\title{Scaling limit of a one-dimensional polymer in a repulsive i.i.d.\ environment.}
\author{Nicolas Bouchot}
\begin{document}

\pagestyle{plain}

\maketitle

\begin{abstract}
    \noindent The purpose of this paper is to study a one-dimensional polymer penalized by its range and placed in a random environment $\omega$. The law of the simple symmetric random walk up to time $n$ is modified by the exponential of the sum of $\beta \omega_z - h$ sitting on its range, with~$h$ and $\beta$ positive parameters. It is known that, at first order, the polymer folds itself to a segment of optimal size $c_h n^{1/3}$ with $c_h = \pi^{2/3} h^{-1/3}$. Here we study how disorder influences finer quantities. If the random variables $\omega_z$ are i.i.d.\ with a finite second moment, we prove that the left-most point of the range is located near $-u_* n^{1/3}$, where $u_* \in [0,c_h]$ is a constant that only depends on the disorder. This contrasts with the homogeneous model (\textit{i.e.}\ when $\beta=0$), where the left-most point has a random location between $-c_h n^{1/3}$ and $0$.
    With an additional moment assumption, we are able to show that the left-most point of the range is at distance $\mathcal U n^{2/9}$ from $-u_* n^{1/3}$ and the right-most point at distance $\mathcal V n^{2/9}$ from $(c_h-u_*) n^{1/3}$. Here again, $\mathcal{U}$ and $\mathcal{V}$ are constants that depend only on $\omega$.

\medskip
\noindent \textsc{Keywords:} random walk, polymer, random media, localization

\medskip
\noindent \textsc{2020 Mathematics subject classification:} 82B44, 60G50, 60G51
\end{abstract}

\section{Introduction}

We study a simple symmetric random walk $(S_k)_{k \geq 0}$ on $\ZZ$, starting from $0$, with law $\mathbf{P}$. Let $\omega = (\omega_z)_{z \in \ZZ}$ be a collection of i.i.d. random variables with law $\PP$, independent from the random walk $S$, which we will call \emph{environment} or \emph{field}. We also assume that $\mathbb E[\omega_0]=0$ and $\mathbb{E}[\omega_0^2]=1$.
For $h > 0$, $\beta > 0$ and a given realization of the field $\omega$, we define the following Gibbs transformation of $\mathbf{P}$, called the (quenched) \emph{polymer measure}:
\[ \dd \mathbf{P}_{n,h}^{\omega,\beta}(S) \defeq \frac{1}{Z_{n,h}^{\omega,\beta}} \exp \Big( \sum_{z \in \mathcal{R}_n} \big(\beta \omega_z - h\big) \Big) \dd \mathbf{P}(S), \]
where $\mathcal{R}_n = \mathcal{R}_n(S) \defeq \big\{ S_0, \dots, S_n \big\}$ is the range of the random walk up to time $n$, and
\[ Z_{n,h}^{\omega,\beta} \defeq \mathbf{E}\Big[ \exp \Big( \sum_{z \in \mathcal{R}_n} \big(\beta \omega_z - h\big) \Big) \Big] = \mathbf{E}\Big[ \exp \Big( \beta \sum_{z \in \mathcal{R}_n} \omega_z - h |\mathcal{R}_n| \Big) \Big] \]
is the partition function, such that $\mathbf{P}_{n,h}^{\omega,\beta}$ is a (random) probability measure on the space of trajectories of length $n$. In other words, the polymer measure $\mathbf{P}_{n,h}^{\omega,\beta}$ penalizes trajectories by their range and rewards visits to sites where the field $\omega$ takes greater values.

In this setting, the disorder term $\sum_{z \in \mathcal{R}_n} \omega_z$ is typically of order $|\mathcal{R}_n|^{1/2}$: one can prove that\footnote{In the rest of the paper we shall use the standard Landau notation:
	as $x\to a$, we write $g(x)\sim f(x)$ if $\lim_{x\to a} \frac{g(x)}{f(x)} =1$, $g(x) = \bar{o}(f(x))$ if $\lim_{x\to a} \frac{g(x)}{f(x)} = 0$,$g(x) = \grdO(f(x))$ if  $\limsup_{x\to a}\big| \frac{g(x)}{f(x)} \big| < +\infty$ and $f \asymp g$ if $g(x) = \grdO(f(x))$ and $f(x) = \grdO(g(x))$.} $\beta \sum_{z \in \mathcal{R}_n} \omega_z - h |\mathcal{R}_n| \sim  - h |\mathcal{R}_n|$ for $\PP$-almost all~$\omega$, see~\cite{berger2020one}. 
Thus, disorder does not sufficiently impact the behavior of the polymer on a first approximation, which is seen in Theorem~\ref{thm0} below.
We introduce the following notation: fix $\omega$ and let $\xi_n^\omega$ be $E$-valued random variables, with $(E,d)$ a metric space. Consider $\xi^\omega \in E$, we write
\[ \xi_n^\omega \xrightarrow[n \to \infty]{\mathbf{P}_{n,h}^{\omega, \beta}} \xi^\omega \quad \Longleftrightarrow \quad \forall \eps > 0, \lim_{n \to \infty} \mathbf{P}_{n,h}^{\omega, \beta} \left( d(\xi_n^\omega, \xi^\omega) > \eps \right) = 0 \, . \]
We will say that ``$\xi_n^\omega$ converges in $\mathbf{P}_{n,h}^{\omega, \beta}$-probability'' even if $\mathbf{P}_{n,h}^{\omega, \beta}$ depends on $n$. If this holds for $\PP$-almost all $\omega$, we will say that $\xi_n$ converges in $\mathbf{P}_{n,h}^{\omega, \beta}$-probability, $\PP$-almost surely. In our results, we will take $(E,d)$ to be $(\RR, |\cdot|)$, or the closed bounded subsets of $\RR^d$ endowed with the Hausdorff distance.

Let us express the results of \cite{berger2020one} with this notation, which states that $|R_n| \sim c_h n^{1/3}$ for $\PP$-almost all realization of $\omega$.

\begin{theorem}[{\cite[Theorem 1.2-(1.a)]{berger2020one}}]
	\label{thm0}
	For all $h > 0$, define $c_h \defeq (\pi^2 h^{-1})^{1/3}$. Then, for any $h,\beta>0$, $\PP$-almost surely we have the following convergence
	\begin{equation}\label{result-quentin}
		\lim_{n\to \infty} \frac{1}{n^{1/3}} \log Z_{n,h}^{\omega,\beta} = -\frac{3}{2} (\pi h)^{2/3}, 
		\qquad  n^{-1/3}|\mathcal{R}_n|\xrightarrow[n \to \infty]{\mathbf{P}_{n,h}^{\omega, \beta}}  c_h \,.
	\end{equation}
\end{theorem}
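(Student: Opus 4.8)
The plan is to reduce the whole statement to the homogeneous model ($\beta=0$) by sandwiching the quenched partition function between two homogeneous ones, at the cost of shifting $h$ by an arbitrarily small amount. This works because of the one-dimensional geometry: for a nearest-neighbour walk the range is always the integer interval $\{\min_{k\le n}S_k,\dots,\max_{k\le n}S_k\}$, so $\sum_{z\in\mathcal R_n}\omega_z=\Omega(\mathcal R_n)$ with $\Omega(I):=\sum_{z\in I}\omega_z$. First I would apply the Hartman--Wintner law of the iterated logarithm to the partial sums of $(\omega_z)$ on each side of the origin -- this is precisely where the i.i.d.\ hypothesis together with $\mathbb E[\omega_0]=0$ and $\mathbb E[\omega_0^2]<\infty$ are needed -- to get that, $\PP$-almost surely, $|\Omega(I)|\le 4\sqrt{|I|\log\log|I|}+C(\omega)$ for every interval $I\ni0$, with $C(\omega)<\infty$. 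Since $\sqrt{L\log\log L}=o(L)$, this gives, for each $\delta>0$, an a.s.\ finite $C_\delta(\omega)$ with $\beta\,|\Omega(\mathcal R_n)|\le\delta h\,|\mathcal R_n|+C_\delta(\omega)$ pointwise in the walk; inserting this into $Z_{n,h}^{\omega,\beta}=\mathbf E[\exp(\beta\Omega(\mathcal R_n)-h|\mathcal R_n|)]$ yields
\[
 e^{-C_\delta(\omega)}\,\mathbf E\big[e^{-(1+\delta)h|\mathcal R_n|}\big]\ \le\ Z_{n,h}^{\omega,\beta}\ \le\ e^{C_\delta(\omega)}\,\mathbf E\big[e^{-(1-\delta)h|\mathcal R_n|}\big],\qquad\PP\text{-a.s.}
\]

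Next I would analyse the homogeneous partition function $Z^{\mathrm{hom}}_{n,a}:=\mathbf E[e^{-a|\mathcal R_n|}]=\sum_{L\ge1}e^{-aL}\mathbf P(|\mathcal R_n|=L)$. The main input is the confinement probability of the simple random walk in an interval of $L$ sites up to time $n$: by a spectral/reflection-principle computation -- the relevant quantity being the top Dirichlet eigenvalue $\cos(\pi/(L+1))$, so that $1-\cos(\pi/(L+1))\sim\pi^2/(2L^2)$ -- this probability equals $\cos(\pi/(L+1))^n$ up to a factor polynomial in $L$, uniformly for $L$ up to order $n$. A union bound over the at most $L$ such intervals containing $0$ gives $\mathbf P(|\mathcal R_n|\le L)\le\mathrm{poly}(L)\,e^{-\frac{\pi^2n}{2(L+1)^2}(1+o(1))}$, and keeping a single interval gives the matching lower bound $\mathbf P(\mathcal R_n\subseteq\{0,\dots,L-1\})\ge\mathrm{poly}(1/L)\,e^{-\frac{\pi^2n}{2(L+1)^2}(1+o(1))}$. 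Bounding $Z^{\mathrm{hom}}_{n,a}$ below by the single term $L=\lfloor(\pi^2n/a)^{1/3}\rfloor$ and above by $n$ times its largest term, and minimising $f_a(L):=aL+\frac{\pi^2n}{2L^2}$ over $L>0$ (minimiser $(\pi^2n/a)^{1/3}$, minimal value $\frac32(\pi a)^{2/3}n^{1/3}$), one gets $n^{-1/3}\log Z^{\mathrm{hom}}_{n,a}\to-\frac32(\pi a)^{2/3}$. Applying this with $a=(1-\delta)h$ and $a=(1+\delta)h$ in the sandwich and letting $\delta\downarrow0$ along a countable set (so all $C_\delta(\omega)$ are finite simultaneously, $\PP$-a.s.) gives the first limit, with $c_h=(\pi^2/h)^{1/3}$ the minimiser of $f_h$.

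Then I would handle the concentration of $|\mathcal R_n|$. For fixed $\eps>0$,
\[
 \mathbf P_{n,h}^{\omega,\beta}\big(\big|\,|\mathcal R_n|-c_hn^{1/3}\big|>\eps n^{1/3}\big)=\frac{1}{Z_{n,h}^{\omega,\beta}}\,\mathbf E\Big[e^{\beta\Omega(\mathcal R_n)-h|\mathcal R_n|}\,\mathbf 1\big\{\big|\,|\mathcal R_n|-c_hn^{1/3}\big|>\eps n^{1/3}\big\}\Big].
\]
I would bound the numerator via $\beta\Omega(\mathcal R_n)\le\delta h|\mathcal R_n|+C_\delta(\omega)$ and the estimates above by $e^{C_\delta(\omega)}\sum_{L:\,|L-c_hn^{1/3}|>\eps n^{1/3}}\mathrm{poly}(L)\,e^{-f_{(1-\delta)h}(L)+o(n^{1/3})}$, and the denominator below by $e^{-C_\delta(\omega)}Z^{\mathrm{hom}}_{n,(1+\delta)h}=e^{-\frac32(\pi(1+\delta)h)^{2/3}n^{1/3}+o(n^{1/3})}$. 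Because $f_a$ is strictly convex with $f_a''\asymp n^{-1/3}$ near its minimiser, which (for $a=(1-\delta)h$, $\delta$ small) sits well inside the window $|L-c_hn^{1/3}|\le\eps n^{1/3}$, every $L$ outside the window satisfies $f_{(1-\delta)h}(L)\ge\frac32(\pi(1-\delta)h)^{2/3}n^{1/3}+c(h)\,\eps^2n^{1/3}$. Summing over the $O(n)$ admissible $L$, the numerator is $\le e^{C_\delta(\omega)}\,e^{-\frac32(\pi(1-\delta)h)^{2/3}n^{1/3}-c(h)\eps^2n^{1/3}+o(n^{1/3})}$, so the ratio is at most $\exp\big(\big[\frac32(\pi(1+\delta)h)^{2/3}-\frac32(\pi(1-\delta)h)^{2/3}\big]n^{1/3}-c(h)\eps^2n^{1/3}+o(n^{1/3})\big)$. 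The bracket is $O(\delta)$, so choosing $\delta=\delta(\eps)$ with $O(\delta)<\frac12 c(h)\eps^2$ makes this tend to $0$; since $\eps$ is arbitrary and everything holds $\PP$-a.s., this is the second assertion.

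The genuine obstacle, I expect, is the uniform confinement estimate feeding step two: one needs $\mathbf P(\mathcal R_n\subseteq I)$ for an interval $I$ of length $L$ with the sharp constant $\pi^2/2$ in the exponent and error terms truly uniform as $n,L\to\infty$ with $L\asymp n^{1/3}$, plus a matching lower bound of the same precision for one well-chosen $I$ -- a classical but somewhat delicate eigenvalue / local-limit computation, the fiddly points being the periodicity of the walk and the exact order of the polynomial prefactor. The rest is bookkeeping; the only real care is to keep the two small parameters correctly ordered, sending $\delta$ to $0$ fast enough relative to the window width $\eps$ that the quadratic gap of $f_a$ at its minimum always dominates the $O(\delta)$ shift created by the sandwich.
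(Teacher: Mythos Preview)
The paper does not prove this statement: Theorem~\ref{thm0} is quoted from \cite[Theorem~1.2-(1.a)]{berger2020one} and used as a black box, as the starting point for the finer expansions in Theorems~\ref{th-1/6} and~\ref{th-1/9}. So there is no in-paper proof to compare against.

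That said, your argument is correct and is essentially the standard route to such a result: bound the disorder uniformly via the law of the iterated logarithm (this is exactly where centering and finite variance are used), sandwich $Z_{n,h}^{\omega,\beta}$ between homogeneous partition functions with slightly shifted penalties $(1\pm\delta)h$, analyse the homogeneous model through the Donsker--Varadhan confinement estimate $\log\mathbf P(\mathcal R_n\subseteq I)\sim -\frac{\pi^2 n}{2|I|^2}$, and optimise $f_h(L)=hL+\frac{\pi^2 n}{2L^2}$. The concentration of $|\mathcal R_n|$ via strict convexity of $f_h$ near its minimiser, with $\delta$ chosen small relative to $\eps^2$, is also the right mechanism. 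The one genuinely delicate point is the one you already identified: the confinement estimate with the sharp constant $\pi^2/2$ and error terms uniform over $L\asymp n^{1/3}$. The present paper itself relies on precisely such estimates (imported from \cite{bouchot1}) in Section~\ref{sec:part-homogene-desordre}, where $g(T)=-\log\cos(\pi/T)\sim\pi^2/(2T^2)$ appears in the rewriting of the partition function, so your identification of the crux matches how the surrounding machinery is built.
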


The main goal of this paper is to extract further information on the polymer, notably on the location of the segment where the random walk is folded, or on how $|\mathcal{R}_n|$ fluctuates at lower scales than $n^{1/3}$.

To do so, we will prove the following expansion of the partition function: there are random variables $u_*, \mathcal{U},\mathcal{V}$ and processes $X, \mathcal{Y}$ such that
\[ \log Z_{n,h}^{\omega,\beta} = -\frac32 h c_h n^{1/3} + \beta X_{u_*} n^{1/6} + \frac{\beta}{\sqrt{2}} \left( \mathcal{Y}_{\mathcal{U},\mathcal{V}} - \frac{3\pi^2}{\beta c_h^4 \sqrt{2}} \big( \mathcal{U} + \mathcal{V} \big)^2 \right) n^{1/9} (1+\bar{o}(1)) \]
holds $\PP$-a.s.\ with the $\bar{o}(1)$ going to $0$ in $\mathbf P_{n,h}^{\beta,\omega}$-probability. The random variables are given by variational problems in Theorems \ref{th-1/6}, \ref{th-1/9}, which are the main results of this paper. Thanks to this expansion, we also get a precise description of $\mathcal{R}_n$ at scale $n^{2/9}$ under $\mathbf{P}_{n,h}^{\omega, \beta}$, that is
\[ \mathcal{R}_n \approx \llbracket -u_*n^{1/3} + \mathcal{U}n^{2/9} , (c_h-u_*)n^{1/3} + \mathcal{V}n^{2/9} \rrbracket \, . \]

\subsection{About the homogeneous setting}

Since we are working in dimension one, we make use of the fact that the range is entirely determined by the position of its extremal points, meaning that $\mathcal{R}_n$ is exactly the segment $\llbracket M_n^-,M_n^+ \rrbracket$, where $M_n^- \defeq \min_{0\leq k \leq n}S_k$ and $M_n^+ \defeq \max_{0\leq k \leq n} S_k$.

We will also adopt the following notation:
\begin{equation}\label{eq:notation}
	T_n \defeq M_n^+ - M_n^- = |\mathcal{R}_n| - 1 \,, \qquad T_n^* \defeq \left(\frac{n \pi^2}{h} \right)^{1/3} = c_h n^{1/3} \,, \qquad  \Delta_n \defeq T_n - T_n^*.
\end{equation} 
Hence, $T_n$ is the size of the range and $T_n^*$ is the typical size of the range at scale $n^{1/3}$ under $\mathbf{P}_{n,h}^{\omega, \beta}$ that appears in \eqref{result-quentin}.

In the homogeneous setting, that is when $\beta = 0$, it is proven in \cite{bouchot1} that the location of the left-most point is random (on the scale $n^{1/3}$) with a density proportional to $\sin(\pi u/c_h)$.
As far as the size of the range $T_n$ is concerned, it is shown to have Gaussian fluctuations. In fact, \cite{bouchot1} treats the case of a parameter $h=h_n$ that may depend on the length of the polymer: in this case, fluctuations vanish when the penalty strength $h_n$ is too high. We state the full result for the sake of completeness.

\begin{theorem}[{\cite[Theorem 1.1]{bouchot1}}]
	Recall the notations of \eqref{eq:notation} and replace $h$ by $h_n$ in the definition of $T_n^*$. Then for $\beta = 0$ and all $\omega$, we have the following results:
	\begin{itemize}
		\item Assume that $h_n \geq n^{-1/2} (\log n)^{3/2}$ and $\lim\limits_{n\to\infty} n^{-1/4} h_n = 0$. Let $a_n \defeq \frac{1}{\sqrt{3}} \left( \frac{n \pi^2}{h_n^4}\right)^{1/6}$, which is such that $\lim_{n\to\infty} a_n =+\infty$. Then for any $r < s$ and any $0 \leq a < b \leq 1$,
		\[ 
		\lim_{n\to\infty} \mathbf{P}_{n,h_n}^{\omega, 0} \left( r \leq \frac{\Delta_n}{a_n} \leq s \, ; \, a \leq \frac{|M_n^-|}{T_n^*} \leq b \right) =\frac{\sqrt{\pi}}{2\sqrt{2}} \int_r^s e^{- \frac{u^2}{2}} \dd u  \int_a^b \sin(\pi v) \, \dd v \,.
		\]
		\item Assume that $\lim\limits_{n\to\infty} n^{-1/4}  h_n =+\infty$ and $\lim\limits_{n\to\infty} n^{-1} h_n = 0$. Then we have for any Borel set $B \subseteq [0,1]$
		\[
		\lim_{n\to\infty}\mathbf{P}_{n,h_n}^{\omega, 0} \big( T_n - \lfloor T_n^*-2 \rfloor \not\in \mathset{0,1} \big) = 0, 
		\quad \lim_{n \to \infty} \mathbf{P}_{n,h_n}^{\omega,0} \Big( \frac{|M_n^-|}{T_n^*} \in B \Big) = \frac{\pi}{2} \int_B \sin(\pi v) \, \dd v.
		\]
	\end{itemize}
\end{theorem}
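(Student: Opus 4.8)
Since $\beta=0$ the polymer weight is $\exp\big(\sum_{z\in\mathcal{R}_n}(\beta\omega_z-h_n)\big)=e^{-h_n|\mathcal{R}_n|}=e^{-h_n(T_n+1)}$, which depends on the path only through $T_n=M_n^+-M_n^-$; thus $\mathbf{P}_{n,h_n}^{\omega,0}$ is the simple random walk law tilted by $e^{-h_n(T_n+1)}$, and everything reduces to the joint law of $(M_n^-,M_n^+)$ under this tilt, through the exact identity
\[ \mathbf{P}_{n,h_n}^{\omega,0}\big(M_n^-=-a,\,M_n^+=b\big)=\frac{1}{Z_{n,h_n}^{\omega,0}}\,e^{-h_n(a+b+1)}\,\mathbf{P}\big(M_n^-=-a,\,M_n^+=b\big),\qquad a,b\geq0. \]
The plan is to obtain sharp asymptotics for the right-hand factor, i.e.\ for the probability that the walk stays in $\llbracket-a,b\rrbracket$ up to time $n$ \emph{and} touches both endpoints. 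The engine is a sharp estimate for the plain confinement probability $G_n(a,b)\defeq\mathbf{P}(M_n^-\geq-a,\,M_n^+\leq b)$, obtained by diagonalising the substochastic nearest-neighbour operator on the $T+1$ sites of $\llbracket-a,b\rrbracket$ ($T\defeq a+b$; eigenvalues $\cos(k\pi/(T+2))$, eigenvectors $i\mapsto\sin(k\pi i/(T+2))$, the start $0$ sitting at relative position $a+1$):
\[ G_n(a,b)=\tfrac{4}{\pi}\,\big(\cos(\pi/(T+2))\big)^{n}\sin\!\big(\pi(a+1)/(T+2)\big)\,e^{\grdO(n/T^{4})}\big(1+\grdO(e^{-\frac32 h_n T_n^*})\big), \]
uniformly for $T$ of order $T_n^*$; the last error is the subdominant-eigenvalue contribution, the spectral gap yielding $(\lambda_2/\lambda_1)^n\asymp e^{-\frac32 h_n T_n^*}$, and $e^{\grdO(n/T^4)}$ collects the Taylor corrections to $\cos$ and to the eigenvector norms (it is $1+\bar{o}(1)$ in the first regime, and a slowly-varying factor that drops out of normalised quantities in the second). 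The hypothesis $h_n\geq n^{-1/2}(\log n)^{3/2}$ is used exactly here: it forces $h_n T_n^*\gtrsim\log n$, so the spectral error is polynomially small, which is what makes it negligible after the second difference below and the subsequent sum.

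Next I would write $\mathbf{P}(M_n^-=-a,\,M_n^+=b)=G_n(a,b)-G_n(a-1,b)-G_n(a,b-1)+G_n(a-1,b-1)$ as a mixed second difference and expand it; the transverse profile $\sin(\pi(a+1)/(T+2))$ survives at leading order, giving
\[ \mathbf{P}\big(M_n^-=-a,\,M_n^+=b\big)=c_n(T)\,\big(\cos(\pi/(T+2))\big)^{n}\sin\!\big(\pi(a+1)/(T+2)\big)\,(1+\bar{o}(1)), \]
with $c_n(T)>0$ depending on $T$ only and slowly varying (its precise form depends on whether $h_n$ stays bounded or diverges, but this is immaterial since it cancels in every normalised ratio). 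Inserting this into the tilt gives $\mathbf{P}_{n,h_n}^{\omega,0}(M_n^-=-a,\,M_n^+=b)\propto e^{-\varphi_n(T)}\sin(\pi(a+1)/(T+2))(1+\bar{o}(1))$, where $\varphi_n(T)=h_n(T+1)+\frac{n\pi^2}{2(T+2)^2}-\log c_n(T)+\cdots$. A Laplace analysis shows $\varphi_n$ is minimised at $T+2=(n\pi^2/h_n)^{1/3}=T_n^*$, i.e.\ at $T=T_n^*-2$ (hence the integer shift $\lfloor T_n^*-2\rfloor$), with $\varphi_n''(T_n^*-2)=3n\pi^2/(T_n^*)^4=a_n^{-2}$, so near the minimum $e^{-\varphi_n(T)}\approx e^{-\varphi_n(T_n^*-2)}e^{-(T-T_n^*+2)^2/(2a_n^2)}$. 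Since $a_n=\bar{o}(T_n^*)$ (a consequence of $h_n\geq n^{-1/2}(\log n)^{3/2}$) and $0\leq a\leq T$, on the bulk of this Gaussian one may replace $\sin(\pi(a+1)/(T+2))$ by $\sin(\pi|M_n^-|/T_n^*)$ at negligible cost, while the Gaussian tails contribute $\bar{o}(1)$ by a crude upper bound. Hence, up to normalisation and uniformly,
\[ \mathbf{P}_{n,h_n}^{\omega,0}\big(M_n^-=-a,\,M_n^+=b\big)\;\propto\;\big(1+\bar{o}(1)\big)\,e^{-(a+b-T_n^*+2)^2/(2a_n^2)}\,\sin\!\big(\pi a/T_n^*\big), \]
a \emph{product} of a function of $\Delta_n=T_n-T_n^*$ and a function of $|M_n^-|$ — this is the source of the asymptotic independence.

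It remains to read off the two regimes. If $h_n=\bar{o}(n^{1/4})$ then $a_n\to\infty$, the discrete Gaussian converges to a genuine one, and summing the last display over the relevant lattice (a Riemann-sum computation) gives, \emph{jointly}, $\Delta_n/a_n\to\mathcal N(0,1)$ and $|M_n^-|/T_n^*\to$ the law with density proportional to $\sin(\pi v)$ on $[0,1]$, independently; the constant $\tfrac{\sqrt\pi}{2\sqrt2}$ is fixed by total-mass normalisation. If $h_n\gg n^{1/4}$ then $a_n\to0$: the Gaussian factor degenerates and concentrates on the integer(s) nearest $T_n^*-2$, so $\mathbf{P}_{n,h_n}^{\omega,0}(T_n-\lfloor T_n^*-2\rfloor\in\{0,1\})\to1$, while conditioning on the value of $T_n$ (now $h_n\to\infty$, so the mixed second difference collapses cleanly to the profile $\sin(\pi(a+1)/(T+2))$) leaves $|M_n^-|/T_n^*$ with limiting density $\tfrac\pi2\sin(\pi v)$; here $h_n=\bar{o}(n)$ serves only to ensure $T_n^*\to\infty$.

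The hard part will be the uniform sharp asymptotics of $G_n$ and, above all, of its mixed second difference: one has to control the subdominant eigenmodes together with the corrections to eigenvalues and eigenvectors, uniformly in $T$ of order $T_n^*$ and in $0\leq a\leq T$, and to verify that the difference does not kill the leading $\sin$-profile but merely rescales it by the $T$-dependent factor $c_n(T)$. A more robust alternative is to bypass the spectral expansion and use the exact ``method of images'' representation, writing $\mathbf{P}(M_n^-=-a,\,M_n^+=b)$ as an alternating sum of local-limit terms $\mathbf{P}(S_n=x)$ over the reflection lattice, then invoking a quantitative local central limit theorem; the two representations are dual via Poisson summation, but in the strongly confined regime $T_n\asymp n^{1/3}\ll\sqrt n$ the spectral one is the efficient choice.
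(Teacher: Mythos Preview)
This theorem is quoted from \cite{bouchot1} and is not proved in the present paper; it appears only as background on the homogeneous ($\beta=0$) model. There is therefore no ``paper's own proof'' to compare against here. That said, your outline is the standard route and is essentially what \cite{bouchot1} does: obtain sharp asymptotics for $\mathbf{P}(\mathcal{R}_n=\llbracket -a,b\rrbracket)$ via the spectral decomposition of the killed walk on the segment (equivalently, gambler's ruin), then perform a Laplace expansion of $\varphi_n(T)=h_nT+\tfrac{n\pi^2}{2T^2}$ around its minimiser $T_n^*$, identifying $\varphi_n''(T_n^*)=a_n^{-2}$ and the transverse $\sin$-profile. The present paper invokes exactly this machinery in Section~\ref{sec:part-homogene-desordre}, citing \cite[Theorem~1.4]{bouchot1} for the asymptotics of $\mathbf{P}(\mathcal{R}_n=\llbracket -x,y\rrbracket)$ and then carrying out the same Taylor expansion in~\eqref{eq:preuve-homogene-DL-phi}. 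Your identification of the role of the hypothesis $h_n\geq n^{-1/2}(\log n)^{3/2}$ (spectral gap large enough to kill subdominant modes) and of $n^{-1/4}h_n\to 0$ versus $\to\infty$ (whether $a_n\to\infty$ or $a_n\to 0$) is correct.
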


We will see that the disordered model displays a very different behavior: the location of the left-most and right-most points are $\mathbf{P}$-deterministic, in the sense that they are completely determined by the disorder field~$\omega$ (at least for the first two orders).

\subsection{First convergence result}

Akin to \cite{berger2020one}, we define the following quantities: for any $j \geq 0$ for which the sum is not empty,
\[
\Sigma^+_j(\omega) \defeq \sum_{z=0}^j \omega_z \, , \qquad \Sigma^-_j(\omega) \defeq \sum_{z=1}^j \omega_{-z}\,. 
\]
Using Skorokhod's embedding theorem (see \cite[Chapter~7.2]{skorokhod1982studies} and Theorem \ref{th-skorokhod} below) we can define on the same probability space a coupling $\hat{\omega} = \hat{\omega}^{(n)}$ of $\omega$ and two independent standard Brownian motions $X^{(1)}$ and $X^{(2)}$ such that for each $n$, $\hat{\omega}^{(n)}$ has the same law as the environment $\omega$ and
\[ \left( \frac{1}{n^{1/6}} \Sigma^-_{un^{1/3}}(\hat{\omega}) \right)_{u \geq 0} \xrightarrow[n \to \infty]{a.s.} \left( X^{(1)}_u(\hat{\omega})  \right)_{u \geq 0} \,, \hspace{0.5cm} \left( \frac{1}{n^{1/6}} \Sigma^+_{vn^{1/3}}(\hat{\omega}) \right)_{v \geq 0} \xrightarrow[n \to \infty]{a.s.} \left( X^{(2)}_v(\hat{\omega})  \right)_{v \geq 0} \]
in the Skorokhod metric on the space of all càdlàg real functions. With an abuse of notation, we will still denote by $\omega$ this coupling, while keeping in mind that the field now depends on $n$.

Our first result improves estimates on the asymptotic behavior of $Z_{n,h}^{\omega,\beta}$ and $(M_n^-,M_n^+)$.

\begin{theorem}\label{th-1/6}
	For any $h,\beta > 0$, we have the following $\PP$-a.s.\ convergence
	\begin{equation}\label{eq:th-1/6-part}
		\lim_{n\to \infty} \frac{1}{\beta n^{1/6}} \left( \log Z_{n,h}^{\omega,\beta} + \frac{3}{2}h c_h n^{1/3} \right) = \sup_{0 \leq u \leq c_h} \left\{ X^{(1)}_u + X^{(2)}_{c_h-u} \right\} \,,
	\end{equation} 
	where $X^{(1)}$ and $X^{(2)}$ are the two independent standard Brownian motions defined above.\\
	Furthermore, $u_* \defeq \argmax_{u \in [0,c_h]} \left\{ X^{(1)}_u + X^{(2)}_{c_h-u} \right\}$ is $\PP$-a.s.\ unique and
	\begin{equation}\label{eq:th-1/6-proba}
		\frac{1}{n^{1/3}} (M_n^-,M_n^+) \xrightarrow[n \to \infty]{\mathbf{P}_{n,h}^{\omega, \beta}} (-u_*,c_h-u_*) \qquad \PP\text{-a.s.}
	\end{equation}
\end{theorem}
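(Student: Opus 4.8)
The plan is to reduce the statement to a careful analysis of the partition function at scale $n^{1/6}$, by writing $Z_{n,h}^{\omega,\beta}$ as a sum over the possible positions of the two extremal points $M_n^-=-a$ and $M_n^+=b$ and controlling the contribution of each pair $(a,b)$. Concretely, for fixed $a,b\ge 0$ with $a+b=T$, the probability that the walk stays in $\llbracket -a,b\rrbracket$ and touches both endpoints by time $n$ is, by classical estimates for the simple random walk confined to an interval (Fourier/eigenvalue expansion), of order $\exp(-\frac{\pi^2 n}{2(T+1)^2}(1+\bar o(1)))$ up to polynomial corrections, uniformly for $T$ in a window of width $O(n^{1/3}\cdot n^{-\alpha})$ around $T_n^*$. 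Multiplying by the disorder reward $\exp(\beta(\Sigma^-_a+\Sigma^+_b) - hT)$ and summing, one gets
\[
Z_{n,h}^{\omega,\beta} \asymp \sum_{a,b} \exp\!\Big( -\tfrac{\pi^2 n}{2(a+b+1)^2} - h(a+b) + \beta\big(\Sigma^-_a(\omega)+\Sigma^+_b(\omega)\big)\Big)\times(\text{poly}).
\]
The deterministic part $-\frac{\pi^2 n}{2T^2} - hT$ is maximized at $T=T_n^*=c_h n^{1/3}$ with value $-\frac32 h c_h n^{1/3}$, and is locally quadratic with curvature of order $n^{-1/3}$, so it effectively constrains $T$ to within $O(n^{1/3})$ — more precisely the relevant window for the next-order term is where the deterministic penalty has only dropped by $O(n^{1/6})$, i.e.\ $|\Delta_n|=O(n^{2/9})$; but already at the coarser level $T\in[(c_h-\eps)n^{1/3},(c_h+\eps)n^{1/3}]$ we may freely optimize $a$ with $a+b\approx c_h n^{1/3}$.

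First I would make the upper and lower bounds on the confined-walk probability precise and uniform, citing or adapting the local limit theorem for the range / survival probability in an interval (this is essentially already in \cite{berger2020one} and \cite{bouchot1}); the key point is a bound of the form $\mathbf P(M_n^-=-a, M_n^+=b) = \Theta\big(g(n,T)\big)\exp(-\tfrac{\pi^2 n}{2(T+1)^2})$ with $g$ subexponential and not depending too wildly on the split between $a$ and $b$. Next, I would insert the Skorokhod coupling: on the event-free (deterministic in $\omega$) side, $\Sigma^-_{un^{1/3}}(\omega)\approx n^{1/6}X^{(1)}_u$ and $\Sigma^+_{vn^{1/3}}(\omega)\approx n^{1/6}X^{(2)}_v$ uniformly on compacts, $\PP$-a.s. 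Therefore, writing $a=un^{1/3}$, $b=vn^{1/3}$,
\[
\frac{1}{\beta n^{1/6}}\Big(\log Z_{n,h}^{\omega,\beta} + \tfrac32 h c_h n^{1/3}\Big) = \frac{1}{\beta n^{1/6}}\log\!\sum_{u,v}\exp\!\Big(\beta n^{1/6}\big(X^{(1)}_u + X^{(2)}_v\big) + \bar o(n^{1/6})\Big),
\]
and a Laplace/Varadhan-type argument (the sum is over $O(n^{1/3})$ terms, which is subexponential in $n^{1/6}$ once we have localized to $u+v\in(c_h-\eps,c_h+\eps)$, and it is negligible at this scale to restrict to $u+v$ exactly $c_h$ up to $o(1)$, since moving away costs $\Theta(n^{1/3})$ in the deterministic term, hence $\to+\infty$ after dividing by $n^{1/6}$) yields the supremum $\sup_{0\le u\le c_h}\{X^{(1)}_u + X^{(2)}_{c_h-u}\}$. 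I would be careful that the localization $|\Delta_n|\ll n^{1/3}$ needed here is much weaker than the $n^{2/9}$ localization of Theorem~\ref{th-1/9}, so only soft estimates are required.

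For the second part, uniqueness of $u_*$ follows from a standard argument: $u\mapsto X^{(1)}_u + X^{(2)}_{c_h-u}$ is, up to reparametrization, a two-sided Brownian-type process (sum of two independent Brownian motions run in opposite directions), whose a.s.\ uniqueness of argmax on a compact interval is classical (no two distinct points realize the maximum, by the independence and non-degeneracy of increments; see e.g.\ the arguments for the argmax of Brownian motion). For the convergence \eqref{eq:th-1/6-proba}, I would show that for any $\eps>0$, the contribution to $Z_{n,h}^{\omega,\beta}$ coming from trajectories with $|M_n^-/n^{1/3}+u_*|>\eps$ (equivalently $u$ outside $(u_*-\eps,u_*+\eps)$) is exponentially smaller, at scale $n^{1/6}$, than the full partition function: indeed that restricted sum behaves like $\exp(\beta n^{1/6}(\sup_{|u-u_*|>\eps}\{X^{(1)}_u+X^{(2)}_{c_h-u}\} + \bar o(1)))$, and by uniqueness of the argmax the exponent constant is strictly below $\beta\sup_u\{\cdots\}$, so the ratio is $\exp(-\delta n^{1/6}(1+\bar o(1)))\to 0$; the constraint on $M_n^+$ is automatic once $M_n^-$ and $T_n\approx T_n^*$ are controlled. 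The main obstacle I anticipate is establishing the uniform-in-$(a,b)$ subexponential control of the confined-walk probabilities with enough precision that the $\bar o(n^{1/6})$ error terms (coming both from the polynomial prefactor $g(n,T)$ and from the Skorokhod coupling error) genuinely vanish after dividing by $n^{1/6}$ — in particular handling the boundary regime where $u$ or $v$ is close to $0$ (one endpoint barely moves), where the confinement estimate degenerates and one must argue separately that such configurations are anyway suppressed by the deterministic term unless $u_*\in\{0,c_h\}$, a $\PP$-null event.
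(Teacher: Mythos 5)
Your proposal is correct and follows essentially the same route as the paper: rewrite the partition function via the gambler's-ruin asymptotics from \cite{bouchot1}, insert the Skorokhod coupling, localize to $u+v\approx c_h$, take a Laplace-type supremum, and show the restricted partition function away from the argmax is exponentially smaller at scale $n^{1/6}$. The paper makes your ``Laplace/Varadhan-type argument'' precise by coarse-graining $(x,y)$ into $\delta n^{1/3}$-boxes (Lemma~\ref{lem:cv-1/6}) and sending $\delta\to 0$ using continuity of the $X^{(i)}$, and your worry about the boundary regime $u\in\{0,c_h\}$ is handled there by noting $\sin(\pi x/(x+y))\gtrsim 1/(x+y)$ once $x\neq 0$, so its logarithm contributes only $\bar o(n^{1/6})$.
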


\begin{figure}[h]
	\centering
	\includegraphics[width=\textwidth,height=\textheight,keepaspectratio]{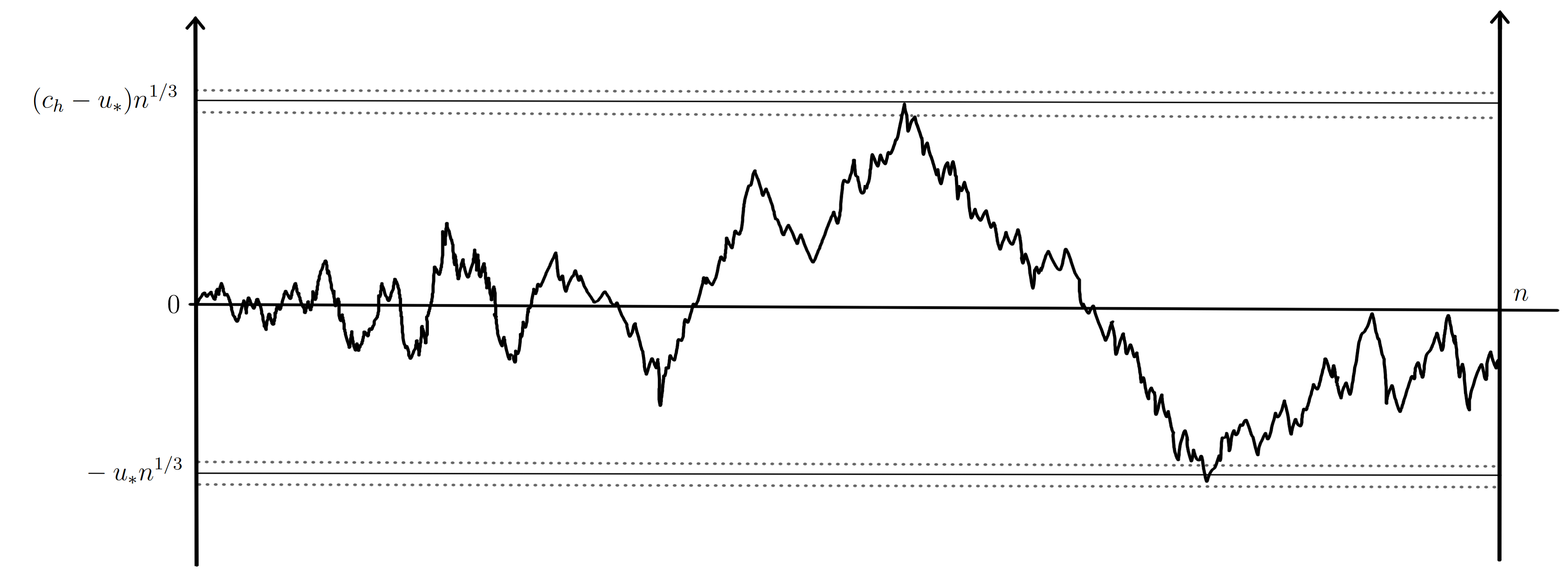}
	\caption{A typical trajectory under the polymer measure for a given $u_*$ and large $n$}
	\label{fig:polymer}
\end{figure}

\begin{remark}
	Theorem \ref{th-1/6} still holds if $(\omega_z)$ are i.i.d and in the domain of attraction of an $\alpha$-stable law with $\alpha \in (1,2)$, only replacing the Brownian motions $X^{(i)}$ by Lévy processes as in \cite{berger2020one} and $n^{1/6}$ by $n^{1/3\alpha}$: we refer to Theorem~\ref{th-1/6-Levy} and its proof in Appendix \ref{appendix-Levy}. As most of the work in this paper requires stronger assumptions on the field $\omega$ we will not dwell further on this possibility and focus on the case where $\esp{\omega_0^2} = 1$.
\end{remark}

\begin{remark}
	We could interpret Theorems \ref{th-1/6} as an almost sure convergence in $\mathbf{P}_{n,h}^{\omega, \beta}$-probability: on the space of closed sets endowed with the Hausdorff distance,
	\[ \mathcal{R}_n n^{-1/3} \xrightarrow[n \to \infty]{\mathbf{P}_{n,h}^{\omega, \beta}} [-u_*, c_h-u_*] \quad \PP\text{-a.s.} \, . \]
\end{remark}

\begin{heuristic}
	Intuitively, the result of Theorem \ref{th-1/6} is a consequence of the following reasoning: if we assume that the optimal size is $T_n^*$ (at a first approximation), the location of the polymer should be around the points $(x_n,y_n) \in \NN^2$ such that $x_n + y_n \approx T_n^*$ and $\Sigma^-_{x_n} + \Sigma^+_{y_n}$ is maximized. Translating in terms of the processes $X^{(1)}, X^{(2)}$, we want to maximize $n^{-1/6} (\Sigma^-_{x_n} + \Sigma^+_{y_n})$, which is ``close'' to $X^{(1)}_{x_n n^{-1/3}} + X^{(2)}_{y_n n^{-1/3}}$. Since $x_n + y_n \sim T_n^*$ we have $y_n n^{-1/3} \sim c_h - x_n n^{-1/3}$ and we want to pick $x_n n^{-1/3}$ to maximize $u \mapsto X^{(1)}_u + X^{(2)}_{c_h - u}$.
\end{heuristic}

\subsection{Second order convergence result}

\par To ease the notation we will denote $X_u \defeq X^{(1)}_{u} + X^{(2)}_{c_h-u}$. Note that $X$ has the distribution of $\sqrt{2} W + X^{(2)}_{c_h}$, where $W$ is a standard Brownian motion.
Hence, the supremum on $[0,c_h]$ of $X_u$ is almost surely finite, attained at a unique~$u_*$ which follows the arcsine law on $[0,c_h]$.
\par In order to extract more information on the typical behavior of the polymer, we need to go deeper into the expansion of $\log Z_{n,h}^{\omega,\beta}$. To do so, we factorize $Z_{n,h}^{\omega,\beta}$ by $e^{\beta n^{1/6} X_{u_*}}$ and we study the behavior of $\log Z_{n,h}^{\omega,\beta} + \frac32 h c_h n^{1/3} - \beta n^{1/6} X_{u_*}$, which is related to the behavior of $X$ near $u_*$.
Studying Wiener processes near their maximum leads to study both the three-dimen\-sio\-nal Bessel process and the Brownian meander (see Appendix \ref{appendix-meandre}).

\begin{proposal}\label{prop:ecriture-X-meandres}
	Conditional on $u_*$, there exist two independent Brownian meanders $(\mathcal{M}^\sigma)_{|\sigma| = 1}$ such that for any $u \in [0,c_h]$,
	\begin{equation}
		X_{u_*} - X_{u} = \sqrt{2 u_*} \mathcal{M}^-_{\frac{u^*-u}{u_*}} \indic{u_* \geq u} + \sqrt{2(c_h - u_*)} \mathcal{M}^+_{\frac{u - u_*}{c_h - u_*}} \indic{u_* < u} \, .
	\end{equation}
\end{proposal}

\begin{proof}
	Recall the fact that $X$ has the law of $\sqrt{2} W + X^{(2)}_{c_h}$ and is maximal when $W$ is maximal. In particular, $X_{u_*} - X_{u}$ has the law of $\sqrt{2}(W_{u_*} - W_{u})$ and $W_{u_*} = \sup_{u \in [0,c_h]} W_u$. Write $\tfrac{1}{\sqrt{2}} (X_{u_*} - X_{u_* + t})$ as $M_X^<(t)$ if $t < 0$, and as $M_X^>(t)$ if $t \geq 0$. By Proposition \ref{prop:max-meandre}, conditional on the value of $u_*$, the processes $M_X^<$ and $M_X^>$ are two independent Brownian meanders, with respective duration $u_*$ and $c_h-u_*$. By the scaling property of the Brownian meander, both $M_X^<$ and $M_X^>$ can be obtained from two independent standard Brownian meanders $\mathcal{M}^\sigma, \sigma \in \mathset{-1,+1}$.
\end{proof}

Some other technical results about the meander are presented in Appendix \ref{appendix-meandre}. We also define the following process, which we call \emph{two-sided three-dimensional Bessel} (BES$_3$) \emph{process}.

\begin{definition}
	We call \textit{two-sided} three-dimensional Bessel process $\mathbf{B}$ the concatenate of two independent three-dimensional Bessel processes $B^-$ and $B^+$. Namely, for all $s\in \mathbb R$, $\mathbf{B}_s = B^-_{-s}\mathbbm{1}_{\RR^-}(s) + B^+_s\mathbbm{1}_{\RR^+}(s)$.
\end{definition}

Additionally, we will use the following coupling between $(X^{(1)}_u+X^{(2)}_{c_h-u}, X_u^{(1)}-X_{c_h-u}^{(2)})$ seen from $u_*$ and a two-sided BES$_3$ process and a Brownian motion. This will allow us to obtain $\PP$-almost sure results instead of convergences in distribution; in particular we obtain trajectorial results that depend on the realization of the environment. The proof is postponed to Appendix \ref{appendix-Construction} and relies on the path decomposition of usual Brownian-related processes.

\begin{proposal}\label{prop-couplage}
	Let 
	\[ X_u = X^{(1)}_u + X^{(2)}_{c_h-u} \,, \qquad Y_u \defeq X^{(1)}_u - X^{(2)}_{c_h-u} \,. \]
	Then, conditionally on $u_*$, one can construct a coupling of $(X^{(1)},X^{(2)})$ and $\mathbf{B}$ a two-sided BES$_3$, $\mathbf{Y}$ a two-sided standard Brownian motion such that: almost surely, there is a $\delta_0 = \delta_0(\omega) >0$ for which on a $\delta_0$-neighborhood of $0$,
	\[ \frac{1}{\sqrt{2}} \big( X_{u_*} - X_{u_*+u} \big) = \chi\mathbf{B}_u \,, \qquad  \frac{1}{\sqrt{2}} \big( Y_{u_*+u} - Y_{u_*} \big) = \mathbf{Y}_u \,,\]
	where we have set $\chi = \chi (u,\omega) \defeq \left(\sqrt{c_h - u_*} \indic{u \geq 0} + \sqrt{u_*} \indic{u < 0} \right)^{-1}$. 
\end{proposal}

\begin{remark}
	It should be noted that $\chi$ actually only depends on the sign of $u$, which means that the process $\chi \mathbf{B}$ has the Brownian scaling invariance property. This will be used in Section \ref{sec:construction-couplage} to get a suitable coupling.
\end{remark}

\begin{theorem}\label{th-1/9}
	Suppose $\esp{|\omega_0|^{3 + \eta}} < \infty$ for some $\eta>0$. 
	With the coupling of Proposition~\ref{prop-couplage}, we have the $\PP$-a.s. convergence
	\begin{equation}\label{eq:th-1/9-part}
		\lim_{n\to \infty} \frac{\sqrt{2}}{\beta n^{1/9}} \left( \log Z_{n,h}^{\omega,\beta} + \frac{3}{2} h c_h n^{1/3} - \beta n^{1/6} X_{u_*} \right) = \sup_{u,v} \left\{ \mathcal{Y}_{u,v} - \frac{3\pi^2}{\beta c_h^4 \sqrt{2}} \big( u + v \big)^2 \right\} \,,
	\end{equation}
	where $\mathcal{Y}_{u,v} \defeq \mathbf{Y}_u - \mathbf{Y}_{-v} - \chi \big[ \mathbf{B}_u +\mathbf{B}_v \big] $.
	
	\noindent
	Moreover, $(\mathcal{U},\mathcal{V}) \defeq \argmax_{u,v} \{ \mathcal{Y}_{u,v} - \frac{3\pi^2}{\beta c_h^4 \sqrt{2}} (u+v)^2\}$ is $\PP$-a.s.\ unique and we have
	\begin{equation}\label{eq:th-1/9-proba}
		\left( \frac{M_n^- + u_*n^{1/3}}{n^{2/9}},\frac{M_n^+ - (c_h-u_*)n^{1/3}}{n^{2/9}} \right) \xrightarrow[n \to \infty]{\mathbf{P}_{n,h}^{\omega, \beta}} (\mathcal{U},\mathcal{V}) \quad \PP\text{-a.s.}
	\end{equation}
	In particular, we have $\displaystyle \frac{T_n - c_h n^{1/3}}{n^{2/9}} \xrightarrow[n \to \infty]{\mathbf{P}_{n,h}^{\omega, \beta}} \mathcal{U}+\mathcal{V}$ $\PP$-a.s.
\end{theorem}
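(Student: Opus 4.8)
The plan is to mimic the strategy that yields Theorem~\ref{th-1/6}, but now zoom in one scale further, around the already-identified point $(-u_*n^{1/3},(c_h-u_*)n^{1/3})$. Recall that in dimension one the range is the segment $\llbracket M_n^-,M_n^+\rrbracket$, so the law of $\mathcal R_n$ under $\mathbf P_{n,h}^{\omega,\beta}$ is governed by the "energy" of pairs $(x,y)$, with $-x=M_n^-\leq 0\leq M_n^+=y$, weighted by the probability that the walk stays in $\llbracket -x,y\rrbracket$ up to time $n$ and touches both endpoints. The key point is a sharp two-term expansion of that staying-probability: for a simple random walk confined to a segment of length $T$, the cost is $\exp(-\tfrac{\pi^2 n}{2 T^2}(1+\bar o(1)))$, and writing $T=c_hn^{1/3}+\Delta$ one Taylor-expands $\tfrac{\pi^2 n}{2T^2}=\tfrac32 h c_h n^{1/3}-h\Delta+\tfrac{3\pi^2}{2c_h^4}\Delta^2 n^{-1/3}+\cdots$. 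The linear term $-h\Delta=-h(x+y-c_hn^{1/3})$ combines with the $-h|\mathcal R_n|$ penalty to cancel the first-order dependence on $(x,y)$, leaving the quadratic term $\tfrac{3\pi^2}{2c_h^4}\Delta^2 n^{-1/3}$ as the leading confinement cost; with $\Delta=\Delta_n$ at scale $n^{2/9}$ this is of order $n^{1/9}$. Meanwhile the disorder reward is $\beta(\Sigma^-_x+\Sigma^+_y)$, which near $u_*$ must be expanded as $\beta n^{1/6}X_{u_*}$ plus a second-order fluctuation term. So the scheme is: (i) reduce $Z_{n,h}^{\omega,\beta}$ to a sum/integral over $(x,y)$ of $\exp(\beta(\Sigma^-_x+\Sigma^+_y)-h(x+y)-\tfrac{\pi^2n}{2(x+y)^2}(1+\bar o(1)))$ using precise small-deviation asymptotics for the confined walk; (ii) localize $x$ near $u_*n^{1/3}$ and $y$ near $(c_h-u_*)n^{1/3}$ at window $n^{2/9}$ (this is where Theorem~\ref{th-1/6} is invoked to rule out everything outside); (iii) substitute $x=u_*n^{1/3}+un^{2/9}$, $y=(c_h-u_*)n^{1/3}+vn^{2/9}$ and identify the limiting variational problem.

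For step (iii) the disorder term is the crucial input. Write $\Sigma^-_x+\Sigma^+_y-n^{1/6}X_{u_*}$ and split it symmetrically using $X$ and $Y$: $\Sigma^-_x+\Sigma^+_y=\tfrac12\big[(\Sigma^-_x+\Sigma^+_y)+(\Sigma^-_x-\Sigma^+_y)\big]+\tfrac12\big[(\Sigma^-_x+\Sigma^+_y)-(\Sigma^-_x-\Sigma^+_y)\big]$, so that the $X$-part recombines with $X_{u_*}n^{1/6}$ into increments of $X$ near $u_*$ and the $Y$-part produces increments of $Y$. Rescaled at the $n^{2/9}$ window, the Skorokhod coupling together with Proposition~\ref{prop-couplage} turns these into $\tfrac{1}{\sqrt2}(X_{u_*}-X_{u_*+un^{2/9-1/3}})\approx\chi\mathbf B_u$ and $\tfrac{1}{\sqrt2}(Y_{u_*+un^{2/9-1/3}}-Y_{u_*})\approx\mathbf Y_u$ — here the extra moment assumption $\mathbb E[|\omega_0|^{3+\eta}]<\infty$ is what upgrades the rate in the KMT/Skorokhod embedding enough to control the error at scale $n^{1/9}$ uniformly over the $n^{2/9}$-window. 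Assembling: $\tfrac{\sqrt2}{\beta n^{1/9}}\big(\log Z_{n,h}^{\omega,\beta}+\tfrac32 hc_hn^{1/3}-\beta n^{1/6}X_{u_*}\big)$ converges to $\sup_{u,v}\{\mathbf Y_u-\mathbf Y_{-v}-\chi(\mathbf B_u+\mathbf B_v)-\tfrac{3\pi^2}{\beta c_h^4\sqrt2}(u+v)^2\}=\sup_{u,v}\{\mathcal Y_{u,v}-\tfrac{3\pi^2}{\beta c_h^4\sqrt2}(u+v)^2\}$, which is the claimed identity~\eqref{eq:th-1/9-part}. The convergence of the sum over the window to this supremum is the standard Laplace/Varadhan argument: the integrand is $\exp(n^{1/9}\cdot(\text{functional of }u,v)(1+\bar o(1)))$, and one needs the functional to have a unique, finite maximizer with quadratic-type decay at infinity — the $-(u+v)^2$ term provides coercivity (a two-sided BES$_3$ minus a Brownian motion grows sublinearly), and almost-sure uniqueness of $(\mathcal U,\mathcal V)$ follows because $\mathbf Y$ and $\mathbf B$ are independent and the maximized process has a density at its argmax, exactly as for $u_*$ under the arcsine law.

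For the probabilistic statement~\eqref{eq:th-1/9-proba}, the standard soft argument applies: $\mathbf P_{n,h}^{\omega,\beta}(M_n^-=-x,M_n^+=y)$ equals the $(x,y)$-term of the sum divided by $Z_{n,h}^{\omega,\beta}$, so after the rescaling it equals $\exp(n^{1/9}[F_n(u,v)-\sup F_n](1+\bar o(1)))$ with $F_n$ converging to $F(u,v)=\tfrac{\sqrt2}{\beta}(\mathcal Y_{u,v}-\tfrac{3\pi^2}{\beta c_h^4\sqrt2}(u+v)^2)$ locally uniformly; uniqueness of the maximizer then forces concentration of $(u,v)$ at $(\mathcal U,\mathcal V)$, which is precisely~\eqref{eq:th-1/9-proba}, and summing the two coordinates gives $T_n=M_n^+-M_n^-\Rightarrow (c_h-u_*)n^{1/3}+u_*n^{1/3}+(\mathcal U+\mathcal V)n^{2/9}$, i.e. $(T_n-c_hn^{1/3})/n^{2/9}\to\mathcal U+\mathcal V$. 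The main obstacle, and where most of the real work lives, is step~(i)--(ii): getting the confinement probability asymptotics with a uniform second-order error term $\bar o(n^{-1/3})$ in the exponent that is good enough at the scale $n^{1/9}$, \emph{simultaneously} with the Skorokhod coupling error, and patching these two error sources together while ruling out the contribution of atypical $(x,y)$ outside the $n^{2/9}$-window (including the regime where $T_n-c_hn^{1/3}$ is much larger than $n^{2/9}$, which must be shown to be exponentially costly at scale $n^{1/9}$ via the quadratic term). This is the technical heart; the variational identification and the probabilistic corollary are comparatively routine once the expansion is in hand.
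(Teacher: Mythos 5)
Your outline tracks the paper's proof very closely: second-order Taylor expansion of $hT+\tfrac{\pi^2 n}{2T^2}$ around $T_n^*$ (modulo the minor arithmetic slip in your isolated expansion of $\tfrac{\pi^2 n}{2T^2}$, whose constant term is $\tfrac12(\pi h)^{2/3}n^{1/3}$ rather than $\tfrac32 hc_hn^{1/3}$ — the combined $\tfrac32 hc_hn^{1/3}$ is of course correct), recombination of the disorder into increments of $X_u=X^{(1)}_u+X^{(2)}_{c_h-u}$ and $Y_u=X^{(1)}_u-X^{(2)}_{c_h-u}$ centred at $u_*$, the almost-sure identification with the coupled $(\mathbf B,\mathbf Y)$ pair from Proposition~\ref{prop-couplage}, and a Laplace argument identifying the limit with $\mathcal W_2$. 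The passage from the Gaussian case to $\esp{|\omega_0|^{3+\eta}}<\infty$ via the strong-invariance error of Theorem~\ref{th-strassen} is also exactly what the paper does in Section~\ref{sec:general-1/9}, and the derivation of~\eqref{eq:th-1/9-proba} from~\eqref{eq:th-1/9-part} by comparing $\log Z_{n,h}^{\omega,\beta}(\mathcal A)$ to $\log Z_{n,h}^{\omega,\beta}$ mirrors the paper's Lemma~\ref{lem:position}.

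There is however one point where what you write would not close the argument: you attribute the restriction to the $n^{2/9}$-window, and the coercivity of the limiting variational problem, entirely to the quadratic term $-\tfrac{3\pi^2}{\beta c_h^4\sqrt2}(u+v)^2$, with the parenthetical justification that the BES$_3$ and Brownian terms ``grow sublinearly''. This is incomplete: the parabola only penalizes $|u+v|$, i.e.\ $|\Delta_n|$. In the direction $u\to+\infty$, $v\approx -u$ (endpoints both drifting, range size essentially unchanged), the quadratic term vanishes, and there the coercivity must come from the $-\chi(\mathbf B_u+\mathbf B_v)$ term — equivalently, on the partition-function side, from the fact that $X_{u_*}-X_{u}\geq 0$ is a (rescaled) Brownian meander increment of order $\sqrt{|u-u_*|}$ away from $u_*$, which dominates the $Y$-fluctuations. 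Concretely, this is the content of the paper's Lemma~\ref{lem:excl-traj-K} (restriction in $|M_n^-+u_*n^{1/3}|$ at fixed $\Delta_n$), which requires fine estimates on the meander near its origin (Lemma~\ref{reflexion}, Corollary~\ref{cor-meander}), and separately of Lemma~\ref{lem:excl-traj-L} (restriction in $\Delta_n$), which is the one your quadratic-term argument addresses. Similarly, the paper's Lemma~\ref{lem-var2} establishes finiteness of the supremum using \emph{both} the parabola (generic directions) and the fact that $\mathbf B_u+\mathbf B_v\to\infty$ (the direction $u+v$ bounded). Without invoking the meander/BES$_3$ cost, your restriction and coercivity arguments would leave that direction uncontrolled; the rest of your outline is sound.
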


\begin{remark}
	We should be able to obtain a statement assuming only that $\esp{|\omega_0|^{2 + \eta}} < \infty$ for some positive $\eta$. 
	The statement is a bit more involved, as we need to use a different coupling between $\omega$ and $X^{(1)},X^{(2)}$.
	For any $K > 1$, we write
	\[ \bar{\mathcal{Z}}_{n,\omega}^{\leq K} \defeq \bar{Z}_{n,h}^{\omega,\beta} \big( | M_n^-+u_*n^{1/3}| \leq K n^{2/9}, |M_n^+ - (c_h - u_*)n^{1/3}| \leq K n^{2/9} \big) \, . \]
	In Section~\ref{sec:th-1/9-2+}, we are able to the following convergence: write $\mathcal{W}_2$ for the right-hand side of \eqref{eq:th-1/9-part}, then $\PP$-almost surely
	\begin{equation}
		\label{eq:th-1/9-part-weak}
		\lim_{K \to +\infty} \lim_{n\to \infty} \frac{\sqrt{2}}{\beta n^{1/9}} \Bigg( \log \bar{\mathcal{Z}}_{n,\omega}^{\leq K} + \frac{3}{2} h c_h n^{1/3} - \beta \sum_{z = -u_* n^{1/3}}^{(c_h-u_*)n^{1/3}} \omega_z \Bigg) = \mathcal{W}_2 \, .
	\end{equation}
	However, we are not able to get the proof that $\lim_{K \to +\infty} \lim_{n\to \infty} \bar{\mathcal{Z}}_{n,\omega}^{\leq K}/Z_{n,h}^{\omega, \beta} = 1$. We give in Section~\ref{sec:th-1/9-2+} some heuristics for why this second convergence should be true, and why our method fails to prove it.
\end{remark}

\subsection{Comments on the results, outline of the paper}

\paragraph{Expansion of the log-partition function.}

One may think about our results as an expansion of $\log Z_{n,h}^{\beta,\omega}$ up to several orders, gaining each time some information on the location of the endpoints of the range.
A way to formulate such result is, for some real numbers $\alpha_1 > \dots > \alpha_p \geq 0$, to define the following sequence of free energies which we may call \emph{$k$-th order free energy, at scale $\alpha_k$}:
\begin{equation}\label{def:energie}
	\begin{split}
		f_\omega^{(1)}(h,\beta) &= \lim_{n \to \infty} n^{-\alpha_1} \log Z_{n,h}^{\beta,\omega} \\
		f_\omega^{(k+1)}(h,\beta) &= \lim_{n \to \infty} n^{-\alpha_{k+1}} \bigg( \log Z_{n,h}^{\beta,\omega} - \sum_{i=1}^k n^{\alpha_i} f_\omega^{(i)}(h,\beta) \bigg)\,, 
	\end{split}
\end{equation}
when these quantities exist and are in $\RR \setminus \mathset{0}$.

Theorems~\ref{thm0}, \ref{th-1/6} and \ref{th-1/9} can be summarized in the following statement: assuming that $\esp{\omega_0^{3 + \eta}} < \infty$ for some positive $\eta$, then letting $\alpha_k = \frac{1}{3k}$ for $k\in \{1,2,3\}$, we have $\PP$-a.s.
\begin{align*}
	f_\omega^{(1)}(h,\beta) & = \lim_{n\to \infty} \frac{1}{n^{1/3}} \log Z_{n,h}^{\omega,\beta} = -\frac{3}{2} (\pi h)^{2/3} \, , \\
	f_\omega^{(2)}(h,\beta) & = \lim_{n\to \infty} \frac{1}{n^{1/6}} \left( \log Z_{n,h}^{\omega,\beta} + \frac{3}{2}h c_h n^{1/3} \right) = \beta\sup_{0 \leq u \leq c_h} \left\{ X^{(1)}_u + X^{(2)}_{c_h-u} \right\} = \beta X_{u_*} \, , \\
	f_\omega^{(3)}(h,\beta) & = \frac{\beta}{\sqrt{2}} \,\sup_{u,v} \left\{ \mathcal{Y}_{u,v} - \frac{3\pi^2}{\beta c_h^4 \sqrt{2}} \big( u + v \big)^2 \right\} \,.
\end{align*}
Note that the first two orders of $\log Z_{n,h}^{\omega,\beta}$, meaning $f_\omega^{(1)}$ and $f_\omega^{(2)}$, are respectively called the \emph{free energy} and the \emph{surface energy}.

\paragraph{Coupling and almost sure results}

Observe that we combine two different couplings that have different uses to prove our results:
\begin{itemize}
	\item A coupling for a given size $n$ between the environment $\omega$ and two Brownian motions $X^{(1)}$ and $X^{(2)}$. This coupling allows for the almost sure convergence in Theorem \ref{th-1/6}, and the assumption $\esp{\omega_0^{3+\eta}} < +\infty$ is used to have a good enough control on the coupling. This will be detailled in Section \ref{sec:general-1/9}.
	\item A coupling between $(X^{(1)},X^{(2)},u_*)$ and $(\mathbf{B},\mathbf{Y})$ to study the behavior of the Brownian motions $X^{(1)}$ and $X^{(2)}$ near $u_*$. This allows us to get the almost sure convergence of Theorem \ref{th-1/9}. This is the object of Proposition \ref{prop-couplage} and Appendix \ref{appendix-Construction}.
\end{itemize}
We explain how these two couplings combine to get the $\PP$-almost sure results of Theorem \ref{th-1/6},\ref{th-1/9} in Section \ref{sec:construction-couplage}. The main idea is to make the environment $\omega$ and the processes $X^{(1)},X^{(2)}$ depend on $n$ in order to "fix" what we see uniformly in $n$ large enough.

\paragraph{Local limit conjecture}

In Section \ref{sec:simplified} we study a simplified model where the random walk is constrained to be non-negative. By restricting it so, the processes involved are less complex as they depend on only one variable (which represents the higher point of the polymer), which simplifies the calculations.
The idea is to give some insights on what happens when studying $\log Z_{n,h}^{\omega,\beta} - \sum_{i = 1}^3 n^{\alpha_i} f^{(i)}(h,\beta)$, especially on the scale of the $4$-th order free energy.
The environment is taken to be Gaussian in order to get the coupling of $n^{-1/6} \Sigma^\pm_{z n^{1/3}}$ with no coupling error (otherwise the result may not be the same).
We give in Section~\ref{sec:simplified} a detailed justification for the following conjecture, which is a form of local limit theorem.

\begin{conjecture}\label{conj}
	If $\omega_0$ is Gaussian, there is a positive process $\mathcal{W} = \{\mathcal{W}_{a,b}, (a,b) \in \RR^2\}$ and a sequence $(u_n,v_n)_n$ with values in $\left[ 0,1 \right]^2$ such that
	\begin{equation}\label{part-conjecture} \lim_{n \to \infty} \left| Z_{n,h}^{\omega,\beta} \exp \Big( - \sum_{i = 1}^3 n^{\alpha_i} f^{(i)}(h,\beta) \Big) - \sum_{i,j \in \ZZ} e^{- \mathcal{W}_{i+u_n, j+v_n}} \right| = 0 \qquad \PP\text{-a.s.}
	\end{equation}
	In particular, for any couple of integers $(i,j)$, we have $\PP$-a.s.
	\begin{equation}
		\label{proba-conjecture}
		\mathbf{P}_{n,h}^{\omega, \beta} \left( M_n^- - \lfloor - u_*n^{1/3} + \mathcal{U} n^{2/9} \rfloor = i, M_n^+ - \lfloor (c_h - u_*) n^{1/3} + \mathcal{V} n^{2/9} \rfloor = j \right) \sim \frac{e^{- \mathcal{W}_{i+u_n, j+v_n}}}{\theta_\omega (n)}\,,
	\end{equation}
	with $\theta_\omega (n) \defeq \sum_{i,j \in \ZZ} e^{- \mathcal{W}_{i+u_n, j+v_n}}$ a normalizing constant.
\end{conjecture}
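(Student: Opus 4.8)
The plan is to leverage the expansion of $\log Z_{n,h}^{\omega,\beta}$ already established through Theorems~\ref{thm0}, \ref{th-1/6} and \ref{th-1/9} and push it one order further, tracking not just the exponential scale but the precise value of the summand at the integer level. First I would decompose the partition function according to the endpoints of the range, writing $Z_{n,h}^{\omega,\beta} = \sum_{x,y} \mathbf{E}\big[ e^{-h T_n} \indic{M_n^- = -x, M_n^+ = y} \big] e^{\beta(\Sigma^-_x + \Sigma^+_y)}$, and then isolating the dominant contribution near $(x,y) = (u_* n^{1/3} + \mathcal{U} n^{2/9}, (c_h-u_*)n^{1/3} + \mathcal{V} n^{2/9})$. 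The key input is a sufficiently sharp local estimate on $\mathbf{P}(M_n^- = -x, M_n^+ = y)$ — a local central limit type statement for the joint law of the extrema of a simple random walk conditioned to stay in a window of width $\sim T_n^*$ — so that, after factoring out the first three orders of the free energy and using Gaussianity of $\omega$ to make the Skorokhod coupling error vanish exactly, each term $\mathbf{E}[\cdots] e^{\beta(\Sigma^-_x + \Sigma^+_y)}$ converges, when indexed by the integer offsets $(i,j)$ from the $n^{2/9}$-scale optimum, to $e^{-\mathcal{W}_{i+u_n,j+v_n}}$ for an explicit limiting process $\mathcal{W}$. The process $\mathcal{W}$ should come from the second-order Taylor expansion of the curvature term $\frac{3\pi^2}{\beta c_h^4\sqrt{2}}(u+v)^2$ around $(\mathcal{U},\mathcal{V})$ combined with the quadratic cost of the random-walk bridge estimate, i.e.\ it is a shifted paraboloid in $(i+u_n, j+v_n)$ with a Gaussian-random linear part inherited from the local behavior of $X^{(1)}, X^{(2)}$ near $u_*$ at scale finer than $n^{2/9}$.

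After establishing the termwise convergence, the next step is a domination argument showing that the tails of the double sum over $(i,j)$ outside a large box are negligible, uniformly in $n$: here the quadratic growth of $\mathcal{W}_{a,b}$ in $(a,b)$ (coming from the $(u+v)^2$ penalty) should give summability, provided one controls the fluctuations of $\Sigma^\pm$ on the relevant scale — this is where an explicit Gaussian tail bound and a union bound over the offsets enter. Once the sum $\sum_{i,j} e^{-\mathcal{W}_{i+u_n,j+v_n}}$ is shown to capture all but a vanishing fraction of $Z_{n,h}^{\omega,\beta} \exp(-\sum_i n^{\alpha_i} f^{(i)})$, the partition-function statement \eqref{part-conjecture} follows, and dividing term by term yields the probability asymptotics \eqref{proba-conjecture} with the normalization $\theta_\omega(n)$.

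The main obstacle — and the reason this is stated as a conjecture rather than a theorem — is precisely the uniform local limit estimate for the joint law of $(M_n^-, M_n^+)$ at the \emph{individual integer} resolution, together with its interplay with the $n^{1/9}$-scale randomness of the environment. The results quoted (Theorems~\ref{th-1/6}, \ref{th-1/9}) only control $\log Z$ up to $\bar{o}(n^{1/9})$ and the endpoints up to $o(n^{2/9})$ in $\mathbf{P}_{n,h}^{\omega,\beta}$-probability, which is one order too coarse: to pin down the limit of each summand one needs to understand $\log Z_{n,h}^{\omega,\beta} - \sum_{i=1}^3 n^{\alpha_i} f^{(i)}(h,\beta)$ at scale $O(1)$, hence a fourth-order control. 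This requires both (i) a random-walk local limit theorem uniform over the $\Theta(n^{2/9})$ relevant values of the endpoints, with error terms genuinely $o(1)$ rather than merely subexponential, and (ii) replacing the Skorokhod coupling (whose error is only $o(n^{1/9})$) by an exact identity, which is why the Gaussian assumption on $\omega_0$ is imposed — there $n^{-1/6}\Sigma^\pm_{zn^{1/3}}$ is genuinely a rescaled Brownian motion with no coupling error. Even under Gaussianity, reconciling the $n^{1/9}$-scale structure of $X^{(1)}, X^{(2)}$ near $u_*$ (governed by the two-sided Bessel and Brownian motions of Proposition~\ref{prop-couplage}) with the $O(1)$-scale integer fluctuations, and showing the two limits $\lim_K \lim_n$ can be taken in the stated order, is the delicate point that I would expect to occupy the bulk of a full proof — and is, per the preceding remark, exactly where the authors' method currently falls short of a complete argument.
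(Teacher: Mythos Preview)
This statement is a conjecture; the paper gives no proof, only heuristics (the paragraph following the conjecture and Section~\ref{sec:simplified}). Your sketch shares the decomposition strategy but diverges from the paper's heuristic on two substantive points.

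You misidentify $\mathcal{W}$, describing it as ``a shifted paraboloid in $(i+u_n,j+v_n)$ with a Gaussian-random linear part''. But the curvature term contributes nothing at the $O(1)$ scale: with $u=\mathcal{U}+in^{-2/9}$, $v=\mathcal{V}+jn^{-2/9}$, one has $n^{1/9}\big[(u+v)^2-(\mathcal{U}+\mathcal{V})^2\big]=O(n^{-1/9})\to 0$. What survives is $n^{1/9}\tilde{\mathcal{Y}}_{u,v}$ with $\tilde{\mathcal{Y}}_{u,v}:=\mathcal{Y}_{\mathcal{U},\mathcal{V}}-\mathcal{Y}_{u,v}$, i.e.\ the field $\mathcal{Y}-c(u+v)^2$ seen from its unique maximum. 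This is a \emph{nonnegative} process, not a Gaussian increment. In the simplified model of Section~\ref{sec:simplified} the analogue is a parabolically-drifted Brownian motion seen from its max, which Conjecture~\ref{prop:couplage-simple} identifies locally with a two-sided $\mathrm{BES}_3$; accordingly $\mathcal{W}$ in Conjecture~\ref{th-ordre 1} is Bessel-like, not parabolic-plus-Gaussian.

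You also misplace the obstacle, putting it primarily in a random-walk local limit theorem for $(M_n^-,M_n^+)$ and in the Skorokhod coupling. Sharp integer-level asymptotics for $\mathbf{P}(\mathcal{R}_n=\llbracket -x,y\rrbracket)$ are already available (Section~\ref{sec:part-homogene-desordre}, via \cite{bouchot1}), and the Gaussian assumption removes the coupling error entirely; neither is the bottleneck. The paper locates the obstruction on the \emph{environment} side: one needs a path decomposition for $\tilde{\mathcal{Y}}$ near its argmax $(\mathcal{U},\mathcal{V})$, playing the role that Proposition~\ref{prop-couplage} played one order up for $X$ near $u_*$. Since $\mathcal{Y}$ already involves a Bessel process, the law of $\tilde{\mathcal{Y}}$ near its maximum is unknown --- this, not the random-walk estimate nor an interchange-of-limits argument, is the missing ingredient the paper singles out.
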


The main obstacle to prove this conjecture is that $\mathcal{W}$ is given by zooming in the process $\tilde{\mathcal{Y}}_{u,v} \defeq \mathcal{Y}_{\mathcal{U},\mathcal{V}} - \mathcal{Y}_{u,v}$ as $(u,v)$ gets close to $(\mathcal{U},\mathcal{V})$. With our methods here, we need to know some properties of $\tilde{\mathcal{Y}}$ which is a seemingly complex process due to the already nontrivial nature of $\mathcal{Y}$.

In Section~\ref{sec:simplified}, we study the model with a fixed minimum, that is by replacing the random walk with the random walk conditioned to stay positive. In this case, the process $\mathcal{Y}$ becomes a Brownian motion with parabolic drift, which allows us to conjecture the law of the corresponding $\tilde{\mathcal{Y}}$ as well as a local limit theorem.

\subsection{Related works}

\paragraph{The case of varying parameters $\beta,h$.}

As mentioned above, the present model has previously been studied in \cite{berger2020one}, with the difference that the parameters $\beta,h$ were allowed to depend on $n$, the size of the polymer.
More precisely, the polymer measure considered was given for arbitrary $\hat{h},\hat{\beta} \in \RR$ by
\[ \dd \mathbf{P}_{n,h}^{\omega,\beta}(S) = \frac{1}{Z_{n,h}^{\omega,\beta}} \exp \Big( \sum_{z \in \mathcal{R}_n(S)} \big(\beta_n \omega_z - h_n\big) \Big) \dd \mathbf{P}(S), \quad \text{with } h_n = \hat h n^{-\zeta}, \, \beta_n = \hat \beta n^{-\gamma} \, . \]
The authors in~\cite{berger2020one} obtained $\PP$-almost sure convergences of $n^{-\lambda} \log Z_{n,h_n}^{\omega,\beta_n}$ for some suitable $\lambda \in \RR$, which corresponds to a first order expansion of the log-partition function. Afterwards, asymptotics for $\mathbb E \mathbf{E}_{n,h_n}^{\omega,\beta_n} [|\mathcal{R}_n|]$ as well as scaling limits for $(M_n^-,M_n^+)$ were established and displayed a wide variety of phases. In addition, the authors also investigated the case where $(\omega_z)$ are i.i.d and in the domain of attraction of an $\alpha$-stable law with $\alpha \in (0,1) \cup (1,2)$ to unveil an even richer phase diagram.

Theorems \ref{th-1/6} and \ref{th-1/9} confirm the conjecture of Comment 4 of \cite{berger2020one} that for a typical configuration $\omega$, the fluctuations of the log-partition function and $n^{-1/3} (M_n^-, M_n^+)$ are not $\mathbf{P}$-random for fixed $h,\beta > 0$. With our methods, it should be possible to extend our results to account for size-dependent $h = h_n, \beta = \beta_n$, with similar results for ``reasonable'' $h_n, \beta_n$ (meaning with sufficiently slow growth/decay).

\paragraph{Link with the random walk among Bernoulli obstacles.}

Take a Bernoulli site percolation with parameter $p$, meaning a collection $\mathcal{O} = \mathset{z \in \ZZ^d, \eta_z = 1}$ where $\eta_z$ are i.i.d. Bernoulli variables with parameter $p$, and write $\mathcal{P} = \mathcal{B}(p)^{\otimes \ZZ}$ its law on $\ZZ$. Consider the random walk starting at $0$ and let $\tau$ denote the time it first encounters $\mathcal{O}$ (called the set of obstacles): one is interested in the asymptotic behavior of the survival probability $\mathbf P(\tau>n)$ as $n\to\infty$ and of the behavior of the random walk conditionally on having $\tau>n$, see for example \cite{ding2019poly} and references therein.
The \textit{annealed} survival probability $\mathbb{E}^{\mathcal{P}} \mathbf P(\tau>n)$ is given by
\[
\mathbb{E}^{\mathcal{P}} \otimes \mathbf{E} \left[ \indic{\mathcal{R}_n \cap \mathcal{O} = \varnothing} \right] = \mathbf{E} \left[ \mathcal{P} \left[ \forall z \in \mathcal{R}_n, \eta_z = 0 \right] \right] = \mathbf{E} \left[ (1-p)^{|\mathcal{R}_n|} \right] = \mathbf{E} \left[ e^{|\mathcal{R}_n| \log (1-p)} \right] ,
\]
and we observe that this is exactly $Z_{n,h_p}^{\omega,0}$ with $h_p = - \log (1-p)$. 
Thus, for $\beta = 0$, our model can be seen as an annealed version of the random walk among Bernoulli obstacles with common parameter $p=1-e^{-h}$.
\par If we push the analogy a bit further and assume $\beta \omega_z - h \leq 0$ for all $z \in \ZZ$, we can see $Z_{n,h}^{\omega, \beta}$ as the annealed survival probability of the random walk among obstacles $\mathcal{O}^\omega = \mathset{z \in \ZZ^d, \eta^\omega_z = 1}$ where $\eta^\omega_z$ are i.i.d.\ Bernoulli variables with random parameter $p^\omega_z = 1-e^{\beta \omega_z-h}$.
The averaging is done on the random walk (with law $\mathbf{P}$) and the Bernoulli variables (with law $\mathcal{P}^\omega = \bigotimes_{z \in \ZZ} \mathcal{B}(p^\omega_z)$), while the parameters $p^\omega_z = 1-e^{\beta \omega_z-h}$ (with law $\PP$) are quenched.

\paragraph{Link with the directed polymer model.}
Another famous model  is given by considering a doubly indexed field $(\omega_{i,z})_{(i,z) \in \NN \times \ZZ}$ and the polymer measure
\[ \dd \mathbf{P}_{n,h}^{\omega,\beta}(S) = \frac{1}{Z_{n,h}^{\omega,\beta}} \exp \Big( \sum_{i = 0}^n \big(\beta \omega_{i, S_i} - \lambda(\beta) \big) \Big) \dd \mathbf{P}(S) \, , \quad \lambda(\beta) = \log \esp{e^{\beta \omega}} \,.\]
This is known as the \textit{directed polymer model} (in contrast with our non-directed model) and has been the object of an intense activity over the past decades, see~\cite{Comets} for an overview.
Let us simply mention that the partition function solves (in a weak sense) a discretized version of Stochastic Heat Equation (SHE) with multiplicative space-time noise $\partial_t u = \Delta u + \beta \xi \cdot u$.
Hence, the convergence of the partition function under a proper scaling $\beta = \beta(n)$, dubbed \textit{intermediate disorder} scaling, has raised particular interest in recent years: see \cite{AKQ14,CSZ16} for the case of dimension $1$ and \cite{caravenna-SHF} for the case of dimension $2$, where this approach enabled the authors to give a notion of solution to the SHE; see also \cite{SHE-levy-noise} for the case of a heavy-tailed noise.

The main difference with our model is how the disorder $\omega$ plays into the polymer measure. The directed polymer gets a new reward/penalty $\omega_{i,z}$ at each step it takes, whereas in our model such event only happens when reaching a new site of $\ZZ$, in some sense ``consuming'' $\omega_z$ when landing on $z$ for the first time.

\subsection{Outline of the paper.}
This paper can be split into three parts.
The first part in Section \ref{sec:ordre1} consists in the proof of Theorem \ref{th-1/6}. The second and main part focuses on the proof of Theorem \ref{th-1/9}. This proof is split into Section \ref{sec:gaussian-1/9} where $\omega$ is assumed to be Gaussian and Section \ref{sec:general-1/9} where we explain how to get the general statement thanks to a coupling.
A third part, in Section \ref{sec:simplified}, studies the simplified model where the random walk is constrained to be non-negative. Precise results under some technical assumption help us formulate the conjectures in \eqref{part-conjecture} and \eqref{proba-conjecture}.

Finally, we prove in Appendix \ref{appendix-Levy} the generalization of Theorem \ref{th-1/6} to the case when~$\omega$ does not have a finite second moment, as announced. We also state some useful properties of the Brownian meander that we use in our proofs in Appendix~\ref{appendix-meandre}.
In Appendix \ref{appendix-Construction} we detail a way to couple Brownian meanders with a two-sided three-dimensional Bessel process so that they are equal near $0$ (\textit{i.e.}\ we prove Proposition \ref{prop-couplage}).

\section{Second order expansion and optimal position}\label{sec:ordre1}

We extensively use the following notation: For a given event $\mathcal A$ (which may depend on~$\omega$), we write the partition function restricted to $\mathcal{A}$ as
\[ Z_{n,h}^{\omega,\beta}(\mathcal{A}) \defeq \mathbf{E}\Big[ \exp \Big( \sum_{z \in \mathcal{R}_n(S)} \big(\beta \omega_z - h\big) \Big) \mathbbm{1}_{\mathcal A} \Big], \quad \text{so that} \quad \mathbf{P}_{n,h}^{\omega,\beta}(\mathcal A) = \frac{1}{Z_{n,h}^{\omega,\beta}} Z_{n,h}^{\omega,\beta}(\mathcal{A}). \]

This section consists in the proof of Theorem \ref{th-1/6} and is divided into two steps:
\begin{itemize}
	\item We first make use of a coarse-graining approach with a size $\delta n^{1/3}$ to prove the convergence of the rescaled $\log Z_{n,h}^{\omega,\beta} -\frac32 h T_n^*$. At the same time, we locate the main contribution as coming from trajectories whose left-most point is around $-u_* n^{1/3}$, proving \eqref{eq:th-1/6-part}.
	\item We then prove that $\mathbb P$-a.s., $n^{-1/3} M_n^-$ converges in $\mathbf{P}_{n,h}^{\omega, \beta}$-probability to $-u_*$, using the previously step and the fact that $\mathbf{P}_{n,h}^{\omega,\beta}(\mathcal{A}) = Z_{n,h}^{\omega,\beta} (\mathcal{A}) / Z_{n,h}^{\omega,\beta}$. Since we also have the result of \eqref{result-quentin}, we deduce \eqref{eq:th-1/6-proba} thanks to Slutsky's lemma as $M_n^-, M_n^+$ and $T_n$ are defined on the same probability space.
\end{itemize}

\subsection{A rewriting of the partition function}\label{sec:part-homogene-desordre}

Theorem~\ref{thm0} implies that there is a vanishing sequence $(\eps_n)_{n \geq 0}$ such that $\PP$-almost surely,
\begin{equation}
	\frac{Z_{n,h}^{\omega,\beta}(|\Delta_n| \leq \eps_n T_n^*)}{Z_{n,h}^{\omega,\beta}} \xrightarrow[n \to +\infty]{} 1 \, .
\end{equation}
In particular, we may restrict our study to $Z_{n,h}^{\omega,\beta}(|\Delta_n| \leq \eps_n T_n^*)$. Writing $\Delta_n^{x,y} \defeq x + y - c_h n^{1/3}$, we have
\begin{equation}\label{eq:partition-proba}
	Z_{n,h}^{\omega,\beta}(|\Delta_n| \leq \eps_n T_n^*) = \sum_{\substack{ x,y \geq 0 \\ |\Delta_n^{x,y}| \leq \eps_n T_n^*}} \exp \Big( -h(x+y+1) + \beta \sum_{z = -x}^y \omega_z \Big) \mathbf{P} \big( \mathcal R_n = \llbracket -x,y \rrbracket \big) \,.
\end{equation}
Gambler's ruin formulae derived from \cite[Chap.~XIV]{Feller} can be used to compute sharp asymptotics for $\mathbf{P} \big( \mathcal R_n = \llbracket -x,y \rrbracket \big)$, see \cite[Theorem~1.4]{bouchot1}.
In particular, we get
\begin{equation}
	\lim_{n \to \infty} \sup_{\substack{x,y \in \NN \\ |\Delta_n^{x,y}| \leq \eps_n T_n^*}} \left| \frac{\mathbf{P} \big( \mathcal R_n = \llbracket -x,y \rrbracket \big)}{\Theta_n(x,y)} - 1 \right| = 0 \, ,
\end{equation}
where we defined the function $\Theta_n(x,y)$ for $x+y = T$ as
\[ \Theta_n(x,y) \defeq \frac{4}{\pi}  (e^h-1)\left[ e^h \sin \left( \frac{\pi(x+1)}{T} \right)  -  \sin \left( \frac{\pi x}{T} \right) \right] e^{-g(T)n} \, , \]
with $g(T) = -\log \cos \tfrac{\pi}{T} \sim \tfrac{\pi^2}{2T^2}$. Since for fixed $T = x + y$, we are interested in $|T - T_n^*| = |\Delta_n^{x,y}| \leq \eps_n T_n^*$, we can rewrite with a Taylor expansion
\[ \Theta_n(x,y) = \frac{4}{\pi} 
(e^h-1)^2 \sin \left( \frac{\pi x}{T} \right) \left[ 1 + e^h \frac{\pi}{T_n^*} \cos \left( \frac{\pi x}{T} \right)  \Big( 1 + \grdO((T_n^*)^{-1}) + \grdO(\eps_n) \Big) \right] e^{-g(T)n} \, . \]
Here $\grdO((T_n^*)^{-1})$ is deterministic and uniform in $x,y$ such that $|\Delta_n^{x,y}| \leq \eps_n T_n^*$. Therefore, writing $\psi_h = e^{-h} \frac{4}{\pi} (e^h-1)^2$, we have as $n \to +\infty$
\begin{equation}\label{eq:preuve-homogene-restreint}
	Z_{n,h}^{\omega,\beta}(|\Delta_n| \leq \eps_n T_n^*) = (1 + \bar{o}(1)) \psi_h \sum_{\substack{ x,y \geq 0 \\ |\Delta_n^{x,y}| \leq \eps_n T_n^*}} \exp \Big( -h(x+y) - g(x+y) n + \beta \sum_{z = -x}^y \omega_z \Big) \, ,
\end{equation}
with a deterministic $\bar{o}(1)$. Then, write $\varphi_n(T) := hT + \frac{n\pi^2}{2T^2}$, we have
\begin{equation}\label{eq:preuve-homogene-energie}
	h T + g(T)n = \varphi_n(T) + n \big(g(T) - \frac{\pi^2}{2T^2} \big) = \varphi_n(T) + \frac{n}{12(T_n^*)^4} (1 + \grdO(\eps_n)) \, .
\end{equation}
We also easily check that $T_n^*$ is the minimizer of $\varphi_n$ and that: $\varphi_n(T_n^*) = \tfrac32 h T_n^*$, $\varphi_n''(T) = \frac{3n \pi^2}{T^4}$ and $\varphi_n'''(T) = - \frac{12 n \pi^2}{T^5}$. Thus, with a Taylor expansion, we have for $x,y \geq 0$ that satisfies $|\Delta_n^{x,y}| \leq \eps_n T_n^*$:
\begin{equation}\label{eq:preuve-homogene-DL-phi}
	\bigg|\varphi_n(T) - \varphi_n(T_n^*) - \frac{3n \pi^2}{(T_n^{*})^4} (\Delta_n^{x,y})^2 \bigg| \leq C \frac{|\Delta_n^{x,y}|^3 n}{(T_n^*)^5} \leq \eps_n \frac{n}{(T_n^{*})^4} (\Delta_n^{x,y})^2 \,.
\end{equation}
Assembling \eqref{eq:preuve-homogene-energie} and \eqref{eq:preuve-homogene-DL-phi} with \eqref{eq:preuve-homogene-restreint},
\begin{equation}\label{partition-restrcit-epsnTn}
	\begin{split}
		&Z_{n,h}^{\omega,\beta}(|\Delta_n| \leq \eps_n T_n^*) =\\ &(1+\bar{o}(1)) \psi_h 			e^{-\tfrac32 h T_n^*} \sum_{x,y=0}^{+\infty} \sin \left( \frac{x\pi}{x+y} \right) 			\exp \left( \beta \sum_{z = -x}^y \omega_z - \frac{3 \pi^2 (\Delta_n^{x,y})^2}				{c_h^4 n^{1/3}} (1 + \bar{o}(1)) \right)
	\end{split}
\end{equation}
for deterministic $\bar{o}(1)$ that are uniform in $x,y$ satisfying $|\Delta_n^{x,y}| \leq \eps_n T_n^*$. We also get in particular that $\PP$-almost surely,
\begin{equation}\label{partition}
	Z_{n,h}^{\omega,\beta} = (1+\bar{o}(1)) \psi_h e^{-\tfrac32 h T_n^*} \sum_{x,y=0}^{+\infty} \sin \left( \frac{x\pi}{x+y} \right) \exp \left( \beta \sum_{z = -x}^y \omega_z - \frac{3 \pi^2 (\Delta_n^{x,y})^2}{c_h^4 n^{1/3}} (1 + \bar{o}(1)) \right) \, .
\end{equation}
More generally, for any event $\mathcal{A} \subseteq \mathset{|\Delta_n| \leq \eps_n T_n}$, with the same considerations, we can write $Z_{n,h}^{\omega,\beta}(\mathcal{A})$ as the sum in \eqref{partition-restrcit-epsnTn} restricted to trajectories satisfying $\mathcal{A}$.

\subsection{Convergence of the log partition function}

In order to lighten notation, we always omit integer parts in the following.

\begin{proof}[Proof of Theorem \ref{th-1/6}-\eqref{eq:th-1/6-part}]
	Recall (\ref{partition}), choose some $\delta > 0$ and split the sum over $x,y$ depending on $k_1 \delta n^{1/3} \leq x < (k_1+1) \delta n^{1/3}$ and $k_2 \delta n^{1/3} \leq y < (k_2+1) \delta n^{1/3}$. By \eqref{partition}, we may only consider the pairs $(x,y)$ that satisfy $|\Delta_n^{x,y}| =|x+y- c_h n^{1/3}|\leq \eps_n n^{1/3} < \delta n^{1/3}$ for $n$ sufficiently large; note that this implies that $(k_1+k_2)\delta \in \{ c_h - \delta, c_h\}$. \newline
	We can now rewrite~\eqref{partition} as 
	\begin{equation}
		\label{eq:decoupZnh}
		\log Z_{n,h}^{\omega,\beta} + \frac{3}{2} h c_h n^{1/3} =\bar{o}(1)+ \log \psi_h + \log  \Lambda_{n,h}^{\omega,\beta}(\delta) \,, 
	\end{equation}
	in which we defined
	\begin{equation}\label{def:Lambdanh}
		\Lambda_{n,h}^{\omega,\beta}(\delta)  \defeq \sum_{k_1 = 0}^{c_h/\delta} \sum_{k_2 = \frac{c_h}{\delta} - k_1 - 1}^{\frac{c_h}{\delta} - k_1} Z_{n,h}^{\omega,\beta} (k_1,k_2,\delta)  \,,
	\end{equation}
	with
	\begin{equation}\label{Znw-retreint-1/6}
		Z_{n,h}^{\omega,\beta} (k_1,k_2,\delta) \defeq \!\!\!\sum_{\substack{k_1 \delta n^{1/3} \leq x < (k_1+1) \delta n^{1/3}\\ k_2 \delta n^{1/3} \leq y < (k_2+1) \delta n^{1/3}}} \!\!\! \sin \left( \frac{x\pi}{x+y} \right) \exp \left(\beta \sum_{z = -x}^y \omega_z - \frac{3 \pi^2 (\Delta_n^{x,y})^2}{2 c_h^4 n^{1/3}}(1+\bar{o}(1)) \right) \,.
	\end{equation}

	Then let us define
	\begin{equation}\label{def:W(uvd)}
		\mathcal{W}^{\pm}(u,v,\delta) \defeq  X^{(1)}_u + X^{(2)}_v \pm \sup_{u \leq u' \leq u + \delta} \big| X^{(1)}_{u'}-X^{(1)}_u \big| \pm \sup_{v \leq v' \leq v + \delta} \big| X^{(2)}_{v'}-X^{(2)}_v \big|.
	\end{equation} 
	Theorem \ref{th-1/6}-\eqref{eq:th-1/6-part} essentially derives from the following lemma.
	
	\begin{lemma}\label{lem:cv-1/6}
		For any integers $k_1,k_2$ and any $\delta > 0$, we have $\PP$-almost surely
		\begin{equation*}
			\mathcal{W}^-(k_1\delta,k_2\delta,\delta) \leq \varliminf_{n\to \infty} \frac{\log Z_{n,h}^{\omega,\beta} (k_1,k_2,\delta)}{\beta n^{1/6}} \leq \varlimsup_{n\to \infty} \frac{\log Z_{n,h}^{\omega,\beta} (k_1,k_2,\delta)}{\beta n^{1/6}} \leq \mathcal{W}^+(k_1\delta,k_2\delta,\delta) \, .
		\end{equation*}
	\end{lemma}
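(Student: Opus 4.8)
The plan is to prove the two-sided bound on the exponential scale $n^{1/6}$ by sandwiching the sum $Z_{n,h}^{\omega,\beta}(k_1,k_2,\delta)$ between two elementary quantities. First I would note that, on the block where $k_1\delta n^{1/3}\le x<(k_1+1)\delta n^{1/3}$ and $k_2\delta n^{1/3}\le y<(k_2+1)\delta n^{1/3}$, the quadratic term $-3\pi^2(\Delta_n^{x,y})^2/(2c_h^4 n^{1/3})$ is nonpositive and bounded, so it contributes only a $\grdO(1)$ factor (indeed on such a block $|\Delta_n^{x,y}|\le\delta n^{1/3}$, so the term is $\grdO(n^{1/3})$ — wait, that is too big, so one must be slightly careful: the relevant blocks have $(k_1+k_2)\delta\in\{c_h-\delta,c_h\}$, hence $\Delta_n^{x,y}=\grdO(\delta n^{1/3})$ and the quadratic term is $\grdO(\delta^2 n^{1/3})$). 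Since this is of order $n^{1/3}\gg n^{1/6}$ it cannot simply be discarded; instead, the point is that this term is \emph{common} to the whole analysis and, after summing over $k_1,k_2$ with the constraint $(k_1+k_2)\delta\approx c_h$, the leading $n^{1/3}$ piece has already been extracted into $\tfrac32 hc_h n^{1/3}$ in \eqref{eq:decoupZnh}. So within a fixed block the residual quadratic fluctuation is genuinely $\grdO(\delta^2 n^{1/3})$ but, crucially, Lemma~\ref{lem:cv-1/6} is stated for \emph{fixed} $k_1,k_2,\delta$ and the final optimization over $\delta\to0$ in the proof of the theorem will kill it; for the lemma itself I would simply absorb the quadratic term into the estimate, observing that it only pushes the $\varliminf$ down and does not affect the $\varlimsup$ bound, or — cleaner — restrict attention to the diagonal block $(k_1+k_2)\delta=c_h$ where $\Delta_n^{x,y}=\grdO(n^{1/3}\delta)$ only through the $\sin$ profile. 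Let me instead take the robust route below.

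The robust route: write $\beta\sum_{z=-x}^y\omega_z=\beta(\Sigma^-_x+\Sigma^+_y)=\beta n^{1/6}\big(n^{-1/6}\Sigma^-_x+n^{-1/6}\Sigma^+_y\big)$. By the Skorokhod coupling stated before Theorem~\ref{th-1/6}, $n^{-1/6}\Sigma^-_{un^{1/3}}\to X^{(1)}_u$ and $n^{-1/6}\Sigma^+_{vn^{1/3}}\to X^{(2)}_v$ uniformly on compacts, $\PP$-a.s. Hence for $x\in[k_1\delta n^{1/3},(k_1+1)\delta n^{1/3})$ and $y\in[k_2\delta n^{1/3},(k_2+1)\delta n^{1/3})$ we have, $\PP$-a.s. and uniformly over such $x,y$,
\[
\Big| n^{-1/6}\big(\Sigma^-_x+\Sigma^+_y\big)-\big(X^{(1)}_{k_1\delta}+X^{(2)}_{k_2\delta}\big)\Big|
\le \sup_{k_1\delta\le u'\le (k_1+1)\delta}\!\!\big|X^{(1)}_{u'}-X^{(1)}_{k_1\delta}\big|+\sup_{k_2\delta\le v'\le (k_2+1)\delta}\!\!\big|X^{(2)}_{v'}-X^{(2)}_{k_2\delta}\big|+\bar o(1),
\]
which is exactly the increment terms appearing in $\mathcal{W}^\pm$. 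Then I would bound the number of terms in the double sum by $C(\delta n^{1/3})^2=\grdO(n^{2/3})$, bound the $\sin$ factor by $1$ above and by a fixed positive constant below (on the diagonal block where $x/(x+y)$ stays in a compact subinterval of $(0,1)$ — if the block touches $x=0$ or $y=0$ one argues separately, but those blocks contribute less and do not realize the supremum), and bound the quadratic term by $0$ from above and by $-C\delta^2 n^{1/3}$...

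Here I must finally confront the genuine obstacle. The clean statement is obtained by noticing that the quadratic penalty, being $\grdO(\delta^2 n^{1/3})=\bar o(n^{1/6})$ is \emph{false}; rather $\delta^2 n^{1/3}\gg n^{1/6}$. The resolution, which I expect is what the paper does, is that Lemma~\ref{lem:cv-1/6} as stated must be about blocks on the \emph{exact diagonal} $(k_1+k_2)\delta=c_h$ in which case $\Delta_n^{x,y}$ ranges over an interval of length $\grdO(\delta n^{1/3})$ but — and this is the key point — one does \emph{not} take the sup of the quadratic term over the block; instead one keeps it and shows it is negligible \emph{after} dividing by $n^{1/6}$ and letting $\delta\to 0$ at the level of the theorem, not the lemma. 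Concretely, since the lemma is only a tool, I would prove the slightly weaker but sufficient statement $\mathcal{W}^-(k_1\delta,k_2\delta,\delta)-C\delta^2 n^{1/3}/(\beta n^{1/6})\le \varliminf(\cdots)$, and observe $C\delta^2 n^{1/3}/n^{1/6}=C\delta^2 n^{1/6}\to\infty$ — still wrong.

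Given the sign structure, the correct reading must be: \emph{within a fixed block the values $x,y$ range over a window of width $\delta n^{1/3}$, so $\Delta_n^{x,y}$ itself ranges over a window of width $\delta n^{1/3}$, but $\Delta_n^{x,y}$ need not be large} — it can be taken near $0$ by choosing $x+y$ near $c_h n^{1/3}$, which is possible precisely on the diagonal blocks. So the sum over the block \emph{contains} terms with $|\Delta_n^{x,y}|\le 1$ (choosing appropriate $x,y$ with $x+y=\lfloor c_h n^{1/3}\rfloor$), for which the quadratic term is $\grdO(n^{-1/3})=\bar o(1)$. This gives the lower bound $\varliminf\ge\mathcal{W}^-$ immediately, using just that one term. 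For the upper bound, bound every term by $1\cdot\exp(\beta(\Sigma^-_x+\Sigma^+_y))\cdot 1$ (dropping the nonpositive quadratic term), multiply by the number $\grdO(n^{2/3})$ of terms, take $\log$, divide by $\beta n^{1/6}$: the counting and $\sin$ factors contribute $\log\grdO(n^{2/3})/(\beta n^{1/6})\to 0$, and the exponent is $\le \mathcal{W}^+(k_1\delta,k_2\delta,\delta)+\bar o(1)$ by the uniform coupling estimate above. Taking $\varliminf$ and $\varlimsup$ and letting $n\to\infty$ along the $\PP$-full-measure event gives the lemma.

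In summary, the steps are: (i) rewrite $Z_{n,h}^{\omega,\beta}(k_1,k_2,\delta)$ with $\beta$-sum expressed via $\Sigma^\pm$ and the quadratic penalty, noting the penalty is $\le 0$; (ii) invoke the a.s. uniform-on-compacts Skorokhod coupling to replace $n^{-1/6}(\Sigma^-_x+\Sigma^+_y)$ by $X^{(1)}_{k_1\delta}+X^{(2)}_{k_2\delta}$ up to the block-oscillation terms that define $\mathcal{W}^\pm$; (iii) for the upper bound, dominate each summand, multiply by the $\grdO(n^{2/3})$ term count, take logs — the polynomial count is $\bar o(n^{1/6})$ on the log scale; (iv) for the lower bound, retain a single well-chosen summand with $x+y=\lfloor c_h n^{1/3}\rfloor$ (hence negligible penalty and $\sin$ bounded below by a constant on the relevant blocks), which already yields $\beta n^{1/6}\mathcal{W}^-+\bar o(n^{1/6})$; (v) divide by $\beta n^{1/6}$, pass to the limit. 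The main obstacle is bookkeeping the quadratic penalty correctly — realizing it is harmless because it is nonpositive for the upper bound and can be made $\bar o(1)$ for the lower bound by selecting $x+y$ near the optimal value $c_h n^{1/3}$ — together with handling the boundary blocks $k_1=0$ or $k_2=0$ where $\sin(x\pi/(x+y))$ degenerates, which one dismisses since those blocks cannot carry the supremum (or one checks the bound there directly with the $\sin$ lower bound replaced by $\grdO(n^{-1/3})$, still $\bar o(n^{1/6})$ on the log scale).
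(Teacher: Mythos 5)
Despite the extensive backtracking, your final argument coincides with the paper's: the upper bound drops the nonpositive quadratic penalty, bounds the block sum by (term count) $\times$ (max summand), and invokes the Skorokhod coupling plus the block-oscillation terms of $X^{(1)},X^{(2)}$ to land on $\mathcal{W}^+$; the lower bound retains a single term with $|\Delta_n^{x,y}|\leq 1$ so the penalty is $\bar o(n^{1/6})$, controls the $\sin$-factor as $\bar o(1)$ on the log scale, and treats the $k_1=0$ boundary by requiring $x\neq 0$. The key difficulty you circled around --- that the penalty is not uniformly small on the whole block --- is resolved exactly as in the paper, and the implicit restriction to blocks intersecting the diagonal $x+y\approx c_h n^{1/3}$ (i.e.\ $(k_1+k_2)\delta\in\{c_h-\delta,c_h\}$), which you correctly flag as needed for the lower bound, is likewise implicit in how the lemma is applied in the paper.
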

	
	Let us use this lemma to conclude the proof of the convergence~\eqref{eq:th-1/6-part}. 
	Since the sum in~\eqref{def:Lambdanh} has $\frac{2c_h}{\delta}$ terms, we easily get that
	\[ 0 \leq \log \Lambda_{n,h}^{\omega,\beta}(\delta) - \max_{\substack{0 \leq k_1,k_2 \leq c_h/\delta\\(k_1+k_2)\delta \in \{ c_h - \delta, c_h\}}} \!\!\! \log Z_{n,h}^{\omega,\beta} (k_1,k_2,\delta) \leq \log \frac{2c_h}{\delta} \, . \]
	Dividing by $\beta n^{1/6}$ and taking the limit $n\to\infty$, Lemma~\ref{lem:cv-1/6} yields
	\[ \varlimsup_{{n \to \infty}} \frac{1}{\beta n^{1/6}} \log \Lambda_{n,h}^{\omega,\beta}(\delta) \leq \max_{\substack{0 \leq k_1,k_2 \leq c_h/\delta\\(k_1+k_2)\delta \in \{ c_h - \delta, c_h\}}} \mathcal{W}^+(k_1\delta,k_2\delta,\delta). \]
	We write $u = k_1 \delta$ and $v = k_2 \delta$, belonging to the finite set $U_\delta$ defined as
	\begin{equation}\label{eq:def-U-delta}
		U_\delta \defeq \left\{ (u,v) \in (\RR_+)^2 \, : \, u \in \big\{ \delta, 2\delta, \dots , \lfloor \frac{c_h}{\delta} \rfloor \delta \big\}, u+v \in \{c_h,c_h-\delta\} \right\} \,,
	\end{equation}
	so 
	\[ \varlimsup_{\delta \to 0} \varlimsup_{n \to \infty} \frac{\log \Lambda_{n,h}^{\omega,\beta}(\delta)}{\beta n^{1/6}} \leq \varlimsup_{\delta \to 0} \max_{\substack{0 \leq u,v \leq c_h\\ u + v \in \{ c_h - \delta, c_h\}}} \mathcal{W}^{+}(u,v,\delta) = \sup_{\substack{0 \leq u,v \leq c_h\\ u + v = c_h}} \left\{ X^{(1)}_u + X^{(2)}_v \right\} \quad \text{$\PP$-a.s.} \,, \]
	where for the last identity, we have used the continuity of $X^{(1)}$ and $X^{(2)}$.
	
	The same goes for $\liminf\limits_{n \to \infty} n^{-1/6} \Lambda_{n,h}^{\omega,\beta}(\delta)$, with the lower bound $\mathcal{W}^{-}(u,v,\delta)$, which concludes the proof.
\end{proof}

\begin{proof}[Proof of Lemma \ref{lem:cv-1/6}]
	The proof is inspired by the proof of Lemma 5.1 in \cite{berger2020one}. Recall the definition \eqref{Znw-retreint-1/6} of $Z_{n,h}^{\omega,\beta} (k_1,k_2,\delta)$ and note that for $k_1 \delta n^{1/3} \leq x < (k_1+1) \delta n^{1/3}$ and $k_2 \delta n^{1/3} \leq y < (k_2+1) \delta n^{1/3}$:
	\begin{equation}
		\left( \Sigma^+_{k_2\delta n^{1/3}} + \Sigma^-_{k_1\delta n^{1/3}} \right) - R_n^\delta(k_1\delta, k_2\delta) \leq \sum_{z = -x}^y \omega_z \leq \left( \Sigma^+_{k_2\delta n^{1/3}} + \Sigma^-_{k_1\delta n^{1/3}} \right) + R_n^\delta(k_1\delta, k_2\delta)
	\end{equation}
	where the error term $R_n^\delta$ is defined for $u,v \geq 0$ by
	\[ R_n^\delta(u,v) \defeq \max_{un^{1/3} + 1 \leq j \leq (u+\delta)n^{1/3}-1} \big| \Sigma^-_j - \Sigma^-_{un^{1/3}} \big| + \max_{vn^{1/3} + 1 \leq j \leq (v+\delta)n^{1/3}-1} \big| \Sigma^+_j - \Sigma^+_{vn^{1/3}} \big| \, .
	\]
	Using the coupling $\hat{\omega}$ and Lemma A.5 of \cite{berger2020one} (for Lévy processes), $\PP$-a.s., for all $\eps> 0$, 
	for all $n$ large enough (how large depends on~$\eps,\delta,\omega$),
	\[
	\frac{1}{n^{1/6}} R_n^\delta(u,v) \leq \eps + \sup_{u \leq u' \leq u + \eps + \delta} \big| X^{(1)}_{u'}-X^{(1)}_u \big| + \sup_{v \leq v' \leq v + \eps + \delta} \big| X^{(2)}_{v'}-X^{(2)}_v \big|
	\]
	and
	\[
	\Big( \Big| \frac{1}{n^{1/6}}\Sigma^+_{vn^{1/3}} - X^{(2)}_v \Big| \vee \Big| \frac{1}{n^{1/6}}\Sigma^-_{un^{1/3}} - X^{(1)}_u \Big| \Big) \leq \eps  \,,
	\]
	uniformly in $u$ and $v$, since $U_\delta$ is a finite set. Thus, letting $n \to \infty$ then $\eps \to 0$ we obtain that $\PP$-almost surely,
	\[ \varlimsup_{n\to \infty} \frac{1}{\beta n^{1/6}} \log Z_{n,h}^{\omega,\beta}(k_1,k_2,\delta) \leq  \sup_{\substack{k_1 \delta n^{1/3} \leq x < (k_1+1) \delta n^{1/3}\\ k_2 \delta n^{1/3} \leq y < (k_2+1) \delta n^{1/3}}} \varlimsup_{n\to \infty} \frac{1}{n^{1/6}}  \sum_{z = -x}^y \omega_z \leq \mathcal{W}^+(u,v,\delta) \, . \]
	in which we recall the definition \eqref{def:W(uvd)} of $\mathcal{W}^{\pm}(u,v,\delta)$.
	
	\medskip
	On the other hand, since $Z_{n,h}^{\omega,\beta} (k_1,k_2,\delta)$ is a sum of non-negative terms, we get a simple lower bound by restricting to configurations with almost no fluctuation around $T_n^*$:
	\[ 
	\begin{split}
		\frac{\log Z_{n,h}^{\omega,\beta}(k_1,k_2,\delta)}{n^{1/6}} & \geq \sup_{|\Delta_n^{x,y}| \leq 1} \left\{ \frac{\beta}{n^{1/6}} \sum_{z = -x}^y \omega_z - \frac{3\pi^2}{2c_h^4} \frac{(\Delta_n^{x,y})^2}{n^{1/2}} \right\} -\bar{o}(1) \\
		& = \frac{\beta}{n^{1/6}} \sup_{|\Delta_n^{x,y}| \leq 1}  \sum_{z = -x}^y \omega_z - \frac{3\pi^2}{2c_h^4\sqrt{n}} -  \bar{o}(1), 
	\end{split}
	\]
	in which the supremum is taken on the $(x,y)$ that satisfy the criteria of $Z_{n,h}^{\omega,\beta} (k_1,k_2,\delta)$, see~\eqref{Znw-retreint-1/6}. 
	In the above, the $\bar{o}(1)$ is deterministic and comes from the contribution of $n^{-1/6}\log \sin(\frac{x\pi}{x+y})$; in the case where $k_1=0$, we restrict the supremum to additionally having $x\neq 0$, so that we always have $\sin(\frac{x\pi}{x+y}) \geq \frac{c}{x+y} \sim \frac{c}{n^{1/3}}$. 
	
	After the exact same calculations as above, we get the lower bound
	\[ \liminf_{n\to \infty} \frac{1}{\beta n^{1/6}} \log Z_{n,h}^{\omega,\beta}(k_1,k_2,\delta) \geq \mathcal{W}^-(u,v,\delta). \qedhere \]
\end{proof}

\begin{lemma}\label{pb-var-1/6}
	The quantity $\sup_{u + v = c_h} \big\{ X^{(1)}_v + X^{(2)}_u \big\} = \sup_{u \in [0,c_h]} X_u$ is almost surely positive and finite, and attained at a unique point $u_*$ of $[0,c_h]$.
\end{lemma}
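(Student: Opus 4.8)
The plan is to exploit the identity $X_u = X^{(1)}_u + X^{(2)}_{c_h-u}$ together with the observation, already recorded in the text, that $X$ has the same law as $\sqrt 2\,W + X^{(2)}_{c_h}$ for a standard Brownian motion $W$ on $[0,c_h]$. The additive constant $X^{(2)}_{c_h}$ plays no role in either the location of the maximum or its finiteness, so everything reduces to classical facts about a single Brownian path on a compact interval.

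First I would argue finiteness: $u\mapsto X_u$ is $\PP$-a.s.\ continuous on the compact interval $[0,c_h]$ (being a sum of two continuous Gaussian processes), hence bounded, so $\sup_{u\in[0,c_h]}X_u<\infty$ a.s.\ and the supremum is attained. Next, positivity: since $X_0 = X^{(2)}_{c_h} = X_{c_h}$, the supremum is at least $X^{(2)}_{c_h}$; to get a strict gain one uses that a Brownian motion a.s.\ exceeds its endpoint value somewhere in the open interval — concretely, $\sup_{u\in[0,c_h]}(X_u - X_{c_h}) = \sup_{u}\sqrt 2(W_u - W_{c_h})>0$ a.s.\ because $W_{c_h}<\sup_{[0,c_h]}W$ a.s. This shows $\sup_u X_u > X^{(2)}_{c_h}$, and combined with the standard fact $\sup_u X_u \ge X_0 = X^{(2)}_{c_h}$, still only gives $\sup_u X_u \ge X^{(2)}_{c_h}$; to conclude the quantity is a.s.\ positive one should rather note that the statement ``positive'' here is relative to $0$, and indeed $\sup_{[0,c_h]} X \ge \tfrac{X_0+X_{c_h}}2 + \text{(a.s.\ positive fluctuation)}$, but more simply: with probability one $X$ takes values of both signs near a generic point, so in particular $\sup_{[0,c_h]} X \ge \max(X_0, \sup_{\text{small interval}} X) > 0$ whenever $X_0\ge 0$, and by symmetry of $X^{(2)}_{c_h}$ one reduces to this case — the cleanest route is to write $\sup_u X_u = X^{(2)}_{c_h} + \sqrt 2 \sup_{[0,c_h]}W$ and observe $\sup_{[0,c_h]}W$ is a.s.\ strictly positive with the same law as $|W_{c_h}|$, so $\sup_u X_u = X^{(2)}_{c_h} + \sqrt 2\,|\tilde W_{c_h}|$ in distribution with $X^{(2)}_{c_h},\tilde W_{c_h}$ independent, which is a.s.\ positive since $X^{(2)}_{c_h}$ is centered Gaussian independent of the strictly-positive term; more carefully, $\PP(X^{(2)}_{c_h} + \sqrt2\sup W \le 0)=0$ because conditional on $\sup W = s>0$ the Gaussian $X^{(2)}_{c_h}$ puts zero mass on the single... no — it puts positive mass on $(-\infty,-\sqrt2 s]$. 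So I would instead establish positivity by the direct argument that $\sup_{u\in[0,c_h]}X_u \ge X_{u_\circ}$ for $u_\circ$ the a.s.-unique maximizer of $W$, at which $X_{u_\circ} = X^{(2)}_{c_h}+\sqrt2 W_{u_\circ}$, and separately that $\sup_u X_u \ge 0$ fails to be automatic — hence the honest statement to prove is only a.s.\ finiteness and a.s.\ uniqueness, with ``positive'' meaning ``$>-\infty$'', OR positivity should be read as: the supremum strictly exceeds the value at both endpoints. I would phrase the positivity claim as $\sup_{u\in[0,c_h]} X_u > X_0 \wedge X_{c_h}$ a.s., which is exactly $\sup_{[0,c_h]}W>0$ a.s., a standard fact.

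For uniqueness of the argmax, I would invoke the classical result that a Brownian motion on a compact interval a.s.\ attains its maximum at a unique point (see e.g.\ the standard Lévy-type argument, or it follows from the arcsine-law representation already cited in the text). Since $X_u - \sqrt2 W_u$ is the constant $X^{(2)}_{c_h}$, the maximizer of $X$ coincides with that of $W$, which is a.s.\ unique; moreover the text has already noted $u_*$ ``follows the arcsine law on $[0,c_h]$,'' so one may simply cite Proposition~\ref{prop:ecriture-X-meandres} / the path-decomposition facts in Appendix~\ref{appendix-meandre} that are used there.

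The main obstacle — really the only subtle point — is the positivity assertion: deciding what ``positive'' is asserting and proving it cleanly. If it means $\sup_u X_u > 0$ this is genuinely false with positive probability (take $X^{(2)}_{c_h}$ very negative and $W$ with a small range); the intended and provable statement is that the supremum strictly exceeds the common value of $X$ at the endpoints, i.e.\ $\sup_{[0,c_h]} W > 0$ a.s., and I would present it that way. Finiteness and uniqueness are then immediate from continuity on a compact set and the a.s.\ uniqueness of the Brownian maximum on $[0,c_h]$ respectively.
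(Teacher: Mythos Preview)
Your reduction $X_u = X^{(2)}_{c_h} + \sqrt{2}\,W_u$ is exactly the route the paper takes; the paper then simply cites a classical reference (Kim--Pollard, \cite{kim1990cube}) for the a.s.\ finiteness of the supremum and the uniqueness of the argmax of a Brownian path on a compact interval. Your arguments for these two claims are correct and amount to the same thing.

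You are also right to balk at the positivity assertion: read literally as $\sup_{u\in[0,c_h]}X_u>0$ a.s., it is false. Concretely, with positive probability each of the independent paths $X^{(1)},X^{(2)}$ stays below $1$ on $[0,c_h/2]$ and below $-2$ on $[c_h/2,c_h]$; on that event $X_u=X^{(1)}_u+X^{(2)}_{c_h-u}\le -1$ for every $u\in[0,c_h]$. So this is a minor misstatement in the lemma, and the paper's one-line proof does not address it. It is also inessential: the only place positivity is invoked downstream is the ``$>0$'' decorating the definition of $\mathcal U^{\eps,\eps'}$ in the proof of Lemma~\ref{lem:position}, but the argument there uses only the strict gap $X_{u_*}-\eps'<X_{u_*}$, never the sign of $X_{u_*}$. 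The correct and sufficient statement is exactly what you have established: the supremum is a.s.\ finite and attained at a unique $u_*$. One small correction to your write-up: $X_0=X^{(2)}_{c_h}$ and $X_{c_h}=X^{(1)}_{c_h}$ are distinct independent Gaussians, so there is no ``common value at the endpoints'' to salvage positivity with.
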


\begin{proof}
	Recall that $X$ has the same law as $\sqrt{2} W + X^{(2)}_{c_h}$ where $W_u = \frac{1}{\sqrt{2}} (X^{(1)}_u + X^{(2)}_u)$ is a standard Brownian motion independent from $X^{(2)}_{c_h}$. Thus it is a classical result, see for example \cite[Lemma~2.6]{kim1990cube}.
\end{proof}

\subsection{Path properties under the polymer measure}

\begin{proof}[Proof of Theorem \ref{th-1/6}-\eqref{eq:th-1/6-proba}]
	The proof essentially reduces to the following lemma.
	
	\begin{lemma}\label{lem:position}
		For any $h,\beta > 0$, recall
		$u_* \defeq \argmax_{u \in [0,c_h]} \big\{ X^{(1)}_u + X^{(2)}_{c_h-u} \big\}$. Then, $\mathbb P$-a.s.
		\[ \frac{1}{n^{1/3}} M_n^- \xrightarrow[n\to\infty]{ \mathbf{P}_{n,h}^{\omega, \beta} } - u_* \,. \]
	\end{lemma}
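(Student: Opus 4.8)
The plan is to show that for every $\eps>0$ the event $\mathcal{A}_n \defeq \{ |M_n^- + u_* n^{1/3}| > \eps n^{1/3} \}$ satisfies $\mathbf{P}_{n,h}^{\omega,\beta}(\mathcal{A}_n)\to 0$, $\PP$-a.s. Since $\mathbf{P}_{n,h}^{\omega,\beta}(\mathcal{A}_n) = Z_{n,h}^{\omega,\beta}(\mathcal{A}_n)/Z_{n,h}^{\omega,\beta}$ and the denominator is already controlled by \eqref{eq:th-1/6-part}, it suffices to bound $Z_{n,h}^{\omega,\beta}(\mathcal{A}_n)$ from above. Setting $x=-M_n^-$, the event $\mathcal{A}_n$ is exactly $\{x/n^{1/3}\notin(u_*-\eps,u_*+\eps)\}$; by Theorem~\ref{thm0} we may freely intersect $\mathcal{A}_n$ with $\{|\Delta_n|\le\eps_n T_n^*\}$ (changing $Z_{n,h}^{\omega,\beta}(\mathcal{A}_n)$ by a factor $1+\bar{o}(1)$), and then $Z_{n,h}^{\omega,\beta}(\mathcal{A}_n)$ can be written, as in \eqref{partition-restrcit-epsnTn}, as $\psi_h e^{-\frac32 h T_n^*}(1+\bar{o}(1))$ times the sum over the relevant $(x,y)$.

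First I would apply the coarse-graining of the proof of \eqref{eq:th-1/6-part}: split the $x$- and $y$-ranges into blocks of size $\delta n^{1/3}$, so that, up to the prefactor and a $1+\bar{o}(1)$, $Z_{n,h}^{\omega,\beta}(\mathcal{A}_n)$ is a sum of at most $2c_h/\delta$ terms $Z_{n,h}^{\omega,\beta}(k_1,k_2,\delta)$, keeping only the blocks meeting $\mathcal{A}_n$, i.e.\ those with $k_1\delta\le u_*-\eps$ or $k_1\delta> u_*+\eps-\delta$. Lemma~\ref{lem:cv-1/6} bounds each surviving term, and since the quadratic penalty in \eqref{Znw-retreint-1/6} is $\le 0$ it only helps the upper bound; hence
\[
\varlimsup_{n\to\infty}\frac{1}{\beta n^{1/6}}\Big(\log Z_{n,h}^{\omega,\beta}(\mathcal{A}_n)+\tfrac32 h c_h n^{1/3}\Big) \le \max_{\substack{(u,v)\in U_\delta\\ |u-u_*|\ge\eps-\delta}}\mathcal{W}^+(u,v,\delta)\,.
\]
Letting $\delta\to0$ and using the continuity of $X^{(1)},X^{(2)}$ exactly as before,
\[
\varlimsup_{n\to\infty}\frac{1}{\beta n^{1/6}}\Big(\log Z_{n,h}^{\omega,\beta}(\mathcal{A}_n)+\tfrac32 h c_h n^{1/3}\Big) \le \sup_{\substack{0\le u\le c_h\\ |u-u_*|\ge\eps}}X_u \qquad\PP\text{-a.s.}
\]

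To conclude I would use Lemma~\ref{pb-var-1/6}: $X$ is continuous on the compact set $[0,c_h]$ and $u_*$ is its unique maximizer, so $\gamma_\eps\defeq X_{u_*}-\sup_{|u-u_*|\ge\eps}X_u>0$. Subtracting the convergence \eqref{eq:th-1/6-part}, namely $n^{-1/6}(\log Z_{n,h}^{\omega,\beta}+\tfrac32 h c_h n^{1/3})\to\beta X_{u_*}$, gives
\[
\varlimsup_{n\to\infty}\frac{1}{n^{1/6}}\log\mathbf{P}_{n,h}^{\omega,\beta}(\mathcal{A}_n)\le-\beta\gamma_\eps<0 \qquad\PP\text{-a.s.},
\]
hence $\mathbf{P}_{n,h}^{\omega,\beta}(\mathcal{A}_n)\to0$, in fact stretched-exponentially fast, which is the claim. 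The convergence \eqref{eq:th-1/6-proba} then follows by combining this with $n^{-1/3}T_n\to c_h$ from Theorem~\ref{thm0} and the identity $M_n^+=M_n^-+T_n$ via Slutsky's lemma.

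The only genuinely delicate point is the strict gap $\gamma_\eps>0$, which is precisely where the uniqueness of the argmax (Lemma~\ref{pb-var-1/6}) enters; this is exactly the feature absent in the homogeneous model, where the left-most point stays random. Everything else is bookkeeping: matching the surviving coarse-grained blocks to $\mathcal{A}_n$ and checking that the few extra boundary blocks, those with $k_1\delta$ within $\delta$ of $u_*+\eps$, contribute negligibly once $\delta\to0$, which is immediate from the continuity of $X$.
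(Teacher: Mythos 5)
Your proof is correct and follows essentially the same route as the paper: write $\mathbf{P}_{n,h}^{\omega,\beta}(\mathcal{A}_n) = Z_{n,h}^{\omega,\beta}(\mathcal{A}_n)/Z_{n,h}^{\omega,\beta}$, bound the restricted numerator via the same coarse-graining and Lemma~\ref{lem:cv-1/6}, and conclude from the strict gap created by the uniqueness of $u_*$. The only difference is presentational: the paper introduces the two-parameter set $\mathcal{U}^{\eps,\eps'}$ and shows $\mathbf{P}_{n,h}^{\omega,\beta}(n^{-1/3}|M_n^-|\notin\mathcal{U}^{\eps,\eps'})\to 0$ before noting $\bigcap_{\eps'>0}\mathcal{U}^{\eps,\eps'}\subset B_{2\eps}(u_*)$, whereas you work directly with $\{|u-u_*|\ge\eps\}$ and introduce the gap $\gamma_\eps = X_{u_*}-\sup_{|u-u_*|\ge\eps}X_u>0$. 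Both are valid; your formulation is arguably more streamlined, with the gap $\gamma_\eps>0$ playing exactly the role the paper delegates to the auxiliary parameter $\eps'$.
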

	
	By Slutsky's Lemma (for a fixed $\omega$ in the set of $\omega$'s for which both convergences are true), Lemma \ref{lem:position} combined with \eqref{result-quentin} readily implies that $\mathbb P$-a.s. $n^{-1/3} M_n^+$ converges to $c_h - u_*$ in $\mathbf{P}_{n,h}^{\omega, \beta}$-probability. Note that Slutsky's lemma can be used on $M_n^+, M_n^-, T_n$ since they are all defined on the same probability space.
\end{proof}

\begin{proof}[Proof of Lemma \ref{lem:position}]
	The proof is analogous to what is done in \cite{berger2020one}.
	Define the following set
	\[ \mathcal{U}^{\eps,\eps'} \defeq \bigg\{ u \in [0,c_h] \, : \, \sup_{s, |s-u|< \eps} \big\{ X^{(1)}_s + X^{(2)}_{c_h-s} \big\} \geq X_{u_*} - \eps' > 0 \bigg\} \,. \]
	We shall prove that for almost all $\omega$, we have $\mathbf{P}_{n,h}^{\omega, \beta} \left( \frac{1}{n^{1/3}} |M_n^-| \not\in \mathcal{U}^{\eps,\eps'} \right) \to 0$. For this, we denote by $\mathcal{A}_n^{\eps,\eps'}$ the event $\big\{ \frac{1}{n^{1/3}} |M_n^-| \not\in \mathcal{U}^{\eps,\eps'} \big\}$. As 
	\[ \begin{split}
		\log \mathbf{P}_{n,h}^{\omega, \beta} \left( \mathcal{A}_n^{\eps,\eps'} \right) &= \log Z_{n,h}^{\omega,\beta}(\mathcal{A}_n^{\eps,\eps'}) - \log Z_{n,h}^{\omega,\beta}\\
		&= \left(\log Z_{n,h}^{\omega,\beta}(\mathcal{A}_n^{\eps,\eps'}) + \frac{3}{2}hc_hn^{1/3}\right) - \left( \log Z_{n,h}^{\omega,\beta}  + \frac{3}{2}hc_hn^{1/3}\right),
	\end{split} \]
	we only need to prove that $\varlimsup\limits_{n\to\infty} \frac{1}{\beta n^{1/6}} \big[ \log Z_{n,h}^{\omega,\beta}(\mathcal{A}_n^{\eps,\eps'}) + \frac{3}{2}h c_h n^{1/3} \big] < X_{u_*}$. Indeed, using the convergence \eqref{eq:th-1/6-part} in Theorem \ref{th-1/6}, we then get that $\varlimsup\limits_{n\to\infty} \frac{1}{\beta n^{1/6}} \log \mathbf{P}_{n,h}^{\omega, \beta} \left( \mathcal{A}_n^{\eps,\eps'} \right) < 0$.
	\newline We apply the same decomposition we used in the proof of Theorem \ref{th-1/6}-\eqref{eq:th-1/6-part} over indices $k_1$ such that $-[k_1,k_1+1)\delta n^{1/3} \not\subset \mathcal{U}^{\eps,\eps'}$. Thus,
	\[ \varlimsup_{\delta \downarrow 0} \varlimsup_{n \to \infty} \frac{1}{\beta n^{1/6}} \left[ \log Z_{n,h}^{\omega,\beta}(\mathcal{A}_n^{\eps,\eps'}) + \frac{3}{2}hT_n^* \right] \leq \sup_{u \not\in \mathcal{U}^{\eps,\eps'}} \big\{ X^{(1)}_u + X^{(2)}_{c_h-u} \big\}\leq X_{u_*} - \eps' \, , \]
	so we indeed have $\mathbf{P}_{n,h}^{\omega, \beta} (\mathcal{A}_n^{\eps,\eps'}) \to 0$.
	Using that $\bigcap_{\eps'>0} \mathcal{U}^{\eps,\eps'} \subset B_{2\eps}(u_*)$ by unicity of the supremum,
	we have thus proved that, $\mathbb P$-a.s., $n^{-1/3} M_n^- \to -u_*$ in $\mathbf{P}_{n,h}^{\omega, \beta}$-probability.
\end{proof}

\section{Proof of Theorem \ref{th-1/9} for a Gaussian environment}\label{sec:gaussian-1/9}

In this section we prove Theorem \ref{th-1/9} under the  assumption that $\omega_0$ has a Gaussian distribution. We take full advantage of the fact that in this case, the coupling with the Brownian motions $X^{(1)},X^{(2)}$, is just an identity: it will thus not create any coupling error and allows us to work directly on these processes. The proof still requires some heavy calculations as we must first find what are the relevant trajectories in the factorized log-partition function.

Going forward, we take the following setting: random variables $\omega_z$ are i.i.d. with normal distribution $\mathcal{N}(0,1)$ and $X^{(1)},X^{(2)}$ are standard Brownian motions such that 
\begin{equation}\label{MB-omega}
	\frac{1}{n^{1/6}} \sum_{z=1}^x \omega_{-z} = X^{(1)}_{xn^{-1/3}} \; , \hspace{0.8cm} \frac{1}{n^{1/6}} \sum_{z=0}^y \omega_z = X^{(2)}_{yn^{-1/3}} \,.
\end{equation}
We will adapt the following proof to a general environment in Section~\ref{sec:general-1/9} by controlling the error term due to the coupling.

\medskip
We define
\[ \bar{Z}_{n,h}^{\omega, \beta} \defeq Z_{n,h}^{\omega, \beta} \, e^{\frac32 h T_n^* - \beta n^{1/6} X_{u_*}} \; , \quad \bar{Z}_{n,h}^{\omega, \beta}(\mathcal{A}) \defeq Z_{n,h}^{\omega, \beta}(\mathcal{A}) \, e^{\frac32 h T_n^* - \beta n^{1/6} X_{u_*}} \,, \]
so that \eqref{eq:th-1/9-part} can be rewritten as a statement regarding the convergence of $n^{-1/9} \log \bar{Z}_{n,h}^{\omega, \beta}$. \newline
Here are the four steps of the proof:
\begin{itemize}
	\item We first rewrite $\bar{Z}_{n,h}^{\omega, \beta}$ to make $X_{xn^{-1/3}} - X_{u_*}$ appear. Having this negative quantity makes it easier to find the relevant trajectories. Indeed, when $|X_{|M_n^-|n^{-1/3}} - X_{u_*}|$ is too large for a given trajectory, the relative contribution of this trajectory to the partition function goes exponentially to $0$. This means that this configuration has a low $\mathbf{P}_{n,h}^{\omega,\beta}$-probability.
	\item We prove the $\PP$-almost sure convergence of $n^{-1/9} \log \bar{Z}_{n,h}^{\omega,\beta}$ restricted to the event $\mathcal{A}_{n,\omega}^{K,L} = \mathset{|\Delta_n| \leq K n^{2/9},| M_n^-+u_*n^{1/3}| \leq Ln^{2/9}}$ towards a positive value. It consists again of a coarse-graining approach where each component $\bar{Z}_{n,h}^{\omega, \beta}(u,v)$ converges to $\mathcal{Y}_{u,v} - c_{h,\beta} (u+v)^2$. This leads to defining $(\mathcal{U},\mathcal{V})$ via a variational problem.
	\item We prove that $n^{-1/9} \log \bar{Z}_{n,h}^{\omega,\beta}$ restricted to $(\mathcal{A}_{n,\omega}^{K,L})^c$ is almost surely negative as $n \to \infty$ as soon as $K$ or $L$ is sufficiently large. Coupled with the previous convergence towards a positive limit, we prove that all of these trajectories have a negligible contribution.
	\item Afterwards, the convergences in $\mathbf{P}_{n,h}^{\omega,\beta}$-probability are derived in the same way as for Theorem \ref{th-1/6}.
\end{itemize}

\begin{corollary}[of Lemma \ref{lem:position}]\label{cor:u*}
	For any $\eps > 0$, consider the event
	\begin{equation}
		\mathcal{A}_{n,\eps}^{\omega} \defeq \mathset{|\Delta_n| \leq \eps n^{1/3}  , |M_n^- + u_*  n^{1/3}| \leq \eps n^{1/3}} \, .
	\end{equation}
	There exists a vanishing sequence $(\eps_n)_{n \geq 1}$ such that
	\[ \lim_{n \to +\infty} \mathbf{P}_{n,h}^{\omega, \beta}\big(\mathcal{A}_{n,\eps_n}^{\omega} \big) = \lim_{n \to +\infty} \frac{1}{Z_{n,h}^{\omega, \beta}} Z_{n,h}^{\omega, \beta}\big(\mathcal{A}_{n,\eps_n}^{\omega} \big) = 1 \qquad \text{$\PP$-a.s.} \,.\]
\end{corollary}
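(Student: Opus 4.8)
The plan is to deduce this directly from Lemma~\ref{lem:position} together with the range-size convergence of Theorem~\ref{thm0}, and then to upgrade the family of fixed-$\eps$ statements into a single statement with a vanishing sequence by a routine diagonal extraction.

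First I would record two facts, each valid $\PP$-a.s. On one hand, Lemma~\ref{lem:position} gives $n^{-1/3} M_n^- \to -u_*$ in $\mathbf{P}_{n,h}^{\omega,\beta}$-probability, i.e.\ $n^{-1/3}(M_n^- + u_* n^{1/3}) \to 0$ in $\mathbf{P}_{n,h}^{\omega,\beta}$-probability. On the other hand, \eqref{result-quentin} gives $n^{-1/3}|\mathcal{R}_n| \to c_h$ in $\mathbf{P}_{n,h}^{\omega,\beta}$-probability; since $\Delta_n = T_n - T_n^* = (|\mathcal{R}_n|-1) - c_h n^{1/3}$ by \eqref{eq:notation}, this yields $n^{-1/3}\Delta_n \to 0$ in $\mathbf{P}_{n,h}^{\omega,\beta}$-probability. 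A union bound on the two complementary events then shows that, for every fixed $\eps>0$, $\PP$-a.s.\ $\mathbf{P}_{n,h}^{\omega,\beta}\big((\mathcal{A}_{n,\eps}^\omega)^c\big)\to 0$, that is $\mathbf{P}_{n,h}^{\omega,\beta}(\mathcal{A}_{n,\eps}^\omega)\to 1$. (The second equality in the statement is just the definition $\mathbf{P}_{n,h}^{\omega,\beta}(\mathcal{A}) = Z_{n,h}^{\omega,\beta}(\mathcal{A})/Z_{n,h}^{\omega,\beta}$ recorded at the start of Section~\ref{sec:ordre1}, so there is nothing extra to check there.)

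Next I would fix a $\PP$-full-measure set of environments on which the previous convergence holds simultaneously for all $\eps = 1/k$, $k\ge 1$ (a countable intersection of full-measure events). On this set, using that $\eps\mapsto \mathcal{A}_{n,\eps}^\omega$ is non-decreasing, I would run the usual diagonalisation: for each $k$ choose an integer $N_k$, strictly increasing in $k$, such that $\mathbf{P}_{n,h}^{\omega,\beta}(\mathcal{A}_{n,1/k}^\omega)\ge 1-1/k$ for all $n\ge N_k$, and set $\eps_n \defeq 1/k$ for $N_k\le n< N_{k+1}$ (and $\eps_n\defeq 1$ for $n<N_1$). Then $\eps_n\downarrow 0$ and $\mathbf{P}_{n,h}^{\omega,\beta}(\mathcal{A}_{n,\eps_n}^\omega)\to 1$, which is the claim.

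There is no genuine obstacle here; the only points requiring a little care are that the sequence $(\eps_n)$ produced this way is a priori $\omega$-dependent — which is harmless, since the convergences inherited from Lemma~\ref{lem:position} and Theorem~\ref{thm0} are themselves only $\PP$-a.s.\ statements — and that one must intersect countably many full-measure events before extracting. Insisting on a deterministic $(\eps_n)$ would instead require a quenched rate of convergence in Lemma~\ref{lem:position}, uniform over a full-measure set of environments, which the soft argument used there does not provide and which is not needed for the applications in Section~\ref{sec:gaussian-1/9}.
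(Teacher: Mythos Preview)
Your argument is correct and matches the paper's intent: the paper gives no separate proof of this corollary, treating it as immediate from Lemma~\ref{lem:position} and the range-size convergence in \eqref{result-quentin}, which is exactly the combination you invoke. Your added detail on the diagonal extraction and the remark that the resulting $(\eps_n)$ is a priori $\omega$-dependent are both accurate and consistent with how the sequence is used downstream (e.g.\ in Section~\ref{sec:construction-couplage} and the proof of Proposition~\ref{th:restrict-1/9}, where one only needs $\eps_n \leq \delta_0(\omega)$ for $n$ large).
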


Going forward, we will work conditionally on $u_*$. Recall that $\frac{1}{\sqrt{2}} (X - X^{(2)}_{c_h})$ has the law of a standard Brownian motion, thus according to Proposition \ref{prop:max-meandre}, the processes $(X_{u_*} - X_{u_*-t}, t \geq 0)$ and $(X_{u_*} - X_{u_*+t}, t \geq 0)$ are two Brownian meanders, respectively on $[0,u_*]$ and $[0, c_h-u_*]$. Recall that since $u_*$ follows the arcsine law on $[0,c_h]$, these intervals are $\PP$-almost surely nonempty.

\subsection{Rewriting the partition function}\label{sec:rewrite}

We define
\begin{equation}\label{fact-1/9}
	\Omega_n^{x,y} \defeq \frac{1}{n^{1/6}}\sum_{z = 1}^{x} \omega_{-z} + \frac{1}{n^{1/6}}\sum_{z = 0}^{y} \omega_{z} -  X_{u_*} =  X^{(1)}_{xn^{-1/3}} + X^{(2)}_{y n^{-1/3}} - X_{u_*} ,
\end{equation} 
where for the last identity we have used the relation~\eqref{MB-omega} between $X$ and $\omega$.
Then, we can rewrite

%Write
%\[ \Psi_n^\omega(x,y) \defeq \exp\left( \beta n^{1/6} \Omega_{n}^{x,y} - \frac{3 \pi^2 (\Delta_n^{x,y})^2}{2 c_h^4 n^{1/3}} (1+\bar{o}(1)) \right) \, , \]
%which is the quantity appearing in 

%à garder au besoin :
%\[  \!\!\!\!\!\! \sum_{\substack{|x-u^*n^{1/3}|\leq \eps_n n^{1/3} \\ |y-(c_h -u^*)n^{1/3}|\leq \eps_n n^{1/3}}} \!\!\!\!\!\! \]

%then,
\begin{equation}
	\bar{Z}_{n,h}^{\omega, \beta}\big( \mathcal{A}_{n,\eps_n}^{\omega} \big) = \psi_h \sum_{x,y \geq 0} \Psi_n^\omega(x,y) \exp\left( \beta n^{1/6} \Omega_{n}^{x,y} - \frac{3 \pi^2 (\Delta_n^{x,y})^2}{2 c_h^4 n^{1/3}} (1+\bar{o}(1)) \right) \, .
\end{equation}
with
\[ \Psi_n^\omega(x,y) = \sin \left( \frac{x \pi}{x+y} \right) \indic{|x-u^*n^{1/3}|\leq \eps_n n^{1/3} , |y-(c_h -u^*)n^{1/3}|\leq \eps_n n^{1/3}} \, . \]
Note that if $x,y$ satisfy $|x-u^*n^{1/3}|\leq \eps_n n^{1/3} , |y-(c_h -u^*)n^{1/3}|\leq \eps_n n^{1/3}$, and if $n$ is large enough to have $\eps_n < \tfrac12$, we have
\[ \left| \sin \left( \frac{x \pi}{x+y} \right) - \sin \left( \frac{u_* \pi}{c_h} \right) \right| \leq 2  \left|\cos \left( \frac{u_* \pi}{c_h} \right) \left( \frac{x \pi}{x+y} - \frac{u_* \pi}{c_h} \right) \right| \leq \frac{2\eps_n(1 + u_*)}{c_h(1-\eps_n)} \leq 4 \eps_n \frac{1+c_h}{c_h} \, . \]
Since $u_* \not\in \mathset{0,c_h}$ $\PP$-a.s., using~\eqref{partition} and Theorem~\ref{th-1/6} we can write:
\begin{equation}\label{partition2}
	\begin{split}
		&\bar{Z}_{n,h}^{\omega, \beta}\big( \mathcal{A}_{n,\eps_n}^{\omega} \big) =\\ &(1 + \bar{o}(1)) \psi_h \sin\left(\frac{u_*\pi}{c_h}\right) \!\!\!\!\!\! \sum_{\substack{|x-u^*n^{1/3}|\leq \eps_n n^{1/3} \\ |y-(c_h -u^*)n^{1/3}|\leq \eps_n n^{1/3}}} \!\!\!\!\!\!
		\exp\left( \beta n^{1/6} \Omega_{n}^{x,y} - \frac{3 \pi^2 (\Delta_n^{x,y})^2}{2 c_h^4 n^{1/3}}(1 + \bar{o}(1)) \right) \, ,
	\end{split}
\end{equation}
where both $\bar{o}(1)$ are deterministic and are a $\grdO(\eps_n)$.

Note that $X_{u_*} - (X^{(1)}_{x n^{-1/3}} + X^{(2)}_{yn^{-1/3}} \big)$ is not necessarily positive since the supremum in \eqref{eq:th-1/6-part} is taken over non negative $u$ and $v$ such that $u+v = c_h$, whereas $x+y \neq c_h n^{1/3}$ in the general case. 
However we can write
\[ \big(X^{(1)}_{xn^{-1/3}} + X^{(2)}_{y n^{-1/3}} \big) = \big(X^{(1)}_{xn^{-1/3}} + X^{(2)}_{c_h- x n^{-1/3}} \big) + \big( X^{(2)}_{yn^{-1/3}} - X^{(2)}_{c_h-x n^{-1/3}} \big) \,,\]
so that $\Omega_n^{x,y}$ can be rewritten as 
\begin{equation}\label{eq:part-rewrit}
	\Omega_{n}^{x,y} \defeq -  \left( X_{u_*} - X_{xn^{-1/3}} \right) +  X^{(2)}_{yn^{-1/3}} - X^{(2)}_{c_h-x n^{-1/3}}\, .
\end{equation}
Note that it is not problematic that $c_h-x n^{-1/3}$ can be negative if $y$ is small enough, since $X^{(2)}$ can be defined on the real line.
Although \eqref{eq:part-rewrit} may seem more complex to study than \eqref{fact-1/9}, having a term that is always non-positive is useful to isolate the main contributions to the partition function.

Recall that $X_{u_*} - X_{xn^{-1/3}}$ can be expressed in terms of Brownian meanders depending on the sign of $u_* - xn^{-1/3}$, see Proposition~\ref{prop:ecriture-X-meandres}. More precisely, there exist $\mathcal{M}^+,\mathcal{M}^-$ two independent standard Brownian meanders such that
\begin{equation}\label{eq:X-meandre}
	X_{u_*} - X_{xn^{-1/3}} = \sqrt{2 u_*} \mathcal{M}^-_{\frac{u^*-xn^{-1/3}}{u_*}} \indic{u_* \geq xn^{-1/3}} + \sqrt{2(c_h - u_*)} \mathcal{M}^-_{\frac{xn^{-1/3} - u_*}{c_h - u_*}} \indic{u_* < xn^{-1/3}} \, .
\end{equation}

\begin{heuristic}
	In~\eqref{partition2} and in view of~\eqref{eq:part-rewrit}, the term inside the exponential can be split into three parts.
	The first part is $- \beta n^{1/6} \left( X_{u_*} - X_{x n^{-1/3}} \right)$, which is negative and of order $n^{1/6} |u_* - x n^{-1/3}|^{1/2}$.
	The second term is $\beta n^{1/6} ( X^{(2)}_{yn^{-1/3}} - X^{(2)}_{c_h-xn^{-1/3}})$, which is of order at most $(\Delta_n)^{1/2}$.
	The last term is $- \tilde c_h (\Delta_n)^2 n^{-1/3}$.
	We thus can easily compare the second term to the last one: dominant terms in \eqref{partition} are all negative when $(\Delta_n)^2 n^{-1/3} \gg (\Delta_n)^{1/2}$ or in other words if $\Delta_n \gg n^{2/9}$.
	Thus we will show that the corresponding trajectories have a negligible contribution to $Z_{n,h}^{\omega, \beta}$, and that we can restrict the partition function to trajectories such that $\Delta_n = \grdO(n^{2/9})$. We can apply the same reasoning to the first term, which must verify $n^{1/6} ( X^{(2)}_{yn^{-1/3}} - X^{(2)}_{c_h-xn^{-1/3}})  = \grdO(n^{1/9})$, from which we will deduce $|M_n^- n^{-1/3}+ u^* |= \grdO(n^{-1/9})$.
\end{heuristic}

\subsection{Coupling and construction}\label{sec:construction-couplage}

Here we explain how these two coupling combine to yield all the desired results.
We start by picking $u_*$ according to the arcsine law on $[0,c_h]$ and by considering a three-dimensional two-sided Bessel process $\mathbf{B}$ as well as an independent two-sided Brownian motion $\mathbf{Y}$, both defined on $\RR$.
Since the process $(X_{u_*}-X_{u_* + u})/\sqrt{2}$ is a two-sided Brownian meander (with left interval $[0,u_*]$ and right interval $[0,c_h-u_*]$), using Proposition \ref{prop-couplage} we can find a $\delta_0(\omega)$ such that if $|u| \leq \delta_0$, we have $X_{u_*}-X_{u_* + u} = \mathbf{B}_u \chi \sqrt{2}$.
\par
We are interested in a coupling that will be such that $n^{1/18} (X_{u_*}-X_{u_* + \frac{u}{n^{1/9}}}) = \mathbf{B}_u \chi \sqrt{2}$ for all $n$ large enough and for any $u n^{-1/9}$ sufficiently close to $0$. To do so, for each $n$ we construct from $\mathbf{B}$ a suitable $X^n$, with the same law as $X$, that satisfies the desired equality.

\par Consider only the pair $(\delta_0,\mathbf{B})$ that was previously defined, and let $n_0$ be such that $\eps_{n_0} \leq \delta_0$. Then, for any $n \geq n_0$, we paste the trajectory of $n^{-1/18} \sqrt{2} \chi \mathbf{B}_{u n^{1/9}}$, which is still a two-sided three-dimensional Bessel process multiplied by $\sqrt{2}$ (note that $\chi$ is scale-invariant), until $|u| = \delta_0$. By construction, we have $X^n_{u_*} - X^n_{u_* + u} = n^{-1/18} \sqrt{2} \chi \mathbf{B}_{u n^{1/9}}$ for $|u| \leq \delta_0$. Next, we consider two independent Brownian meanders $M^{L,n,\delta_0}, M^{R,n,\delta_0}$ of duration $u_*$ (resp. $c_h-u_*$) conditioned on $M^{L,n,\delta_0}_{\delta_0} = n^{-1/18} \sqrt{2} \chi \mathbf{B}_{-\delta_0 n^{1/9}}$ (resp. $M^{R,n,\delta_0}_{\delta_0} = n^{-1/18} \sqrt{2} \chi \mathbf{B}_{\delta_0 n^{1/9}}$), and we plug their trajectory to complete the process $X^n$. The full definition of $X^n$ is thus given by
\[ \frac{1}{\sqrt{2}} (X^n_{u_*} - X^n_u) = n^{-1/18} \chi \mathbf{B}_{u n^{1/9}} \indic{|u-u_*| < \delta_0} + M^{L,n,\delta_0}_{u_* - u} \indic{u \in [0, u_* - \delta_0]} + M^{R,n,\delta_0}_{u - u_*} \indic{u \in [u_* + \delta_0, c_h]} \]
We can similarly define $Y^n_{u_* + u} - Y^n_{u_*} \defeq n^{-1/18} \sqrt{2} \mathbf{Y}_{un^{1/9}}$ where no particular coupling is needed. From $X^n$ and $Y^n$, we can recover our new Brownian motions $X^{(1),n}, X^{(2),n}$.

\par
In the case of a Gaussian environment, we can define the random variables $(\omega_z)_{z \in \ZZ^d}$ using \eqref{MB-omega}.
For the other cases, we construct the environment $\omega = \omega^n$ from the processes $(X^{(1),n},X^{(2),n})$ using Skorokhod's embedding theorem (see Theorems \ref{th-skorokhod},\ref{th-strassen}).

In both cases, all of our processes are defined to have almost sure convergences, and when $n$ is greater than some $n_0(\omega)$ (which only only depends on $\delta_0$) and $|un^{-1/9}| \leq \eps_n \leq \delta_0$, we have
\begin{equation}\label{eq:egalites-couplage}
	n^{1/18} (X^n_{u_*} - X^n_{u_* + \frac{u}{n^{-1/9}}}) = \sqrt{2} \chi \mathbf{B}_u \, , \quad n^{1/18} (Y^n_{u_*} - Y^n_{u_* + \frac{u}{n^{-1/9}}}) = \sqrt{2} \mathbf{Y}_u \, \, .
\end{equation}
Our construction will be used to prove Proposition \ref{th:restrict-1/9} and Proposition \ref{prop-cv-1/9} in order to get proper limits when zooming around $u_*$. This is done by making it so that if $n$ is large enough, the process we study do not depend on $n$, as illustrated by Proposition \ref{prop-couplage}.

\subsection{Restricting the trajectories}

We recall that in what follows, we work conditionally on the value of $u_*$, yet we will still write $\PP$ for the law of $\omega$ under conditional to this value.

Our goal is now to characterize the main contribution to the partition function directly in terms of $M_n^-$ and $M_n^+$ or equivalent quantities, and not in terms of the processes. With this goal in mind, we define, for $K,L\geq 0$:
\[
\bar{Z}_{n,\omega}^{>}(K,L) \defeq \bar{Z}_{n,h}^{\omega,\beta} \big( |\Delta_n| \geq L n^{2/9}, K n^{2/9} \leq |M_n^- +u_*n^{1/3}| \leq \eps_n n^{1/3} \big)
\]
and 
\[
\bar{Z}_{n,\omega}^{\leq}(K,L) \defeq \bar{Z}_{n,h}^{\omega,\beta} \big( |\Delta_n| \leq L n^{2/9}, |M_n^- +u_*n^{1/3}| \leq K n^{2/9} \leq \eps_n n^{1/3} \big) .
\]

In this section, 
In the next section, we will prove, in Proposition \ref{prop-cv-1/9} and Lemma \ref{lem-var2}, that $\PP$-a.s.,
$\varliminf_{n \to \infty} n^{-1/9} \log \bar{Z}_{n,\omega}^{\leq}(K,L) > 0$.

The following proposition and its Corollary \ref{cor:lem:exclusion-1/9} shows that  $\PP$-a.s. for $K$ \textit{or} $L$ large enough, $\varlimsup_{n \to \infty} n^{-1/9} \log \bar{Z}_{n,\omega}^{>}(K,L) < 0$, meaning that trajectories in $\bar{Z}_{n,\omega}^{>}(K,L)$ have a negligible contribution.

\begin{proposal}\label{th:restrict-1/9}
	Uniformly in $n \geq 1$ such that $\eps_n < \frac12$, we have
	\[ \varlimsup_{K \to \infty} \proba{\varlimsup_{n \to \infty} \frac{1}{n^{1/9}} \log \bar{Z}_{n,\omega}^{>}(K,0) \geq -1} = \varlimsup_{L \to \infty} \proba{\varlimsup_{n \to \infty} \frac{1}{n^{1/9}} \log \bar{Z}_{n,\omega}^{>}(0,L) \geq -1} = 0 \, . \]
\end{proposal}

Let us use Proposition \ref{th:restrict-1/9} and conclude on the main result of this section. For any $K > 1$ we define
\begin{equation}\label{eq:def-Z>K} \begin{split}
		\bar{\mathcal{Z}}_{n,\omega}^{> K} \defeq \bar{Z}_{n_0,\omega}^{>}(K,0) + \bar{Z}_{n_0,\omega}^{>}(0,K) = & \bar{Z}_{n,h}^{\omega,\beta} \big(K n^{2/9} < | M_n^-+u_*n^{1/3}| \leq \eps_n n^{1/3} \big) \\
		&+ \bar{Z}_{n,h}^{\omega,\beta} \big( K n^{2/9} < |M_n^+ - (c_h - u_*)n^{1/3}| \leq \eps_n n^{1/3} \big) \, .
\end{split} \end{equation}

Using Proposition \ref{th:restrict-1/9}, we can prove the main result of this section.

\begin{corollary}\label{cor:lem:exclusion-1/9}
	For $\PP$-almost all $\omega$, there is a $K_0 > 1$ such that for all $K \geq K_0$,
	\begin{equation}
		\limsup_{n \to +\infty} \frac{1}{n^{1/9}} \log \bar{\mathcal{Z}}_{n,\omega}^{> K} \leq -1 \, .
	\end{equation}
\end{corollary}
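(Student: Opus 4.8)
The plan is to deduce the corollary from Proposition~\ref{th:restrict-1/9} by a soft monotonicity argument, after reducing the statement for $\bar{\mathcal{Z}}_{n,\omega}^{>K}$ to its two constituent pieces. First I would use the identity $\bar{\mathcal{Z}}_{n,\omega}^{>K}=\bar{Z}_{n,\omega}^{>}(K,0)+\bar{Z}_{n,\omega}^{>}(0,K)$ from \eqref{eq:def-Z>K} together with the elementary bound $\log(a+b)\le \log 2+\max(\log a,\log b)$; dividing by $n^{1/9}$, noting $n^{-1/9}\log 2\to 0$, and taking $\varlimsup_{n\to\infty}$ yields
\[ \varlimsup_{n\to\infty}\frac{1}{n^{1/9}}\log\bar{\mathcal{Z}}_{n,\omega}^{>K}\ \le\ \max\Big(\varlimsup_{n\to\infty}\tfrac{1}{n^{1/9}}\log\bar{Z}_{n,\omega}^{>}(K,0),\ \varlimsup_{n\to\infty}\tfrac{1}{n^{1/9}}\log\bar{Z}_{n,\omega}^{>}(0,K)\Big). \]
Hence it is enough to produce, for $\PP$-almost every $\omega$, a threshold $K_0(\omega)>1$ beyond which each of the two $\varlimsup$'s on the right-hand side is $\le -1$.

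The key point is a monotonicity in $K$. For each fixed $n$ and $\omega$, the event defining $\bar{Z}_{n,\omega}^{>}(K,0)$, namely $\{Kn^{2/9}\le |M_n^-+u_*n^{1/3}|\le \eps_n n^{1/3}\}$ (the constraint $|\Delta_n|\ge 0$ being vacuous), is non-increasing in $K$; so $K\mapsto\bar{Z}_{n,\omega}^{>}(K,0)$ is non-increasing, and likewise $K\mapsto\bar{Z}_{n,\omega}^{>}(0,K)$. Consequently the events
\[ E_K:=\Big\{\varlimsup_{n\to\infty}\tfrac{1}{n^{1/9}}\log\bar{Z}_{n,\omega}^{>}(K,0)\ge -1\Big\},\qquad F_K:=\Big\{\varlimsup_{n\to\infty}\tfrac{1}{n^{1/9}}\log\bar{Z}_{n,\omega}^{>}(0,K)\ge -1\Big\} \]
are non-increasing in $K$, so $\PP\big(\bigcap_{K}E_K\big)=\lim_{K\to\infty}\PP(E_K)=0$ by Proposition~\ref{th:restrict-1/9}, and similarly $\PP\big(\bigcap_{K}F_K\big)=0$ (I read the ``uniformly in $n$'' statement there simply as the assertion $\PP(E_K),\PP(F_K)\to 0$ as $K\to\infty$). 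Therefore, for $\PP$-almost every $\omega$ there is $K_0(\omega)>1$ with $\omega\notin E_K\cup F_K$ for all $K\ge K_0(\omega)$; that is, both $\varlimsup$'s are $<-1$, and inserting this into the display of the previous paragraph gives $\varlimsup_{n\to\infty}n^{-1/9}\log\bar{\mathcal{Z}}_{n,\omega}^{>K}\le -1$ for all $K\ge K_0(\omega)$, which is the claim.

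If one prefers to avoid relying on the exact monotonicity of the events $E_K,F_K$ (only the monotonicity in $K$ of $\bar{Z}_{n,\omega}^{>}(\cdot,0)$ and $\bar{Z}_{n,\omega}^{>}(0,\cdot)$ for fixed $n$ is truly needed), one can instead pick $K_m\uparrow\infty$ with $\PP(E_{K_m})+\PP(F_{K_m})\le 2^{-m}$, apply Borel--Cantelli to get that $\PP$-a.s.\ $\omega\notin E_{K_m}\cup F_{K_m}$ for all large $m$, and then interpolate over $K\in[K_m,K_{m+1}]$ using the pointwise-in-$n$ monotonicity. Either way, this corollary is pure measure-theoretic packaging: all the analytic content sits in Proposition~\ref{th:restrict-1/9} (the matching statement $\varliminf_n n^{-1/9}\log\bar{Z}_{n,\omega}^{\le}(K,L)>0$, which is what actually makes these ``far'' trajectories negligible, comes later from Proposition~\ref{prop-cv-1/9} and Lemma~\ref{lem-var2}). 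The only step deserving a moment's care is the passage from ``$\PP(E_K)\to 0$'' to an almost sure eventual statement, which is exactly what the monotonicity in $K$ buys; I do not expect any genuine obstacle here.
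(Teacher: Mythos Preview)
Your proof is correct and follows essentially the same route as the paper: both derive the corollary from Proposition~\ref{th:restrict-1/9} together with the monotonicity of $K\mapsto\bar{\mathcal{Z}}_{n,\omega}^{>K}$ (or of its two summands). The only cosmetic difference is that the paper applies Borel--Cantelli along a subsequence to the combined quantity $\bar{\mathcal{Z}}_{n,\omega}^{>K}$ and then interpolates by monotonicity, whereas your primary argument uses the monotonicity of the events $E_K,F_K$ directly via $\PP(\bigcap_K E_K)=\lim_K\PP(E_K)=0$; your ``alternative'' paragraph is exactly the paper's argument.
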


\begin{proof}
	Applying Proposition \ref{th:restrict-1/9}, we proved			
	\begin{equation}\label{eq:Z>K-proba}
		\varlimsup_{K \to \infty} \proba{\varlimsup_{n \to \infty} \frac{1}{n^{1/9}} \log \bar{\mathcal{Z}}_{n,\omega}^{> K} \geq -1} = 0 \, .
	\end{equation}
	Write $p_K$ for the probability in \eqref{eq:Z>K-proba}, in particular we proved that $p_K \to 0$. Thus, we can extract a subsequence $K_k$ such that
	\[ \sum_{k \geq 1} \proba{\limsup_{n \to +\infty} \frac{1}{n^{1/9}} \log \bar{\mathcal{Z}}_{n,\omega}^{> K_k} \geq -1} < +\infty \, . \]
	Using Borel-Cantelli lemma, $\PP$-almost surely there is a $k_0^\omega \geq 1$ such that for all $k \geq k_0^\omega$, $\limsup_{n \to +\infty} \frac{1}{n^{1/9}} \log \bar{\mathcal{Z}}_{n,\omega}^{> K_k} \leq -1$. Since $\bar{\mathcal{Z}}_{n,\omega}^{> K}$ is non-increasing in $K$, we deduce that for any $j \geq K_{k_0^\omega}$, $\limsup_{n \to +\infty} \frac{1}{n^{1/9}} \log \bar{\mathcal{Z}}_{n,\omega}^{> j} \leq -1$, hence the result.
\end{proof}

In the rest of the section, we will prove Proposition \ref{th:restrict-1/9}. This proof boils down to upper bounds on both probabilities, but with a fixed $n$ instead of the limsup. The fact that the bounds are uniform in $n$, and a use of our coupling, will give us the result.

We first explain a small argument that we will use repetitively throughout the paper in order to transfer estimates from standard Brownian meanders to $X_{u_*} - X_{u_* + u}$. Take an interval $I$ and real numbers $\alpha, \lambda > 0$. In the following Lemmas we will need to compute probabilities such as $\PP \Big(\inf_{|u| \in  \lambda I}  \{X_{u_*} - X_{u_* + u}\} \leq \alpha \Big)$.
First note that
\[
\PP \Big(\inf_{|u| \in  \lambda I}  \{X_{u_*} - X_{u_* + u}\} \leq \alpha \Big)
\leq 2 \max_{\sigma \in \mathset{-1,+1}} \PP \Big(\inf_{\sigma u \in  \lambda I}  \{ X_{u_*} - X_{u_* + u} \} \leq \alpha \Big). \]
To get bounds on those probabilities, we use \eqref{eq:X-meandre}, which gives an expression of $X_{u_*} - X_{u_* + u}$ as a concatenate of rescaled Brownian meanders $\mathcal{M}^\sigma, \sigma \in \mathset{-1,+1}$. Taking for example $\sigma = +1$, that is $u \geq 0$, we have 
\begin{equation}\label{eq:changer-les-constantes}
	\PP \Big(\inf_{u \in  \lambda I}  \{ X_{u_*} - X_{u_* + u}\} \leq \alpha \Big) = \PP \Big(\inf_{u \in  \frac{\lambda}{c_h - u_*} I}  \sqrt{c_h - u_*} \mathcal{M}^+_u \leq \alpha \Big).
\end{equation}
In the following lemmas, we work conditionally on $u_*$ and we have no need for precise values of the constants. Thus, we can consider $u_*$ and $c_h - u_*$ as constants, and remove them from our calculations in order to ease the notation. For example, instead of getting a bound on \eqref{eq:changer-les-constantes}, it is sufficient to get a bound on $\PP(\inf_{u \in I}  \mathcal{M}^+_u \leq \alpha )$.

\begin{lemma}\label{lem:excl-traj-L} There is a positive constant $C = C(h,\beta)$, uniform in $L,n\geq 1$ such that
	\begin{equation}\label{eq:cv-proba-delta}
		\proba{\frac{1}{n^{1/9}} \log \bar{Z}_{n,\omega}^{>}(0,L) \geq -1} \leq e^{- C L^3} \xrightarrow[L \to \infty]{} 0 \, .
	\end{equation}
\end{lemma}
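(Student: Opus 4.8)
The plan is to bound $\bar{Z}_{n,\omega}^{>}(0,L)$ from above directly from the rewriting~\eqref{partition2} of the (factorized) partition function, then take logarithms. Recall that on the event $\{|\Delta_n| \geq L n^{2/9},\ |M_n^-+u_*n^{1/3}|\leq \eps_n n^{1/3}\}$ one has, with $\Delta_n^{x,y} = x+y-c_h n^{1/3}$ and $|\Delta_n^{x,y}| \geq L n^{2/9}$, the decomposition~\eqref{eq:part-rewrit}: $\Omega_n^{x,y} = -(X_{u_*} - X_{xn^{-1/3}}) + (X^{(2)}_{yn^{-1/3}} - X^{(2)}_{c_h - xn^{-1/3}})$. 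The first summand is $\leq 0$, so it can simply be dropped for an upper bound. The second summand is an increment of $X^{(2)}$ over an interval of length $|yn^{-1/3} - (c_h - xn^{-1/3})| = |\Delta_n^{x,y}| n^{-1/3} =: \theta$, hence $\beta n^{1/6}|X^{(2)}_{yn^{-1/3}} - X^{(2)}_{c_h-xn^{-1/3}}|$ is controlled, on an event of overwhelming probability, by $C\beta n^{1/6}\sqrt{\theta \log(2+1/\theta)}$ or, more crudely, by $\eta \tfrac{3\pi^2}{2c_h^4}\tfrac{(\Delta_n^{x,y})^2}{n^{1/3}} + C_\eta n^{1/9}$ for any $\eta$ — this is the elementary inequality $ab \leq \eta a^2 + C_\eta b^2$ applied with $a$ of order $(\Delta_n^{x,y})/n^{1/6}$ and $b$ of order $n^{1/6}\sqrt{\theta}$, exploiting precisely that the quadratic penalty beats the square-root gain once $\Delta_n \gg n^{2/9}$ (the heuristic after~\eqref{eq:X-meandre}). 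Choosing $\eta = \tfrac12$ absorbs half the parabolic term.

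Concretely, I would argue: for each fixed pair $(x,y)$ with $|\Delta_n^{x,y}| \geq Ln^{2/9}$, bound the summand in~\eqref{partition2} by $\exp\big(\beta n^{1/6}(X^{(2)}_{yn^{-1/3}} - X^{(2)}_{c_h-xn^{-1/3}}) - \tfrac{3\pi^2}{2c_h^4}(\Delta_n^{x,y})^2 n^{-1/3}(1+\bar o(1))\big)$, then use a union bound / Gaussian tail over the relevant increments of $X^{(2)}$ (there are only polynomially many pairs $(x,y)$ in the range $|\Delta_n^{x,y}|\leq \eps_n n^{1/3}$, $|x - u_* n^{1/3}|\leq \eps_n n^{1/3}$, so a crude union bound costs only a polynomial factor, negligible against the exponential). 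After absorbing, the surviving bound on the full sum is of order $\mathrm{poly}(n)\cdot \sum_{m \geq Ln^{2/9}} \exp\big(-c\, m^2 n^{-1/3} + C n^{1/9}\big)$, where $m = |\Delta_n^{x,y}|$. The geometric-type sum is dominated by its first term $\exp(-c L^2 n^{1/9} + Cn^{1/9})$... wait — I must be careful: $m^2 n^{-1/3}$ at $m = Ln^{2/9}$ equals $L^2 n^{1/9}$, so the leading term is $\exp(-(cL^2 - C)n^{1/9})$, giving $n^{-1/9}\log \bar Z_{n,\omega}^{>}(0,L) \leq -(cL^2 - C) + \bar o(1)$ deterministically once the good event for $X^{(2)}$ holds. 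Then $\proba{n^{-1/9}\log \bar Z_{n,\omega}^{>}(0,L) \geq -1} \leq \proba{\text{bad event for }X^{(2)}}$, and the bad event — essentially that some increment of $X^{(2)}$ of length $\theta$ exceeds a threshold proportional to $\theta^{2}$-ish in the rescaled picture — has probability $\leq e^{-C'L^3}$ by a Gaussian/meander tail estimate summed over dyadic scales of $\theta \geq Ln^{-1/9}$. This $L^3$ (versus $L^2$) is standard: integrating the Gaussian tail $e^{-c\Delta^2 n^{-1/3}/\theta}$ type bounds, or using the Lévy modulus of continuity of Brownian motion, produces the extra power.

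The main obstacle will be organizing the two competing effects cleanly: the deterministic parabolic penalty $-c\,\Delta_n^2 n^{-1/3}$ is of order $n^{1/9}$ exactly at the critical scale $\Delta_n \asymp n^{2/9}$, so one cannot simply say "the penalty dominates everything"; one must quantify the fluctuations of $X^{(2)}$ over all relevant scales \emph{simultaneously} and uniformly in $n$ (the uniformity in $n$ is what the statement demands and what lets the coupling of Section~\ref{sec:construction-couplage} be applied afterwards). I would handle this by a dyadic decomposition over $\theta \in [2^j L n^{-1/9}, 2^{j+1}Ln^{-1/9}]$, $j \geq 0$, applying on each block the reflection-principle bound $\mathbf{P}(\sup_{[0,\theta]}|X^{(2)}| \geq a) \leq 4 e^{-a^2/2\theta}$ (or the analogous meander bound from Appendix~\ref{appendix-meandre}), with $a$ chosen so that $\beta n^{1/6} a$ is a small fraction of the parabolic term $c(2^j L n^{-1/9})^2 n^{1/3}/n^{1/3}\cdot n$... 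I will instead phrase it in the rescaled coordinates $u = xn^{-1/3}$, $v = yn^{-1/3}$ where $X^{(i)}_{\cdot}$ appear directly, so that the bound becomes a statement purely about Brownian increments and is manifestly $n$-free; summing the dyadic tails $\sum_j 4\exp(-c\,2^{2j}L^2 \cdot 2^{-j}L) \leq \sum_j 4 e^{-c 2^j L^3} \leq C e^{-cL^3}$ gives~\eqref{eq:cv-proba-delta}. The remaining routine checks are that the polynomial union-bound factor and the $\bar o(1)$ from the Taylor expansion in~\eqref{partition} are harmless, and that the constant $C(h,\beta)$ can indeed be taken independent of $n$ and $L$.
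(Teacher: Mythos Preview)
Your overall strategy---drop the non-positive meander term $-(X_{u_*}-X_{xn^{-1/3}})$, then beat the Brownian increment $X^{(2)}_v - X^{(2)}_{c_h-u}$ by the parabolic penalty via a dyadic decomposition in $|\Delta_n^{x,y}|$---does recover the exponent $L^3$ at each fixed dyadic scale, but it has a genuine gap: it does not deliver the \emph{uniformity in $n$} that the lemma requires. The difficulty is hidden in your phrase ``there are only polynomially many pairs $(x,y)$\ldots negligible against the exponential''. Once you drop the meander term, the increment $X^{(2)}_v - X^{(2)}_{c_h-u}$ has length $\theta = |\Delta_n^{x,y}|n^{-1/3}$ but its \emph{starting point} $c_h-u$ ranges over an interval of width $2\eps_n$, since $|x-u_*n^{1/3}|\leq \eps_n n^{1/3}$. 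So the ``bad event'' on the block $\theta\in 2^l[1,2)Ln^{-1/9}$ is really
\[
\sup_{s\in[c_h-u_*-\eps_n,\,c_h-u_*+\eps_n]}\ \sup_{|t-s|\leq 2^{l+1}Ln^{-1/9}} |X^{(2)}_t-X^{(2)}_s|\ \geq\ c(2^lL)^2 n^{-1/18}.
\]
A chaining or union bound over the $\sim \eps_n n^{1/9}/(2^lL)$ sub-blocks of the $s$-range yields a prefactor $\eps_n n^{1/9}$ in front of $e^{-c(2^lL)^3}$, and Borell--TIS instead shifts the threshold by $\sqrt{\theta\log(\eps_n/\theta)}\asymp \sqrt{L}\,n^{-1/18}\sqrt{\log n}$, which is comparable to the threshold $c L^2 n^{-1/18}$ only when $L^3\gtrsim \log n$. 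In either case the bound blows up with $n$ for fixed $L$.

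The paper's proof avoids this precisely by \emph{not} discarding the meander term. It decomposes simultaneously over $l$ (dyadic scale of $|\Delta_n|$) and over $k$ (the position $|M_n^-+u_*n^{1/3}|\in[k,k+1)2^lLn^{2/9}$). For each fixed $k$, the starting point $c_h-u$ is now localized to an interval of length $2^lLn^{-1/9}$, so the $X^{(2)}$-supremum has the honest Gaussian tail $e^{-c(2^lL)^3}$ with no $n$-dependent prefactor. The price is an infinite sum over $k$, and this is where the meander term does the work: for $k\geq 1$ one splits on whether $\inf_{|u-u_*|\in[k,k+1)2^lLn^{-1/9}}(X_{u_*}-X_u)$ is below or above $k^{1/8}\sqrt{2^lL}\,n^{-1/18}$; the first event has probability $\lesssim k^{-9/8}$ by the meander estimate of Corollary~\ref{cor-meander}, while on the second event the extra $-\beta n^{1/6}\mathcal{M}^n_{k,l}$ in the exponent gives an additional $e^{-ck^{1/4}}$ after the Gaussian tail. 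Both are summable in $k$, uniformly in $n$. In short: the meander term is not just a sign to throw away---it is the mechanism that makes the position-sum converge independently of $n$, and your proposal needs to reinstate it.
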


\begin{proof}
	We can write $\bar{Z}_{n,\omega}^{>}(0,L) = \sum_{k,l \geq 0} \bar{Z}_{n,\omega}^{>}(0,L)_{k,l}$ with
	\[ 
	\bar{Z}_{n,\omega}^{>}(0,L)_{k,l} = \bar{Z}_{n,h}^{\omega,\beta} \big( |\Delta_n| \in  2^l [1,2)  L n^{2/9} \, , \, ||M_n^-|n^{-1/3} - u_*|  \in [k,k+1) 2^l L n^{-1/9} \big)\,.
	\]
	
	Using~\eqref{partition2}, we have
	\[ \bar{Z}_{n,\omega}^{>}(0,L)_{k,l} \leq C_h (\eps_n n^{1/3})^2 \exp \left( \beta n^{1/6} (\mathcal{X}^{(2)}_{k,l} - \mathcal{M}^n_{k,l}) - \frac{3\pi^2}{2 c_h^4} (2^lLn^{2/9})^2 n^{-1/3} \right) \,,\]
	where we have set 
	\[ \mathcal{M}^n_{k,l} \defeq \inf_{|u-u_*| \in [k,k+1) 2^l L n^{-1/9}} X_{u_*} - X_u \, , \quad \mathcal{X}^{(2)}_{k,l} \defeq \sup_{\substack{|\Delta_n^{x,y}| \in 2^l [1,2) L n^{2/9}\\ |u-u_*| \in [k,k+1) 2^l L n^{-1/9}}} |X^{(2)}_v - X^{(2)}_{c_h-u}| \]
	with $u=xn^{-1/3}$ and $v=y n^{-1/3}$. Thus, a union bound yields
	\[ \begin{split}
		\proba{\bar{Z}_{n,\omega}^{>}(0,L) \geq e^{-n^{1/9}}} & \leq \sum_{k,l = 0}^{+\infty} \proba{C_h (\eps_n n^{1/3})^2 e^{\beta n^{1/6} (\mathcal{X}^{(2)}_{k,l} - \mathcal{M}^n_{k,l})} e^{- \frac{3\pi^2}{2c_h^4} (2^l L)^2 n^{1/9}} \geq \frac{e^{-n^{1/9}}}{2^{l+1}} }\\
		&\leq \sum_{k,l = 0}^{+\infty} \proba{\beta n^{1/6} (\mathcal{X}^{(2)}_{k,l} - \mathcal{M}^n_{k,l}) \geq  c_h' n^{1/9}  2^{2l} L^2} \,,
	\end{split} \]
	where we have used that for $n$ large enough (how large depends only on $h$) 
	\[ n^{1/9}(\frac{3\pi^2}{2c_h^4} 2^{2l} L^2 - 1) - (l+1)\log 2 - 2 \log (\eps_n n^{1/3})  -\log C_h \geq c'_h n^{1/9} 2^{2l} L^2 \]
	for some constant $c_h'$, uniformly in $L\geq c_h^2/\pi$ and $l\geq 0$.
	We now work out an upper bound on $\proba{\mathcal{X}^{(2)}_{k,l} - \mathcal{M}^n_{k,l} \geq  c_h' n^{-1/18}  2^{2l} L^2/\beta}$, we first observe that writing $u_k = u_* + 2^l k L n^{-1/9}$, we have $|c_h-u_k-v| \leq |c_h - u - v| + |u-u_k| \leq 2^{l+1} L n^{-1/9}$ on the intervals where the supremum are taken in $\mathcal{X}^{(2)}_{k,l}$. Thus, we have the upper bound
	\begin{equation}\label{eq:localisation-u-MB}
		\begin{split}
			\mathcal{X}^{(2)}_{k,l} \leq \sup_{|c_h- u_k - v| \leq 2 \frac{2^l L}{n^{1/9}}} |X^{(2)}_v - X^{(2)}_{c_h-u_k}| + \sup_{|u-u_k| \leq \frac{2^l L}{n^{1/9}}} |X^{(2)}_{c_h-u_k} - X^{(2)}_{c_h-u}| \, .
		\end{split} 
	\end{equation}
	Write $\mathcal{X}^{(2),v}_{k,l}$ and $\mathcal{X}^{(2),u}_{k,l}$ for the first and the second term of the right-hand side of \eqref{eq:localisation-u-MB} respectively, as well as $\alpha_l \defeq c_h' n^{-1/18}  2^{2l} L^2/\beta$.
	We first need to control the term $k=0$, in which we know that $\mathcal{M}^n_{0,l} = 0$, then we are left to bound
	\[ \sum_{l = 0}^{+\infty} \proba{\beta n^{1/6} \mathcal{X}^{(2)}_{0,l} \geq  c_h' n^{1/9}  2^{2l} L^2} \leq \sum_{l = 0}^{+\infty} \left[ \proba{\mathcal{X}^{(2),v}_{0,l} \geq \frac{\alpha_{l}}{2}} 
	+ \proba{\mathcal{X}^{(2),u}_{0,l} \geq \frac{\alpha_{l}}{2}} \right] \, . \]
	By the reflection principle for Brownian motion, both of these variables are the modulus of a Gaussian of variance $2 \frac{2^l L}{n^{1/9}}$ and $\frac{2^l L}{n^{1/9}}$ respectively. Thus, we have the upper bound
	\[ \sum_{l = 0}^{+\infty} \proba{\beta n^{1/6} \mathcal{X}^{(2)}_{0,l} \geq  c_h' n^{1/9}  2^{2l} L^2} \leq \sum_{l = 0}^{+\infty} c_0 \left[ e^{- c_1 2^{3l} L^3} 
	+ e^{- c_2 2^{3l} L^3} \right] \leq (cst.) \sum_{l = 0}^{+\infty} e^{- c 2^{3l} L^3} \, . \]
	for some constants $c_0,c_1,c_2,c > 0$.
	\par We now focus on $k \geq 1$ and decompose on whether $\mathcal{M}^n_{k,l}$ is less or greater than $k^{1/8} (2^l L)^{1/2} n^{-1/18}$. On $\mathset{\mathcal{M}^n_{k,l} \leq k^{1/8} (2^l L)^{1/2} n^{-1/18}}$, we use Hölder inequality for $p > 1$:
	\[ \proba{\mathcal{M}^n_{k,l} \leq k^{1/8} \frac{\sqrt{2^l L}}{n^{1/18}}, \mathcal{X}^{(2)}_{k,l} - \mathcal{M}^n_{k,l} \geq \alpha_l} \leq \proba{\mathcal{M}^n_{k,l} \leq k^{1/8} \frac{\sqrt{2^l L}}{n^{1/18}}}^{1/p} \proba{\mathcal{X}^{(2)}_{k,l} \geq \alpha_l}^{1-\frac{1}{p}} . \]
	
	Since $k$ can we taken arbitrarily, in the definition of $\mathcal{M}^n_{k,l}$ we can replace $X_{u_*} - X_{u_*+u}$ by $\mathcal{M}_u$ for $\mathcal{M}$ a standard Brownian meander on $[0,1]$.
	Thus, with the help of Corollary \ref{cor-meander} with $\lambda = 2$ and the previous argument ($L$ and $k$ can be taken up to a positive multiplicative constant), we compute
	\[ \begin{split}
		\proba{\mathcal{M}^n_{k,l} \leq k^{1/8} \sqrt{2^l L} n^{-1/18}} &\leq \frac{16 k^{1/8} }{\sqrt{\pi k}}\left( 1 \wedge \frac{(2 k^{1/8})^2}{2 k} \right) + (cst.)  k^{5/4} \frac{e^{- 2  k^{1/4}}}{1 - e^{-8 k^{1/4} }}\\
		&\leq (cst.) k^{-9/8} + (cst.) k^{1/4} \frac{e^{- 2  k^{5/4}}}{1 - e^{-8 k^{1/4}}} \leq (cst.) k^{-9/8} \, .
	\end{split} \]
	
	And for the other probability, we use
	\[ \proba{\mathcal{X}^{(2)}_{k,l} \geq \alpha_l} = \proba{\mathcal{X}^{(2)}_{0,l} \geq \alpha_l} \leq (cst.) e^{- c 2^{3l} L^3} \, . \]
	Therefore,
	\[ \proba{\mathcal{M}^n_{k,l} \leq k^{1/8} \sqrt{2^l L} n^{-1/18}, \mathcal{X}^{(2)}_{k,l} - \mathcal{M}^n_{k,l} \geq \alpha_l} \leq (cst.) k^{-9/8p} e^{- c(1-\frac{1}{p}) 2^{3l} L^3} \, , \]
	which has a finite sum in $k \geq 1$ and $l \geq 0$ that goes to $0$ when $L \to +\infty$ when $p$ is suffienciently close to $1$.
	\par On the other hand,
	\[ \proba{\mathcal{M}^n_{k,l} \geq k^{1/8} \sqrt{2^l L} n^{-1/18}, \mathcal{X}^{(2)}_{k,l} - \mathcal{M}^n_{k,l} \geq \alpha_l} \leq \proba{\mathcal{X}^{(2)}_{k,l} \geq \alpha_l + k^{1/8} \sqrt{2^l L} n^{-1/18}} \, , \]
	and again by \eqref{eq:localisation-u-MB} and the Brownian reflection principle,
	\[ \proba{\mathcal{X}^{(2)}_{k,l} \geq \alpha_l + k^{1/8} \sqrt{2^l L} n^{-1/18}} \leq C e^{-c \frac{(\alpha_l + k^{1/8} \sqrt{2^l L} n^{-1/18})^2}{2^l L n^{-1/9}}} \leq (cst.) e^{-c_1 2^{3l}L^3} e^{-c_2 k^{1/4}} \, , \]
	which is again summable in $k,l$, with a sum that goes to $0$ when $L \to +\infty$.
	In conclusion, we proved that $\proba{\bar{Z}_{n,\omega}^{>}(0,L) \geq e^{-n^{1/9}}}$ is bounded by $ce^{-CL^3}$, uniformly in $n$ large enough, thus proving the lemma.
\end{proof}

\begin{lemma}\label{lem:excl-traj-K} There is a positive $C$ such that for any $n \geq 1$ such that $\eps_n < \frac12$, any $K \geq 1$
	\begin{equation}\label{eq:cv-proba-u}
		\proba{\frac{1}{n^{1/9}} \log \bar{Z}_{n,\omega}^{>}(K,0) \geq -1} \leq \frac{C}{K^{1/12}} \xrightarrow[K \to \infty]{} 0 \, . 
	\end{equation}
\end{lemma}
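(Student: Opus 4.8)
The plan is to mimic the proof of Lemma~\ref{lem:excl-traj-L}, exchanging the roles of the range-size direction $\Delta_n$ and the left-endpoint direction $M_n^-+u_*n^{1/3}$, and to work conditionally on $u_*$ (so that $u_*$ and $c_h-u_*$ may be treated as fixed positive constants and $X_{u_*}-X_u$ rewritten through~\eqref{eq:X-meandre} as a constant multiple of a standard Brownian meander evaluated at $|u-u_*|$ suitably rescaled). Writing $u=xn^{-1/3}$, $v=yn^{-1/3}$, I would first decompose $\bar Z_{n,\omega}^{>}(K,0)=\sum_{k,l\ge 0}\bar Z_{n,\omega}^{>}(K,0)_{k,l}$, where the $(k,l)$-piece restricts $|u-u_*|$ to the dyadic shell $2^k[1,2)Kn^{-1/9}$ and $|\Delta_n|$ to a dyadic shell around the critical scale $n^{2/9}$ (with one extra term covering $|\Delta_n|=\grdO(n^{2/9})$, where the Gaussian factor provides no suppression at all). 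Plugging~\eqref{partition2} and bounding the number of summands by a power of $n$, each piece is at most $(\mathrm{poly}\,n)\exp\big(\beta n^{1/6}(\mathcal{X}^{(2)}_{k,l}-\mathcal{M}^n_{k,l})-c\,2^{2l}n^{1/9}\big)$, where $\mathcal{M}^n_{k,l}\defeq\inf\{X_{u_*}-X_u\}$ over the relevant $u$-shell and $\mathcal{X}^{(2)}_{k,l}\defeq\sup|X^{(2)}_v-X^{(2)}_{c_h-u}|$ over the relevant $(u,v)$, exactly as in the proof of Lemma~\ref{lem:excl-traj-L}.

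By a union bound it then suffices to control, for each $(k,l)$, the probability that $\beta n^{1/6}(\mathcal{X}^{(2)}_{k,l}-\mathcal{M}^n_{k,l})\ge (c\,2^{2l}-1)n^{1/9}-\grdO(\log n)$. Since $\mathcal{M}^n_{k,l}\ge 0$ and its typical order on the $k$-th shell is $(2^kK)^{1/2}n^{-1/18}$, either the meander is atypically small by a factor $\asymp(2^kK)^{-1/2}$, or $\mathcal{X}^{(2)}_{k,l}$ itself must be at least of order $(1+2^{2l}+(2^kK)^{1/2})n^{-1/18}$. On the first event I would apply Hölder's inequality with exponent $p$, writing (with $\delta_k$ the above meander threshold and $\alpha_{k,l}$ the corresponding threshold for $\mathcal{X}^{(2)}$)
\[
\proba{\mathcal{M}^n_{k,l}\le\delta_k\,,\ \mathcal{X}^{(2)}_{k,l}-\mathcal{M}^n_{k,l}\ge\alpha_{k,l}}\ \le\ \proba{\mathcal{M}^n_{k,l}\le\delta_k}^{1/p}\,\proba{\mathcal{X}^{(2)}_{k,l}\ge\alpha_{k,l}}^{1-1/p},
\]
bounding the first factor by the small-infimum estimate for the Brownian meander (Corollary~\ref{cor-meander}), which decays only polynomially in $2^kK$, and the second by a Gaussian tail, via the reflection principle for $X^{(2)}$, that is exponentially small both in $n$ and in $2^{3l}$; the complementary event $\{\mathcal{M}^n_{k,l}\text{ not small}\}$ is handled directly by the reflection principle, as it forces $\mathcal{X}^{(2)}_{k,l}$ to be even larger. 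Summing the resulting geometric series over $k$ and $l$, with all polynomial-in-$n$ prefactors absorbed, leaves a bound $C\,K^{-\kappa}$ uniform in $n$ with $\eps_n<\tfrac12$, where $\kappa>0$ depends on the meander small-ball exponent and on $p$; choosing $p$ appropriately gives $\kappa=\tfrac1{12}$ (which is not expected to be sharp).

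The main obstacle, compared with Lemma~\ref{lem:excl-traj-L}, is that the decay is merely polynomial rather than super-exponential in the large parameter: in Lemma~\ref{lem:excl-traj-L} the factor $e^{-cL^3}$ came for free from the constraint $|\Delta_n|\ge Ln^{2/9}$ via the Gaussian term, whereas here the constraint bears on $M_n^-$ and the only gain is the (polynomially small) probability that the Brownian meander stays unusually close to $0$ over a macroscopic stretch. One must therefore keep careful track of the meander small-ball exponent, of the Hölder exponent $p$, and of the various polynomial-in-$n$ prefactors — harmless individually but all needing to be dominated uniformly in $n$ by $e^{-n^{1/9}}$ — in order to extract a summable estimate with an explicit negative power of $K$. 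A secondary subtlety, absent from Lemma~\ref{lem:excl-traj-L}, is that $\Delta_n$ is not bounded below in $\bar Z_{n,\omega}^{>}(K,0)$, so one must also treat the regime $|\Delta_n|=\grdO(n^{2/9})$, where there is no Gaussian suppression and the estimate rests entirely on the meander term.
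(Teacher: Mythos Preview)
Your proposal is correct and follows essentially the same route as the paper: decompose dyadically in the $M_n^-$ direction, control the meander infimum via Corollary~\ref{cor-meander} for polynomial decay in $K$, and control the $X^{(2)}$ supremum by the reflection principle for summability in the $\Delta_n$-direction. The paper's proof differs only in minor organization---it uses linear rather than dyadic $l$-shells $[l,l+1)n^{2/9}$ for $|\Delta_n|$, and instead of a single meander threshold it decomposes over the integer value $j$ of $n^{1/18}C_{n,l}$ (where $C_{n,l}$ is the $X^{(2)}$-supremum minus the $l$-dependent threshold) before applying Cauchy--Schwarz, which cleanly extracts the exponent $1/12$ from the meander bound $(cst.)\,j/(2^kK)^{1/6}$.
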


\begin{proof}
	We will use the same strategy as for Lemma \ref{lem:excl-traj-L}. In particular we control both $M_n^- + u_* n^{1/3}$ and $\Delta_n$, instead of only $M_n^- + u_* n^{1/3}$. Thus, we consider
	\[ 
	\bar{Z}_{n,\omega}^{>}(K,0)_{k,l} = \bar{Z}_{n,h}^{\omega,\beta} \big( |M_n^- + u_* n^{1/3}| \in 2^k [1,2) K n^{2/9}, |\Delta_n| \in [l,l+1) n^{2/9} \big) \, ,
	\]
	and when summing on $l \geq 0$ we get $\bar{Z}_{n,\omega}^{>}(K,0)_{k} = \bar{Z}_{n,h}^{\omega,\beta} \big( |M_n^- + u_* n^{1/3}| \in 2^k [1,2) K n^{2/9} \big)$ similar to the notation in Lemma \ref{lem:excl-traj-L}.
	Let us introduce 
	\[
	\xi_{k,l} \defeq -\inf_{|u| \in  2^{k} [1,2) K n^{-1/9}}  \{X_{u_*} - X_{u_* + u}\} + \sup_{|\Delta_n| \in [l,l+1) L n^{2/9}} |X^{(2)}_v - X^{(2)}_{c_h-u}| \, .
	\]
	With the same considerations as before, we have by a union bound
	\begin{equation}\label{eq:prop-restrict-1}
		\proba{\bar{Z}_{n,\omega}^{>}(K,0) \geq e^{-n^{1/9}}} \leq \sumtwo{k,l = 0}{2^k K \leq \eps_n n^{1/9}}^{+\infty} \proba{ C_h (\eps_n n^{1/3})^2 e^{\beta n^{1/6} \xi_{k,l}} e^{- \frac{3\pi^2}{2c_h^4} l^2 n^{1/9}} \geq \frac{e^{-n^{1/9}}}{2^{k+l+1}} } \,.
	\end{equation}
	Observe that each probability in the sum is equal to
	\[  \proba{\beta n^{1/6} \xi_{k,l} \geq \Big(\frac{3\pi^2}{2c_h^4} l^2 - 1\Big) n^{1/9} - (k + l + 1) \log 2 - 2 \log (\eps_n n^{1/3}) -\log C_h}. \]
	Since we have the restriction $2^k K n^{-1/9} \leq \eps_n$, assuming $K \geq 1$ we have $k \log 2 \leq \log (\eps_n n^{1/9})$, thus there is a constant $c>0$ such that for $n$ large enough (how large depends only on $h$),
	\[
	\Big(\frac{3\pi^2}{2c_h^4} l^2 - 1\Big) n^{1/9} - (k + l + 1) \log 2 - 2 \log (\eps_n n^{1/3}) \geq  \big(c l^2 - 2\big) n^{1/9} \, ,
	\]
	uniformly in $k, K, l$.
	Therefore, we get that
	\[
	\proba{\bar{Z}_{n,\omega}^{>}(K,0) \geq e^{-n^{1/9}}} \leq \sumtwo{k,l = 0}{2^k K \leq \eps_n n^{1/9}}^{+\infty} \proba{\beta n^{1/18} \xi_{k,l} \geq c l^2 - 2} \,.
	\]
	
	Let us define $C_{n,l} \defeq \sup_{|\Delta_n^{x,y}| \in [l,l+1) n^{2/9}} |X^{(2)}_v - X^{(2)}_{c_h-u}| - \beta^{-1} n^{-1/18} \left( c l^2 - 2 \right)$, with again $u=xn^{-1/3}$ and $v=y n^{-1/3}$. Recalling the definition of $\xi_{k,l}$ above we have
	\[
	\proba{\beta n^{1/18} \xi_{k,l} \geq c l^2 - 2} = \PP \Big( \inf_{|u| \in  2^{k} [1,2) K n^{-1/9}}  \{X_{u_*} - X_{u_* + u}\} \leq C_{n,l} \Big) \, .
	\]
	Let us now decompose over the values of $C_{n,l}$. Since $X_{u_*} - X_u \geq 0$, when $C_{n,l} < 0$ the probability equals $0$, so we can intersect with $C_{n,l} \geq 0$. We have
	\[ 
	\begin{split}
		&\proba{\beta n^{1/18} \xi_{k,l} \geq c l^2 - 2} \\
		& \quad \leq \sum_{j = 1}^{+\infty} \proba{\inf_{|u| \in  2^{k} [1,2) K n^{-1/9}}  X_{u_*} - X_{u_* + u} \leq j n^{-1/18}, C_{n,l} \in \left[j-1, j\right) n^{-1/18}}\\
		& \quad \leq \sum_{j = 1}^{+\infty}  \PP \Big(\inf_{|u| \in  2^{k} [1,2) K n^{-1/9}}  X_{u_*} - X_{u_* + u} \leq j n^{-1/18}\Big)^{1/2} \PP \Big(C_{n,l} \in \left[j-1, j\right) n^{-1/18}\Big)^{1/2} \, ,
	\end{split}
	\]
	where we have used Cauchy-Schwartz inequality. 
	
	First, let us treat the last probability: using the Brownian scaling, we have
	\[ \begin{split}
		\proba{C_{n,l} \in \left[j-1, j\right) n^{-1/18}} &= \PP \Big(\sup_{r \in \left[l,l+1\right)} |X^{(2)}_r| - \beta^{-1} (c l^2 - 2) \in \left[j-1, j\right)\Big)\\
		&\leq \PP \Big(\sup_{r \in \left[l,l+1\right)} |X^{(2)}_r| \geq j -1 +  \beta^{-1}(c l^2 - 2) \Big) \,.
	\end{split}  \]
	We can get a bound on this probability using usual Gaussian bounds and the reflection principle:
	\[ \PP \Big( \sup_{r \in \left[l,l+1\right)} |X^{(2)}_r| \geq \alpha \Big) \leq \PP \Big( \sup_{r \in \left[0,l+1\right)} |X^{(2)}_r| \geq \alpha \Big) \leq 2e^{-\frac{\alpha^2}{2(l+1)}} \,. \] 
	Then, we substitute $\alpha$ with $j -1+ \beta^{-1} (c l^2 - 2)$ to get the upper bound
	\[
	\proba{C_{n,l} \in \left[j-1, j\right) n^{-1/18}} \leq 2 e^{- c \frac{l^4}{l+1}} e^{- \frac{(j-c')^2}{2(l+1)}} e^{- c \frac{l^2}{l+1} (j-c')} \,,
	\]
	for some constants $c,c'$ (that depend only on $h,\beta$).
	
	For the other probability, with the argument explained previously (since $K$ and $j$ can again be taken up to a positive multiplicative constant), we only need to get a bound on
	\begin{equation}\label{eq:reflexion-apply}
		\PP \Big( \inf_{u \in  2^{k} [1,2) K n^{-1/9}} \mathcal{M}^+_u \leq j n^{-1/18} \Big) \,.
	\end{equation}
	For $\sigma = -1$ we can do the same reasoning, thus we only need to get a bound for~\eqref{eq:reflexion-apply}.
	We use Corollary \ref{cor-meander}: for any $\lambda > 0$, we have
	\[ \PP \Big( \inf_{u \in  [s,t]} \mathcal{M}^+_u \leq a \Big) \leq  \frac{8 \lambda a}{\sqrt{\pi s}}\left( 1 \wedge \frac{(\lambda a)^2}{2s} \right) + (cst.) \frac{a \sqrt{t}}{t-s} \frac{e^{-\frac{2}{t-s} a^2 (\lambda-1)^2}}{1 - e^{-\frac{2}{t-s} a^2 \lambda^2}} \,, \]
	which translates for $\lambda = (2^k K)^{1/3}$ to
	\[ \begin{split}
		\PP \Big( \inf_{u \in  2^{k} [1,2) K n^{-1/9}} \mathcal{M}^+_u \leq j n^{-1/18} \Big) &\leq \frac{8 j (2^k K)^{1/3}}{\sqrt{\pi 2^k K}} + \frac{j \sqrt{2^k K}}{2^k K} \frac{(cst.)}{1 - e^{- 2 (2^k K)^{2/3} \frac{j^2}{2^k K }}}\\
		&\leq \frac{16 j}{\sqrt{\pi} (2^k K)^{1/6}} + (cst.) \frac{j}{2^k K} (2^k K)^{5/6} \leq (cst.) \frac{j}{(2^k K)^{1/6}}
	\end{split} \]
	where we used that $\frac{1}{x(1-e^{-\alpha/x})}$ is bounded for $x \geq 1$, uniformly in $\alpha \geq 1$ (note that $j \geq 1$).
	
	Together with the above, this yields the following upper bound for $2^k K n^{-1/9} \leq \eps_n < \frac12$:
	\begin{equation}\label{eq:prop-restrict-2}
		\proba{\beta n^{1/6} \xi_{k,l} \geq n^{1/9} \left( c l^2 - 2 \right)} \leq \frac{(cst.)}{(2^k K)^{1/12}}\, e^{- c l^3} \sum_{j = 1}^{+\infty} j e^{- \frac{(j-c')^2}{2(l+1)}} e^{- c l (j-c')} \, .
	\end{equation} 
	
	The sum on $j \geq 1$ is bounded from above by $c'' (l+1)^3$ (where the constant $c''$ does not depend on $l\geq 0$), so we finally get
	\[ \proba{\bar{Z}_{n,\omega}^{>}(K,0) \geq e^{-n^{1/9}}} \leq \sum_{k,l = 0}^{+\infty} \proba{\beta n^{1/18} \xi_{k,l} \geq c l^2 - 2} \leq \frac{(cst.)}{K^{1/12}} \sum_{k,l = 0}^{+\infty} \frac{1}{2^{k/6}} (l+1)^3 e^{- c l^3} \,. \]
	The lemma follows since the last sum is finite.
\end{proof}

\begin{proof}[Proof of Proposition \ref{th:restrict-1/9}]
	Use Proposition \ref{prop-couplage} and consider some $n \geq n_0(\omega)$ such that $\eps_n < \delta(\omega)$ (recall the definitions in Proposition \ref{prop-couplage}). Then, using the same notation as in the proofs of Lemmas \ref{lem:excl-traj-K},\ref{lem:excl-traj-L}, we have
	\[ \begin{split}
		-\inf_{|u| \in  2^{k} [1,2) K} n^{1/18} \{X_{u_*} - X_{u_* + u n^{-1/9}}\} = -\inf_{|u| \in  2^{k} [1,2) K} \mathbf{B}_u \, .
	\end{split} \]
	On the other hand,
	\[ X^{(2)}_v - X^{(2)}_{c_h - u} = \frac12 \left( Y_{c_h-v} - X_{c_h-v} + Y_u - X_u \right) \]
	since $c_h - v = \Delta_n^{x,y}n^{-1/3} + u$ and $u = u_* + s$ with $s \leq \eps_n \leq \delta_0(\omega)$, this is therefore equal to $\frac12 \left( \mathbf{Y}_{c_h-v} - \mathbf{X}_{c_h-v} + \mathbf{Y}_u - \mathbf{X}_u \right)$, which means that
	\[ n^{1/18} \sup_{c_h-u-v \in n^{-1/9}} |X^{(2)}_v - X^{(2)}_{c_h - u}| = \frac12 \sup_{r \in [l,l+1) , s \leq \delta} \left( \mathbf{Y}_{r+s} - \mathbf{B}_{r+s} + \mathbf{Y}_s - \mathbf{B}_s \right) \, .  \]
	Thus, we see that the random quantities $\xi_{k,l}$ and $\mathcal{X}^{(2)}_l$ do not depend on $n$ when $n$ is large enough (meaning $n \geq n_0$), thus
	\[ \varlimsup_{n \to \infty} \frac{1}{n^{1/9}} \log \bar{Z}_{n,\omega}^{>}(K,0) = \frac{1}{n_0^{1/9}} \log \bar{Z}_{n_0,\omega}^{>}(K,0) \, , \] 
	the same being true for $\bar{Z}_{n,\omega}^{>}(0,L)$. This proves the proposition by applying Lemmas \ref{lem:excl-traj-L}, \ref{lem:excl-traj-K} to $n = n_0$.
\end{proof}

\subsection{Convergence of the restricted log partition function}

In this section we study the convergence of $n^{-1/9} \log \bar{Z}_{n,\omega}^{\leq}(K,L)$ for fixed $K$ and $L$ (large), in which we recall that $\bar{Z}_{n,\omega}^{\leq}(K,L) \defeq \bar{Z}_{n,h}^{\omega,\beta} \big( |\Delta_n| \leq L n^{2/9}, | M_n^-+u_*n^{1/3}| \leq K n^{2/9} \big)$. It is a bit more convenient to transform the condition $|\Delta_n| \leq L n^{2/9}$ into the condition $|M_n^+ - (c_h - u_*)n^{1/3}| \leq L n^{2/9}$, which restricts to the same trajectories after adjusting the value of $L$. Finally, since we plan to take the limit for $K,L \to \infty$
%in order to get all the relevant trajectories,
it is enough to treat the case where $K = L$. Thus, we define
\[ \bar{\mathcal{Z}}_{n,\omega}^{\leq K} \defeq \bar{Z}_{n,h}^{\omega,\beta} \big( | M_n^-+u_*n^{1/3}| \leq K n^{2/9}, |M_n^+ - (c_h - u_*)n^{1/3}| \leq K n^{2/9} \big) \, . \]
Note that for any $K > 1$, if we recall the definition \eqref{eq:def-Z>K} of $\bar{\mathcal{Z}}_{n,\omega}^{> K}$, we have
\[ \bar{Z}_{n,h}^{\omega,\beta} \big( |\Delta_n| \leq \eps_n n^{1/3} , |M_n^- + u_* n^{1/3}| \leq \eps_n n^{1/3} \big) = \bar{\mathcal{Z}}_{n,\omega}^{\leq K} + \bar{\mathcal{Z}}_{n,\omega}^{> K} \, . \]

As explained in the beginning of this section, as $K \to \infty$, $\bar{\mathcal{Z}}_{n,\omega}^{\leq K}$ contains all the relevant trajectories giving the main contribution to the partition function.

\begin{proposal}\label{prop-cv-1/9}
	For any $h,\beta > 0$ and any $K > 1$, $\PP$-almost surely,
	\[  \lim_{n\to \infty} \frac{\sqrt{2}}{\beta n^{1/9}} \log \bar{\mathcal{Z}}_{n,\omega}^{\leq K} = \sup_{-K \leq u,v \leq K} \left\{ \mathcal{Y}_{u,v} - \frac{3\pi^2}{\beta c_h^4 \sqrt{2}} \big( u + v \big)^2 \right\} \eqdef \mathcal{W}_2^K \, , \]
	with $\mathcal{Y}_{u,v} \defeq  \mathbf{Y}_u - \mathbf{Y}_{-v} - \chi (\mathbf{B}_u + \mathbf{B}_v)$ and $(\mathbf{B},\mathbf{Y})$ as in Proposition \ref{prop-couplage}.
\end{proposal}

\begin{proof}[Proof of Proposition \ref{prop-cv-1/9}]
	For any $\delta > 0$, we define the following subsets of $\NN$
	\begin{equation}\label{eq-def-intervalles}
		\mathscr{C}^-_{n,\delta}(k_1) \defeq \mathset{ x \, : \, \left\lfloor \frac{x - u_*n^{1/3}}{\delta n^{2/9}} \right\rfloor = k_1 } \, , \quad \mathscr{C}^+_{n,\delta}(k_2) \defeq \mathset{ y \, : \, \left\lfloor \frac{y - (c_h - u_*)n^{1/3}}{\delta n^{2/9}} \right\rfloor = k_2 }
	\end{equation}
	as well as $\mathscr{C}_{n,\delta}(k_1,k_2) \defeq \mathscr{C}^-_{n,\delta}(k_1) \times \mathscr{C}^+_{n,\delta}(k_2)$. Recall \eqref{partition2} and the notation
	\[ 
	\Omega_{n}(x,y) = \Omega_{n}^{x,y} = - \left( X_{u_*} - X_{xn^{-1/3}} \right) + X^{(2)}_{yn^{-1/3}} - X^{(2)}_{c_h-x n^{-1/3}} \, . 
	\]
	Similarly to the proof of Theorem \ref{th-1/6}, we define
	\[ \bar{\Lambda}_{n,h}^{\omega,\beta}(K,\delta) \defeq \sum_{k_1 = -K/\delta}^{K/\delta} \sum_{k_2 = -K/\delta}^{K/\delta} \bar Z_{n,h}^{\omega,\beta} (k_1,k_2,\delta) \, , \]
	where
	\begin{equation}\label{eq:Z-delta-1/9}
		\bar Z_{n,h}^{\omega,\beta} (k_1,k_2,\delta) \defeq  \sum_{(x,y) \in \mathscr{C}_{n,\delta}(k_1,k_2)} \exp \left( - \beta n^{1/6} \Omega_n^{x,y} -  \hat c_{h} \frac{(\Delta_n^{x,y})^2}{n^{1/3}}(1 + \bar{o}(1)) \right) \, .
	\end{equation}
	with $\hat c_h = \frac{3\pi^2}{2c_h^4}$.
	Then, we can write
	\[ \log \bar{\mathcal{Z}}_{n,\omega}^{\leq K} = \log \big(1 + \bar{o}(1)\big)\psi_h \sin \left( \frac{\pi u_*}{c_h} \right) + \log \bar{\Lambda}_{n,h}^{\omega,\beta}(K,\delta) \, . \]
	Note that both $\bar{o}(1) \to 0$ are the deterministic quantities mentioned in Section \ref{sec:part-homogene-desordre}.
	Again, we only have to get bounds on the maximum of $\bar Z_{n,h}^{\omega,\beta} (k_1,k_2,\delta)$, as
	\begin{equation}\label{eq:Lambda-1/9}
		0 \leq \log \bar{\Lambda}_{n,h}^{\omega,\beta}(K,\delta) - \max_{-\frac{K}{\delta} \leq k_1,k_2 \leq \frac{K}{\delta}} \log \bar Z_{n,h}^{\omega,\beta} (k_1,k_2,\delta) \leq 2 \log \frac{4K}{\delta} \, ,
	\end{equation}
	and $n^{-1/9} 2 \log \frac{4K}{\delta}$ goes to $0$ as $n\to\infty$.
	
	Now, if we factorize $\bar{Z}_{n,h}^{\omega,\beta}(k_1,k_2,\delta)$ by the contribution of $x = \hat x_{k_1} := u_*n^{1/3} + k_1\delta n^{2/9}$ and $y=\hat y_{k_2}:=(c_h-u_*)n^{1/3} + k_2\delta n^{2/9}$ respectively, we have
	\[ \bar{Z}_{n,h}^{\omega,\beta}(k_1,k_2,\delta) = e^{\Xi_n(k_1,k_2,\delta)} \sum_{(x,y) \in \mathscr{C}_{n,\delta}(k_1,k_2)} e^{- \beta n^{1/6} \zeta^{k_1,k_2}_{n,\delta} (\frac{x}{n^{1/3}},\frac{y}{n^{1/3}}) + \frac{\hat c_h}{n^{1/3}} \left( (k_1 + k_2)^2 \delta^2 n^{4/9} - (\Delta_n^{x,y})^2 \right)} \]
	where we have defined
	\begin{equation}\label{eq:1/9-grain}
		\Xi_n(k_1,k_2,\delta) \defeq \beta n^{1/6} \Omega_{n}( \hat x_{k_1},\hat y_{k_2} ) - n^{1/9}  \hat c_h \big( k_1 \delta + k_2 \delta \big)^2
	\end{equation}
	and
	\begin{equation}\label{eq:def-zeta}
		\begin{split}
			\zeta^{k_1,k_2}_{n,\delta} (u,v) &\defeq \Omega_n(un^{1/3}, vn^{1/3}) - \Omega_{n}( \hat x_{k_1},\hat y_{k_2} ) \\
			&=X_u - X_{u_* + \frac{k_1\delta}{n^{1/9}}} - X^{(2)}_{c_h - u} + X^{(2)}_{c_h - u_* - \frac{k_1 \delta}{n^{1/9}}} + X^{(2)}_v - X^{(2)}_{c_h - u_* + \frac{k_2 \delta}{n^{1/9}}} \, .
		\end{split}
	\end{equation}
	Finally, define
	\[ 
	\bar{R}_n(k_1,k_2,\delta) \defeq \sup_{(x,y) \in \mathscr{C}_{n,\delta}(k_1,k_2)} \mathset{ \beta n^{1/6} \left| \zeta^{k_1,k_2}_{n,\delta} (\frac{x}{n^{1/3}},\frac{y}{n^{1/3}}) \right|
		+ \frac{\hat c_{h}}{n^{1/3}} \left| (k_1 + k_2)^2 \delta^2 n^{4/9} - (\Delta_n^{x,y})^2 \right|} \, , 
	\]
	then we have
	\begin{equation}\label{eq:reste-1/9}
		\left| \log \bar{Z}_{n,h}^{\omega,\beta}(k_1,k_2,\delta) - \Xi_n(k_1,k_2,\delta) \right| \leq \bar{R}_n(k_1,k_2,\delta) + \log |\mathscr{C}_{n,\delta}(k_1,k_2)|.
	\end{equation}
	Since $n^{-1/9} \log |\mathscr{C}_{n,\delta}(k_1,k_2)| \to 0$ as $n \to \infty$, in the rest of the proof we have to control $n^{-1/9} \bar{R}_n(k_1,k_2,\delta)$ and then prove the convergence of $n^{-1/9} \Xi_n(k_1,k_2,\delta)$. Afterwards we will plug those convergences in \eqref{eq:Lambda-1/9} and \eqref{eq:reste-1/9} to prove Proposition \ref{prop-cv-1/9}.

	\medskip\noindent
	\textit{Control of $\bar{R}_n(k_1,k_2,\delta)$. \ }
	We now seek a bound on $\bar{R}_n(k_1,k_2,\delta)$. First we have 
	\[ \begin{split}
		\left| n^{1/9} \big( k_1 \delta + k_2 \delta \big)^2 - \frac{(\Delta_n^{x,y})^2}{n^{1/3}} \right| &\leq 2 \frac{k_1\delta + k_2 \delta}{n^{1/
				9}} \left| x + y - c_h n^{1/3} - k_1\delta n^{2/9} - k_2\delta n^{2/9} \right|\\
		& \leq 4 |k_1 + k_2| \delta^2 n^{1/9}  \leq 4K \delta n^{1/9}\, .
	\end{split} \]
	
	To control the random part $\zeta^{k_1,k_2}_{n,\delta} (x n^{-1/3},y n^{-1/3})$, we use the following proposition, that we prove afterwards.
	
	\begin{proposal}\label{prop-meandre-intervalle}
		Let $\delta_j = 2^{-j}, j \in \NN$, then, $\PP$-almost surely, there exists a positive $C_\omega$ such that for any $n$ and $j$ large enough, any $k_1,k_2 \in \llbracket - \frac{K}{\delta_j}, \frac{K}{\delta_j} \rrbracket$, we have
		\[ n^{1/6} \sup_{(x,y) \in \mathscr{C}_{n,\delta_j}(k_1,k_2)} \left| \zeta^{k_1,k_2}_{n,\delta} (x n^{-1/3},y n^{-1/3}) \right| \leq C_\omega \delta_j^{1/4} n^{1/9} \, . \]
	\end{proposal}
	
	We will still denote this parameter by $\delta$ while keeping in mind that $\delta \to 0$ along a specific sequence.
	Assembling these results, we see that
	\begin{equation}\label{eq:borne-reste-1/9}
		\begin{split}
			\frac{1}{n^{1/9}} \left| \bar{R}_n(k_1,k_2,\delta) \right| &\leq \beta C_\omega \delta^{1/4} + 4 \hat{c}_h K \delta \eqdef \eps(\omega,\delta) \,.
		\end{split} 
	\end{equation}
	Thus, $n^{-1/9} \left| \bar{R}_n(k_1,k_2,\delta) \right|$ is bounded by a function $\eps(\omega,\delta)$ that goes to $0$ as $\delta \downarrow 0$ uniformly in $-\frac{K}{\delta} \leq k_1,k_2 \leq \frac{K}{\delta}$, for almost all $\omega$.
	
	\medskip\noindent
	\textit{Convergence of $n^{-1/9} \Xi_n(k_1,k_2,\delta)$. \ } 
	As in the proof of Theorem \ref{th-1/6} we write $u = k_1 \delta$ and $v = k_2 \delta$ in \eqref{eq:1/9-grain}:
	recalling the definition~\eqref{fact-1/9} of $\Omega$, this leads to
	\begin{equation}\label{eq-th-1/9-pre-cv}
		\frac{\Xi_n(k_1,k_2,\delta)}{\beta n^{1/9}} =  - n^{1/18} \big( X_{u_*} - X_{u_*+\frac{u}{n^{1/9}}} \big) - c_{h,\beta} \big( u + v \big)^2 + n^{1/18} \big( X^{(2)}_{c_h - u_* + \frac{v}{n^{1/9}}} - X^{(2)}_{c_h - u_* - \frac{u }{n^{1/9}}} \big),
	\end{equation}
	with $c_{h,\beta} =\beta^{-1} \hat c_h$.

	\par 
	Recall Proposition \ref{prop-couplage} and its notation.
	Set
	\[ X_u = X^{(1)}_u + X^{(2)}_{c_h-u} \,, \qquad Y_u = X^{(1)}_u - X^{(2)}_{c_h-u}, \]
	and denote by $\mathbf{B}$ a two-sided three-dimensional Bessel process and by $\mathbf{Y}$ a standard Brownian motion, independent from $\mathbf{B}$. 
	Then for any $n \geq n_0(\omega)$,
	\[ n^{1/18} \big( X_{u_*} - X_{u_*+\frac{u}{n^{1/9}}} \big) =  \sqrt{2} \chi \mathbf{B}_u \,,\]
	with $\chi=\chi(\omega, u) = \left(\sqrt{c_h - u_*} \indic{u \geq 0} + \sqrt{u_*} \indic{u < 0} \right)^{-1}$,
	and
	\[ \begin{split}
		n^{1/18} \big( X^{(2)}_{c_h - u_* + \frac{v}{n^{1/9}}} - X^{(2)}_{c_h - u_* - \frac{u}{n^{1/9}}} \big) &= \frac{n^{1/18}}{2} \left( X_{u_* - \frac{v}{n^{1/9}}} - Y_{u_* - \frac{v}{n^{1/9}}} - X_{u_* + \frac{u}{n^{1/9}}} + Y_{u_* + \frac{u}{n^{1/9}}} \right)\\ &= \frac{1}{\sqrt{2}} \Big( \chi(\mathbf{B}_u - \mathbf{B}_v) + \mathbf{Y}_u - \mathbf{Y}_{-v} \Big) \, .
	\end{split} \]
	Assembling those results with \eqref{eq-th-1/9-pre-cv}, we established the following convergence (which is an identity for $n\geq n_0(\omega)$):
	\begin{equation}\label{eq:Xi-cv}
		\frac{\Xi_n(k_1,k_2,\delta)}{\beta n^{1/9}} \xrightarrow[n \to \infty]{\PP-a.s.} \frac{1}{\sqrt{2}} \left( -\chi(\mathbf{B}_u +\mathbf{B}_v) + \mathbf{Y}_u - \mathbf{Y}_{-v} \right) - c_{h,\beta} \big( u + v \big)^2 \,.
	\end{equation}
	
	\medskip\noindent
	\textit{Conclusion of the proof. \ }
	If we define $\mathcal{Y}_{u,v} \defeq \mathbf{Y}_u - \mathbf{Y}_{-v} - \chi(\mathbf{B}_u + \mathbf{B}_v)$, combining \eqref{eq:borne-reste-1/9} and \eqref{eq:Xi-cv} with \eqref{eq:reste-1/9} proves
	\begin{equation}\label{eq:cv-z-bar-1/9}
		\varlimsup_{n\to \infty} \left| \frac{1}{\beta n^{1/9}} \log \bar{Z}_{n,h}^{\omega,\beta}(k_1,k_2,\delta) - \mathcal{Y}_{k_1 \delta,k_2\delta} + c_{h,\beta} (k_1 \delta + k_2 \delta)^2 \right| \leq \eps(\omega,\delta).
	\end{equation}
	Then, \eqref{eq:Lambda-1/9} and \eqref{eq:cv-z-bar-1/9} lead to
	\[ \varlimsup_{n\to \infty} \frac{\log \bar{\mathcal{Z}}_{n,\omega}^{\leq K}}{\beta n^{1/9}} \leq \sup_{-K \leq u,v \leq K} \left\{ \frac{1}{\sqrt{2}} \mathcal{Y}_{u,v} - \frac{3\pi^2}{2 \beta c_h^4} \big( u + v \big)^2 + \eps(\omega,\delta) \right\} \quad \text{$\PP$-a.s.} \, , \]
	and, using the uniform continuity of $\mathcal{Y}_{u,v}$ and of $(u+v)^2$ on $[-K,K]^2$, we have
	\[ \varliminf_{n\to \infty} \frac{\log \bar{\mathcal{Z}}_{n,\omega}^{\leq K}}{\beta n^{1/9}} \geq \sup_{-K \leq u,v \leq K} \left\{ \frac{1}{\sqrt{2}} \mathcal{Y}_{u,v} - \frac{3\pi^2}{2 \beta c_h^4} \big( u + v \big)^2 - \eps(\omega,\delta) \right\} \quad \text{$\PP$-a.s.} \]
	Finally, letting $\delta$ go to $0$ proves the convergence of $n^{-1/9} \log \bar{\mathcal{Z}}_{n,\omega}^{\leq K}$.
\end{proof}

\begin{proof}[Proof of Proposition \ref{prop-meandre-intervalle}]
	Recall the definition \eqref{eq-def-intervalles} of $\mathscr{C}^-_{n,\delta}(k_1)$ and $\mathscr{C}^+_{n,\delta}(k_2)$ as well as the notation $\mathscr{C}_{n,\delta}(k_1,k_2) = \mathscr{C}^-_{n,\delta}(k_1) \times \mathscr{C}^+_{n,\delta}(k_2)$
	The proof essentially boils down to the following lemma and a use of Borel-Cantelli lemma.
	
	\begin{lemma}\label{lem:proba-A}
		There exists some positive constants $\lambda,\mu$ such that for any $\delta \in (0,1)$ and any $C\geq 1$,
		\[ \sup_{n \geq 1} \sup_{-\frac{K}{\delta} \leq k_1,k_2 \leq \frac{K}{\delta}} \proba{\sup_{(x,y) \in \mathscr{C}_{n,\delta}(k_1,k_2)}  \left| \zeta^{k_1,k_2}_{n,\delta} (x n^{-1/3},y n^{-1/3}) \right| \geq C \frac{\delta^{1/4}}{n^{1/18}}} \leq \mu e^{- \lambda \frac{C}{\delta^{1/4}}} . \]
	\end{lemma}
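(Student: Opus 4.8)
The plan is to bound the supremum inside the probability by a sum of oscillations of the Brownian motions $X$ and $X^{(2)}$ over short space-intervals, and then to estimate each such oscillation by a reflection-type argument. Concretely, starting from \eqref{eq:def-zeta}, note that for $(x,y)\in\mathscr{C}_{n,\delta}(k_1,k_2)$ one has $u:=xn^{-1/3}\in[u_*+k_1\delta n^{-1/9},u_*+(k_1+1)\delta n^{-1/9})$ and similarly $v:=yn^{-1/3}$ lies in an interval of length $\delta n^{-1/9}$ around $(c_h-u_*)+k_2\delta n^{-1/9}$. Hence $\zeta^{k_1,k_2}_{n,\delta}(u,v)$ is a sum of three differences, each an increment of $X$ (near $u_*$) or of $X^{(2)}$ over a space-interval of length at most $\delta n^{-1/9}$; moreover, since $|k_i|\le K/\delta$, all these increments take place within distance $(K+1)n^{-1/9}$ of $u_*$, resp.\ of $c_h-u_*$ — a fact I will use crucially. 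A union bound then reduces the lemma to showing that each of the three oscillations exceeds $\tfrac13 C\delta^{1/4}n^{-1/18}$ with probability at most a constant times $\exp(-\lambda' C\delta^{-1/4})$.

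For the two $X^{(2)}$-terms this is immediate from the reflection principle: the probability that a standard Brownian motion oscillates by more than $a$ on an interval of length $\ell\le\delta n^{-1/9}$ is at most $2\exp(-a^2/(2\ell))$, and taking $a$ equal to $\tfrac13 C\delta^{1/4}n^{-1/18}$ gives $a^2/\ell\ge\tfrac1{18}C^2\delta^{-1/2}$, with the factors of $n$ cancelling exactly; this is uniform in $n$ and in $k_1,k_2$. For the $X$-term, which is the delicate one, I would use Proposition \ref{prop:ecriture-X-meandres} (equivalently \eqref{eq:X-meandre}): conditionally on $u_*$, the increment $X_u-X_{u_*+k_1\delta n^{-1/9}}$ is, up to the fixed constant $\sqrt{2(c_h-u_*)}$ or $\sqrt{2u_*}$, an increment of a standard Brownian meander between two times that are both $O(n^{-1/9})$ — here the bound $|k_i|\le K/\delta$ is what keeps the relevant times small — and that differ by at most $O(\delta n^{-1/9})$. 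So the problem becomes the control, uniformly in its position, of the oscillation of a Brownian meander $\mathcal{M}$ over a short interval $[r,r+h]$ with $h=O(\delta n^{-1/9})$ and $r+h\to0$.

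To handle this I would invoke the meander estimates of Appendix \ref{appendix-meandre}: near the origin the law of $\mathcal{M}$ is absolutely continuous with respect to that of a three-dimensional Bessel process $R$ started at $0$, with density bounded by a constant once $r+h$ is small; writing $R=|W|$ for a three-dimensional Brownian motion $W$ gives $|R_{r+s}-R_r|\le|W_{r+s}-W_r|$, so the reflection principle applied coordinatewise yields a Gaussian tail $C_1\exp(-c_1 a^2/h)$ for the oscillation of $R$ — hence, up to the fixed constant, of $\mathcal{M}$ — over $[r,r+h]$, uniformly in $r$. With $a$ a constant times $C\delta^{1/4}n^{-1/18}$ and $h=O(\delta n^{-1/9})$ this again gives $a^2/h$ of order $C^2\delta^{-1/2}$. (When $\delta n^{2/9}<2$ the set $\mathscr{C}_{n,\delta}(k_1,k_2)$ has at most one point and the oscillation vanishes; for the remaining small values of $n$ the same conclusion holds using, away from the origin of the meander, the comparison with a Brownian motion rather than with $R$.)

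Assembling the three bounds and using that $C\ge1$, $\delta<1$ force $C\delta^{-1/4}\ge1$ and therefore $C^2\delta^{-1/2}=(C\delta^{-1/4})^2\ge C\delta^{-1/4}$, each of the three probabilities is at most $\mathrm{(const)}\exp(-\mathrm{(const)}\,C\delta^{-1/4})$ uniformly in $n\ge1$ and in $|k_1|,|k_2|\le K/\delta$, which gives the lemma with $\mu$ large and $\lambda$ small enough. I expect the main obstacle to be precisely the meander step: what is genuinely needed is a Gaussian tail for the oscillation of a Brownian meander over a short interval near its starting point that is \emph{uniform in the location of the interval}, and the clean way to obtain this uniformity is the comparison with the three-dimensional Bessel process supplied by Appendix \ref{appendix-meandre}. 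This is also why the bound is deliberately stated in the weak form $e^{-\lambda C\delta^{-1/4}}$ rather than the stronger $e^{-\lambda C^2\delta^{-1/2}}$ that the argument actually delivers: the weak form is exactly what survives the Borel-Cantelli summation over $\delta_j=2^{-j}$ and over the $O((K/\delta_j)^2)$ pairs $(k_1,k_2)$ in the proof of Proposition \ref{prop-meandre-intervalle}.
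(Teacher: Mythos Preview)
Your overall strategy is sound and close to the paper's, but there is a genuine gap in the step where you treat the two $X^{(2)}$-increments by the ordinary Brownian reflection principle. The intervals over which you take the supremum are centered at $c_h-u_*-k_1\delta n^{-1/9}$ and $c_h-u_*+k_2\delta n^{-1/9}$, and $u_*=\argmax_u\{X^{(1)}_u+X^{(2)}_{c_h-u}\}$ is \emph{not} independent of $X^{(2)}$. Hence, conditionally on $u_*$, the process $X^{(2)}$ is no longer a Brownian motion, and the bound $\PP(\text{osc}\ge a)\le 2e^{-a^2/2\ell}$ does not follow. This is precisely why the paper does \emph{not} work with the decomposition \eqref{eq:def-zeta} directly: it first rewrites $2\zeta^{k_1,k_2}_{n,\delta}$ as a sum of two $X$-increments and two $Y$-increments, where $Y_u=X^{(1)}_u-X^{(2)}_{c_h-u}$. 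The point of this rewriting is that the increments of $Y$ are independent of those of $X$ (sum and difference of two i.i.d.\ Gaussian increments), and since $u_*$ is measurable with respect to $X$, the $Y$-increments remain genuine Brownian increments after conditioning on $u_*$; the reflection principle then applies cleanly. Your argument can be repaired exactly by inserting this $(X,Y)$ rewriting before the union bound.

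For the meander part your route differs from the paper's and is in principle workable. The paper uses the meander reflection inequality (Lemma~\ref{reflexion}) together with an explicit exponential-moment bound computed from the transition densities \eqref{meandre-transition}--\eqref{meandre-transition-0}; you instead propose to transfer the tail estimate from the Bessel-$3$ process via the absolute continuity of Proposition~\ref{prop-meandre-scaling}. That works, but your claim ``density bounded by a constant once $r+h$ is small'' needs to be stated carefully: the Radon--Nikodym derivative of the standard meander on $\mathcal{F}_s$ with respect to Bessel-$3$ is $\sqrt{\pi/2}\,\EE^B[X_1^{-1}\mid X_s]$, which is indeed bounded uniformly in $X_s\ge 0$ for $s$ bounded away from $1$, but this has to be checked rather than asserted. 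Either approach yields the Gaussian-type tail $e^{-cC^2/\sqrt{\delta}}$, which, as you note, is stronger than the stated $e^{-\lambda C/\delta^{1/4}}$.
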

	
	Using Lemma \ref{lem:proba-A} and a union bound immediately yields
	\[ \sup_{n \geq 1} \proba{\sup_{-\frac{K}{\delta} \leq k_1,k_2 \leq \frac{K}{\delta}} \sup_{(x,y) \in \mathscr{C}_{n,\delta}(k_1,k_2)} \Big| \zeta^{k_1,k_2}_{n,\delta} (u,v) \Big| \geq C \frac{\delta^{1/4}}{n^{1/18}}} \leq \left(\frac{K}{\delta}\right)^2 \mu e^{- \lambda \frac{C}{\delta^{1/4}}}. \]
	Summing over $\delta_j = 2^{-j}$ gives a bound which is summable in $C$: this allows us to use a Borel-Cantelli lemma. This means that with $\PP$-probability $1$, there is a positive $C_\omega$ such that for all $j \geq 0$, for all $-K 2^j \leq k_1,k_2 \leq K 2^j$, for all $(x,y) \in \mathscr{C}_{n,\delta}(k_1,k_2)$, we have $\zeta^{k_1,k_2}_{n,\delta} \left(\frac{x}{n^{1/3}},\frac{y}{n^{1/3}}\right) \leq C_\omega \delta^{1/4} n^{-1/18}$, thus proving the proposition.
\end{proof}

\begin{proof}[Proof of Lemma \ref{lem:proba-A}]
	Recall the definition \eqref{eq:def-zeta}
	\[ \zeta^{k_1,k_2}_{n,\delta} (u,v) = X_u - X_{u_* + \frac{k_1\delta}{n^{1/9}}} - X^{(2)}_{c_h - u} + X^{(2)}_{c_h - u_* - \frac{k_1 \delta}{n^{1/9}}} + X^{(2)}_v - X^{(2)}_{c_h - u_* + \frac{k_2 \delta}{n^{1/9}}} \]
	and that $2 X^{(2)}_{c_h - t} = X_t - Y_t$. Then we can rewrite $\zeta^{k_1,k_2}_{n,\delta} (u,v)$ as
	\[ \begin{split}
		\zeta^{k_1,k_2}_{n,\delta} (u,v) = X_u - X_{u_* + \frac{k_1\delta}{n^{1/9}}} - \frac{X_u - Y_u}{2}&+ \frac{X_{c_h - v} - Y_{c_h - v}}{2}\\
		& + \frac{X_{u_* + \frac{k_1 \delta}{n^{1/9}}} - Y_{u_* + \frac{k_1 \delta}{n^{1/9}}}}{2} - \frac{X_{u_* - \frac{k_2 \delta}{n^{1/9}}} - Y_{u_* - \frac{k_2 \delta}{n^{1/9}}}}{2} \, ,
	\end{split} \]
	which simplifies to
	\[ \begin{split}
		2 \zeta^{k_1,k_2}_{n,\delta} (u,v) &= X_u + Y_u - (X_{u_* + \frac{k_1 \delta}{n^{1/9}}} + Y_{u_* + \frac{k_1 \delta}{n^{1/9}}}) + X_{c_h - v} - Y_{c_h - v} - (X_{u_* - \frac{k_2 \delta}{n^{1/9}}} - Y_{u_* - \frac{k_2 \delta}{n^{1/9}}})\\
		&= X_u - X_{u_* + \frac{k_1 \delta}{n^{1/9}}} + X_{c_h - v} - X_{u_* - \frac{k_2 \delta}{n^{1/9}}} + Y_u - Y_{u_* + \frac{k_1 \delta}{n^{1/9}}} + Y_{u_* - \frac{k_2 \delta}{n^{1/9}}} - Y_{c_h - v}\, .
	\end{split} \]
	We split $\zeta^{k_1,k_2}_{n,\delta} (u,v)$ into four parts corresponding to the terms in $X$ and those in $Y$:
	\begin{itemize}
		\item $X_u - X_{u_* + \frac{k_1 \delta}{n^{1/9}}}$ and $X_{c_h - v} - X_{u_* - \frac{k_2 \delta}{n^{1/9}}}$ which we call ``meander parts'' because of \eqref{eq:X-meandre}
		\item $|Y_u - Y_{u_* + \frac{k_1 \delta}{n^{1/9}}}|$ and $|Y_{u_* - \frac{k_2 \delta}{n^{1/9}}}- Y_{c_h - v}|$ which we call ``Brownian parts''.
	\end{itemize}
	We use a union bound to separately control the probability for each increment to be greater than $\frac{C \delta^{1/4}}{8 n^{1/18}}$.
	
	\medskip
	\noindent
	\textit{Control of the Brownian parts. \ }
	First, recall that by construction, $X$ and $Y$ are independent. This also means that since $u_*$ is $X$-measurable, $u_*$ and $Y$ are also independent. Thus, the Brownian reflection principle yields
	\[ \sup_{u n^{1/3} \in \mathscr{C}_{n,\delta}^-(k_1)}  \Big| Y_u - Y_{u_* + \frac{k_1 \delta}{n^{1/9}}} \Big| \overset{(d)}{=} \sup_{v n^{1/3} \in \mathscr{C}_{n,\delta}^+(k_2)} \Big| Y_{u_* - \frac{k_2 \delta}{n^{1/9}}} - Y_{c_h - v} \Big| \overset{(d)}{=} |W_\frac{\delta}{n^{1/9}}| \, , \]
	where $W$ is a standard Brownian motion. This leads us to
	\[ \begin{split}
		\proba{\sup_{(u,v) n^{1/3} \in \mathscr{C}_{n,\delta}(k_1,k_2)} \Big| Y_u - Y_{u_* + \frac{k_1 \delta}{n^{1/9}}}\Big| \geq \frac{C \delta^{1/4}}{8 n^{1/18}}}
		\leq \proba{|W_\frac{\delta}{n^{1/9}}| \geq \frac{C \delta^{1/4}}{8 n^{1/18}}} \leq  e^{-\frac{C^2}{128 \sqrt{\delta}}} \, ,
	\end{split} \]
	and similarly for $|Y_{u_* - \frac{k_2 \delta}{n^{1/9}}} - Y_{c_h - v}|$.

	\medskip
	\noindent
	\textit{Control of the meander parts. \ }
	We have to bound the following:
	\begin{equation}\label{eq:proba-meandre-union}
		\PP \Bigg( \sup_{u n^{1/3} \in \mathscr{C}_{n,\delta}^-(k_1)} |X_u - X_{u_* + \frac{k_1 \delta}{n^{1/9}}}| \geq \frac{C \delta^{1/4}}{8 n^{1/18}} \Bigg)
	\end{equation}
	and similarly for $|X_{c_h - v} - X_{u_* - \frac{k_2 \delta}{n^{1/9}}}|$;
	we will focus on bounding~\eqref{eq:proba-meandre-union} since the other bound follows from it\footnote{Observe that if $v n^{1/3} \in \mathscr{C}_{n,\delta}^+(k_2)$, writing $\tilde v \defeq c_h - v$ and assuming $v \neq c_h - u_* + \frac{k_2\delta}{n^{1/9}}$, we have
		$\left\lfloor \frac{\tilde v n^{1/3} - u_*n^{1/3}}{\delta n^{2/9}} \right\rfloor = \left\lfloor - \frac{v n^{1/3} - (c_h- u_*)n^{1/3}}{\delta n^{2/9}} \right\rfloor = - 1 - k_2  , $
		thus $(c_h - v)n^{1/3} \in \mathscr{C}_{n,\delta}^+(-1-k_2)$.}.
	Recall \eqref{eq:X-meandre} to get
	\[ \sup_{u n^{1/3} \in \mathscr{C}_{n,\delta}^-(k_1)} |X_u - X_{u_* + \frac{k_1 \delta}{n^{1/9}}}| = \sup_{u n^{1/3} \in \mathscr{C}^-_{n,\delta}(k_1)} \chi \left| \mathcal{M}_{\frac{u - u_*}{\chi^2}} - \mathcal{M}_{\frac{k_1\delta n^{-1/9}}{\chi^2}} \right| \, . \]
	Observe that $\chi=\chi(u,\omega)$ is a constant that only depends on the sign of $k_1$ and that since $K$ and $C$ are arbitrary chosen, we only need to get a bound on the probability of $\sup_{u n^{1/3} \in \mathscr{C}^-_{n,\delta}(k_1)} | \mathcal{M}_{u - u_*} - \mathcal{M}_{\frac{k_1\delta}{n^{1/9}}}|$ being greater than $\frac{C\delta^{1/4}}{8 n^{1/18}}$.
	
	\medskip
	Without any loss of generality we can suppose that $k_1 \geq 0$: to get the case where $k_1 \leq 0$ we only need to do the same proof with $|k_1 + 1|$ instead.
	Use Lemma~\ref{reflexion} and Markov's inequality to get
	\[ \begin{split}
		\proba{\sup_{u n^{1/3} \in \mathscr{C}^-_{n,\delta}(k_1)} \left| \mathcal{M}_{u - u_*} - \mathcal{M}_{\frac{k_1\delta}{n^{1/9}}} \right| \geq C \frac{\delta^{1/4}}{8 n^{1/18}}} &\leq 4 \proba{\mathcal{M}_{\frac{(k_1+1)\delta}{n^{1/9}}} - \mathcal{M}_{\frac{k_1\delta}{n^{1/9}}} \geq\frac{C \delta^{1/4}}{8 n^{1/18}}}\\
		&\leq 4 \esp{e^{\frac{n^{1/18}}{\sqrt{\delta}} (\mathcal{M}_{\frac{(k_1+1)\delta}{n^{1/9}}} - \mathcal{M}_{\frac{k_1\delta}{n^{1/9}}})}} e^{-\frac{C}{8} \delta^{\frac14}} .
	\end{split} \]
	We show below that there is a constant $c = c(K)>0$ such that, for $n$ large enough
	\begin{equation}
		\label{eq:exp-moment-meander}
		\esp{e^{\frac{n^{1/18}}{\sqrt{\delta}} (\mathcal{M}_{\frac{(k_1+1)\delta}{n^{1/9}}} - \mathcal{M}_{\frac{k_1\delta}{n^{1/9}}})}} \leq c 
	\end{equation}
	uniformly in $k_1 \in \{0,1,\ldots, K/\delta \}$ and $0 < \delta < 1$, thus proving Lemma~\ref{lem:proba-A}.
\end{proof}

\begin{proof}[Proof of~\eqref{eq:exp-moment-meander}]
	We want to get an upper bound on quantities $\esp{e^{\alpha (\mathcal{M}_v - \mathcal{M}_u)}}$ for specific $u < v, \alpha > 0$. In order to do so, we first condition on the value of $\mathcal{M}_u$ and use the transition probabilities of the Brownian meander to get an upper bound, which we then integrate with respect to the law of $\mathcal{M}_u$.
	Let us set $\kappa_\delta^n \defeq k_1 \delta n^{-1/9}$ with $ k_1 \neq 0$ (we treat the case $k_1=0$ at the end) and $\alpha:= n^{1/18}/\sqrt{\delta}$.
	
	\medskip
	\textit{Step 1: Meander increment conditioned on $\mathcal{M}_{\kappa_\delta^n} \eqdef x$. \ } 
	We write $\varphi_t(x) \defeq \frac{1}{\sqrt{2 \pi t}} e^{-\frac{x^2}{2t}}$ and $\Phi_t(y) \defeq \int_0^y \varphi_t(x) dx$. 
	Using \eqref{meandre-transition}, the density of an increment between time $\kappa_\delta^n \neq 0$ and a time $u = (k_1+1)\delta n^{-1/9} = \kappa_{\delta}^n + \alpha^{-2}$, when starting at $M_{\kappa_\delta^n}=x$, is given by
	\[ \left[ \varphi_{u-\kappa_\delta^n}(m) - \varphi_{u-\kappa_\delta^n}(m+2x) \right] \frac{\Phi_{1 - u}(m+x)}{\Phi_{1 - \kappa_\delta^n}(x)} \indic{m \geq -x} \,. \]
	Then we use that (recall that $u-\kappa_{\delta}^n =\alpha^{-2}$)
	\[ \varphi_{u-\kappa_\delta^n}(m) - \varphi_{u-\kappa_\delta^n}(m+2x) = \frac{1}{\sqrt{2\pi (u-\kappa_\delta^n)}} \left( e^{-\frac{m^2}{2(u-\kappa_\delta^n)}} - e^{-\frac{(m+2x)^2}{2(u-\kappa_\delta^n)}} \right) \leq \frac{\alpha}{\sqrt{2\pi}} e^{-\frac{\alpha^2 m^2}{2}} \,, \]
	and since $u$ is taken close to $u_*$ (recall $|u-u_*| \leq \eps_n$),
	\[ \frac{\Phi_{1 - u}(m+x)}{\Phi_{1 - \kappa_\delta^n}(x)} \leq  \frac{x+m}{x} e^{\frac{x^2}{2(1-\kappa_\delta^n)}} \sqrt{\frac{1 - \kappa_\delta^n}{1 - u}} \leq (cst.) \Big( 1+\frac{m}{x}\Big) e^{\frac{x^2}{2(1-\kappa_\delta^n)}} \, . \]
	Thus we have to bound
	\begin{equation*}\label{eq:prop-control-int}
		\esp{\exp \Big( \alpha (\mathcal{M}_{\frac{(k_1+1)\delta}{n^{1/9}}} - \mathcal{M}_{\kappa_\delta^n}) \Big) \, \Big| \, \mathcal{M}_{\kappa_\delta^n} = x} \leq (cst.) \frac{\alpha}{\sqrt{2\pi}} e^{\frac{x^2}{2(1-\kappa_\delta^n)}} \int_{-x}^\infty  \Big(1+\frac{m}{x} \Big) e^{\alpha m} e^{-\frac{\alpha^2m^2}{2}} \, \dd m \,.
	\end{equation*}
	Now, setting $\Psi(m) \defeq e^{- \frac{(\alpha m)^2}{2} + \alpha m}$, after integrating by parts (writing $m\Psi(m) = (m e^{- \frac{(\alpha m)^2}{2}}) \times e^{\alpha m}$) we can rewrite the above as
	\[ (cst.)\frac{\alpha e^{\frac{x^2}{2(1-\kappa_\delta^n)}}}{\sqrt{2\pi}} \left[ \int_{-x}^\infty \Psi(m)  \, \dd m  + \frac{2}{x \alpha^2} \left( \Psi(-x) + \alpha \int_{-x}^\infty \Psi(m)  \, \dd m \right) \right] \, . \]
	Usual bounds for Gaussian integrals (notice that $\Psi(m)= e^{1/2} e^{- \frac12 (\alpha m -1)^2}$) then yield the upper bound conditioned to $x = \mathcal{M}_{\kappa_\delta^n}$
	\begin{equation*}
		\esp{\exp \left( \alpha (\mathcal{M}_{\frac{(k_1+1)\delta}{n^{1/9}}} - x) \right) \Big| \mathcal{M}_{\kappa_\delta^n} = x } \leq \frac{\alpha e^{\frac{x^2}{2(1-\kappa_\delta^n)}}}{\sqrt{2\pi}}  \left[ \left( 1 + \frac{2}{\alpha x} \right) \sqrt{2\pi} \frac{1}{\alpha}  e^{1/2} + \frac{2\Psi(-x)}{x \alpha^2} \right] \,,
	\end{equation*}
	which we simplify (using that $\Psi(m)\leq e^{1/2}$ for all $m$) as
	\begin{equation}\label{eq:esp-increment-sachant-x}
		\esp{\exp \left( \alpha (\mathcal{M}_{\frac{(k_1+1)\delta}{n^{1/9}}} - x) \right) \Big| \mathcal{M}_{\kappa_\delta^n} = x } \leq (cst.) e^{\frac{x^2}{2(1-\kappa_\delta^n)}} \left[ 1 + \frac{1}{x \alpha} \right] \, .
	\end{equation}
	
	\medskip
	\textit{Step 2: Averaging on $x = \mathcal{M}_{k_1 \delta n^{-1/9}}$. \ } 
	In order to take the expectation in \eqref{eq:esp-increment-sachant-x}, we use the following bounds, given in~\eqref{meander-expmoment} (using that $\sqrt{2\pi} \geq 2$): for $0<a<r/2$,
	\begin{equation}\label{eq:laplace-meandre}
		\esp{e^{a \mathcal{M}_r^2}} \leq \left( 1 - 2ar \right)^{-3/2} \, , \qquad
		\esp{(\mathcal{M}_r)^{-1} e^{a \mathcal{M}_r^2}} \leq \frac{\sqrt{2\pi}}{\sqrt{r(1-r)}} \left(1-2ra\right)^{-1} \, .
	\end{equation}
	
	Recalling that $\kappa_{\delta}^n = k_1\delta n^{-1/9} \leq \eps_n \to 0$, we can use the above to get that  for $n$ sufficiently large, there is a $C > 0$ such that
	\[ \begin{split}
		\esp{e^{\frac{1}{2(1-\kappa_\delta^n)} \mathcal{M}_{\kappa_\delta^n}^2} \Big( 1 + \frac{1}{\alpha \mathcal{M}_{\kappa_\delta^n}}\Big)} &\leq  \left( 1 - \frac{\kappa_\delta^n}{1-\kappa_\delta^n} \right)^{-3/2} + \frac{2\sqrt{\pi}}{\alpha \sqrt{\kappa_\delta^n}} \left( 1 - \frac{\kappa_\delta^n}{1-\kappa_\delta^n} \right)^{-1} \leq C \,,
	\end{split} \]
	recalling that $\alpha \sqrt{\kappa_\delta^{n}} =\sqrt{k_1}\geq 1$.
	This proves the bound~\eqref{eq:exp-moment-meander} in the case $k_1>0$.
	
	\medskip
	\textit{Case $k_1=0$.\ }
	When $k_1 = 0$ we have
	\[ \esp{\exp \left( \alpha \big(\mathcal{M}_{\frac{(k_1+1)\delta}{n^{1/9}}} - \mathcal{M}_{\frac{k_1\delta}{n^{1/9}}} \big) \right)} = \esp{\exp \left( \alpha \mathcal{M}_{\frac{\delta}{n^{1/9}}} \right)} \leq 4\left( 1 - \frac{2}{\alpha} \right)^{-3/2} \, , \]
	where we used the previous bound and the fact that $\delta n^{-1/9}=1/\alpha^2$. Since $\sqrt{\delta} n^{-1/18} = \bar{o}(1)$, this gives the bound~\eqref{eq:exp-moment-meander} when $k_1 = 0$.
\end{proof}

\subsection{Convergence of the log partition function, proof of Theorem~\ref{th-1/9}-\eqref{eq:th-1/9-part}}

\begin{lemma}\label{lem-var2}
	$\PP$-almost surely, there exists a unique $(\mathcal{U},\mathcal{V})$ such that
	\begin{equation}\label{eq:sup-1/9}
		\mathcal{W}_2 \defeq \sup_{u,v} \left\{ \mathcal{Y}_{u,v} - \frac{3\pi^2}{\beta c_h^4 \sqrt{2}} \big( u + v \big)^2 \right\} = \mathcal{Y}_{\mathcal{U},\mathcal{V}} - \frac{3\pi^2}{\beta c_h^4 \sqrt{2}} \big( \mathcal{U}+\mathcal{V} \big)^2 \in (0, +\infty) \, .
	\end{equation}
\end{lemma}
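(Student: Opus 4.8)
The statement has three ingredients: (a) the supremum $\mathcal{W}_2$ is finite and attained, (b) $\mathcal{W}_2>0$, and (c) the maximiser is a.s.\ unique. I work conditionally on $u_*$, so that $\mathbf{B}$ is a two-sided BES$_3$ process and $\mathbf{Y}$ an independent two-sided Brownian motion, and I abbreviate $\kappa\defeq \tfrac{3\pi^2}{\beta c_h^4\sqrt2}>0$ and $F(u,v)\defeq \mathcal{Y}_{u,v}-\kappa(u+v)^2$, where $\mathcal{Y}_{u,v}=\mathbf{Y}_u-\mathbf{Y}_{-v}-\chi(\mathbf{B}_u+\mathbf{B}_v)$. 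Since $\mathbf{Y}_0=\mathbf{B}_0=0$ we have $F(0,0)=0$, so $\mathcal{W}_2\geq 0$ is immediate; the content is to sharpen this to $\mathcal{W}_2\in(0,+\infty)$ and to get uniqueness.

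\textbf{Finiteness and attainment.} As $F$ is a.s.\ continuous, it suffices to show $F(u,v)\to-\infty$ as $|(u,v)|\to\infty$, since then the supremum is attained on a (random) compact set. Along any sequence with $|(u,v)|\to\infty$ one of two regimes occurs (up to subsequences). If $|u+v|\to\infty$, then $-\kappa(u+v)^2$ dominates, because $-\chi(\mathbf{B}_u+\mathbf{B}_v)\leq 0$ and $\mathbf{Y}_u-\mathbf{Y}_{-v}=\bar o(|u|)+\bar o(|v|)$ by the strong law for Brownian motion. Otherwise $|u+v|$ stays bounded while $\max(|u|,|v|)\to\infty$, i.e.\ (say) $u\to+\infty$ and $v\to-\infty$; here $-\kappa(u+v)^2$ is bounded and one must invoke the transience of $\mathbf{B}$: a.s.\ $\mathbf{B}_u=B^+_u\geq \sqrt{u}/(\log u)^2$ for $u$ large (Dvoretzky--Erd\H{o}s lower function for BES$_3$), whereas $\mathbf{Y}_u-\mathbf{Y}_{-v}$ is an increment of a one-sided Brownian motion over a window of bounded length near time $u$, hence $\grdO(\sqrt{\log u})$ uniformly (a standard chaining/Borel--Cantelli estimate for $\sup_{t\le T,\,|h|\le K}|\mathbf{Y}_{t+h}-\mathbf{Y}_t|$). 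Thus $F(u,v)\leq \grdO(\sqrt{\log u})-\chi\sqrt u/(\log u)^2+\grdO(1)\to-\infty$, and $\mathcal{W}_2<+\infty$.

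\textbf{Positivity.} It remains to exclude $\mathcal{W}_2=0$. Restrict to $v=0$: for $u>0$, $F(u,0)=\mathbf{Y}_u-\chi\mathbf{B}_u-\kappa u^2$ with $\mathbf{Y}$ and $\mathbf{B}$ independent. By Brownian scaling, $\PP(F(\eps,0)>0)=\PP(G-\chi H>\kappa\eps^{3/2})$, where $G\sim\mathcal N(0,1)$ and $H$ is an independent $\chi_3$ variable; as $\eps\downarrow 0$ this tends to $p\defeq \PP(G>\chi H)>0$. Hence the decreasing events $A_n\defeq\{\exists\,u\in(0,\tfrac1n):F(u,0)>0\}$ obey $\PP(A_n)\geq \PP(F(\tfrac1{2n},0)>0)\geq p/2$ for $n$ large, so $\PP(\bigcap_n A_n)\geq p/2>0$. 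But $\bigcap_n A_n$ lies in the germ $\sigma$-field of $(\mathbf{Y},\mathbf{B})$ at $0$, so Blumenthal's $0$--$1$ law forces $\PP(\bigcap_n A_n)=1$; on this event $\mathcal{W}_2\geq\sup_{u>0}F(u,0)>0$. Therefore $\mathcal{W}_2\in(0,+\infty)$ a.s.

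\textbf{Uniqueness (the main obstacle).} The difficulty is that $-\kappa(u+v)^2$ is only concave, not strictly concave: it is constant along the lines $u+v=\mathrm{const}$ (the ``pure translation'' directions of the range), so uniqueness must come from $\mathcal{Y}$. I would proceed in two steps. First, for any fixed distinct $p_1=(u_1,v_1)$, $p_2=(u_2,v_2)$ one has $\PP(F(p_1)=F(p_2)=\mathcal{W}_2)=0$: the difference $F(p_1)-F(p_2)$ contains the term $(\mathbf{Y}_{u_1}-\mathbf{Y}_{u_2})-(\mathbf{Y}_{-v_1}-\mathbf{Y}_{-v_2})$, which is a non-degenerate Gaussian independent of everything else appearing in the equation (using the two-sidedness of $\mathbf{Y}$ and its independence from $\mathbf{B}$); hence $F(p_1)-F(p_2)$ has no atom at $0$. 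The only configurations where that Gaussian part degenerates correspond to $p_1,p_2$ both on the line $u+v=0$, on which $\mathcal{Y}_{u,-u}=-\chi(B^+_u+B^-_u)\leq 0<\mathcal{W}_2$, so such points are never maximisers and are irrelevant. Second, I would upgrade this to ``a.s.\ $F$ has no two distinct maximisers'' by the usual compactness argument: if with positive probability the maximiser set met two disjoint balls $B_1,B_2$ with rational data, then with positive probability $\sup_{B_1}F=\sup_{B_2}F=\mathcal{W}_2$, and one rules this out by conditioning on $F$ off a neighbourhood of $B_1$ and observing that $\sup_{B_1}F$ still carries a non-degenerate Brownian fluctuation, so $\sup_{B_1}F-\sup_{B_2}F$ has no atom at $0$ — this is exactly in the spirit of the classical argmax-uniqueness results, cf.\ \cite[Lemma~2.6]{kim1990cube}. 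Combining (a)--(c) gives a unique $(\mathcal{U},\mathcal{V})$ with $F(\mathcal{U},\mathcal{V})=\mathcal{W}_2\in(0,+\infty)$, as claimed. I expect this last ``disjoint balls / no atom'' step to be the delicate part, because it requires a quantitative non-degeneracy of $\sup_{B_1}F$ given the rest of the field.
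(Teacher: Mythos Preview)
Your proof is correct and in places more careful than the paper's own argument. For finiteness, the paper only argues that $\mathcal{Y}_{u,v}-\kappa(u+v)^2\le 0$ once $|u+v|$ is large and then asserts that the supremum is attained on a compact set; this overlooks precisely the regime you handle explicitly, namely $|u+v|$ bounded with $|u|,|v|\to\infty$, where one needs the transience of $\mathbf{B}$ to beat the Brownian increment $\mathbf{Y}_u-\mathbf{Y}_{-v}$ over a bounded window. Your Dvoretzky--Erd\H{o}s lower bound together with the $\grdO(\sqrt{\log u})$ modulus-of-continuity estimate closes that gap. For positivity, the paper instead produces a sequence $u_k\downarrow 0$ with $\mathbf{Y}_{u_k}\geq 2\sqrt{u_k}$ and $\mathbf{B}_{u_k}\leq\sqrt{u_k}$ simultaneously; your Blumenthal $0$--$1$ argument is a clean alternative that sidesteps justifying the existence of such a joint sequence. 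For uniqueness, the paper simply cites \cite[Appendix~A.3]{berger2020one}; your observation that the Gaussian part $\mathbf{Y}_{u_1}-\mathbf{Y}_{u_2}-\mathbf{Y}_{-v_1}+\mathbf{Y}_{-v_2}$ is non-degenerate except when both points lie on the line $u+v=0$, where $\mathcal{Y}_{u,-u}=-\chi(B^+_u+B^-_u)\le 0<\mathcal{W}_2$, is exactly the structural input behind that reference, so your sketch is in the same spirit but makes the mechanism explicit.
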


\begin{proof}
	Choose $v = 0$ to get $\mathcal{W}_2 \geq \sup_{u} \big\{  \mathbf{Y}_u -\mathbf{B}_u - c_{h,\beta} \sqrt{2} u^2 \big\}$. We get a positive lower bound since almost surely, there are real numbers $(u_k)\downarrow 0$ such that $\mathbf{Y}_{u_k} \geq 2 \sqrt{u_k}$ and $\mathbf{B}_{u_k}\leq \sqrt{u_k}$. This leads to $\mathcal{W}_2 \geq \sup_{k} \big\{ \sqrt{u_k} - c_{h,\beta} \sqrt{2} u_k^2 \big\} > 0$ almost surely.
	
	In order to show that $\mathcal{W}_2$ is almost surely finite, see $\mathbf{B}_u$ as the modulus of a 3-dimensional standard Brownian motion $W^{(3)}_u$ and consider a one dimensional Wiener process $W$. We use the fact that $t^{-1} (|W^{(3)}_t| + W_t) \to 0$ almost surely to get $\mathcal{Y}_{u,v}/|u+v| \to 0$ as $|u + v| \to \infty$.
	Thus, $\mathcal{Y}_{u,v} - c_{h,\beta} \sqrt{2} (u + v)^2 \leq 0$ $\PP$-a.s. when $|u+v|$ is large enough, meaning that the supremum of this continuous process is almost surely taken on a compact set, thus it is finite.
	The existence of $(\mathcal U, \mathcal V)$ is also a consequence of the continuity of $\mathcal{Y}_{u,v} - c_{h,\beta} \sqrt{2} (u + v)^2$ and of the fact that the supremum is $\PP$-a.s.\ taken on a compact set.
	The uniqueness of the maximum follows from standard methods for Brownian motion with parabolic drift (see \cite[Appendix A.3]{berger2020one}).
\end{proof}

\begin{remark}
	We could have taken another form of $Z_{n,h}^{\omega,\beta} (k_1,k_2,\delta)$ given by (\ref{fact-1/9}), without using the process $X$ that was only useful to reject trajectories whose minimum is too far from $-u_* n^{1/3}$. This would have led us to the alternative form
	\[ \mathcal{W}_2' \defeq \sup_{u,v} \left\{ \bar{X}^{(1)}_u + \bar{X}^{(2)}_v - c_{h,\beta} \big( u + v \big)^2 \right\} = \mathcal{W}_2 \, , \]
	where $\bar{X}^{(i)}_u$ are Brownian-related processes provided by a suitable coupling. However, these limit processes are not independent and their distribution may not be known processes, making $\mathcal{W}_2'$ less exploitable.
\end{remark}

\begin{proof}[Proof of Theorem~\ref{th-1/9}-\eqref{eq:th-1/9-part}]
	We first see that $\PP$-almost surely, by Proposition \ref{prop-cv-1/9} and Lemma \ref{lem-var2}, we have
	\begin{equation}
		\lim_{K \to +\infty} \lim_{n \to +\infty} \frac{\sqrt{2}}{\beta n^{1/9}} \log \bar{\mathcal{Z}}_{n,\omega}^{\leq K} = \lim_{K \to +\infty} \mathcal{W}_2^K = \mathcal{W}_2 > 0 \, .
	\end{equation}
	On the other hand, using Corollary \ref{cor:lem:exclusion-1/9},
	\begin{equation}
		\limsup_{K \to +\infty} \lim_{n \to +\infty} \frac{\sqrt{2}}{\beta n^{1/9}} \log \bar{\mathcal{Z}}_{n,\omega}^{> K} \leq - \frac{\sqrt{2}}{\beta} < 0  \quad \text{$\PP$-a.s.}  \, .
	\end{equation}
	Therefore, $\PP$-almost surely we have
	\begin{equation}
		\lim_{n \to +\infty} \frac{\sqrt{2}}{\beta n^{1/9}} \log  \bar{Z}_{n,h}^{\omega,\beta} \big( | M_n^-+u_*n^{1/3}| \leq \eps_n n^{1/3}, |\Delta_n| \leq \eps_n n^{1/3} \big) = \mathcal{W}_2 \, ,
	\end{equation}
	which, according to Corollary \ref{cor:u*}, proves the convergence of $Z_{n,h}^{\beta,\omega}$.
\end{proof}

Combining Proposition \ref{th:restrict-1/9} with the fact that the right-hand side quantity in Proposition \ref{prop-cv-1/9} increases with $K$ and thus converges almost surely as $K \to \infty$ yields Theorem~\ref{th-1/9} in the case of a Gaussian environment. We only need to see that the convergence is towards a non trivial quantity, which is the object of the following lemma.

\subsection{Path properties at second order}\label{sec:traj-1/9}

\begin{proof}[Proof of Theorem \ref{th-1/9}-\eqref{eq:th-1/9-proba}]
	The proof of \eqref{eq:th-1/9-proba} is a repeat of the proof of Lemma \ref{lem:position}, this time writing
	\[ \mathcal{U}_2^{\eps,\eps'} \defeq \left\{ (u,v) \in \RR^2 \, : \, \sup_{(s,t) \in B_\eps(u,v)} \Big\{ \mathcal{Y}_{s,t} - c_{\beta,h} (s+t)^2 \Big\} \geq \mathcal{W}_2 - \eps' > 0 \right\} \, , \]
	with $B_\eps(u,v)$ the Euclidean ball of radius $\eps$ centered at $(u,v)$. Define the event
	\[ \mathcal{A}_{2,n}^{\eps,\eps'} \defeq \mathset{\left( \frac{M_n^- + u_*n^{1/3}}{n^{2/9}},\frac{M_n^+ - (c_h-u_*)n^{1/3}}{n^{2/9}} \right) \not\in \mathcal{U}_2^{\eps,\eps'}} \, , \]
	then
	\[ \log \mathbf{P}_{n,h}^{\omega,\beta} \big( \mathcal{A}_{2,n}^{\eps,\eps'} \big) = \log \mathcal{Z}_{n,h}^{\omega,\beta}\big( \mathcal{A}_{2,n}^{\eps,\eps'} \big) - \log \mathcal{Z}_{n,h}^{\omega,\beta} \, . \]
	Afterwards, using the definition of $\mathcal{U}_2^{\eps,\eps'}$ we prove as above that 
	\[ \limsup_{n \to \infty} \frac{\sqrt{2}}{\beta n^{1/9}} \log \mathcal{Z}_{n,h}^{\omega,\beta}\big( \mathcal{A}_{2,n}^{\eps,\eps'} \big) = \sup_{(u,v) \not\in \mathcal{U}_2^{\eps,\eps'}} \Big\{ \mathcal{Y}_{s,t} - \frac{3\pi^2}{\beta c_h^4 \sqrt{2}} (s+t)^2 \Big\} < \mathcal{W}_2 - \eps' \, , \]
	and thus $\limsup\limits_{n \to \infty} n^{-1/9} \log \mathbf{P}_{n,h}^{\omega,\beta} \big( \mathcal{A}_{2,n}^{\eps,\eps'} \big) < 0$, proving \eqref{eq:th-1/9-proba} since $\bigcap\limits_{\eps' > 0} \mathcal{U}_2^{\eps,\eps'} \subset B_{2\eps}((\mathcal{U},\mathcal{V}))$.
\end{proof}

\section{Generalizing with the Skorokhod embedding}\label{sec:general-1/9}

\subsection{Proof of Theorem \ref{th-1/9}, case of a finite \texorpdfstring{$(3+\eta)$}-th moment}\label{sec:th-1/9-3+}

For now, Theorem \ref{th-1/9} has only been established for a Gaussian environment $\omega$, meaning that the variables $(\omega_z)$ are $i.i.d$ with a normal distribution. In the following, we will explain how we can generalize those results to any random $i.i.d.$ field with sufficient moment conditions.

\medskip
\par
We first expand on the coupling between the random field $\omega$ and the Brownian motions $X^{(i)}, i = 1,2$. Our starting point is the following statement from \cite[Chapter~7.2]{skorokhod1982studies}.

\begin{theorem}[Skorokhod]\label{th-skorokhod}
	Let $\xi_1, \dots, \xi_m$ be i.i.d.\ centered variables with finite second moment. For a Brownian motion $W$, there exists independent positive variables $\tau_1, \dots, \tau_m$ such that
	\[ \big(\xi_1, \dots, \xi_m\big) \overset{d}{=} \left( W(\tau_1), W(\tau_1 + \tau_2)-W(\tau_1), \dots, W(\sum_{i = 1}^m \tau_i)-W(\sum_{i = 1}^{m-1} \tau_i) \right) \, . \]
	Moreover, for all $k \leq m$, we have
	\[ \esp{\tau_k} = \esp{\xi_k^2} \quad \text{and} \quad \forall p > 1, \exists C_p > 0, \esp{(\tau_k)^p} \leq C_p \esp{(\xi_k)^{2p}} \, . \]
\end{theorem}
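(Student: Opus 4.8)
This statement is quoted from Skorokhod's monograph, so rather than a new argument the plan is to recall the construction that produces it. The case $m=1$ is the heart of the matter: given a centered law $\mu$ with $\int x^2\,\mu(\dd x)<\infty$, I want a (possibly randomized) stopping time $\tau$ for $W$ with $W_\tau\sim\mu$, $\esp{\tau}=\int x^2\,\mu(\dd x)$ and $\esp{\tau^p}\le C_p\int|x|^{2p}\,\mu(\dd x)$ for $p>1$. The building block is the two-point embedding: for $a<0<b$ set $\tau_{a,b}\defeq\inf\{t\ge 0:W_t\notin(a,b)\}$; the optional stopping theorem applied to the martingales $(W_t)$ and $(W_t^2-t)$ gives $\PP(W_{\tau_{a,b}}=a)=\tfrac{b}{b-a}$, $\PP(W_{\tau_{a,b}}=b)=\tfrac{-a}{b-a}$ and $\esp{\tau_{a,b}}=\esp{W_{\tau_{a,b}}^2}=(-a)b$, so that $W_{\tau_{a,b}}$ realizes the mean-zero two-point law $\mu_{a,b}$ supported on $\{a,b\}$.

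The second ingredient is the classical fact that every centered probability measure decomposes as a mixture $\mu=\int\mu_{a,b}\,\nu(\dd a,\dd b)$ of such two-point laws, where $\nu$ is an explicit probability measure on $\{a\le 0\le b\}$ built from the positive and negative parts of $\mu$ (the atom at $0$ being trivial). One then draws $(a,b)$ from $\nu$ using a source of randomness independent of $W$ and sets $\tau\defeq\tau_{a,b}$; then $W_\tau\sim\mu$, and integrating $\esp{\tau_{a,b}}=(-a)b$ against $\nu$ gives $\esp{\tau}=\int x^2\,\mu(\dd x)$. For $p>1$, the Burkholder--Davis--Gundy inequality yields $\esp{\tau_{a,b}^p}\le C_p\,\esp{\big(\sup_{t\le\tau_{a,b}}|W_t|\big)^{2p}}$ (the left side being finite since the exit time of a bounded interval has all moments), and an explicit computation of the law of $\sup_{t\le\tau_{a,b}}|W_t|$ via the gambler's ruin formula bounds the right-hand side by $C_p'\big(|a|^{2p}\tfrac{b}{b-a}+b^{2p}\tfrac{-a}{b-a}\big)$; integrating against $\nu$ gives $\esp{\tau^p}\le C_p'\int|x|^{2p}\,\mu(\dd x)$.

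For general $m$ one iterates: having embedded $\xi_1,\dots,\xi_k$ on the interval $[0,\tau_1+\dots+\tau_k]$, the strong Markov property makes $(W_{\tau_1+\dots+\tau_k+t}-W_{\tau_1+\dots+\tau_k})_{t\ge 0}$ a Brownian motion independent of the past, into which $\xi_{k+1}$ is embedded with a fresh independent randomization, producing $\tau_{k+1}$. Mutual independence of the successive Brownian increments and of the auxiliary randomizations makes $\tau_1,\dots,\tau_m$ independent, each obeying the one-variable moment estimates, while $\big(W_{\tau_1},W_{\tau_1+\tau_2}-W_{\tau_1},\dots\big)$ has law $\mu^{\otimes m}$, hence the same law as $(\xi_1,\dots,\xi_m)$. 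The only genuinely technical point is the explicit description of the mixing measure $\nu$ and the moment comparison for the two-point exit times (the identity $\int\big(|a|^{2p}\tfrac{b}{b-a}+b^{2p}\tfrac{-a}{b-a}\big)\,\nu(\dd a,\dd b)=\int|x|^{2p}\,\mu(\dd x)$ being immediate from the mixture property); everything else reduces to optional stopping, the gambler's ruin identities, and the strong Markov property.
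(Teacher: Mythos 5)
The paper states Theorem~\ref{th-skorokhod} as a citation from Skorokhod's monograph and gives no proof of it, so there is no in-paper argument to compare against. Your reconstruction is a correct sketch of the standard randomized two-point (Hall-type) embedding: the optional-stopping identities for $\tau_{a,b}$, the mixture decomposition $\mu=\int\mu_{a,b}\,\nu(\dd a,\dd b)$, the external randomization making $\tau$ independent of $W$, and iteration via the strong Markov property are all exactly right. The one point worth being explicit about is the $L^p$ estimate: a naive bound $\sup_{t\le\tau_{a,b}}|W_t|\le\max(|a|,b)$ would give $\max(|a|,b)^{2p}$, which does \emph{not} integrate against $\nu$ to $\int|x|^{2p}\,\dd\mu$; you need, as you say, the gambler's-ruin tail $\PP(\sup W\ge y)=|a|/(y+|a|)$ (and its mirror for $-\inf W$), which shows $\esp{(\sup_{t\le\tau_{a,b}}|W_t|)^{2p}}\asymp b^{2p}\tfrac{|a|}{b-a}+|a|^{2p}\tfrac{b}{b-a}=\esp{|W_{\tau_{a,b}}|^{2p}}$, and then BDG closes the loop. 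With that refinement spelled out, the argument is complete.
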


The following theorem gives us asymptotic estimates for the error of this coupling.

\begin{theorem}[{\cite[Theorem 2.2.4]{csorgo2014strong}}]\label{th-strassen}
	Let $(\theta_i)$ be i.i.d. centered variables, and assume that $\esp{|\theta_1|^p} < \infty$ for a real number $p \in \left(2, 4 \right)$. Then, if the underlying probability space is rich enough, there is a Brownian motion $W$ such that
	\[ \displaystyle \Big| \sum_{i = 1}^m \theta_i - W_m \Big| = \bar{o} \big( m^{1/p} (\log m)^{1/2} \big) \quad a.s. \text{ as } m \to \infty \, . \]
\end{theorem}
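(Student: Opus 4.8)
The plan is to deduce this strong approximation from the Skorokhod embedding of Theorem~\ref{th-skorokhod}, which is efficient precisely in the range $p<4$. After rescaling we may assume $\esp{\theta_1^2}=1$, so that $W_t$ has variance $t$. Working on a rich enough probability space, realize the partial sums as $S_m:=\sum_{i=1}^m\theta_i=W(T_m)$ with $T_m=\sum_{i=1}^m\tau_i$, where $(\tau_i)_{i\ge1}$ are i.i.d., nonnegative, with $\esp{\tau_i}=\esp{\theta_i^2}=1$ and, by the moment bound in Theorem~\ref{th-skorokhod} applied with exponent $q=p/2>1$, $\esp{\tau_i^{p/2}}\le C_{p/2}\,\esp{|\theta_i|^{p}}<\infty$. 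The restriction $p\in(2,4)$ is exactly what makes $p/2\in(1,2)$, i.e.\ the centered variables $\tau_i-1$ have a finite moment of order strictly between $1$ and $2$.

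The proof then splits into two steps. First, control the time change: since $\tau_i-1$ are i.i.d., centered, with $\esp{|\tau_i-1|^{p/2}}<\infty$ and $p/2\in(1,2)$, the Marcinkiewicz--Zygmund strong law of large numbers gives
\[
\big|T_m-m\big|=\Big|\sum_{i=1}^m(\tau_i-1)\Big|=\bar{o}\big(m^{2/p}\big)\qquad\text{a.s.}
\]
Second, reduce $|S_m-W_m|=|W(T_m)-W(m)|$ to the oscillation of $W$ over an interval of width $|T_m-m|$ around time $m$. Setting $a_m:=m^{2/p}$, so that $\sqrt{a_m}=m^{1/p}$, the key deterministic input is that for every fixed $\delta>0$, almost surely
\[
\limsup_{m\to\infty}\ \sup_{|t-m|\le \delta a_m}\frac{|W(t)-W(m)|}{\sqrt{a_m\log m}}\le C\sqrt{\delta},
\]
with a universal constant $C$. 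Granting this, and using that $|T_m-m|\le\delta a_m$ eventually for every $\delta>0$, one gets $|W(T_m)-W(m)|\le (C+\bar{o}(1))\sqrt{\delta}\, m^{1/p}(\log m)^{1/2}$ for all large $m$; intersecting the almost sure events over $\delta\in\{1/j\}$ and letting $j\to\infty$ yields $|S_m-W_m|=\bar{o}\big(m^{1/p}(\log m)^{1/2}\big)$ a.s., as claimed.

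The main obstacle is the displayed oscillation estimate with the constant $C$ \emph{uniform in $\delta$}; this is what makes the rate sharp ($\bar{o}$ rather than $\grdO$). It is obtained by a dyadic blocking: for $m\in[2^k,2^{k+1})$ one covers $[0,2^{k+2}]$ by $\asymp\delta^{-1}2^{k(1-2/p)}$ windows of length $\asymp\delta a_{2^k}$, applies the Gaussian tail bound $\proba{\sup_{[0,h]}|W|\ge\lambda\sqrt{h}}\le 4e^{-\lambda^2/2}$ on each window with $\lambda=c\sqrt{k}$, and sums over $k$; since $\delta^{-1}$ is a constant, the threshold $c^2/(2\log 2)>1-2/p$ needed for summability does not depend on $\delta$, and Borel--Cantelli gives the claim, the factor $\sqrt{\delta}$ coming from $\sqrt{\delta a_{2^k}}$. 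This is also where $p<4$ enters essentially: for $p\ge4$ the time change is only $\grdO(\sqrt{m\log\log m})$ by the law of the iterated logarithm, and the Skorokhod-embedding method cannot reach the rate $m^{1/p}$ — in that regime the sharp statement requires the Komlós--Major--Tusnády quantile construction instead.
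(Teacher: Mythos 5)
The paper does not prove Theorem~\ref{th-strassen}; it imports it verbatim from \cite[Theorem~2.2.4]{csorgo2014strong}, so there is no internal argument to compare against. Your sketch is a correct reconstruction of the classical Skorokhod--Strassen proof that underlies that reference: the embedding of Theorem~\ref{th-skorokhod} (iterated to an infinite sequence of stopping times) reduces the problem to the time change $T_m$; the Marcinkiewicz--Zygmund law is exactly where $p/2<2$ is used, giving $|T_m-m|=\bar{o}(m^{2/p})$ a.s.; and the modulus-of-continuity estimate with a $\delta$-independent constant $C$ transfers this to $W$. The point you single out is indeed the crux: in the dyadic blocking the Borel--Cantelli threshold $c^2/(2\log 2)>1-2/p$ depends only on $p$ (the $\delta^{-1}$ multiplying the geometric sum is harmless), while the window width contributes the $\sqrt{\delta}$, and it is this uniformity that converts the $\bar{o}$ in the time change into the $\bar{o}$ in the conclusion rather than a mere $\grdO$; your closing remark on $p=4$, the LIL obstruction, and the need for the Komlós--Major--Tusnády coupling beyond that range is likewise accurate.
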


We can easily adapt this statement and choose the Wiener processes $X^{(1)}$ and $X^{(2)}$ to be independent Brownian motions such that, as $n\to\infty$,
\[ \Big| \sum_{z = 1}^{u n^{1/3}} \omega_{-z} - n^{1/6} X^{(1)}_{u} \Big| \vee \Big| \sum_{z = 0}^{v n^{1/3}} \omega_z - n^{1/6} X^{(2)}_{v} \Big| = \bar{o} \big( (u \vee v)^{1/p} (n^{1/3})^{1/p} (\log n)^{1/2} \big) \]
as long as $\mathbb{E}[|\omega_z|^p]<+\infty$ for some $p\in (2,4)$.
Since in the partition function we can restrict to trajectories with $x$ and $y$ are taken between $0$ and $(c_h + \eps_n)n^{1/3}$ (recall~\eqref{partition}), we can obtain a uniform bound over every $u,v$ we consider, meaning that
$\mathbb{P}$-a.s.\ there is some constant $C(\omega)$ such that for all $n\geq 1$,
\begin{equation}\label{eq:skorokhod}
	\Big| \sum_{z = 1}^{x} \omega_{-z} - n^{1/6} X^{(1)}_{u} \Big| \vee \Big| \sum_{z = 0}^{y} \omega_z - n^{1/6} X^{(2)}_{v} \Big| \leq C(\omega) \; n^{1/3p} (\log n)^{1/2} \, ,
\end{equation} 
uniformly for $|x|,|y|\leq 2c_h n^{1/3}$.
Let us also recall the notation $\bar{Z}_{n,h}^{\omega, \beta} = Z_{n,h}^{\omega, \beta} \, e^{\frac32 h T_n^* - \beta n^{1/6} X_{u_*}}$.

\begin{proof}[Proof of Theorem \ref{th-1/9} with $\esp{|\omega_0|^{3+\eta}} < \infty$]
	We now repeat the proof for a Gaussian field, but with the introduction of an error term given by Theorem \ref{th-strassen}. Recall \eqref{fact-1/9}: with a Gaussian environment, we had $\Sigma^-_{x} = X^{(1)}_{x n^{-1/3}}$ and $\Sigma^+_{y} = X^{(2)}_{y n^{-1/3}}$. Now, we must introduce an error term $E_n(x,y) \defeq \Sigma^-_{x} - X^{(1)}_{x n^{-1/3}} + \Sigma^+_{y} - X^{(2)}_{y n^{-1/3}}$: the equation \eqref{partition2} becomes
	\begin{equation}\label{eq:part-terme-erreur}
		\bar Z_{n,h}^{\omega,\beta} \sim \psi_h \sin\left(\frac{u_*\pi}{c_h}\right) \!\!\! \sum_{\substack{|x-u^*n^{1/3}|\leq \eps_n n^{1/3} \\ |y-(c_h-u^*| n^{1/3}|\leq \eps_n n^{1/3}}} \!\!\!
		\exp\left( \beta \bar \Omega_{n}^{x,y} + E_n(x,y) - \frac{3 \pi^2 (\Delta_n^{x,y})^2}{2 c_h^4 n^{1/3}} (1+\bar{o}(1)) \right),
	\end{equation}
	with $\bar{o}(1)$ deterministic and uniform in $x,y$, and
	\[ \bar\Omega_n^{x,y} := n^{1/6} \left( X^{(1)}_{xn^{-1/3}} + X^{(2)}_{y n^{-1/3}} - X_{u_*} \right) \, , \qquad E_n(x,y) \defeq \sum_{z = -x}^y \omega_z - \bar\Omega_{n}^{x,y}. \]
	
	Take $p = 3 + \eta$ with $\eta \in (0,1)$, and assume that $\esp{|\omega_0|^p} < +\infty$. Then, using \eqref{eq:skorokhod},  we have $|E_n(x,y)| \leq C(\omega) n^{1/(9+3\eta)} (\log n)^{1/2}$ for all summed $(x,y)$. Therefore, combining with \eqref{eq:part-terme-erreur}, we get that $\PP$-a.s.
	\[ \Bigg| \log \bar Z_{n,h}^{\omega,\beta} - \log \!\!\! \sum_{\substack{|x-u^*n^{1/3}|\leq \eps_n n^{1/3} \\ |y-(c_h-u^*) n^{1/3}|\leq \eps_n n^{1/3}}} \!\!\!
	\exp\left( \beta \bar \Omega_{n}^{x,y} - \frac{3 \pi^2 (\Delta_n^{x,y})^2}{2 c_h^4 n^{1/3}} \right) \Bigg| \leq C(\omega) n^{\frac{1}{9+3\eta}} (\log n)^{1/2} \,.
	\]
	Since $n^{\frac{1}{9+3\eta}} (\log n)^{1/2} = \bar{o}(n^{1/9})$, we can restrict our study to (exactly) the same sum that appeared in the Gaussian case, see~\eqref{partition2}.
	Then, \eqref{eq:th-1/9-proba} follows identically from the same proof as in Section \ref{sec:traj-1/9}.
\end{proof}

\subsection{Adaptation to the case of a finite \texorpdfstring{$(2+\eta)$}-th moment}\label{sec:th-1/9-2+}

We now explain how we can infer \eqref{eq:th-1/9-part-weak}, \textit{i.e.}\ a version of Theorem~\ref{th-1/9} where we only assume that $\esp{|\omega_0|^{2+\eta}} < \infty$ for some positive $\eta$, from adapting the proofs of Section \ref{sec:gaussian-1/9}. We are able to prove that the relevant trajectories converge to the suspected limit for $Z_{n,h}^{\omega,\beta}$, however some technicalities prevent us from getting the full theorem.
\par The key observation is the following: when subtracting $\beta \sum_{u_* n^{1/3}}^{(c_h-u_*)n^{1/3}} \omega_z$ instead of $\beta n^{1/6} X_{u_*}$ from $\log Z_{n,h}^{\omega,\beta} + \frac32 h c_h n^{1/3}$, we precisely cancel out the $(\omega_z)$ present in both $\Sigma^-_{|M_n^-|} + \Sigma^+_{M_n^+}$ and $\Sigma^-_{u_* n^{1/3}} + \Sigma^+_{(c_h-u_*) n^{1/3}}$.
This leaves us with a smaller sample of the variables $(\omega_z)$, with size $|M_n^-+u_*n^{1/3}| + |M_n^+-(c_h-u_*)n^{1/3}|$ which is 
at most $2\eps_n n^{1/3}$ (see~\eqref{partition2}), and of order $n^{2/9}$ when restricting to trajectories giving the main contribution (see~Proposition~\ref{th:restrict-1/9}).

Informally, let $\Omega_{n,h}^{u_*}(x,y)$ be the sum of $\omega_z$ that are between $u_* n^{1/3}$ and $x$, and between $(c_h-u_*) n^{1/3}$ and $y$, with proper signs.
With the same arguments as before, we are led to get a convergence for
\begin{equation}
	\tilde{Z}_{n,h}^{\omega,\beta} \defeq \psi_h \sin\left(\frac{u_*\pi}{c_h}\right) \!\!\!\!\!\! \sum_{\substack{|x-u^*n^{1/3}|\leq \eps_n n^{1/3} \\ |y-(c_h -u^*)n^{1/3}|\leq \eps_n n^{1/3}}} \!\!\!\!\!\!
	\exp\left( \beta \Omega_{n,h}^{u_*}(x,y) - \frac{3 \pi^2 (\Delta_n^{x,y})^2}{2 c_h^4 n^{1/3}} (1+\bar{o}(1)) \right),
\end{equation}

We now want to rewrite $\Omega_{n,h}^{u_*}(x,y)$ as $\Omega_n^{x,y}$ in \eqref{eq:part-rewrit} with an additional error term that is a $\bar{o}(n^{1/9})$.
Since we are only interested in the $(\omega_z)$ that are present in $\Omega_{n,h}^{u_*}(x,y)$, we see that we only need to have a good coupling between the environment and $(X^{(1)},X^{(2)})$ near $(u_*,c_h-u_*)$ instead of a global coupling like the one we used in Section \ref{sec:th-1/9-3+}.

An application of Theorem \ref{th-strassen} allows to consider that the field $\omega$ satisfies:
\[ \Big| \Omega_{n,h}^{u_*}(x,y) - n^{1/6} \Big( X^{(1)}_{x n^{-1/3}} + X^{(2)}_{y n^{-1/3}} - X_{u_*} \Big) \Big| \leq C(\omega)\ (g_n(x,y))^{1/2+\eta} (\log n)^{1/2} \, , \]
where $g_n(x,y) = |u_*n^{1/3} - x| \vee (|(c_h-u_*)n^{1/3} - y|\wedge |c_h n^{1/3} - (x+y)|)$ and $C(\omega)$ is a positive constant at fixed $\omega$.

With this coupling, it is easy to adapt the proof of Proposition \ref{prop-cv-1/9} to the case $\esp{|\omega_0|^{2+\eta}} < +\infty$. This yields the convergence
\[ \lim_{K \to +\infty} \lim_{n\to \infty} \frac{\sqrt{2}}{\beta n^{1/9}} \log \bar{\mathcal{Z}}_{n,\omega}^{\leq K} = \sup_{u,v \in \RR} \left\{ \mathcal{Y}_{u,v} - \frac{3\pi^2}{\beta c_h^4 \sqrt{2}} \big( u + v \big)^2 \right\} \, = \mathcal{W}_2 , \]
where $\bar{\mathcal{Z}}_{n,\omega}^{\leq K}$ is defined the same way as in Section \ref{sec:traj-1/9}, only by substraction the sum of $\omega_z$'s instead of $X$.

What remains to show is that $n^{-1/9} \log \bar{\mathcal{Z}}_{n,\omega}^{> K}$ has a non-positive limsup as $K,n \to +\infty$ in the same spirit as Proposition \ref{th:restrict-1/9}.
However, we are not able to get a sufficient decay for the probabilities appearing in the proofs of Lemmas \ref{lem:excl-traj-L} \& \ref{lem:excl-traj-K}. The union bound thus fails to conclude the proof, although we have no doubt on the result being true.

\section{Simplified model : range with a fixed bottom}\label{sec:simplified}

In this section we shall focus on a simpler model in which one of the range's extremal points is fixed at $0$. The main motivation is that this model is sufficiently close to our original model to give us some insight on finer properties of the original polymer, while being easier to study because of the range being only a single variable: the highest point of the polymer.

We give in this section a conjecture on the simplified model which is supported by previously known results about Brownian motions with drift. This conjecture states that the fourth order expansion of the log-partition function is given by a quantity of order one, with a limited dependence on $n$. It is natural to expect the same behavior for our original model, hence Conjecture \ref{conj}.

Let us focus on this simplified polymer, which is modeled by a non-negative random walk. The polymer measure is given by
\[ \tilde{\mathbf{P}}_{n,h}^{\omega,\beta}(S) \defeq \frac{1}{\tilde Z_{n,h}^{\omega,\beta}} \exp \Big( -h M_n^+ + \beta \sum_{i = 0}^{M_n^+} \omega_i \Big) \indic{\forall k \leq n, S_k \geq 0} \mathbf{P}(S). \]
For now, we will keep studying the case where the field $\omega$ is composed of i.i.d. Gaussian variables. We once again take a Brownian motion $X$ such that $\frac{1}{n^{1/6}} \sum_{z=0}^T \omega_z = X_{Tn^{-1/3}}$. The partition function is given by
\[ \tilde{Z}_{n,h}^{\omega,\beta} = \sum_{T = 1}^{+\infty} e^{-hT + \sum_{i\leq T} \omega_i} \mathbf{P}(\mathcal{R}_n = \llbracket 0, T \rrbracket) = \sum_{T = 1}^{+\infty} \phi_n(T) e^{-hT + \sum_{i\leq T} \omega_i -g(T)n} \]
with $g(T) = \pi^2/2T^2$ (see \cite{bouchot1}, this is analogous to what is done in Section \ref{sec:part-homogene-desordre}).

It is not difficult to see that our results up to Section 3 still hold, first we have
\[ \frac{1}{n^{1/3}} \log \tilde Z_{n,h}^{\omega,\beta} \xrightarrow[n \to \infty]{\PP-a.s.} -\frac{3}{2} h c_h \, . \]
This tells us that the range has size $T_n \sim c_h n^{1/3}$ at first order. Since $M_n^- = 0$, it is natural to expect (and not hard to prove)
\[ \frac{1}{\beta n^{1/6}} \left( \log \tilde Z_{n,h}^{\omega,\beta} + \frac{3}{2} h c_h n^{1/3} \right) \xrightarrow[n \to \infty]{\PP-a.s.} X_{c_h} \, , \]
which is an analogue of Theorem \ref{th-1/6} with the knowledge that $u_* = 0$.
Factorizing by $e^{\beta n^{1/6} X_{c_h}}$ yields the following exponential term
\[ \beta \sum_{z = 0}^T \omega_z - \beta n^{1/6} X_{c_h} - \frac{3 \pi^2 (T - T_n^*)^2}{2 c_h^4 n^{1/3}} = -\beta n^{1/6} \big( X_{c_h} - X_{T n^{-1/3}} \big) - \frac{3 \pi^2 (T - T_n^*)^2}{2 c_h^4 n^{1/3}}.\]

\begin{proposal}
	For any $h,\beta > 0$ there is a standard Brownian motion $W$ such that $\PP$-a.s.,
	\begin{equation}\label{prop:simp-pre-conj}
		\lim_{n\to \infty} \frac{1}{\beta n^{1/9}} \left( \log \tilde Z_{n,h}^{\omega,\beta} + \frac{3}{2} h c_h n^{1/3} - \beta n^{1/6} X_{c_h} \right) = \sup_{s \in \RR} \Big\{ W_s - \frac{3\pi^2}{2 \beta c_h^4} s^2 \Big\}\,.
	\end{equation}
\end{proposal}

\noindent
\textit{Proof scheme.\ }
Since $|X_{c_h} - X_{T n^{-1/3}}| \leq C n^{-1/6} \sqrt{|T_n^* - T|}$ with probability at least $1-e^{-C^2}$, we can repeat the proof of Proposition \ref{th:restrict-1/9} and restrict the trajectories. This leads to studying $\tilde{Z}_{n,h}^{\omega,\beta}(|T_n^* - T| \leq K n^{2/9})$ which contains all the main contributions for $K$ large. Split over $k\delta n^{2/9} \leq T_n^* - T \leq (k+1)\delta n^{2/9}$ and the main contribution will be given by the supremum over $k$ of
\[ -\beta n^{1/6} \big( X_{c_h} - X_{c_h + \frac{k\delta n^{2/9}}{n^{1/3}}} \big) - \frac{3 \pi^2 (k\delta n^{2/9})^2}{2 c_h^4 n^{1/3}} = -\beta n^{1/6} \big( X_{c_h} - X_{c_h + \frac{s}{n^{1/9}}} \big) - \frac{3 \pi^2 s^2}{2 c_h^4} n^{1/9}\, , \]
where we wrote $s = k\delta$. We can conclude similarly to the proof of Theorem \ref{th-1/9} by changing the limit process $\mathcal{Y}_{u,v}$ to $B$ which is the limit of the processes $B^{(n)} = n^{1/18} \big(X_{c_h} - X_{c_h + \frac{u}{n^{1/9}}}\big)_u$ and is a standard Brownian motion. Once again, we can couple the Brownian motion $X = X^{(n)}$ so that the processes $B^{(n)}$ are equal to $B$ when $n$ is large, in the same fashion as Proposition \ref{prop-couplage}. We can prove that the right-hand side of \eqref{prop:simp-pre-conj} is $\PP$-a.s. positive and finite, attained at a unique point $s_*$.\qed

\medskip
\par
To sum up the results of this simplified model, we write the following statement

\begin{theorem}
	Recall the notation of \eqref{def:energie}, this time with $\tilde Z_{n,h}^{\omega,\beta}$. Then $\PP$-almost surely,
	\[ \tilde f_\omega^{(1,\frac13)}(h,\beta) = -\frac{3}{2} (\pi h)^{2/3} \, , \quad \tilde f_\omega^{(2,\frac16)}(h,\beta) = \beta X_{c_h} \, , \quad \frac{1}{\beta} \tilde f_\omega^{(3,\frac19)}(h,\beta) = \sup_{s \in \RR} \Big\{ B_s - \frac{3\pi^2}{2 \beta c_h^4} s^2 \Big\} \,.\]
\end{theorem}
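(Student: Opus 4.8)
The statement simply collects the three convergences already displayed in this section and recasts them in the language of \eqref{def:energie}, here with $\alpha_k = \tfrac{1}{3k}$ for $k\in\{1,2,3\}$. So the plan is to check the algebraic matching term by term, and to verify that each of the three limits lies in $\RR\setminus\{0\}$, which is the only condition in \eqref{def:energie} that is not automatic.

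For $\tilde f_\omega^{(1,1/3)}$, I would invoke the analogue of Theorem~\ref{thm0} for $\tilde Z_{n,h}^{\omega,\beta}$ recalled above, namely $n^{-1/3}\log \tilde Z_{n,h}^{\omega,\beta}\to -\tfrac32 h c_h$ $\PP$-a.s., obtained exactly as in Section~\ref{sec:part-homogene-desordre} from the Gambler's-ruin asymptotics for $\mathbf{P}(\mathcal R_n=\llbracket 0,T\rrbracket)$ of \cite{bouchot1} followed by optimising $\varphi_n$. Since $h c_h = \pi^{2/3}h^{2/3}=(\pi h)^{2/3}$, this reads $\tilde f_\omega^{(1,1/3)}=-\tfrac32(\pi h)^{2/3}$, which is nonzero because $h>0$. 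For $\tilde f_\omega^{(2,1/6)}$, one uses $n^{\alpha_1}\tilde f_\omega^{(1,1/3)}=-\tfrac32 h c_h n^{1/3}$, so by definition $\tilde f_\omega^{(2,1/6)}=\lim_n n^{-1/6}\big(\log \tilde Z_{n,h}^{\omega,\beta}+\tfrac32 h c_h n^{1/3}\big)$, which is the second convergence of the section, equal to $\beta X_{c_h}$; this is the analogue of Theorem~\ref{th-1/6} in the degenerate case $u_*=0$, proved by the coarse-graining of Section~\ref{sec:ordre1} with the optimisation over the left endpoint removed. Since $X_{c_h}\sim\mathcal N(0,c_h)$ is $\PP$-a.s.\ nonzero, this limit lies in $\RR\setminus\{0\}$.

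For $\tilde f_\omega^{(3,1/9)}$, subtracting $n^{\alpha_1}\tilde f_\omega^{(1,1/3)}+n^{\alpha_2}\tilde f_\omega^{(2,1/6)}=-\tfrac32 h c_h n^{1/3}+\beta n^{1/6}X_{c_h}$ from $\log\tilde Z_{n,h}^{\omega,\beta}$ and dividing by $n^{1/9}$ gives precisely the left-hand side of \eqref{prop:simp-pre-conj}. Hence the Proposition yields $\tilde f_\omega^{(3,1/9)}=\beta\sup_{s\in\RR}\{W_s-\tfrac{3\pi^2}{2\beta c_h^4}s^2\}$, i.e.\ $\tfrac1\beta\tilde f_\omega^{(3,1/9)}=\sup_{s\in\RR}\{B_s-\tfrac{3\pi^2}{2\beta c_h^4}s^2\}$ with $B\defeq W$, and the closing line of the Proposition's proof scheme gives that this supremum is $\PP$-a.s.\ positive and finite, so in particular in $\RR\setminus\{0\}$. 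Assembling the three items completes the theorem.

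I do not expect the theorem itself to present any real difficulty: the whole weight sits in the Proposition, and there the delicate step (as in the full model) is the restriction of trajectories, i.e.\ showing that configurations with $|T_n^*-T|\gg n^{2/9}$, equivalently with $n^{1/6}(X_{c_h}-X_{Tn^{-1/3}})$ too large, contribute negligibly to $\tilde Z_{n,h}^{\omega,\beta}$. This mirrors Proposition~\ref{th:restrict-1/9}: one couples $X=X^{(n)}$ so that the rescaled processes $B^{(n)}_u=n^{1/18}(X_{c_h}-X_{c_h+un^{-1/9}})$ agree with a fixed Brownian motion $B$ for $n$ large, as in Proposition~\ref{prop-couplage}, and then uses reflection-principle tail bounds for $B$ to sum over the coarse-graining blocks; it is genuinely easier than Proposition~\ref{th:restrict-1/9} since only the scalar $T$ is involved rather than the pair $(M_n^-,M_n^+)$.
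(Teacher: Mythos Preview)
Your proposal is correct and matches the paper's treatment: the theorem is stated in the paper as a summary (``To sum up the results of this simplified model, we write the following statement'') with no separate proof, exactly because it is just the three displayed convergences rephrased via \eqref{def:energie}. Your additional check that each limit lies in $\RR\setminus\{0\}$ is a nice touch the paper leaves implicit.
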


Recall the following notation of \eqref{eq:notation}:
\[ T_n \defeq M_n^+ - M_n^- = |\mathcal{R}_n| - 1 \,, \qquad T_n^* \defeq \left(\frac{n \pi^2}{h} \right)^{1/3} = c_h n^{1/3} \,, \qquad  \Delta_n \defeq T_n - T_n^*. \]

\begin{corollary}
	There is a vanishing sequence $(\eps_n)$ such that
	\[ \limsup_{n \to \infty} \tilde{\mathbf{P}}_{n,h}^{\omega,\beta} \left( |\Delta_n - s_* n^{2/9}| \geq \eps_n n^{2/9} \right) = 0 \, . \]
\end{corollary}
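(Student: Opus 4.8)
The plan is to mimic, in the simplified setting, the path-localization argument used for Theorem~\ref{th-1/9}-\eqref{eq:th-1/9-proba} (i.e.\ the proof of Lemma~\ref{lem:position} and its reprise in Section~\ref{sec:traj-1/9}). Recall that in this model $M_n^- = 0$, so $\Delta_n = T_n - T_n^* = M_n^+ - c_h n^{1/3}$ is the only fluctuating quantity, and the relevant scale is $n^{2/9}$, with $s_* = \argmax_{s} \{ B_s - \frac{3\pi^2}{2\beta c_h^4} s^2 \}$ the $\PP$-a.s.\ unique maximizer appearing in the preceding proposition. The key identity throughout is $\tilde{\mathbf{P}}_{n,h}^{\omega,\beta}(\mathcal{A}) = \tilde Z_{n,h}^{\omega,\beta}(\mathcal{A}) / \tilde Z_{n,h}^{\omega,\beta}$, so the whole statement reduces to controlling the restricted partition function on the complement of the event $\{|\Delta_n - s_* n^{2/9}| \leq \eps_n n^{2/9}\}$.

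First I would fix $\eps > 0, \eps' > 0$ and introduce the set $\mathcal{S}^{\eps,\eps'} \defeq \{ s \in \RR : \sup_{|t-s| < \eps} \{ B_t - \frac{3\pi^2}{2\beta c_h^4} t^2 \} \geq \mathcal{W}_3 - \eps' \}$, where $\mathcal{W}_3 \defeq \sup_s \{ B_s - \frac{3\pi^2}{2\beta c_h^4} s^2 \} > 0$ is the $\PP$-a.s.\ positive finite value from \eqref{prop:simp-pre-conj}. By uniqueness of the maximizer $s_*$, $\bigcap_{\eps' > 0} \mathcal{S}^{\eps,\eps'} \subseteq B_{2\eps}(s_*)$, so it suffices to show $\tilde{\mathbf{P}}_{n,h}^{\omega,\beta}(\Delta_n n^{-2/9} \notin \mathcal{S}^{\eps,\eps'}) \to 0$ $\PP$-a.s. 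Writing $\mathcal{A}_n^{\eps,\eps'} \defeq \{\Delta_n n^{-2/9} \notin \mathcal{S}^{\eps,\eps'}\}$, I would split $\log \tilde{\mathbf{P}}_{n,h}^{\omega,\beta}(\mathcal{A}_n^{\eps,\eps'}) = (\log \tilde Z_{n,h}^{\omega,\beta}(\mathcal{A}_n^{\eps,\eps'}) + \frac32 h c_h n^{1/3} - \beta n^{1/6} X_{c_h}) - (\log \tilde Z_{n,h}^{\omega,\beta} + \frac32 h c_h n^{1/3} - \beta n^{1/6} X_{c_h})$ and show that the limsup of $(\beta n^{1/9})^{-1}$ times the first term is at most $\mathcal{W}_3 - \eps'$, while by \eqref{prop:simp-pre-conj} the second term divided by $\beta n^{1/9}$ converges to $\mathcal{W}_3$. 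To bound the first term, I would rerun the coarse-graining of the proof scheme of \eqref{prop:simp-pre-conj}: restrict to $|\Delta_n| \leq K n^{2/9}$ (legitimate for $K$ large by the analogue of Proposition~\ref{th:restrict-1/9}, using $|X_{c_h} - X_{Tn^{-1/3}}| \leq C n^{-1/6}\sqrt{|T_n^*-T|}$ off a small-probability event), split over $k\delta n^{2/9} \leq T_n^*-T < (k+1)\delta n^{2/9}$, keeping only those $k$ with $k\delta \notin \mathcal{S}^{\eps,\eps'}$, and use the coupling making $B^{(n)}_u = n^{1/18}(X_{c_h}-X_{c_h+u n^{-1/9}}) = B_u$ for $n$ large to conclude $\varlimsup_{\delta \downarrow 0}\varlimsup_n (\beta n^{1/9})^{-1}\log\tilde Z_{n,h}^{\omega,\beta}(\mathcal{A}_n^{\eps,\eps'}) \leq \sup_{s \notin \mathcal{S}^{\eps,\eps'}} \{B_s - \frac{3\pi^2}{2\beta c_h^4}s^2\} \leq \mathcal{W}_3 - \eps'$.

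Combining the two pieces gives $\varlimsup_n (\beta n^{1/9})^{-1} \log \tilde{\mathbf{P}}_{n,h}^{\omega,\beta}(\mathcal{A}_n^{\eps,\eps'}) < 0$ $\PP$-a.s., hence $\tilde{\mathbf{P}}_{n,h}^{\omega,\beta}(\mathcal{A}_n^{\eps,\eps'}) \to 0$. Finally, to upgrade from "for every fixed $\eps, \eps'$" to "there exists a vanishing sequence $\eps_n$", I would use a standard diagonal extraction: for each $m \geq 1$ pick (using the $\PP$-a.s.\ convergence and uniqueness) a threshold $N_m$ so that $\tilde{\mathbf{P}}_{n,h}^{\omega,\beta}(|\Delta_n n^{-2/9} - s_*| \geq 1/m) \leq 1/m$ for all $n \geq N_m$, then set $\eps_n \defeq 1/m$ for $N_m \leq n < N_{m+1}$; this $\eps_n \downarrow 0$ and delivers the claimed $\limsup_n \tilde{\mathbf{P}}_{n,h}^{\omega,\beta}(|\Delta_n - s_* n^{2/9}| \geq \eps_n n^{2/9}) = 0$.

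The main obstacle is not the coarse-graining itself — which is lighter here than in Theorem~\ref{th-1/9} because only one spatial variable is involved and $\mathcal{Y}_{u,v}$ collapses to a genuine Brownian motion $B$ — but rather making sure each of the inputs borrowed from Section~\ref{sec:gaussian-1/9} genuinely transfers: namely the restriction-of-trajectories estimate (the one-variable analogue of Proposition~\ref{th:restrict-1/9}, including the exponential-moment control of meander-type increments, here simply Brownian increments, which is easier) and the $n$-independence of the zoomed process under the Brownian-scaling coupling. Once these are in place the argument is essentially bookkeeping; the only genuinely delicate point is the uniqueness of $s_*$, which follows from the standard parabolic-drift argument (cf.\ \cite[Appendix A.3]{berger2020one}) already invoked for Lemma~\ref{lem-var2}.
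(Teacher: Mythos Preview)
Your proposal is correct and follows essentially the same approach as the paper: the Corollary is stated without proof in the paper precisely because it is the one-variable analogue of the path-localization argument already carried out in Section~\ref{sec:traj-1/9} (itself a reprise of Lemma~\ref{lem:position}), and you have faithfully reproduced that scheme. The diagonal extraction you add to pass from fixed $\eps,\eps'$ to a vanishing sequence $\eps_n$ is the standard step the paper leaves implicit (as it does in Corollary~\ref{cor:u*}).
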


\par Our goal is now to find out whether factorizing the partition function by this quantity leads to a bounded logarithm or not; in other words, we are looking for the $4$th order free energy, in the spirit of Section~\ref{def:energie}. 
We develop here some heuristic to justify that the $4$th order free energy is at scale $\alpha_4=0$.

Going forward we work conditionally to $s_*$. We define
\begin{equation}\label{eq:part-factorized-simp}
	\tilde{\mathcal{Z}}_{n,h}^{\omega,\beta} \defeq \tilde{Z}_{n,h}^{\omega,\beta} \exp \left( \frac{3}{2} h c_h n^{1/3} - \beta n^{1/6} X_{c_h} - \beta n^{1/9} \sup_u \Big\{ W_u - \frac{3\pi^2}{2 \beta c_h^4} u^2 \Big\} \right) \phi(T_n^*)^{-1}  \, .
\end{equation}
We first rewrite the factorized partition function $\tilde{\mathcal{Z}}_{n,h}^{\omega,\beta}$. If we write $T_n = c_h n^{1/3} + \Delta_n$ and we recall that thanks to the coupling, for $u$ in a neighborhood of $0$, we have $W_u = n^{1/18} \left( X_{c_h + \frac{u}{n^{1/9}}} - X_{c_h} \right)$ for sufficiently large $n$, we can rewrite
\[ \beta n^{1/6} \left( X_{T_n n^{-1/3}} - X_{c_h} \right) = \beta n^{1/6} \left( X_{c_h + \Delta_n n^{-1/3}} - X_{c_h} \right) = \beta n^{1/9} B_{\Delta_n n^{-2/9}} \, . \]
Then, we have
\begin{equation}\label{eq:diff-MBdrift}
	\tilde{\mathcal{Z}}_{n,h}^{\omega,\beta} \sim \sum_{|k - s_* n^{2/9}| \leq \eps_n n^{2/9}} \exp \left( \beta n^{1/9} \left[ \left( W_{k n^{-2/9}} - c_{h,\beta} \frac{k^2}{n^{4/9}} \right) - \sup_{s \in \RR} \mathset{W_s - c_{h,\beta} s^2} \right] \right) \, .
\end{equation}
We define the process $Y_s \defeq B_s - c_{h,\beta} s^2$ which is a Brownian motion with quadratic drift, and $s_*$ the point at which it attains its maximum on $\RR$. \eqref{eq:diff-MBdrift} can thus be rewritten as
\begin{equation}\label{eq:diff-MBdrift-Y}
	\tilde{\mathcal{Z}}_{n,h}^{\omega,\beta} \sim \sum_{|k - s_* n^{2/9}| \leq \eps_n n^{2/9}} \exp \left( \beta n^{1/9} \left( Y_{k n^{-2/9}}  - Y_{s_*} \right) \right) \, .
\end{equation}
The exponential term is non-positive, which means that the typical trajectories for the polymer are those that minimize the difference in \eqref{eq:diff-MBdrift}. 

\begin{remark}
	Previous works studied with some extent the laws of $s_*$ and $Y_{s_*}$ (see \cite{janson2013moments}). In particular $s_*$ follows the so-called Chernov distribution, which is symmetric. Writing Ai for the Airy function, \cite[Theorem 1.1]{janson2013moments} states that
	\[ \quad \esp{s_*^2} = \frac{2^{-2/3} c_{h,\beta}^{-4/3}}{6 i \pi} \int_\RR  \frac{y \, dy}{\text{Ai}(iy)} < \infty \quad \text{and} \quad \forall p \in \NN, \; \esp{s_*^p} < +\infty \, . \]
\end{remark}

In all the following, we use the fact that the distribution of $s_*$ is symmetric to reduce to the case $s_* > 0$. We will also work conditionally on the value of $s_*$, meaning on the location of the maximum of $Y$. We write $\alpha = 2 c_{h,\beta} s_*$, then observe that for any $s > 0$,
\[ Y_{s_*} - Y_{s_* \pm s} = B_{s_*} - B_{s_* \pm s} \pm \alpha s - c_{h,\beta} s^2 \leq B_s \pm \alpha s \eqdef R^\pm_s \, . \]
Thus we have $e^{\beta n^{1/9} (Y_{k n^{-2/9}} - Y_{s_*})} \leq e^{\beta n^{1/9} (R_{k n^{-2/9}} - R_{s_*})}$ which means that we can get an upper bound on the contribution of a given trajectory just by studying the processes $R^\pm$ conditioned to be positive, provided the existence of a coupling between these processes and $Y$. Moreover, since we are interested in the setting $s \to 0$, we should have a lower bound that reads $Y_{s_*} - Y_{s_* \pm s} \geq (1+o(1))R^\pm_s$ as $s \to 0$. This motivates our first conjecture, which is an analog of Proposition \ref{prop-couplage}. We will write $R = R^- \mathbbm{1}_{\RR^-} + R^+ \mathbbm{1}_{\RR^+}$.

\begin{conjecture}\label{prop:couplage-simple}
	There exists a coupling of $(R,Y)$ and a two-sided $\mathrm{BES}_3$ $\tilde{\mathbf{B}}$ such that almost surely, there exists $\delta_1 > 0$ and $n_1 \in \NN$ for which $\forall n \geq n_1$, for all $|u| < \delta_1$
	\begin{equation}\label{eq:couplage-simple}
		n^{1/9} R_{u n^{-2/9}} = n^{1/9} (Y_{s_*} - Y_{s_* + \frac{u}{n^{2/9}}}) = \tilde{\mathbf{B}}_u.
	\end{equation}
\end{conjecture}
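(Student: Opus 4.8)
The plan is to follow the blueprint of Proposition~\ref{prop-couplage} and its proof in Appendix~\ref{appendix-Construction}, with the two-sided Brownian meander replaced by the two-sided process obtained by looking at $Y_s \defeq B_s - c_{h,\beta} s^2$ near its global maximum. Conditionally on $s_*$, set $\Gamma^+_t \defeq Y_{s_*} - Y_{s_*+t}$ and $\Gamma^-_t \defeq Y_{s_*} - Y_{s_*-t}$ for $t \ge 0$; these are nonnegative and vanish at $t=0$. The first step is a path decomposition: conditionally on $s_*$ (and on the maximal value $Y_{s_*}$), the processes $\Gamma^+$ and $\Gamma^-$ are independent diffusions, each of which is a $\mathrm{BES}_3$ process carrying the (time-inhomogeneous) drift coming from the derivative of the parabola, which equals $\pm\alpha$ at the maximum with $\alpha = 2c_{h,\beta}s_*$. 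This is the parabolic-drift analogue of Williams' decomposition of a Brownian motion with linear drift at its maximum, whose post-maximum part is a $\mathrm{BES}_3$ process with constant drift; in particular $\Gamma^\pm$ coincides, near $t=0$, with the process $R^\pm$ conditioned to stay positive, which is exactly a $\mathrm{BES}_3$ process with linear drift $\pm\alpha$ by the classical Doob $h$-transform, up to a negligible quadratic correction.

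The second, quantitative, ingredient is that near $t=0$ all of these diffusions are mutually absolutely continuous with a standard $\mathrm{BES}_3$ process, with Radon--Nikodym density tending to $1$ as the time window shrinks to $\{0\}$: for the linear-drift Bessel processes this is a direct Girsanov computation, and for $\Gamma^\pm$ it follows from the decomposition above plus the fact that the quadratic correction only contributes an $\grdO(\varepsilon^2)$ perturbation on $[0,\varepsilon]$. Granting this, one couples a two-sided $\mathrm{BES}_3$ process $\tilde{\mathbf{B}}$ with $R$ conditioned to stay positive and with $(\Gamma^-,\Gamma^+)$ (hence with $Y$) so that all three agree on a common random neighborhood $(-\delta_1,\delta_1)$ of $0$, exactly as the Brownian meander is coupled with a $\mathrm{BES}_3$ in Appendix~\ref{appendix-Construction}. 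To obtain the $n$-dependent statement one then argues as in Section~\ref{sec:construction-couplage}: for each $n$ one builds from $\tilde{\mathbf{B}}$ a Brownian motion $X^{(n)}$ with the law of $X$ by pasting on $[-\delta_1,\delta_1]$ an appropriately rescaled copy of $\tilde{\mathbf{B}}$ (namely $v \mapsto n^{-1/9}\tilde{\mathbf{B}}_{n^{2/9}v}$, which is again a two-sided $\mathrm{BES}_3$-type process by the scale invariance of the Bessel process) and completing it with the matching conditioned pieces; then $n^{1/9}(Y^{(n)}_{s_*} - Y^{(n)}_{s_*+u n^{-2/9}}) = \tilde{\mathbf{B}}_u$ for $|u| < \delta_1$ and all $n$ large enough that $\eps_n < \delta_1$, and likewise with $R$ in place of $Y^{(n)}$ near $s_*$, which is \eqref{eq:couplage-simple}.

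The main obstacle is the first step. Conditioning $Y$ on its \emph{global} argmax over all of $\RR$ is a conditioning on a non-Markovian event, and the resulting pre- and post-maximum diffusions are governed by Airy functions (in the same vein as the distribution of $s_*$ in the remark above) rather than by one of the textbook processes. The honest route is to localize: on a bounded window around $s_*$, replacing the parabola $c_{h,\beta}s^2$ by its tangent line at $s_*$ changes the law of $Y$ by a bounded Radon--Nikodym factor that tends to $1$ as the window shrinks, which reduces matters to the classical Williams decomposition of a Brownian motion with \emph{linear} drift at its maximum. Making this absolute-continuity reduction rigorous, and verifying that the densities relating $\Gamma^\pm$, $R^\pm$ conditioned positive and a pure $\mathrm{BES}_3$ all converge to $1$ fast enough to permit an almost-sure coupling on a non-degenerate interval, is the delicate point; it is the reason the present statement is left as a conjecture, whereas for the driftless structure underlying Proposition~\ref{prop-couplage} the analogous decomposition (Brownian meander versus $\mathrm{BES}_3$) is classical and the proof goes through in Appendix~\ref{appendix-Construction}.
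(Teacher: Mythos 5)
The paper does not prove this statement: it is explicitly labeled a \emph{conjecture}, and the only supporting material the paper offers is the Martinez--San Martin theorem stating that a Brownian motion with negative linear drift, started from $x>0$ and conditioned to stay positive on $[0,T]$, converges to a $\mathrm{BES}_3$ process as $T\to\infty$. Your proposal is not a proof either, and you are candid about it: the third paragraph correctly names the obstruction (conditioning $Y_s=B_s-c_{h,\beta}s^2$ on the location of its global maximum over all of $\RR$ is a non-Markovian conditioning, and the resulting pre-/post-maximum diffusions are Airy-weighted rather than one of the tractable linear-drift processes). That is precisely the gap that the paper also sidesteps by leaving the statement as a conjecture rather than a proposition, so your diagnosis is aligned with the paper.

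On the constructive side, your sketch is a reasonable blueprint and goes a little further than the paper's one-line heuristic. You propose to (i) use a Williams-type decomposition of $Y$ at its maximum, replacing the quadratic drift by its tangent line at $s_*$ on a shrinking window and controlling the change of measure; (ii) observe that all the processes in play ($\Gamma^\pm$, $R^\pm$ conditioned positive, and a standard $\mathrm{BES}_3$) become mutually absolutely continuous with Radon--Nikodym derivatives tending to $1$ on $[0,\eps]$; and (iii) then run the Appendix~\ref{appendix-Construction} pasting argument to get the $n$-uniform coupling via Bessel scale invariance. The one point worth flagging is that step (ii) is delicate in a way your write-up slightly understates: absolute continuity with density near $1$ on $[0,\eps]$ gives closeness in total variation but does not by itself produce an almost-sure \emph{path equality} on a common random interval, which is what \eqref{eq:couplage-simple} requires. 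In Appendix~\ref{appendix-Construction}, the path equality is obtained from an honest path decomposition (excursion inside meander, Lemma~\ref{lem:meandre-excursion}) plus a genuine coupling event for two Bessel bridges (Lemma~\ref{lem:temps-couplage}), not from an absolute-continuity limit. So even granting the Williams-type decomposition in step (i), you would still need to find the analogue of Lemma~\ref{lem:meandre-excursion} for the parabolic-drift post-maximum process, i.e.\ a structural identity equating a piece of its path with a piece of a $\mathrm{BES}_3$ path, which is the genuinely open part.
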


The fact that the three-dimensional Bessel process appears is mainly due to the following result from Martinez and San Martin \cite{martinez1994quasi}.

\begin{theorem}
	Define $X^\alpha_t \defeq x + W_t - \alpha t$, with $\alpha, x > 0$. Then the process $X^\alpha$ conditioned to stay positive on $[0,T]$ converges in distribution to the Bessel process as $T \to \infty$.
\end{theorem}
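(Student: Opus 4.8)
The plan is to realise the conditioned law, restricted to any finite time interval, as an explicit absolutely continuous change of measure from the law of $X^\alpha$, to identify the limit of the corresponding densities, and to recognise this limit as the Doob $h$-transform of Brownian motion killed at $0$ by the harmonic function $y\mapsto y$ — which is exactly the law of the three-dimensional Bessel process started from $x$. I would write $P_x$ for the law on $C([0,\infty))$ of $X^\alpha_\cdot = x + W_\cdot - \alpha\cdot$, set $\tau \defeq \inf\{s\ge 0 : X^\alpha_s = 0\}$, and $h_u(y) \defeq P_y(\tau > u)$ for $u,y>0$; and for $T>0$ let $Q_T \defeq P_x(\,\cdot\mid \tau > T)$. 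The goal is: for every fixed $S>0$, the image of $Q_T$ on $C([0,S])$ converges weakly, as $T\to\infty$, to the law of a $\mathrm{BES}_3$ started at $x$.

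Fix $t>0$ and let $\mathcal F_t$ be the $\sigma$-algebra generated by $(X^\alpha_s)_{s\le t}$ (equivalently, the Borel $\sigma$-algebra of $C([0,t])$). For $T>t$, the Markov property at time $t$ gives, for all $A\in\mathcal F_t$, that $Q_T(A) = h_T(x)^{-1} E_x[\mathbf{1}_A\,\mathbf{1}_{\{\tau>t\}}\, h_{T-t}(X^\alpha_t)] = E_x[\mathbf{1}_A\, L_{T,t}]$, where $L_{T,t} \defeq h_{T-t}(X^\alpha_t)\, h_T(x)^{-1}\mathbf{1}_{\{\tau>t\}}$ is the density of $Q_T|_{\mathcal F_t}$ with respect to $P_x|_{\mathcal F_t}$. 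Now I would use that the first-passage time of $X^\alpha$ to $0$ from $y$ is inverse Gaussian, with density $\tfrac{y}{\sqrt{2\pi s^3}}\exp(-\tfrac{(y-\alpha s)^2}{2s}) = \tfrac{y e^{\alpha y}}{\sqrt{2\pi}} s^{-3/2}e^{-y^2/(2s)}e^{-\alpha^2 s/2}$; integrating over $s\in(u,\infty)$ and applying Watson's lemma (the endpoint $s=u$ dominates) yields, uniformly for $y$ in compacts of $(0,\infty)$,
\[ h_u(y) = \frac{2\,y\,e^{\alpha y}}{\alpha^2\sqrt{2\pi}\;u^{3/2}}\,e^{-\alpha^2 u/2}\,\big(1+o(1)\big),\qquad u\to\infty. \]
Since the $u$-prefactor is independent of the spatial variable, the ratio converges $P_x$-almost surely: $L_{T,t} \to L_{\infty,t} \defeq x^{-1}e^{-\alpha x}\,X^\alpha_t e^{\alpha X^\alpha_t}\,e^{\alpha^2 t/2}\,\mathbf{1}_{\{\tau>t\}}$ as $T\to\infty$.

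It remains to identify $L_{\infty,t}$ and upgrade the convergence. By Girsanov, under $\widetilde P_x$ with $\tfrac{d\widetilde P_x}{dP_x}|_{\mathcal F_t} = e^{\alpha W_t-\alpha^2 t/2} = e^{-\alpha x}e^{\alpha X^\alpha_t+\alpha^2 t/2}$, the process $X^\alpha$ is a driftless Brownian motion started at $x$; call it $B$, with hitting time $\tau_0$ of $0$. Substituting, $L_{\infty,t} = \tfrac{d\widetilde P_x}{dP_x}|_{\mathcal F_t}\cdot x^{-1}B_t\,\mathbf{1}_{\{\tau_0>t\}}$, so the candidate limit $Q_\infty$ defined by $dQ_\infty = L_{\infty,t}dP_x$ on $\mathcal F_t$ equals $x^{-1}B_t\mathbf{1}_{\{\tau_0>t\}}\,d\widetilde P_x$, i.e. Brownian motion started at $x$ conditioned (in Doob's sense, via $h(y)=y$) never to hit $0$ — that is the $\mathrm{BES}_3$ law started at $x$. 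Optional stopping at $\tau_0$ gives $\widetilde E_x[B_t\mathbf{1}_{\{\tau_0>t\}}]=x$, hence $E_x[L_{\infty,t}]=1=E_x[L_{T,t}]$ for all $T$; combined with the a.s. convergence, Scheffé's lemma yields $L_{T,t}\to L_{\infty,t}$ in $L^1(P_x)$, so $Q_T|_{\mathcal F_t}\to Q_\infty|_{\mathcal F_t}$ in total variation. Taking $t=S$ gives total-variation — in particular weak — convergence on $C([0,S])$ for every $S$, and the limit is $\mathrm{BES}_3$ started at $x$, proving the theorem.

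The main obstacle is the asymptotics of $h_u(y)$, together with enough control to pass to the limit: one must verify that the $o(1)$ above is uniform on compacts in $y$ (so that $L_{T,t}\to L_{\infty,t}$ holds $P_x$-a.s. with $X^\alpha_t$ random) and that the a.s.\ limit is genuinely a probability density (the optional-stopping identity $E_x[L_{\infty,t}]=1$ needed for Scheffé). Everything else is classical: the Markov decomposition of the density, the Girsanov change of drift, and the identification with the Bessel $h$-transform. An equivalent route would be to check at the outset that $M_t\defeq \mathbf{1}_{\{\tau>t\}}X^\alpha_t e^{\alpha X^\alpha_t}e^{\alpha^2 t/2}$ is a $P_x$-martingale (a short Itô computation), define $Q_\infty$ as the associated Doob transform, and then verify $L_{T,t}\to x^{-1}e^{-\alpha x}M_t$.
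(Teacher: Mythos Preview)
The paper does not give its own proof of this statement: it is quoted as a known result of Martinez and San Martin \cite{martinez1994quasi} and used only to motivate Conjecture~\ref{prop:couplage-simple}. There is therefore no argument in the paper to compare your proposal against.

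That said, your approach is correct and is essentially the standard one. Writing the conditioned law on $\mathcal F_t$ via the Markov property as $L_{T,t}\,dP_x$ with $L_{T,t}=h_{T-t}(X^\alpha_t)/h_T(x)\,\mathbf{1}_{\{\tau>t\}}$, extracting the large-$u$ asymptotics of $h_u(y)$ from the inverse-Gaussian first-passage density, and identifying the limiting density via Girsanov as the Doob $h$-transform $y\mapsto y$ of killed Brownian motion (i.e.\ $\mathrm{BES}_3$) is exactly how this is done in the quasi-stationary literature. The Scheff\'e step is the right way to upgrade almost-sure convergence to $L^1$ once you know $E_x[L_{\infty,t}]=1$; your justification of the latter via $\widetilde E_x[B_t\mathbf{1}_{\{\tau_0>t\}}]=x$ is fine since $(B_{s\wedge\tau_0})_{s\le t}$ is dominated by $\sup_{s\le t}|B_s|\in L^1$. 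Your caveat about uniformity of the $o(1)$ in the asymptotic of $h_u(y)$ is well placed but mild: for the almost-sure convergence you only need the asymptotic pointwise in $y$, which follows directly from the explicit integral since $e^{-y^2/(2s)}\to 1$ on $[u,\infty)$ for each fixed $y$.
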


Our second conjecture is a description of the simplified model and the idea should follow along the steps of Section \ref{sec:gaussian-1/9}, excluding trajectories and using Conjecture \ref{prop:couplage-simple} to get an almost-sure convergence of $\tilde{Z}_{n,h}^{\omega,\beta}$.

\begin{conjecture}\label{th-ordre 1}
	There exist $\tilde{\mathbf{B}}$ (given by \eqref{eq:couplage-simple}) a two-sided three-dimensional Bessel process such that for $n$ large enough, writing $s_*^n \defeq s_* n^{2/9} - \lfloor s_* n^{2/9} \rfloor$ we have
	\[ \mathbf{P}_{n,h}^{\omega, \beta} \left( M_n^+ = c_h n^{1/3} + \lfloor s_* n^{2/9} \rfloor + k \right) \sim \frac{1}{\theta_\omega (n)} e^{-\beta \mathcal{W}_{s_*^n + k}} \, , \quad \text{with} \quad  \theta_\omega (n) \defeq \sum_{k \in \ZZ} e^{-\beta \mathcal{W}_{s_*^n + k}(\omega)} \, . \]
\end{conjecture}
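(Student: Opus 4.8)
Granting Conjecture~\ref{prop:couplage-simple}, the plan is to run the argument of Section~\ref{sec:gaussian-1/9} one order further, keeping every estimate at multiplicative precision $1+\bar o(1)$ instead of at the exponential scale $e^{\bar o(n^{1/9})}$, so that it produces a genuine local limit rather than the convergence of $n^{-1/9}\log$. We work conditionally on $s_*$ and, as throughout Section~\ref{sec:simplified}, with a Gaussian field, so that $\beta\sum_{i\le T}\omega_i=\beta n^{1/6}X_{Tn^{-1/3}}$ with no coupling error.

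The first step is to make the $k$-th term of $\tilde{\mathcal Z}_{n,h}^{\omega,\beta}$ from \eqref{eq:part-factorized-simp} explicit. Using $hT+g(T)n=\varphi_n(T)$, the sharp gambler's-ruin asymptotics of Section~\ref{sec:part-homogene-desordre} (one-sided analogue of \cite[Theorem~1.4]{bouchot1}) and $\varphi_n(T_n^*)=\tfrac32hc_hn^{1/3}$, the contribution to $\tilde{\mathcal Z}_{n,h}^{\omega,\beta}$ of the event $\{M_n^+=c_hn^{1/3}+\lfloor s_*n^{2/9}\rfloor+k\}$, i.e.\ of $T=c_hn^{1/3}+\lfloor s_*n^{2/9}\rfloor+k$, is
\[
(1+\bar o(1))\,\frac{\phi_n(T)}{\phi_n(T_n^*)}\exp\!\Big(-\big[\varphi_n(T)-\varphi_n(T_n^*)\big]+\beta n^{1/6}\big(X_{Tn^{-1/3}}-X_{c_h}\big)-\beta n^{1/9}\sup_{u}\big\{W_u-c_{h,\beta}u^2\big\}\Big),
\]
with $\phi_n(T)/\phi_n(T_n^*)=1+\grdO(n^{-1/9})$. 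In the Taylor expansion of $\varphi_n$ at $T_n^*$, the quadratic term $\hat c_hn^{-1/3}(T-T_n^*)^2$ recombines with $\beta n^{1/6}(X_{Tn^{-1/3}}-X_{c_h})=\beta n^{1/9}B_{(T-T_n^*)n^{-2/9}}$ (the coupling behind \eqref{prop:simp-pre-conj}, an identity for $|s|<\delta$ and $n$ large) and with $\beta n^{1/9}\sup_u\{W_u-c_{h,\beta}u^2\}=\beta n^{1/9}Y_{s_*}$, $Y_s\defeq B_s-c_{h,\beta}s^2$, to give exactly $\beta n^{1/9}\big(Y_{(T-T_n^*)n^{-2/9}}-Y_{s_*}\big)$; the cubic term $-\tfrac{2n\pi^2}{(T_n^*)^5}(T-T_n^*)^3$ is $\Theta(1)$ but, since $T-T_n^*=s_*n^{2/9}-s_*^n+k$, equals a $k$-independent constant $\varrho_n=\varrho(s_*)+\bar o(1)$; the remaining Taylor terms are $\grdO(n^{-1/9})$. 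Finally, writing $(T-T_n^*)n^{-2/9}=s_*+(k-s_*^n)n^{-2/9}$ and invoking the exact identity of Conjecture~\ref{prop:couplage-simple}, $\beta n^{1/9}\big(Y_{(T-T_n^*)n^{-2/9}}-Y_{s_*}\big)=-\beta\tilde{\mathbf B}_{k-s_*^n}$ whenever $|k-s_*^n|<\delta_1n^{2/9}$ (in particular for every fixed $k$ and all $n$ large). So the $k$-th term of $\tilde{\mathcal Z}_{n,h}^{\omega,\beta}$ is $(1+\bar o(1))\,e^{\varrho_n}e^{-\beta\tilde{\mathbf B}_{k-s_*^n}}$.

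Next I would sum and normalise. By the Corollary preceding the conjecture, $\tilde{\mathcal Z}_{n,h}^{\omega,\beta}=(1+\bar o(1))\tilde{\mathcal Z}_{n,h}^{\omega,\beta}\big(|\Delta_n-s_*n^{2/9}|\le\eps_n n^{2/9}\big)$, and since $\eps_n\to0<\delta_1$ the identity above holds uniformly over the whole window $|k-s_*^n|\le\eps_n n^{2/9}$; one makes the factors $1+\bar o(1)$ uniform in $k$ there by arguments parallel to Lemma~\ref{lem:excl-traj-L}. Summing, $\tilde{\mathcal Z}_{n,h}^{\omega,\beta}=(1+\bar o(1))e^{\varrho_n}\sum_{|k-s_*^n|\le\eps_n n^{2/9}}e^{-\beta\tilde{\mathbf B}_{k-s_*^n}}$. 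Because $\tilde{\mathbf B}$ is a two-sided $\mathrm{BES}_3$, it is transient and $\PP$-a.s.\ satisfies $\tilde{\mathbf B}_t\ge c\sqrt{|t|}$ for $|t|$ large, so $\theta_\omega(n)\defeq\sum_{k\in\ZZ}e^{-\beta\tilde{\mathbf B}_{k-s_*^n}}$ is finite and, by continuity of $\tilde{\mathbf B}$, bounded above and away from $0$ uniformly in the offset $s_*^n\in[0,1)$, while $\sum_{K<|k-s_*^n|\le\eps_n n^{2/9}}e^{-\beta\tilde{\mathbf B}_{k-s_*^n}}\to0$ as $K\to\infty$ uniformly in $n$. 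Letting $K\to\infty$ after $n\to\infty$ gives $\tilde{\mathcal Z}_{n,h}^{\omega,\beta}=(1+\bar o(1))e^{\varrho_n}\theta_\omega(n)$; dividing the $k$-th term by this (the factorisation constants and $e^{\varrho_n}$ cancel) yields, for every fixed $k$, $\tilde{\mathbf P}_{n,h}^{\omega,\beta}\big(M_n^+=c_hn^{1/3}+\lfloor s_*n^{2/9}\rfloor+k\big)\sim\theta_\omega(n)^{-1}e^{-\beta\tilde{\mathbf B}_{k-s_*^n}}$, which is the asserted local limit with $\mathcal W$ the two-sided $\mathrm{BES}_3$ $\tilde{\mathbf B}$ read at the lattice offset $k-s_*^n$.

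The genuinely hard input is Conjecture~\ref{prop:couplage-simple} itself: producing a coupling of $n^{1/9}(Y_{s_*}-Y_{s_*+\,\cdot\,n^{-2/9}})$ with a two-sided $\mathrm{BES}_3$ that is an exact identity near $0$ for all large $n$ — the analogue, for Brownian motion with parabolic drift near its maximum, of Proposition~\ref{prop-couplage} and Appendix~\ref{appendix-Construction}. Its distributional core is available (Brownian motion with linear drift conditioned to stay positive converges to $\mathrm{BES}_3$, \cite{martinez1994quasi}), but promoting it to a trajectorial coupling compatible with the $n^{1/9}$-zoom via a path decomposition of $Y$ at its argmax is where the real work lies. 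Conditionally on that, the main obstacle in the scheme above is uniformity: the $1+\bar o(1)$ corrections must be controlled uniformly over $|k-s_*^n|\le\eps_n n^{2/9}$ and the tail bound for $\theta_\omega(n)$ must be uniform in $n$, since here — unlike in Theorem~\ref{th-1/9} — multiplicative errors, not errors on the $n^{-1/9}\log$ scale, must be tracked; one should also keep in mind that $\tilde{\mathcal Z}_{n,h}^{\omega,\beta}$ itself does not converge, because $s_*^n$ oscillates in $[0,1)$, so only the normalised ratios stabilise.
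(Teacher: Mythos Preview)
This statement is a conjecture, and the paper supplies only a heuristic rather than a proof; your strategy tracks that heuristic closely---restrict to $|\Delta_n-s_*n^{2/9}|\le K$, invoke the coupling of Conjecture~\ref{prop:couplage-simple} to turn the exponent into $-\beta\tilde{\mathbf B}$ at lattice points, then normalise---while adding welcome rigor, notably the cubic Taylor term $\varrho_n$, which is $\Theta(1)$ at this scale but $k$-independent and hence cancels in the ratio (a point the paper's $\sim$ glosses over). Your indexing $\tilde{\mathbf B}_{k-s_*^n}$ differs from the paper's $\mathcal W_{s_*^n+k}$, but since $\mathcal W$ is not made explicit this is cosmetic.
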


\begin{heuristic}
	To minimize $n^{1/9} (Y_{s_*} - Y_{s_* + s}) = n^{1/9} R_s$, since $R_s \asymp \sqrt{s} - \alpha s \asymp \sqrt{s}$ with high probability when $s \to 0$ (we are close to $s_*$) we roughly need to have $s = \grdO (n^{-2/9})$. In the definition of $\tilde{\mathcal{Z}}_{n,h}^{\omega,\beta}$, we take $s + s_* = \Delta n^{-2/9}$, thus we should be able to prove that $\tilde{\mathcal{Z}}_{n,h}^{\omega,\beta}(|\Delta_n - s_*n^{2/9}| > K)/\tilde{\mathcal{Z}}_{n,h}^{\omega,\beta} \to 0$ when $K \to \infty$ and $n$ is large enough. On the other hand, for $\tilde{\mathcal{Z}}_{n,h}^{\omega,\beta}(|\Delta_n - s_*n^{2/9}| \leq K)$, when $n$ is large enough, we have $Y_{kn^{-2/9}} - Y_{s_*} = \tilde{\mathbf{B}}_k$ for any $|k| \leq K$. Thus, we should be able to prove that when $n \to +\infty$, we have $|\tilde{\mathcal{Z}}_{n,h}^{\omega,\beta}(|\Delta_n - s_*n^{2/9}| \leq K) - \sum_{|k| \leq K} e^{-\beta \tilde{\mathbf{B}}_k}| \to 0$ in similar fashion to the proof of Proposition~\ref{prop-cv-1/9}.
\end{heuristic}

\begin{remark}
	It should be possible to obtain an analog of Conjecture \ref{th-ordre 1} in the general model for a Gaussian environment $\omega$, which supports Conjecture \ref{conj}. This would require a coupling of $\mathcal{Y}_{\mathcal{U},\mathcal{V}} - \mathcal{Y}_{u,v}$ (recall the definitions in Theorem \ref{th-1/9}) with some suitable process on a small neighborhood of $(\mathcal{U},\mathcal{V})$.
\end{remark}

\begin{appendix}
	
	\section{Disorder in a domain of attraction of a Lévy process}\label{appendix-Levy}
	
	In this section we will extend the Theorem~\ref{th-1/6} to the case where $(\omega_z)_{z\in \mathbb Z}$ is in the domain of attraction of an $\alpha$-stable law, with $\alpha\in (1,2)$; we refer to~\cite{berger2020one} where the case $\alpha<1$ is shown to have a different behavior.
	More precisely, we assume that the field $\omega$ is such that $\esp{\omega_0} = 0$ and that there exists $\alpha \in (1,2)$ such that
	\begin{equation}\label{eq:alpha-stable}
		\proba{\omega_0 > t} \sim pt^{-\alpha} \quad , \quad \proba{\omega_0 < -t} \sim qt^{-\alpha} \quad \text{as $t \to \infty$ with $p+q=1$.}
	\end{equation}
	This ensures that $\frac{1}{k^{1/\alpha}} \sum_{z = 0}^k \omega_z$ converges in law to an $\alpha$-stable Lévy process, $\alpha\in (0,2)$. Note that we treat the case of a pure power tail in~\eqref{eq:alpha-stable}, \textit{i.e.}\ the normal domain of attraction to an $\alpha$-stable law, only for simplicity, to avoid dealing with slowly varying corrections in the tail behavior.
	
	\par As in the case where $\esp{\omega_0^2} = 1$, one can define a coupling $\hat\omega = \hat\omega^{(n)}$ such that
	\[ \left( \frac{1}{n^{1/3\alpha}} \Sigma^-_{un^{1/3}}(\hat{\omega}) \right)_{u \geq 0} \xrightarrow[n \to \infty]{a.s.} \left( X^{(1)}_u(\hat{\omega})  \right)_{u \geq 0} \; , \hspace{0.5cm} \left( \frac{1}{n^{1/3\alpha}} \Sigma^+_{vn^{1/3}}(\hat{\omega}) \right)_{v \geq 0} \xrightarrow[n \to \infty]{a.s.} \left( X^{(2)}_v(\hat{\omega})  \right)_{v \geq 0}, \]
	where $X^{(1)},X^{(2)}$ are two independent $\alpha$-stable Lévy processes, see \cite[\S1.2]{berger2020one}.

	\medskip
	If the range is of size of order $n^{\xi}$,	then we have that $\sum_{z \in \mathcal{R}_n} \omega_z$ is of order $n^{\xi/\alpha}$, which is negligible compared to $n^{\xi}$ since $\alpha > 1$. 
	Hence the disorder should be negligible at first order, and this is what is proven in  \cite[Thm.~1.2]{berger2020one}: we have
	\[ \lim_{n\to \infty} \frac{1}{n^{1/3}} \log Z_{n,h}^{\omega,\beta} = -\frac{3}{2} (\pi h)^{2/3}, \quad \forall \eps > 0, \mathbf{P}_{n,h}^{\omega,\beta}\left( \Big| n^{-1/3}|\mathcal{R}_n| - c_h \Big| > \eps \right) \xrightarrow[n \to \infty]{} 0 \, . \]
	
	Our result here is to obtain the second order asymptotic for the convergence of $\log Z_{n,h}^{\omega,\beta}$; we deduce a result on the position of the range under $\mathbf{P}_{n,h}^{\omega,\beta}$.
	\begin{theorem}\label{th-1/6-Levy}
		Suppose that $(\omega_z)_{z\in \mathbb Z}$ verifies~\eqref{eq:alpha-stable}.
		Then, for any $h,\beta > 0$, we have the following $\PP$-a.s.\ convergence
		\[ \lim_{n\to \infty} \frac{1}{\beta n^{1/3\alpha}} \left( \log Z_{n,h}^{\omega,\beta} + \frac{3}{2}hc_hn^{1/3} \right) = \sup_{0 \leq u \leq c_h} \left\{ X^{(1)}_u + X^{(2)}_{c_h-u} \right\} \, , \]
		where $X^{(1)}$ and $X^{(2)}$ are two independent $\alpha$-stable Lévy processes.
		
		\noindent
		Furthermore, $u_* \defeq \argmax_{u \in [0,c_h]} \left\{ X^{(1)}_u + X^{(2)}_{c_h-u} \right\}$ exists $\PP$-almost surely and
		\[ \forall \eps > 0, \mathbf{P}_{n,h}^{\omega, \beta} \left( \Big|\frac{1}{n^{1/3}} (M_n^-,M_n^+) - (-u_*,c_h-u_*) \Big| > \eps \right) \xrightarrow[n \to \infty]{} 0 \qquad \text{$\PP$-a.s.}  \]
	\end{theorem}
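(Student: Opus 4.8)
The plan is to mimic the proof of Theorem~\ref{th-1/6}, keeping track of the fact that the Brownian motions $X^{(1)},X^{(2)}$ are now replaced by $\alpha$-stable Lévy processes and the scaling $n^{1/6}$ by $n^{1/3\alpha}$. First I would redo the rewriting of the partition function from Section~\ref{sec:part-homogene-desordre} verbatim: Theorem~\ref{thm0} still holds in this setting (by~\cite{berger2020one}), so we may restrict to $|\Delta_n|\le \eps_n T_n^*$, and the Gambler's-ruin asymptotics for $\mathbf P(\mathcal R_n=\llbracket -x,y\rrbracket)$ are unchanged since they do not involve $\omega$. This yields exactly~\eqref{partition} with $\sum_{z=-x}^y\omega_z$ unchanged. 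Then one performs the same coarse-graining over boxes of side $\delta n^{1/3}$, writing $\log Z_{n,h}^{\omega,\beta}+\tfrac32 hc_h n^{1/3}=\bar o(1)+\log\psi_h+\log\Lambda_{n,h}^{\omega,\beta}(\delta)$ and reducing to controlling each $Z_{n,h}^{\omega,\beta}(k_1,k_2,\delta)$.

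The heart of the argument is the analogue of Lemma~\ref{lem:cv-1/6}: for fixed $k_1,k_2,\delta$, $\PP$-a.s.
\[
\mathcal W^-(k_1\delta,k_2\delta,\delta)\le \varliminf_{n\to\infty}\frac{\log Z_{n,h}^{\omega,\beta}(k_1,k_2,\delta)}{\beta n^{1/3\alpha}}\le \varlimsup_{n\to\infty}\frac{\log Z_{n,h}^{\omega,\beta}(k_1,k_2,\delta)}{\beta n^{1/3\alpha}}\le \mathcal W^+(k_1\delta,k_2\delta,\delta),
\]
where $\mathcal W^\pm$ is defined as in~\eqref{def:W(uvd)} but with the Lévy processes $X^{(1)},X^{(2)}$. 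The upper bound proceeds exactly as before: one sandwiches $\sum_{z=-x}^y\omega_z$ between $\Sigma^+_{k_2\delta n^{1/3}}+\Sigma^-_{k_1\delta n^{1/3}}\pm R_n^\delta$, and invokes the a.s.\ convergence of $n^{-1/3\alpha}\Sigma^\pm_{\cdot n^{1/3}}$ to $X^{(1)},X^{(2)}$ in the Skorokhod topology together with the Lévy analogue of~\cite[Lemma~A.5]{berger2020one} to bound $n^{-1/3\alpha}R_n^\delta$ by $\eps+\sup_{[u,u+\eps+\delta]}|X^{(1)}-X^{(1)}_u|+\sup_{[v,v+\eps+\delta]}|X^{(2)}-X^{(2)}_v|$, uniformly on the finite set $U_\delta$; let $n\to\infty$ then $\eps\to0$. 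The lower bound again restricts to $|\Delta_n^{x,y}|\le 1$ and picks the single best $(x,y)$, so that $n^{-1/3\alpha}\log Z_{n,h}^{\omega,\beta}(k_1,k_2,\delta)\ge \beta^{-1}n^{-1/3\alpha}\sup_{|\Delta_n^{x,y}|\le1}\sum_{z=-x}^y\omega_z-\bar o(1)$ (note that since $\alpha<2$ we have $3\pi^2(\Delta_n^{x,y})^2/(2c_h^4)n^{-1/3}\to0$, so that term is harmless, and the $\sin$-term contributes a deterministic $\bar o(1)$ handled exactly as in the proof of Lemma~\ref{lem:cv-1/6}). Summing the $2c_h/\delta$ boxes, passing to the limit $n\to\infty$ and then $\delta\to0$, using that $\mathcal W^\pm(u,v,\delta)\to X^{(1)}_u+X^{(2)}_v$ as $\delta\to0$ by the a.s.\ right-continuity (and the fact that the maximum of the limit is not attained at a jump time of the supremum, almost surely) gives the convergence of $n^{-1/3\alpha}(\log Z_{n,h}^{\omega,\beta}+\tfrac32 hc_h n^{1/3})$ to $\sup_{0\le u\le c_h}\{X^{(1)}_u+X^{(2)}_{c_h-u}\}$. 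The existence of $u_*$ as an argmax follows from upper semicontinuity of $u\mapsto X^{(1)}_u+X^{(2)}_{c_h-u}$ on the compact $[0,c_h]$; I would not assert uniqueness, which explains the weaker formulation in the statement of Theorem~\ref{th-1/6-Levy}.

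For the path statement, I would reproduce the proof of Lemma~\ref{lem:position}: set $\mathcal U^{\eps,\eps'}=\{u\in[0,c_h]:\sup_{|s-u|<\eps}\{X^{(1)}_s+X^{(2)}_{c_h-s}\}\ge X_{u_*}-\eps'>0\}$, let $\mathcal A_n^{\eps,\eps'}=\{n^{-1/3}|M_n^-|\notin \mathcal U^{\eps,\eps'}\}$, and use $\log\mathbf P_{n,h}^{\omega,\beta}(\mathcal A_n^{\eps,\eps'})=[\log Z_{n,h}^{\omega,\beta}(\mathcal A_n^{\eps,\eps'})+\tfrac32 hc_h n^{1/3}]-[\log Z_{n,h}^{\omega,\beta}+\tfrac32 hc_h n^{1/3}]$. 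The coarse-graining restricted to boxes $k_1$ with $-[k_1,k_1+1)\delta n^{1/3}\not\subset \mathcal U^{\eps,\eps'}$ gives $\varlimsup_{\delta\to0}\varlimsup_n (\beta n^{1/3\alpha})^{-1}[\log Z_{n,h}^{\omega,\beta}(\mathcal A_n^{\eps,\eps'})+\tfrac32 hT_n^*]\le \sup_{u\notin\mathcal U^{\eps,\eps'}}\{X^{(1)}_u+X^{(2)}_{c_h-u}\}\le X_{u_*}-\eps'$, strictly below the limit $X_{u_*}$ of the full log-partition function, hence $\mathbf P_{n,h}^{\omega,\beta}(\mathcal A_n^{\eps,\eps'})\to0$; then $n^{-1/3}|M_n^-|$ concentrates on $\mathcal U^{\eps,\eps'}$, and since any argmax $u_*$ lies in every $\mathcal U^{\eps,\eps'}$ and $\eps,\eps'$ are arbitrary one obtains convergence in $\mathbf P_{n,h}^{\omega,\beta}$-probability of $n^{-1/3}M_n^-$ to $-u_*$. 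Combined with $n^{-1/3}|\mathcal R_n|\to c_h$ from Theorem~\ref{thm0} and Slutsky's lemma (all of $M_n^-,M_n^+,T_n$ live on the same probability space), this gives $n^{-1/3}(M_n^-,M_n^+)\to(-u_*,c_h-u_*)$. The main obstacle, and the reason this is relegated to an appendix, is the absence of continuity: one must be careful that the Skorokhod-topology convergence of $n^{-1/3\alpha}\Sigma^\pm$ to a Lévy process only controls suprema over intervals of length $\delta$ in the limit $\delta\to0$ up to jump sizes, so the passage $\delta\to0$ in $\mathcal W^\pm(u,v,\delta)$ must be justified via a.s.\ properties of the Lévy paths (no fixed discontinuities, and a.s.\ the maximizer is unique or at least stable under the $\delta$-neighborhood argument) — this is exactly the role of the Lévy-process lemmas imported from~\cite{berger2020one}, and these are what I would cite rather than reprove.
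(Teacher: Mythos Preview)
Your approach is essentially identical to the paper's: the same coarse-graining decomposition~\eqref{eq:decoupZnh}, the same sandwich bounds $\mathcal W^\pm$ via~\cite[Lemma~A.5]{berger2020one}, the same passage $\delta\to 0$ using the c\`adl\`ag structure, and the same Lemma~\ref{lem:position} argument for the path statement. The paper additionally cites \cite[Theorem~2.1]{Levy-processes} and \cite[Section~3]{growth-RW-Levy} for positivity and finiteness of the supremum, which you do not address but which is a routine citation.

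There is, however, one genuine inconsistency in your write-up. You explicitly decline to assert uniqueness of $u_*$, saying this ``explains the weaker formulation'' of the theorem. But the second half of the statement --- convergence of $n^{-1/3}(M_n^-,M_n^+)$ to the single point $(-u_*,c_h-u_*)$ --- does not make sense without uniqueness, and your $\mathcal U^{\eps,\eps'}$ argument collapses without it: the inclusion $\bigcap_{\eps'>0}\mathcal U^{\eps,\eps'}\subset B_{2\eps}(u_*)$ (used in the proof of Lemma~\ref{lem:position}) fails if the argmax set contains two distinct points, and your phrase ``any argmax $u_*$ lies in every $\mathcal U^{\eps,\eps'}$'' does not salvage convergence to a deterministic limit. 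The paper resolves this by invoking \cite[Proposition~3.1]{berger2020one}, which gives a.s.\ uniqueness of the maximizer for the Lévy variational problem; you should cite it rather than sidestep the issue.
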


	\begin{proof}
		The proof is essentially the same as the one of Theorem \ref{th-1/6}.
		As in~\eqref{eq:decoupZnh}, we can write
		\[ \log Z_{n,h}^{\omega,\beta} + \frac{3}{2} h c_hn^{1/3} = \log \big(1 + \bar{o}(1)\big)\psi_h + \log \sum_{k_1 = 0}^{c_h/\delta} \sum_{k_2 = \frac{c_h}{\delta} - k_1 - 1}^{\frac{c_h}{\delta} - k_1} Z_{n,h}^{\omega,\beta} (k_1,k_2,\delta) \, , \]
		with $Z_{n,h}^{\omega,\beta} (k_1,k_2,\delta)$ defined as in \eqref{Znw-retreint-1/6}. Once again we have
		\begin{equation}
			\Big| \sum_{z = -x}^y \omega_z - \left( \Sigma^+_{k_2\delta n^{1/3}} + \Sigma^-_{k_1\delta n^{1/3}} \right) \Big| \leq R_n^\delta(k_1\delta, k_2\delta) \, ,
		\end{equation}
		where the error remainer $R_n^\delta$ is defined for $u,v \geq 0$ by
		\[ R_n^\delta(u,v) \defeq \max_{un^{1/3} + 1 \leq j \leq (u+\delta)n^{1/3}-1} \big| \Sigma^-_j - \Sigma^-_{un^{1/3}} \big| + \max_{vn^{1/3} + 1 \leq j \leq (v+\delta)n^{1/3}-1} \big| \Sigma^+_j - \Sigma^+_{vn^{1/3}} \big| \, .
		\]
		Using the coupling $\hat{\omega}$ and Lemma A.5 of \cite{berger2020one}, we have $\PP-a.s.$ $\forall \eps > 0$, $\exists n_0 = n_0(\eps,\delta,\omega)$ such that $\forall n \geq n_0$,
		\[ \frac{1}{n^{1/3\alpha}} R_n^\delta(u,v) \leq \eps + \sup_{u \leq u' \leq u + \eps + \delta} \big| X^{(1)}_{u'}-X^{(1)}_u \big| + \sup_{v \leq v' \leq v + \eps + \delta} \big| X^{(2)}_{v'}-X^{(2)}_v \big| \]
		\[ \left( \left| \frac{1}{n^{1/3\alpha}}\Sigma^+_{vn^{1/3}} - X^{(2)}_v \right| \vee \left| \frac{1}{n^{1/3\alpha}}\Sigma^-_{un^{1/3}} - X^{(1)}_u \right| \right) \leq \eps\, , 
		\]
		uniformly in $u,v \in U_\delta$ as $U_\delta$ is a finite set (recall the definition \eqref{eq:def-U-delta} of $U_\delta$).
		Letting $N \to \infty$ then $\eps \to 0$ we obtain that $\PP$-almost surely,
		\[ 
		\begin{split}
			\limsup_{n\to \infty} \frac{1}{\beta n^{1/6}} \left( \log Z_{n,h}^{\omega,\beta} + \frac{3}{2}hc_hn^{1/3} \right) \leq  \sup_{\substack{u,v \in U_\delta\\ u+v \in \{c_h,c_h-\delta\}}} \mathcal{W}^+(u,v,\delta) \, , \\
			\liminf_{n\to \infty} \frac{1}{\beta n^{1/6}} \left( \log Z_{n,h}^{\omega,\beta} + \frac{3}{2}hc_hn^{1/3} \right) 
			\geq \sup_{\substack{u,v \in U_\delta\\ u+v \in \{c_h,c_h-\delta\}}} \mathcal{W}^-(u,v,\delta)
		\end{split}
		\]
		in which we wrote
		\[ \mathcal{W}^{\pm}(u,v,\delta) =  X^{(1)}_u + X^{(2)}_v \pm \sup_{u \leq u' \leq u + \delta} \big| X^{(1)}_{u'}-X^{(1)}_u \big| \pm \sup_{v \leq v' \leq v + \delta} \big| X^{(2)}_{v'}-X^{(2)}_v \big| \, . \]
		Using the càdlàg structure of Lévy processes $X^{(1)}$ and $X^{(2)}$ we push $\delta$ to $0$ and get the desired convergence.
		
		\par Afterwards, we can use \cite[Theorem 2.1]{Levy-processes} and \cite[Section 3]{growth-RW-Levy} to prove that the variational problem is positive and finite (in the sense that $\sup_{0 \leq u \leq c_h} \big\{ X^{(1)}_u + X^{(2)}_{c_h-u} \big\}$ is almost surely positive and finite), which relies on the same reasoning as Lemma \ref{pb-var-1/6}. Then, \cite[Proposition 3.1]{berger2020one} proves the existence and unicity of the maximizer $u_*$. The proof of the second part of Theorem~\ref{th-1/6-Levy} is exactly the proof of Lemma~\ref{lem:position}.
	\end{proof}

	\section{Technical results for the Brownian meander}\label{appendix-meandre}
	
	Let $W$ be a standard Brownian motion on $[0,1]$ and denote $\tau \defeq \sup \mathset{t \in [0,1] \; : \, W_t = 0}$.
	The Brownian meander on $[0,1]$ is defined as the rescaled trajectory of $W$ between $\tau$ and~$1$. More precisely it is the process $M$ defined on $[0,1]$ by
	\[ M_t \defeq \frac{1}{\sqrt{1-\tau}} |W_{\tau + t(1-\tau)}| \, . \]
	Note that we could define the meander to be on any interval $[0,T]$ by changing how we rescale the trajectory, leading to define a Brownian meander of duration $T$ as the rescaled process $\sqrt{\frac{T}{1-\tau}} |W_{\tau + \frac{t}{T}(1-\tau)}|$ on $[0,T]$.
	
	The Brownian meander also appears when studying a Brownian motion seen from its maximum over an interval. More precisely, we have the following result.
	
	\begin{proposal}[{\cite[Theorem 5]{imhof1984density}}]\label{prop:max-meandre}
		Let $W$ be a Brownian motion on $[0,1]$ and let $\sigma$ be the time at which $W$ reaches its maximum on $[0,1]$. Conditional on the event $\mathset{\sigma = u}$, the processes $(W_{u+s} - W_u, 0 \leq s \leq 1-u)$ and $(W_{u-s} - W_u, 0 \leq s \leq u)$ are independent Brownian meanders of respective duration $1-u$ and $u$.
	\end{proposal}
	
	Recall the notation $\varphi_t(x) \defeq \frac{1}{\sqrt{2 \pi t}} e^{-\frac{x^2}{2t}}$ and $\Phi_t(y) \defeq \int_0^y \varphi_t(x) dx$.
	The Brownian meander on $[0,1]$ is a continuous, non-homogeneous Markov process starting at $0$, with transition kernel given by
	\begin{equation}\label{meandre-transition}
		\proba{M_t \in dy \, | \, M_s = x} = p^+(s,x,t,y) dy = \left[\varphi_{t-s}(x-y) - \varphi_{t-s}(x+y)\right] \frac{\Phi_{1-t}(y)}{\Phi_{1-s}(x)} dy
	\end{equation} 
	and
	\begin{equation}\label{meandre-transition-0}
		\proba{M_t \in dy} =  p^+(0,0,t,y) dy = 2y t^{-3/2} e^{-\frac{y^2}{2t}} \Phi_{1-t}(y) dy \, .
	\end{equation}
	For the proofs of these facts, we refer to \cite{meandre} and its references.				
	
	\noindent
	Using $\proba{M_t \in dy} \leq 2y t^{-3/2} e^{-\frac{y^2}{2t}} dy$ and $\Phi_{1-t}(y) \leq y/\sqrt{2\pi(1-t)}$, we have the following estimates: for any $a<r/2$,
	\begin{equation}
		\label{meander-expmoment}
		\begin{split}
			\esp{e^{a M_r^2}} & \leq \frac{2}{r^{3/2}\sqrt{2\pi }} \int_0^\infty y e^{-\frac{1-2ar}{2r} y^2} \, \dd y =  \left(1-2ra\right)^{-3/2} \, , \\
			\esp{(M_r)^{-1} e^{a \mathcal{M}_r^2}} &\leq \frac{1}{r^{3/2}\pi\sqrt{(1-r)}} \int_0^\infty y e^{-\frac{1-2ar}{2r} y^2} \, \dd y = \frac{\sqrt{2\pi}}{\sqrt{r(1-r)}} \left(1-2ra\right)^{-1} \,,
		\end{split}
	\end{equation}

	\smallskip
	The asymmetry of the meander can be used to prove the following ``reflection principle''.
	
	\begin{lemma}[Reflection principle for the meander]\label{reflexion}
		Let $M$ be a Brownian meander, then for all $b > 0$ and all $0 \leq s < t \leq 1$,
		\[  \PP \Big( \sup_{0 \leq r \leq t} M_r \geq b \Big) \leq 2 \proba{M_t \geq b} \, , \quad \PP \Big(\sup_{s \leq r \leq t} |M_r - M_s| \geq b \Big) \leq 4 \proba{M_t - M_s \geq b} \, . \]
	\end{lemma}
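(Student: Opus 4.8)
The plan is to derive both inequalities from the strong Markov property of the (time‑inhomogeneous) Brownian meander together with a single one‑sided estimate, which is where the asymmetry of the meander enters:

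\textbf{Key estimate.} For all $0 \le v \le t \le 1$ and all $c \ge 0$, one has $\PP(M_t \ge c \mid M_v = c) \ge \tfrac12$.

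I would prove this directly from the transition density \eqref{meandre-transition}: write $p^+(v,c,t,y) = h(y)\,\Phi_{1-t}(y)/\Phi_{1-v}(c)$, where $h(y) \defeq \varphi_{t-v}(c-y) - \varphi_{t-v}(c+y) \ge 0$ is the transition density of a Brownian motion started at $c$ and killed at $0$. Since $\Phi_{1-t}$ is non‑decreasing, $\Phi_{1-t}(y) \ge \Phi_{1-t}(c)$ for $y \ge c$ and $\le \Phi_{1-t}(c)$ for $y \le c$, so $\PP(M_t \ge c \mid M_v = c) \ge \frac{\Phi_{1-t}(c)}{\Phi_{1-v}(c)}\int_c^\infty h$ and $\PP(M_t < c \mid M_v=c) \le \frac{\Phi_{1-t}(c)}{\Phi_{1-v}(c)}\int_0^c h$; hence it suffices to check $\int_c^\infty h \ge \int_0^c h$. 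By the reflection principle for Brownian motion this is a one‑line Gaussian computation: with $N \sim \mathcal N(0,t-v)$ one gets $\int_c^\infty h - \int_0^c h = 2\,\PP(c \le N \le 2c) \ge 0$. (The tilt toward larger values comes precisely from the $h$‑transform factor $\Phi_{1-t}(y)/\Phi_{1-v}(c)$, which is the ``asymmetry'' alluded to before the statement.)

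For the first inequality, let $\sigma \defeq \inf\{r\in[0,1] : M_r \ge b\}$; this is a stopping time and $M_\sigma = b$ on $\{\sigma \le t\}$ by path‑continuity. The meander is a measurable functional of a Brownian motion with transition kernel \eqref{meandre-transition}, hence enjoys the strong Markov property; applying it at $\sigma$ and using the key estimate with $c = b$,
\[
\PP\big(\sup_{0\le r\le t}M_r \ge b\big) = \PP(\sigma \le t) \le 2\,\esp{\PP(M_t \ge b \mid \mathcal F_\sigma)\,\indic{\sigma\le t}} = 2\,\PP(M_t \ge b,\ \sigma\le t) \le 2\,\PP(M_t\ge b) .
\]
For the second inequality, split $\{\sup_{s\le r\le t}|M_r-M_s|\ge b\} \subseteq \{\sup_{s\le r\le t}(M_r-M_s)\ge b\} \cup \{\inf_{s\le r\le t}M_r \le M_s - b\}$. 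For the first event, condition on $M_s = x$, run the same first‑passage argument for the meander after time $s$ at level $x+b$ (again via the key estimate, now with $c = x+b$), and integrate over $x$, which gives $\PP(\sup_{s\le r\le t}(M_r-M_s)\ge b) \le 2\,\PP(M_t-M_s\ge b)$.

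The remaining piece, $\PP(\inf_{s\le r\le t}M_r \le M_s - b)$, is the delicate one, since it has to be controlled by the \emph{upward} tail $\PP(M_t-M_s\ge b)$: a naive ``downward reflection at level $M_s-b$'' fails because, the meander being pushed up, $\PP(M_t \le c \mid M_v=c) < \tfrac12$ in general. The route I would take is to condition on $(M_s,M_t)=(x,y)$ and observe, from \eqref{meandre-transition}, that the $\Phi$‑factors telescope, so that the meander bridge on $[s,t]$ from $x$ to $y$ is exactly a Brownian bridge from $x$ to $y$ conditioned to stay positive on $[s,t]$; conditioning a Brownian bridge on positivity can only lower the probability of a deep downward excursion, so the classical bridge reflection formula yields $\PP(\inf_{[s,t]}M_r \le x-b \mid M_s=x, M_t=y) \le e^{-2b(y-x+b)/(t-s)}$ when $y > x-b$ (and the event is contained in $\{M_t\le M_s-b\}$ otherwise). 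Integrating gives $\PP(\inf_{[s,t]}M_r \le M_s-b) \le \PP(M_t-M_s\le -b) + \esp{e^{-2b(M_t-M_s+b)/(t-s)}\,\indic{M_t-M_s>-b}}$, and one finishes using (i) the asymmetry $\PP(M_t-M_s\le -b)\le\PP(M_t-M_s\ge b)$, which follows from the stochastic domination of the meander's transition kernel over the Brownian one, and (ii) a bound on the exponential‑moment term by a constant times $\PP(M_t-M_s\ge b)$. I expect step (ii) — extracting the factor $\PP(M_t-M_s\ge b)$ from that integral with a constant small enough to keep the total $\le 4$, which needs an estimate on the density of the increment $M_t-M_s$ near $-b$ — to be the main obstacle; the upward half and the first inequality are routine once the key estimate is available.
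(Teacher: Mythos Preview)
Your key estimate and the first inequality are correct, and are the paper's argument in analytic rather than path-reflection form (the paper reflects the bridge on $[L_b,t]$ about level $b$; your density computation encodes the same injection). The upward half of the second inequality, $\PP\big(\sup_{[s,t]}(M_r-M_s)\ge b\big)\le 2\,\PP(M_t-M_s\ge b)$, also matches the paper.

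The divergence is in the downward half. Your bridge/exponential-moment route is heavier than needed, and the step you flag as the obstacle really is one: extracting the factor $\PP(M_t-M_s\ge b)$ from $\esp{e^{-2b(M_t-M_s+b)/(t-s)}\indic{M_t-M_s>-b}}$ with a constant small enough to land on $4$ is not straightforward (for Brownian motion the two quantities are exactly equal, so there is no slack to absorb the meander tilt without further work).

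The paper bypasses this entirely by proving directly that
\[
\PP\big(\sup_{[s,t]}|M_r-M_s|\ge b\big)\ \le\ 2\,\PP\big(\sup_{[s,t]}(M_r-M_s)\ge b\big),
\]
i.e.\ that $\PP\big(\inf(M_r-M_s)\le -b,\ \sup(M_r-M_s)<b\big)\le\PP\big(\sup(M_r-M_s)\ge b\big)$. The mechanism is the same as your key estimate, but applied at the two-sided first exit from the band $(M_s-b,M_s+b)$ rather than at a one-sided first passage: condition on $M_s=x$ (with $x>b$, else the event is empty), and reflect the path about level $x$ up to the first exit time $\tau$. The reflected path stays in $(x-b,x+b)\subset(0,\infty)$ and exits at $x+b$ instead of $x-b$; since the meander's density against stopped Brownian motion is $\Phi_{1-\tau}(M_\tau)/\Phi_{1-s}(x)$ and $\Phi$ is increasing, the reflected event carries at least as much mass. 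This one-line ``asymmetry'' step replaces your entire bridge computation and delivers the constant $4$ immediately.
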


	\begin{proof}
		If we denote by $T_b$ the hitting time of $b$, we have
		\[ \proba{\sup_{0 \leq s \leq t} M_s \geq b} = \int_0^t \proba{T_b \in ds} = \int_0^t \proba{T_b \in ds, M_t < b}  + \int_0^t \proba{T_b \in ds, M_t \geq b} \, .  \]
		Now, write $L_b$ the lime of last visit to $b$ before time $t$, on $[L_b,t]$ the process $M_r - b$ is a Brownian bridge conditioned to be above $-b$. We only need to see that any trajectory of $M$ from $b$ to $(0,b]$ which stays above $0$ can thus be transformed into a trajectory from $b$ to $[b,2b)$ that stays above $0$ by reflecting the trajectory between the last visit $L_b$ to $b$ and~$t$ (see Figure \ref{fig:reflexion-meandre}).
		
		\begin{figure}
			\centering
			\includegraphics[width=9cm]{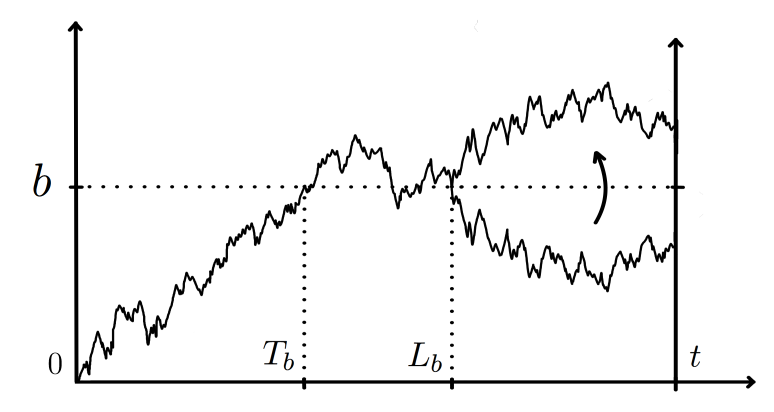}
			\caption{Reflection of the trajectory $b \to (0,b]$ with respect to the horizontal line at $b$}
			\label{fig:reflexion-meandre}
		\end{figure}
		
		Since these two Brownian bridges have the same probability and $[b,2b) \subset [b, +\infty)$ it shows that this operation is injective and thus $\proba{T_b \in ds, M_t < b} \leq \proba{T_b \in ds, M_t \geq b}$ for all $s \leq t$ (note that this is a consequence of the Brownian reflection principle). Therefore, we proved
		\[ \proba{\sup_{0 \leq s \leq t} M_s \geq b} \leq 2 \int_0^t \proba{T_b \in ds, M_t \geq b} = 2 \proba{T_b \leq t, M_t \geq b} = 2 \proba{M_t \geq b} \, . \]
		
		If we study the supremum of an increment $M_r - M_s$, $s \leq r \leq t$ we only need to repeat the proof for a starting point $M_s = x$ and integrate over all the positions $x$. Since the meander is a Markov process, we get 
		$\PP \big(\sup_{s \leq r \leq t} M_r - M_s \geq b \big) \leq 2 \proba{M_t - M_s \geq b}.$
		Afterwards, we only need to see that again using the asymmetry of $M$, we have that 
		\[
		\PP \Big(\sup_{s \leq r \leq t} |M_r - M_s| \geq b \Big) \leq 2 \PP \Big(\sup_{s \leq r \leq t} M_r - M_s \geq b \Big)\,, 
		\]
		hence the result.
	\end{proof}
	
	\begin{corollary}
		\label{cor-meander}
		For any $\lambda > 1, a > 0$ and $0 \leq s < t < \frac12$, we have
		\[ \PP \Big( \inf_{s \leq r \leq t} M_r \leq a \Big) \leq \proba{M_s \leq \lambda a} + \proba{M_t \leq \lambda a} + \frac{4 a \sqrt{2t}}{t-s} \frac{e^{-\frac{2}{t-s} a^2 (\lambda-1)^2}}{1 - e^{-\frac{2}{t-s} a^2 \lambda^2}} \, , \]
		as well as
		$ \proba{M_t \leq a}  \leq \frac{4a}{\sqrt{\pi t}}\left( 1 \wedge \frac{a^2}{2t} \right).$
	\end{corollary}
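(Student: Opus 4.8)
The plan is to derive both bounds in Corollary~\ref{cor-meander} directly from Lemma~\ref{reflexion} and the explicit density~\eqref{meandre-transition-0}, treating the two displayed inequalities separately.

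First I would prove the simpler estimate $\proba{M_t \leq a} \leq \frac{4a}{\sqrt{\pi t}}(1 \wedge \frac{a^2}{2t})$. Starting from~\eqref{meandre-transition-0}, we have $\proba{M_t \leq a} = \int_0^a 2y t^{-3/2} e^{-y^2/2t} \Phi_{1-t}(y)\,\dd y$, and since $\Phi_{1-t}(y) \leq y/\sqrt{2\pi(1-t)}$, and for $t < \tfrac12$ we have $1-t > \tfrac12$, this is at most $\frac{2}{t^{3/2}\sqrt{\pi}} \int_0^a y^2 e^{-y^2/2t}\,\dd y$. One then bounds the integral two ways: crudely by $\int_0^a y^2\,\dd y = a^3/3 \leq a^3$, which gives the factor $\frac{a^2}{2t}$ up to constants after comparing with $\frac{4a}{\sqrt{\pi t}}\cdot\frac{a^2}{2t}$; and alternatively by dropping one power of $y$ via $y^2 e^{-y^2/2t} \leq y \cdot \sqrt{t}$ (since $y e^{-y^2/2t} \leq \sqrt{t/e} \leq \sqrt t$) to get $\int_0^a y^2 e^{-y^2/2t}\,\dd y \leq \sqrt t \int_0^a y\,\dd y = \sqrt t\, a^2/2$, yielding the bound without the extra $a^2/t$ factor. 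Taking the minimum of the two and absorbing numerical constants gives the claim; one should double-check that the constant $4$ is indeed large enough in both regimes.

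Next, for the infimum bound, I would decompose on the excursion structure: if $\inf_{s \leq r \leq t} M_r \leq a$ but neither endpoint value $M_s$ nor $M_t$ is below $\lambda a$, then the path must drop from above $\lambda a$ down to below $a$ and come back up above $\lambda a$ within $[s,t]$. Conditioning on the last time $L$ before the infimum at which $M = \lambda a$ and the first time after at which $M = \lambda a$ again, the increment over that sub-interval of length $\leq t-s$ must travel a distance $\geq (\lambda-1)a$ in each direction; by the strong Markov property and the reflection-principle increment bound of Lemma~\ref{reflexion} (which gives $\PP(\sup_{s\le r\le t}|M_r - M_s| \ge b) \le 4\PP(M_t - M_s \ge b)$, and the increment $M_t - M_s$ is stochastically dominated by a centered Gaussian of variance $t-s$ by the meander's increment structure), one gets a Gaussian tail $e^{-2a^2(\lambda-1)^2/(t-s)}$. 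Summing the geometric-type series over the number of down-crossings of width $\lambda a$ — each successive return costing another factor $e^{-2a^2\lambda^2/(t-s)}$ — produces the factor $(1 - e^{-2a^2\lambda^2/(t-s)})^{-1}$, and the prefactor $\frac{4a\sqrt{2t}}{t-s}$ comes from the maximal density of the Gaussian increment (of order $(t-s)^{-1/2}$) times the window width $a$, times the bound $\sqrt{t}$-type factor from the $\Phi_{1-t}$ weighting; $t<\tfrac12$ keeps $1-t$ bounded below.

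The main obstacle I expect is the increment comparison: justifying cleanly that $M_t - M_s$ under the meander law is stochastically dominated (or has a density dominated) by a Gaussian, uniformly in the starting height, so that Lemma~\ref{reflexion}'s reflection bound can be applied and a genuine geometric series in the number of crossings can be summed. The meander is not a Brownian motion and its increments are not independent, so one must be careful to use the Markov property at the crossing times of the levels $\lambda a, 2\lambda a, \dots$ and check that the conditional increment laws are controlled by~\eqref{meandre-transition}; the weights $\Phi_{1-t}(y)/\Phi_{1-s}(x)$ are increasing in $y$, which actually helps since it biases increments downward, but this monotonicity needs to be invoked explicitly. Once that domination is in hand, the rest is a routine geometric summation and collection of constants.
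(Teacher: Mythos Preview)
Your treatment of the second inequality $\proba{M_t \leq a} \leq \frac{4a}{\sqrt{\pi t}}(1 \wedge \frac{a^2}{2t})$ is essentially the paper's argument: integrate the density~\eqref{meandre-transition-0} with the bound $\Phi_{1-t}(y)\leq y/\sqrt{2\pi(1-t)}$ and estimate the resulting integral in the two regimes. That part is fine.

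For the infimum bound, however, your approach diverges from the paper and has a real gap. The paper does \emph{not} use Lemma~\ref{reflexion} or any crossing/geometric-series argument. Instead it conditions on the pair $(M_s,M_t)=(z,w)$ with $z,w>\lambda a$ and observes that, by the Markov property and the form of~\eqref{meandre-transition}, the meander on $[s,t]$ is then a Brownian bridge $W^{z\to w}_{t-s}$ conditioned to stay positive. The conditional probability that this bridge dips below $a$ is computed \emph{exactly} from the classical formula $\PP(\inf W^{z\to w}_T>0)=1-e^{-2zw/T}$, giving
\[
\frac{e^{-\frac{2}{T}(z-a)(w-a)}-e^{-\frac{2}{T}zw}}{1-e^{-\frac{2}{T}zw}}
\;\leq\; \frac{2a(z+w)}{T}\,\frac{e^{-\frac{2}{T}a^2(\lambda-1)^2}}{1-e^{-\frac{2}{T}a^2\lambda^2}}
\]
via the mean value theorem, and then one takes $\mathbb{E}[M_s+M_t]\leq 2\sqrt{2t}$. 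The factor $(1-e^{-2a^2\lambda^2/(t-s)})^{-1}$ is thus the normalisation from the positivity conditioning, not a geometric sum over crossings, and the numerator exponent $(\lambda-1)^2$ comes from the bridge staying above $a$, not from an increment tail.

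Your proposed route would require controlling the meander's increments by Gaussian tails uniformly in the starting height, which you yourself flag as the obstacle; Lemma~\ref{reflexion} gives only $\PP(\sup|M_r-M_s|\geq b)\leq 4\PP(M_t-M_s\geq b)$, and the law of $M_t-M_s$ is not sub-Gaussian in any way that is easy to extract from~\eqref{meandre-transition} without essentially redoing the bridge computation. The bridge representation is the missing idea: it replaces all of the delicate increment analysis by a two-line exact formula.
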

	
	\begin{proof}
		We decompose the probability on whether $M_s, M_t \leq \lambda a$, meaning we only have to consider $\PP \big( \inf_{s \leq r \leq t} M_r \leq a, M_s > \lambda a, M_t > \lambda a \big)$.
		For this, we first use Brownian bridge estimates: see that for any $z,w, T > 0$, we have
		\[ \begin{split} \probaM{z}{W_T \in dw, \inf_{t \in [0,T]} W_t > 0} &= \frac{1}{\sqrt{2\pi T}} \left( e^{-\frac{1}{2T}(z-w)^2} - e^{-\frac{1}{2T}(z+w)^2} \right) dw \\ \probaM{z}{W_T \in dw} &= \frac{1}{\sqrt{2\pi T}} e^{-\frac{1}{2T}(z-w)^2} dw
		\end{split} \]
		thus we have
		\begin{equation}\label{eq:pont-brownien-pos}
			\proba{\inf_{t \in [0,T]} W_t^{z \to w} > 0} = 1 - e^{\frac{1}{2T}(z-w)^2 - \frac{1}{2T}(z+w)^2} = 1 - e^{-\frac{2}{T}zw} \, .
		\end{equation}
		For any $\alpha > 0$ and $z,w > \alpha$, we define
		\[ P_T^\alpha(z,w) \defeq \proba{\inf_{t \in [0,T]} W_t^{z \to w} \leq \alpha \, | \, \inf_{t \in [0,T]} W_t^{z \to w} > 0} = 1 - \frac{\proba{\inf_{t \in [0,T]} W_t^{z \to w} > \alpha}}{\proba{\inf_{t \in [0,T]} W_t^{z \to w} > 0}} \, . \]
		Then, using \eqref{eq:pont-brownien-pos} with $z,w,z-\alpha, w - \alpha > 0$, we can deduce
		\begin{equation}\label{eq:pont-brownien-fenetre}
			P_T^\alpha(z,w) = 1 - \frac{1 - e^{-\frac{2}{T}(z-\alpha)(w-\alpha)}}{1 - e^{-\frac{2}{T}zw}} = \frac{e^{-\frac{2}{T}(z-\alpha)(w-\alpha)} - e^{-\frac{2}{T}zw}}{1 - e^{-\frac{2}{T}zw}} \, .
		\end{equation}
		Consider the mapping $f_T : (x,y) \mapsto e^{-\frac{2}{T}xy}$. Using the mean value theorem, there is a $c \in [0,1]$ such that
		\begin{equation}\label{eq:TAF-2-var}
			\begin{split}
				f_T(z,w) - f_T(z-\alpha, w-\alpha) &= \nabla f_T \Big( (1-c) \begin{pmatrix} z \\ w \end{pmatrix} + c \begin{pmatrix} z-\alpha \\ w-\alpha \end{pmatrix} \Big) \cdot \Big( \begin{pmatrix} z \\ w \end{pmatrix} - \begin{pmatrix} z-\alpha \\ w-\alpha \end{pmatrix} \Big) \\
				&= - \frac{2 \alpha}{T} (z+w-2c\alpha)e^{-\frac{2}{T}(z-c\alpha)(w-c\alpha)} \, .
			\end{split}
		\end{equation}
		Injecting in \eqref{eq:pont-brownien-fenetre}, this yields
		\begin{equation}\label{eq:pont-brownien-fenetre-final}
			P_T^\alpha(z,w) = \frac{2 \alpha}{T} (z+w-2c\alpha)\frac{e^{-\frac{2}{T}(z-c\alpha)(w-c\alpha)}}{1 - e^{-\frac{2}{T}zw}} \leq \frac{2 \alpha}{T} (z + w) \frac{e^{-\frac{2}{T}(z-c\alpha)(w-c\alpha)}}{1 - e^{-\frac{2}{T}zw}} \, .
		\end{equation}
		In particular, if we assume $z,w \geq \lambda \alpha$ for some $\lambda > 1$, then $f_T(z,w) \leq f_T(\lambda a, \lambda a)$ and we obtain 
		\[  P_T^\alpha(z,w) \leq \frac{2 \alpha}{T} (z + w) \frac{e^{-\frac{2}{T}\alpha^2 (\lambda-c)^2}}{1 - e^{-\frac{2}{T}\alpha^2 \lambda^2}} \, . \]
		Therefore, for any $\lambda > 1$ and $a > 0$,
		\begin{equation}\label{eq:pont-meandre}
			\begin{split}
				\proba{\inf_{s \leq r \leq t} M_r \leq a, M_s > \lambda a, M_t > \lambda a} &= \esp{P_{t-s}^a(M_s,M_t) \indic{M_s,M_t \geq \lambda a}} \\
				&\leq \frac{2 a}{t-s} \frac{e^{-\frac{2}{t-s} a^2 (\lambda-c)^2}}{1 - e^{-\frac{2}{t-s} a^2 \lambda^2}} \esp{M_s + M_t} \, ,
			\end{split}
		\end{equation}
		and we compute $\esp{M_t} \leq \frac{2\sqrt{2}}{\pi} \sqrt{t} \leq \sqrt{2t}$ for $t < 1/2$ to get the desired result.
		\par On the other hand, using~\eqref{meandre-transition-0}, we write for $0<t<\frac12$
		\begin{align*}
			\proba{\mathcal{M}_t \leq a} &= \frac{2}{t^{3/2}} \int_0^a y e^{-\frac{y^2}{2t}} \int_0^y \frac{e^{-\frac{u^2}{2(1-t)}}du}{\sqrt{2\pi(1-t)}} dy \leq \frac{2}{t^{3/2}} \int_0^a y e^{-\frac{y^2}{2t}} \int_0^y \frac{du}{\sqrt{2\pi(1-t)}} dy  \\
			&\leq \frac{2 a t^{-3/2}}{\sqrt{2\pi(1-t)}} \int_0^a y e^{-\frac{y^2}{2t}} dy = \frac{4 a (1 - e^{-\frac{a^2}{2t}})}{\sqrt{2 \pi t(1-t)}} \leq \frac{4a}{\sqrt{\pi t}} \left( 1 \wedge \frac{a^2}{2t} \right) \,.
			\qedhere
		\end{align*}
	\end{proof}

	Let us mention that a process related to the meander is the $3$-dimensional Bessel process $B$. It can be defined as the solution of the SDE $\dd B_t = \dd W_t + B_t^{-1} \dd t$, or as the sum $B_t = |W_t| + L_t$ where $L$ is the local time of $W$ at $0$; it is a homogeneous Markov process that has the Brownian scaling property $(B_{\alpha t})_t \overset{d}{=} (\sqrt{\alpha} B_t)_t$. We refer to \cite{revuz2013continuous} for those results. The link between the Bessel process and the meander is given by the following result.
	
	\begin{proposal}\label{prop-meandre-scaling}
		The law $\PP^{+,T}$ of the Brownian meander on $[0,T]$ has a density with respect to $\PP^B$ the law of the three-dimensional Bessel process: if $X$ is the canonical process, we have
		\[ \PP^{+,T} (A, X_T \in dx) = \frac{1}{x} \sqrt{\frac{\pi T}{2}} \, \PP^B(A,X_T \in dx) \, . \]
		In particular, $\forall \alpha > 0, \forall s \leq T, \PP^{+,\alpha T}(X_{\alpha s} \in dx) = \PP^{+,T}(\sqrt{\alpha} X_s \in dx)$.
	\end{proposal}
	
	\begin{proof}
		The formula for the density can be found in \cite[Section 4]{imhof1984density}. Afterwards, for any positive measurable function $f$ and any $\alpha > 0$, we have
		\[ \EE^{+,\alpha T} \left[ f\Big(\frac{X_{\alpha s}}{\sqrt{\alpha}} \Big) \right] = \EE^B \left[ \frac{1}{X_{\alpha T}} \sqrt{\frac{\pi \alpha T}{2}} f\Big(\frac{X_{\alpha s}}{\sqrt{\alpha}} \Big) \right] = \sqrt{\frac{\pi}{2}} \EE^B \left[ \frac{\sqrt{T}}{X_T} f(X_s) \right] = \EE^{+,T} \left[ f(X_s) \right] \, . \qedhere \]
	\end{proof}

	\section{Coupling of Brownian meander, a three-dimensional Bessel process and a Brownian excursion}\label{appendix-Construction}
	
	In this section we will expand on the way we can construct our different processes to have the almost sure results of Theorems \ref{th-1/6} and \ref{th-1/9}. In particular we want the following result:
	\[ \frac{1}{n^{1/6}} \sum_{-u n^{1/3}}^{vn^{1/3}} \omega_z \xrightarrow[n \to \infty]{a.s.} X^{(1)}_u + X^{(2)}_v \quad \text{and} \quad n^{1/18} (X_{u_* + \frac{u}{n^{1/9}}} - X_{u_*}) \xrightarrow[n \to \infty]{a.s.} \mathbf{B}_u \, . \]
	
	Skorokhod's embedding theorem (Theorem \ref{th-skorokhod}) allows us to sample the Brownian motions $X^{(i)}, i = 1,2$ to get a new environment $\hat\omega^{(n)}$ to obtain the first convergence. Thus we must find how we can couple both processes $X^{(i)}$ to the processes $\mathbf{B},\mathbf{Y}$ in Theorem \ref{th-1/9},
	that is we need to prove Proposition \ref{prop-couplage}.
	This is based on two intermediate results, Lemmas~\ref{lem:meandre-excursion} and~\ref{lem:bessel-excursion} below, which couple a meander, resp.\ a Bessel-$3$ process, to a Brownian excursion.
	
	\begin{lemma}[{\cite[Theorem 2.3]{Bertoin1994PathTC}}]\label{lem:meandre-excursion}
		Let $\mathbf{e}$ be a standard Brownian excursion and $U$ a uniform variable on $[0,1]$. Then, the process $M_t = \mathbf{e}_t \indic{t \leq U} + (\mathbf{e}_U + \mathbf{e}_{1-(t-U)}) \indic{t > U}$ is a Brownian meander on $[0,1]$. In particular, there exists a coupling of the Brownian meander $M$ and the Brownian excursion $\mathbf{e}$ on $[0,1]$ such that $M_t = \mathbf{e}_t$ if $t \leq U$.
	\end{lemma}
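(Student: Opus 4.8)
The plan is to prove the distributional identity by reducing both the process $M$ of the statement and the Brownian meander to concatenations of independent three-dimensional Bessel bridges, and then to match the corresponding one-dimensional marginals using the explicit transition kernels \eqref{meandre-transition}, \eqref{meandre-transition-0} together with Proposition~\ref{prop-meandre-scaling} (Imhof's absolute continuity between the meander and the $\mathrm{BES}_3$ process on $[0,1]$). Since $U$ is independent of $\mathbf e$, one works conditionally on $\{U=u\}$ without altering the law of $\mathbf e$.

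First I would use that $\mathbf e$ is the normalized $\mathrm{BES}_3$ bridge, in particular an inhomogeneous Markov process symmetric under $t\mapsto 1-t$. By the Markov property, conditionally on $\{U=u,\ \mathbf e_u=a\}$ the two pieces $(\mathbf e_t)_{0\le t\le u}$ and $(\mathbf e_{u+s})_{0\le s\le 1-u}$ are independent. The first is a $\mathrm{BES}_3$ bridge from $0$ to $a$ on $[0,u]$. For the second, its time-reversal $s\mapsto\mathbf e_{1-s}$ for $0\le s\le 1-u$ is the initial segment of length $1-u$ of the reversed excursion $(\mathbf e_{1-s})_{0\le s\le 1}$, which by symmetry is again an excursion; ending at $\mathbf e_u=a$, this segment is, again by the Markov property, a $\mathrm{BES}_3$ bridge from $0$ to $a$ on $[0,1-u]$, independent of the first piece. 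Since the statement reads $M_{u+s}=\mathbf e_u+\mathbf e_{1-s}$ on $[u,1]$, this shows that conditionally on $\{U=u,\ \mathbf e_u=a\}$ the process $M$ is the concatenation of a $\mathrm{BES}_3$ bridge $0\to a$ on $[0,u]$ with, on $[u,1]$, the value $a$ plus an independent $\mathrm{BES}_3$ bridge $0\to a$ run over $[0,1-u]$.

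It then remains to identify this conditional law, after integrating over $(U,\mathbf e_U)$, with that of the meander on $[0,1]$. As $U$ is uniform and independent of $\mathbf e$, the density of $(U,\mathbf e_U)$ is proportional in $(u,a)$ to the excursion marginal $p^+(0,0,u,a)$ of \eqref{meandre-transition-0}. On the meander side one performs the analogous split at an independent time: by Proposition~\ref{prop-meandre-scaling}, $M$ is the $\mathrm{BES}_3$ process on $[0,1]$ biased by $\sqrt{\pi/2}\,/\,M_1$, and splitting the $\mathrm{BES}_3$ process at an independent time, then applying the Markov property, produces the same two-independent-bridges structure, the bias $1/M_1$ accounting exactly for the discrepancy between the excursion, $\mathrm{BES}_3$-bridge and meander marginals. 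Verifying this last algebraic identity among the kernels \eqref{meandre-transition}--\eqref{meandre-transition-0} is the computational core of the proof; a structure-free alternative is to integrate out $U$ and check the finite-dimensional distributions of $M$ directly from \eqref{meandre-transition}, at the cost of heavier bookkeeping.

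The delicate point, and the main obstacle, is precisely this matching of normalizations: one must track how integrating out the uniform time $U$ on the excursion side corresponds to the size-biasing by $1/M_1$ on the $\mathrm{BES}_3$ side, so that the glued process acquires exactly the meander marginal and not merely the right conditional structure. Once the first part is established the coupling statement is immediate: define $M$ on the probability space carrying $(\mathbf e,U)$ by the displayed formula; then $M$ is a Brownian meander on $[0,1]$ and $M_t=\mathbf e_t$ for $t\le U$ by construction.
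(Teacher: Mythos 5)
The statement is imported from Bertoin--Pitman (cited as \cite[Theorem 2.3]{Bertoin1994PathTC}); the paper gives no proof, so the comparison is with the classical proof of that theorem rather than with anything in this manuscript.

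Your conditional decomposition of $M$ is correct and is the right first step: given $(U,\mathbf e_U)=(u,a)$, the piece $(\mathbf e_t)_{0\le t\le u}$ is a $\mathrm{BES}_3$ bridge $0\to a$ of length $u$, and by the time-reversal invariance of the excursion together with the Markov property, $(\mathbf e_{1-s})_{0\le s\le 1-u}$ is an independent $\mathrm{BES}_3$ bridge $0\to a$ of length $1-u$, so $M|_{[u,1]}$ is $a$ plus that second bridge. There are, however, two genuine problems beyond the computation you explicitly leave undone. First, a concrete slip: the density of $(U,\mathbf e_U)$ is $\dd u$ times the \emph{excursion} marginal $\sqrt{2/\pi}\,a^2\,(u(1-u))^{-3/2}e^{-a^2/(2u(1-u))}$, not the quantity $p^+(0,0,u,a)$ from \eqref{meandre-transition-0}, which is the meander marginal; these are different functions and the mismatch propagates into the normalization you are trying to match. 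Second, and more fundamentally, the claimed identity of structure on the meander side does not hold as stated. Splitting a $\mathrm{BES}_3$ process (or its $1/\rho_1$-biased version) at an independent uniform time $V$ and conditioning on $(\rho_V,\rho_1)=(b,c)$ yields a $\mathrm{BES}_3$ bridge $0\to b$ followed by an \emph{unshifted} $\mathrm{BES}_3$ bridge $b\to c$. But the second piece of $M$ is $a + \text{BES}_3\text{ bridge }0\to a$, a process constrained to stay $\ge a$, which is not a $\mathrm{BES}_3$ bridge $a\to 2a$ (the latter may dip below $a$). So the ``same two-independent-bridges structure'' you appeal to is not in fact the same, and the alleged algebraic matching of kernels would not close even if carried out. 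The actual proof (à la Bertoin--Pitman) needs an extra ingredient --- a Williams-type path decomposition or Vervaat-type transformation that handles the fact that $M_1=2\mathbf e_U$ is determined by the splitting point, so the meander must be split at a \emph{path-dependent} time (related to the last passage of $M$ to level $M_1/2$), not an independent one --- or else a direct finite-dimensional-distribution computation, which you note but do not perform. As it stands the proposal is an incomplete plan with a structural misidentification on the meander side, not a proof.
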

	
	\begin{lemma}\label{lem:bessel-excursion}
		For any $T \in [0,1]$, 
		There exists a coupling of the Brownian excursion $\mathbf{e}$ on $[0,1]$ and the three-dimensional Bessel process $\mathbf{B}$ such that there is a positive $\eps(\omega)$ for which we have $\mathbf{B}_t =  \mathbf{e}_{t}$ for any $t \in [0,\eps(\omega)]$.
	\end{lemma}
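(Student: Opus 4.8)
The plan is to couple the Brownian excursion and the three-dimensional Bessel process through their common behaviour near the origin, exploiting the fact that both processes, started from $0$, look locally like a $\mathrm{BES}_3$ process on a small time interval. Recall that the standard Brownian excursion $\mathbf{e}$ on $[0,1]$ can be described via Itô's excursion theory, or equivalently (by Williams' decomposition) as follows: conditionally on its maximum, $\mathbf{e}$ run up to the time it attains its maximum is a $\mathrm{BES}_3$ process started at $0$ and run until it first hits that maximal level; symmetrically, run backwards from time $1$. More directly, the key fact I would invoke is that the excursion measure $n(\mathrm{d}\varepsilon)$ and the $\mathrm{BES}_3$ law agree, after an $h$-transform, on their germ $\sigma$-fields near $0$: precisely, for the canonical process $X$, the law of $(X_t)_{t\le\eta}$ under the (normalized) excursion law, where $\eta$ is any stopping time with $\eta<\zeta$ (the excursion length), is absolutely continuous with respect to the $\mathrm{BES}_3$ law with a density that is bounded away from $0$ and $\infty$ on $\{\eta\le\delta\}$ for $\delta$ small.

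\medskip

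\noindent\textit{Step 1: reduce to a statement about laws on a short interval.} Fix a small deterministic $\delta\in(0,1)$. I would first show that the law of $(\mathbf{e}_t)_{0\le t\le\delta}$ is mutually absolutely continuous with respect to the law of $(\mathbf{B}_t)_{0\le t\le\delta}$, with Radon--Nikodym derivative
\[
\frac{\mathrm{d}\,\mathrm{Law}(\mathbf{e}|_{[0,\delta]})}{\mathrm{d}\,\mathrm{Law}(\mathbf{B}|_{[0,\delta]})}(w) \;=\; F_\delta(w_\delta),
\]
a function of the endpoint value $w_\delta$ only. This follows from Proposition~\ref{prop-meandre-scaling}-type computations together with the explicit excursion transition densities: under $n$, conditioning on the excursion length being $1$ and looking at $[0,\delta]$ with $\delta<1$, the endpoint marginal and the bridge structure differ from $\mathrm{BES}_3$ precisely by a factor involving the probability that a $\mathrm{BES}_3$ started at $w_\delta$ returns to $0$ in the remaining time $1-\delta$, which is a smooth, strictly positive, bounded function of $w_\delta$ on compacts. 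The Markov property in $t$ (both processes being Markov) guarantees the density depends on the path only through $w_\delta$.

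\medskip

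\noindent\textit{Step 2: build the coupling.} Given the density $F_\delta$, I would construct the coupling by first sampling $\mathbf{B}$ on $[0,\delta]$, then accepting/rejecting or reweighting to obtain $\mathbf{e}|_{[0,\delta]}$; more cleanly, sample the pair from the maximal coupling of the two path-laws on $[0,\delta]$. Since the two laws are mutually absolutely continuous with a density bounded above and below on, say, $\{w_\delta\le A\}$ for each $A$, the event that the two realizations coincide on an initial sub-interval has positive probability; concretely, one can couple so that $\mathbf{B}_t=\mathbf{e}_t$ for $t$ up to the first time either process exits a small tube, and since near $0$ the densities match to leading order one gets equality on a genuinely positive random interval $[0,\varepsilon(\omega)]$. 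Once the two processes agree on $[0,\delta']$ for the coupled piece, extend each to its full time interval independently using the respective regular conditional laws given the restriction to $[0,\delta']$ (using the Markov property of both $\mathbf{e}$ and $\mathbf{B}$), which does not affect the common initial segment.

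\medskip

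\noindent The main obstacle I expect is Step~1: making rigorous that the Radon--Nikodym derivative between the excursion-restricted-to-$[0,\delta]$ law and the $\mathrm{BES}_3$ law is a bounded, strictly positive function of the endpoint alone. This requires care with Itô excursion theory (the excursion of \emph{length exactly one} is obtained by conditioning, so one must work with the normalized excursion law and track the length-conditioning factor) and with the non-homogeneity introduced by fixing the total duration. The cleanest route is probably to use Williams' time-reversal/decomposition of the excursion together with the known identity, from \cite{imhof1984density} and Proposition~\ref{prop-meandre-scaling}, relating the meander, the excursion and $\mathrm{BES}_3$ via endpoint-dependent $h$-transforms; once the density is identified as a function of $w_\delta$ that is continuous and positive at $w_\delta=0^+$, the existence of the coupling with $\mathbf{B}_t=\mathbf{e}_t$ on a positive interval is routine.
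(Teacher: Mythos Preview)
Your Step~1 is essentially right: the law of the normalized excursion on $[0,\delta]$ is an $h$-transform of the $\mathrm{BES}_3$ law on $[0,\delta]$, with Radon--Nikodym derivative $F_\delta(w_\delta)=(1-\delta)^{-3/2}\exp\bigl(-w_\delta^2/(2(1-\delta))\bigr)$ depending only on the endpoint. But Step~2 contains a genuine gap. Mutual absolute continuity with a density that tends to $1$ as $\delta\to 0$ does \emph{not} by itself yield a coupling with $\mathbf{B}_t=\mathbf{e}_t$ on an almost-surely positive initial interval. The maximal coupling on $[0,\delta]$ makes the two paths either identical on the whole of $[0,\delta]$ (with probability $1-d_{\mathrm{TV}}$) or different (with probability $d_{\mathrm{TV}}$); it does not produce a random sub-interval of agreement. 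Your sentence about coupling ``until the first time either process exits a small tube'' is not a construction: driving both SDEs by the same Brownian motion gives $\mathbf{e}_t<\mathbf{B}_t$ for all $t>0$ (the excursion has an extra negative drift $-\mathbf{e}_t/(1-t)$), so they never coincide pathwise after time $0$. To salvage your route you would need an explicit adapted coupling, for instance via the running infimum of the density martingale $M_t=F_t(\mathbf{B}_t)$: sample $U$ uniform on $[0,1]$, set $\tau=\inf\{t:M_t\le U\}$, put $\mathbf{e}=\mathbf{B}$ on $[0,\tau)$, and then carefully specify the continuation of $\mathbf{e}$ after $\tau$ so that its overall law is correct. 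This last step (matching the ``deficit'' measure $(M_t-\underline{M}_t)\cdot P$) is the real work and you have not done it.

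The paper takes a completely different, more geometric route. It represents the excursion on $[0,1/2]$ as a $\mathrm{BES}_3$ bridge to a random height $V$, realizes both the $\mathrm{BES}_3$ process and the $\mathrm{BES}_3$ bridge as moduli of three-dimensional Brownian bridges (run backwards from time $1/2$), and shows that two such bridges with different starting points almost surely have their moduli collide strictly before the terminal time. The key estimate is that the probability the pair $(|X_t|,|Y_t|)$ stays ordered and positive on $[\varepsilon,t]$ is controlled by a Weyl-chamber (cone) exit estimate of order $\varepsilon^{1/4}$, so the non-collision probability on $(0,t]$ vanishes. At the collision time one rotates one trajectory onto the other, giving exact pathwise equality thereafter and hence on an initial interval when reversed. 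This approach avoids any measure-theoretic coupling machinery; the price is the cone estimate (Lemma~\ref{lem:proba-non-intersection}), which is where the analytical effort goes.
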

	
	\begin{proof}
		It is known (see for example \cite[p79]{ito-mckean}) that the Brownian excursion can be decomposed into two Bessel bridges of duration $\frac12$ joining at a point $V$ whose law has density $\frac{16}{\sqrt{2\pi}} v^2 e^{-2v^2}$. Thus we only need to define a coupling between a $3d$-Bessel process $\mathbf{B}$ and a $3d$-Bessel bridge $\mathbf{B}'$ with duration $\frac12$ and endpoint $V$. We use the fact that both processes can be realized by the modulus of a three-dimensional Brownian motion.
		
		\smallskip
		Consider two independent, three-dimensional Brownian bridges $X$ and $Y$ of duration $1/2$, such that $X_0 = x \in \RR^3$ (resp. $Y_0 = y \in \RR^3$) and $X_\frac12 = Y_\frac12 = 0$. Denote by $\tau$ the first time $X$ and $Y$ have the same modulus: $\tau \defeq \inf \mathset{0 \leq t \leq \frac12 \, : \, |X_t| = |Y_t|}$.
		We have the following result.
		
		\begin{lemma}\label{lem:temps-couplage}
			Almost surely, there exists $\eps(\omega) > 0$ such that $\tau \leq \frac12 - \eps(\omega)$.
		\end{lemma}
		
		Using this lemma, we can conclude the construction of the coupling.
		After time $\tau$, we define a coupling by taking the trajectory of $X$ between $\tau$ and $\tfrac12$ and plugging it at $Y_\tau$ after a rotation:
		\[ \text{write } X_t = |X_t| e^{i \theta^X_t}, Y_t = |Y_t| e^{i \theta^Y_t} \text{ and define } \hat Y_t = \begin{dcases} Y_t \quad \text{if} \quad t \leq \tau \,,\\ |X_t| e^{i \theta^X_t + i (\theta^Y_\tau - \theta^X_\tau)} \quad \text{if} \quad \tau < t \leq \tfrac12 \,.\end{dcases} \]
		The new process $\hat{Y}$ is such that for every $t \in [\tau, \frac12]$, we have $|X_t| = |\hat{Y}_t|$.
		Recall that the Brownian bridge is a diffusion process (as the solution to an SDE), thus is Markovian, and $\tau$ is a stopping time for both processes $X$ and $Y$. It follows that $\hat Y$ is a Brownian bridge between $y$ and $0$.
		
		To create the coupling between the two Bessel processes $\mathbf{B}$ and $\mathbf{B}'$, we choose the starting points $x$ and $y$ so that they respectively correspond to $W_\frac12$ (with $W$ a $3d$-Brownian motion) and a uniform variable on the sphere centered at $0$ of radius $V$. Then the processes $\mathbf{B}_t = |X_{\frac12 - t}|$ and $\mathbf{B}'_t = |\hat{Y}_{\frac12 - t}|$ are Bessel processes starting at $0$ that coincide on $[0,\frac12 - \tau]$ and such that $\mathbf{B}'_\frac12 = V$. In particular, the Bessel process $\mathbf{B}$ and the Brownian excursion $\mathbf{e}$ coincide on $[0,\frac12 - \tau]$.
	\end{proof}

	\begin{proof}[Proof of Lemma \ref{lem:temps-couplage}]
		On $[0,\tfrac12]$, consider $\mathbf{B}$ a 3-dimensional Bessel process starting at $0$ and $\mathbf{e}$ the Brownian excursion, which is a Bessel bridge of duration $\tfrac12$ starting at $0$ and ending at $V$. We define $I_{s,t} \defeq \mathset{\forall r \in (s,t), \mathbf{e}_r \neq \mathbf{B}_r}$ the event on which $\mathbf{e}$ and $\mathbf{B}$ never intersect between $0$ and $t$ (with the exception of $0$).
		From \cite[(3.1)]{imhof1984density}, we have $\probaM{x}{A, \mathbf{B}_t \in dz} = \frac{z}{x} \probaM{x}{A, W_t \in dz, H_0 > t}$, where $W$ is a Brownian motion and $H_0$ its first hitting time of $0$. Then for any $\eps > 0$, conditioning on the values of $(\mathbf{e}_\eps,\mathbf{B}_\eps)$ and $(V,\mathbf{B}_t)$, we can write
		\[ \proba{I_{0,t}} \leq \esp{ \proba{I_{\eps,t} \, | \, \mathbf{e}_\eps, \mathbf{B}_\eps} } \leq \esp{ \frac{\mathbf{e}_t \mathbf{B}_t}{\mathbf{e}_\eps \mathbf{B}_\eps} \proba{\mathscr{I}_{\mathbf{e}_\eps \to \mathbf{e}_t}^{\mathbf{B}_\eps \to \mathbf{B}_t}(t-\eps)} } \, , \]
		where we have defined
		\[ \mathscr{I}_{x \to y}^{a \to b}(T) \defeq \mathset{\forall r \in (0,T), W^{x \to y}_{T}(r) > 0, W^{a \to b}_{T}(r) > 0,  W^{x \to y}_{T}(r) \neq W^{a \to b}_{T}(r)} \, , \]
		in which $W^{a \to b}_T$ is a Brownian bridge $a \to b$ of duration $T$ (resp. for $x \to y$). We are interested in taking $t = 1/2$, but this result could be used for any fixed $t > 0$, in the sense that the Bessel process and the Brownian excursion almost surely cross each-other on $]0,t]$ for any fixed $t$.
		Take a positive $C > 0$ to be chosen later (we will choose $C = \eps^{-1/8}$). Then, we first get a bound using Cauchy-Schwartz inequality twice:
		\[ \esp{ \frac{| \mathbf{e}_t \mathbf{B}_t|}{\mathbf{e}_\eps \mathbf{B}_\eps} \proba{\mathscr{I}_{\mathbf{e}_\eps \to V}^{\mathbf{B}_\eps \to \mathbf{B}_t}(t-\eps)} \indic{\mathbf{B}_t \vee V > C}} \leq \esp{\frac{1}{(\mathbf{e}_\eps \mathbf{B}_\eps)^2}}^{\frac12} \esp{(\mathbf{e}_t \mathbf{B}_t)^4}^{\frac14} \proba{\mathbf{B}_t \vee V > C}^{\frac14} \, . \]
		
		Since $\eps < t$ and $\mathbf{e},\mathbf{B}$ are independent, we have $\esp{(\mathbf{e}_t \mathbf{B}_t)^4} \leq c(t)$ and
		\[ \esp{\frac{1}{(\mathbf{e}_\eps \mathbf{B}_\eps)^2}} \leq \frac{2}{\sqrt{\pi}} \Gamma(\frac32) (1-\eps)^{-\frac32} \eps^{-3} \int_{\RR_+^2} e^{-\frac{x^2}{2\eps}} e^{-\frac{y^2}{2\eps(1-\eps)}} dx dy \leq (1-\eps)^{-1} \eps^{-2} \, , \]
		where we used the transition probabilities for the Bessel process \cite[VI §3 Prop. 3.1]{revuz2013continuous}, the Brownian excursion \cite[Section 2.9 (3a)]{ito-mckean} and $\Gamma(\frac32) = \sqrt{\pi}/2$. Finally we compute $\proba{\mathbf{B}_t \vee V > C} \leq e^{-C^2/t^{1/6}}$ to get
		\begin{equation}\label{eq:majore-intersection-inf-C}
			\esp{ \frac{| \mathbf{e}_t \mathbf{B}_t|}{\mathbf{e}_\eps \mathbf{B}_\eps} \proba{\mathscr{I}_{\mathbf{e}_\eps \to V}^{\mathbf{B}_\eps \to \mathbf{B}_t}(t-\eps)} \indic{\mathbf{B}_t \vee \mathbf{e}_t > C}} \leq c_t (1-\eps)^{-\frac12} \eps^{-2} e^{-\frac{C^2}{t^{1/6}}} \, .
		\end{equation}
		On the other hand,
		\begin{equation}\label{eq:separation-intersection-leq}
			\esp{ \frac{| \mathbf{e}_t \mathbf{B}_t|}{\mathbf{e}_\eps \mathbf{B}_\eps} \proba{\mathscr{I}_{\mathbf{e}_\eps \to \mathbf{e}_t}^{\mathbf{B}_\eps \to \mathbf{B}_t}(t-\eps)} \indic{\mathbf{B}_t \vee \mathbf{e}_t \leq C}} \leq \esp{ \frac{C^2}{\mathbf{e}_\eps \mathbf{B}_\eps} \proba{\mathscr{I}_{\mathbf{e}_\eps \to \mathbf{e}_t}^{\mathbf{B}_\eps \to \mathbf{B}_t}(t-\eps)}} \, .
		\end{equation}
		We will use the following lemma to get a bound on $\proba{\mathscr{I}_{\mathbf{e}_\eps \to \mathbf{e}_t}^{\mathbf{B}_\eps \to \mathbf{B}_t}(t-\eps)}$.
		
		\begin{lemma}\label{lem:proba-non-intersection}
			For any $T > 0$, there is a $C_T > 0$ such that for any $x,y,a,b > 0$,
			\begin{equation}
				\proba{\mathscr{I}_{x \to y}^{a \to b}(T)} \leq C_T (x^2 + a^2)^2 (y^2 + b^2)^2 \, .
			\end{equation}
		\end{lemma}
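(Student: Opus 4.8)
The plan is to bound $\proba{\mathscr{I}_{x \to y}^{a \to b}(T)}$ by splitting the non-intersection event into two contributions: the two independent bridges must both stay positive on $(0,T)$, and they must not cross. First I would use the explicit formula~\eqref{eq:pont-brownien-pos}, which gives $\proba{\inf_{t\in[0,T]} W_t^{z\to w} > 0} = 1 - e^{-\frac{2}{T}zw} \leq \frac{2}{T}zw$ for a single bridge with positive endpoints. Applying this to each of the two independent bridges $W^{x\to y}_T$ and $W^{a\to b}_T$ already yields a factor $\frac{4}{T^2} x y a b$. This is not quite the claimed form $(x^2+a^2)^2(y^2+b^2)^2$, so the non-crossing constraint must be used to gain the extra powers; the key point is that, near the endpoints $0$ and $T$, the two bridges are forced to be close to $0$ and hence close to each other, so avoiding each other is costly.

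The cleanest route is to consider the difference process $D_r \defeq W^{x\to y}_T(r) - W^{a\to b}_T(r)$, which is itself (a multiple of, after rescaling) a Brownian bridge from $x-a$ to $y-b$ of duration $T$, independent of the ``sum'' direction. On the event $\mathscr{I}_{x\to y}^{a\to b}(T)$ the process $D$ keeps a constant sign on $(0,T)$; by~\eqref{eq:pont-brownien-pos} again (applied to $|D|$ conditioned on its sign, with endpoints $|x-a|$ and $|y-b|$) this costs a factor $\leq \frac{2}{T}|x-a|\,|y-b|$. However, this alone is not enough either, so I would instead condition on the positions of both bridges at an intermediate time, say $T/2$: write $\mathscr{I}$ as requiring positivity and non-crossing on $(0,T/2)$ and on $(T/2,T)$ separately, integrate over $(W^{x\to y}_{T/2}, W^{a\to b}_{T/2}) = (p,q)$ against the Gaussian transition density, and on each half-interval apply the single-bridge positivity bound~\eqref{eq:pont-brownien-pos} to each of the two bridges \emph{and} the difference-bridge sign bound. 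On $(0,T/2)$ this produces a factor $\lesssim \frac{1}{T^2} x a \cdot \frac{1}{T}|x-a|\cdot(\text{function of }p,q)$ and symmetrically $\frac{1}{T^2} y b\cdot\frac1T|y-b|$ on $(T/2,T)$; since $xa\,|x-a|^2 \lesssim (x^2+a^2)^2$ and likewise at the other end, and the remaining integral over $p,q$ of the transition densities against the bounded factors is finite and gives only a constant $C_T$, one recovers exactly the stated bound.

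The main obstacle I expect is controlling the joint law at the intermediate time: after conditioning on $(p,q)$ one needs the product of the two half-bridge positivity/non-crossing probabilities to be integrable against the Gaussian densities \emph{uniformly} in $x,y,a,b$, and to extract clean polynomial factors in the endpoints rather than in $p,q$. The bound~\eqref{eq:pont-brownien-pos} is linear in each endpoint, so each half contributes a linear factor in $p$ and in $q$; these must be absorbed by the Gaussian tails $e^{-p^2/T}e^{-q^2/T}$ coming from the transition densities, which is harmless, but one must be careful that no factor of $p^{-1}$ or $q^{-1}$ appears (it does not, because we only ever use the \emph{upper} bound $1-e^{-c} \leq c$, never a lower bound). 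A secondary technical point is that the difference of two Brownian bridges of duration $T$ is a Brownian bridge of duration $T$ but with variance scaled by $2$; this only changes constants and is absorbed into $C_T$. Once Lemma~\ref{lem:proba-non-intersection} is in hand, plugging it into~\eqref{eq:separation-intersection-leq} with $T = t-\eps$, using the moment bound $\esp{(\mathbf{e}_\eps\mathbf{B}_\eps)^{-1}(\text{polynomial in }\mathbf{e}_\eps,\mathbf{B}_\eps,\mathbf{e}_t,\mathbf{B}_t)} < \infty$ and combining with~\eqref{eq:majore-intersection-inf-C} (choosing $C = \eps^{-1/8}$) shows $\proba{I_{0,t}} \to 0$ as $\eps\to 0$ for fixed $t$, hence $\proba{I_{0,t}} = 0$, which is Lemma~\ref{lem:temps-couplage}.
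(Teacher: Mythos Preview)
Your approach has a genuine gap: you cannot multiply the three probabilities (each bridge stays positive, and the difference bridge keeps its sign) because these events are \emph{not} independent. Positivity of $W^{x\to y}$ and non-crossing $W^{x\to y}<W^{a\to b}$ both involve the first bridge, and there is no correlation inequality in the right direction here. Even if the product bound were legitimate, the factor you extract at the left endpoint is $xa\,|x-a|$, which is only cubic in $(x,a)$; your claimed ``$xa|x-a|^2\lesssim(x^2+a^2)^2$'' silently inserts an extra $|x-a|$ that your decomposition never produces. The lemma genuinely requires \emph{quartic} decay at each endpoint, and three separate linear constraints cannot deliver that.

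The paper's argument is different in kind. It views the pair $\bigl(W^{x\to y}_T,W^{a\to b}_T\bigr)$ as a single two-dimensional Brownian bridge from $z=(x,a)$ to $z'=(y,b)$ and observes that $\mathscr{I}_{x\to y}^{a\to b}(T)$ is exactly the event that this bridge stays in the cone $\mathscr{C}=\{(u,v):0\le u\le v\}$, of opening angle $\pi/4$. For a planar cone of opening $\theta$ the killed heat kernel near the apex behaves like $K_T^{\mathscr{C}}(z,w)\asymp u(z)u(w)$ with harmonic profile $u(re^{i\phi})=r^{\pi/\theta}\sin(\pi\phi/\theta)$; for $\theta=\pi/4$ the exponent is $4$, giving $|z|^4|z'|^4=(x^2+a^2)^2(y^2+b^2)^2$ directly (this is cited from Denisov--Wachtel / Ba\~nuelos--Smits; equivalently it is the $B_2$ Weyl-chamber estimate, whose harmonic function is the product over positive roots $xa(a-x)(a+x)$, visibly of degree $4$). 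The upgrade from degree $3$ to degree $4$ comes from a cancellation in the alternating sum over the reflection group and cannot be seen by bounding the three events separately. Your final paragraph on how to feed the lemma into \eqref{eq:separation-intersection-leq}--\eqref{eq:majore-intersection-inf-C} is fine; only the proof of the lemma itself needs to be replaced by the cone argument.
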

		
		Thus, using Lemma \ref{lem:proba-non-intersection} in \eqref{eq:separation-intersection-leq}, we have the upper bound
		\[ \esp{ \frac{| \mathbf{e}_t \mathbf{B}_t|}{\mathbf{e}_\eps \mathbf{B}_\eps} \proba{\mathscr{I}_{\mathbf{e}_\eps \to \mathbf{e}_t}^{\mathbf{B}_\eps \to \mathbf{B}_t}(t-\eps)} \indic{\mathbf{B}_t \vee \mathbf{e}_t \leq C}} \leq C^6 C_{t,\eps} \esp{ \frac{1}{\mathbf{e}_\eps \mathbf{B}_\eps}\Big( (\mathbf{e}_\eps)^2 + (\mathbf{B}_\eps)^2 \Big)^2} \, , \]
		and we compute
		\[ \begin{split}
			\esp{\frac{\Big( (\mathbf{e}_\eps)^2 + (\mathbf{B}_\eps)^2 \Big)^2}{\mathbf{e}_\eps \mathbf{B}_\eps}} &= \frac{2}{\sqrt{\pi}} \Gamma(\frac32) (1-\eps)^{-\frac32} \eps^{-3} \int_{\RR_+^2} (x^2 + y^2)^2 xy e^{-\frac{x^2}{2\eps}} e^{-\frac{y^2}{2\eps(1-\eps)}} dx dy\\
			&\leq (cst.) \eps^{-3} \int_{\RR_+^2} \eps^2 (u^2 + v^2)^2 uv \eps e^{-\frac{u^2}{2}} e^{-\frac{v^2}{2}} \eps du dv \leq (cst.) \eps \, .
		\end{split} \]
		to get
		\begin{equation}\label{eq:majore-intersection-sup-C}
			\esp{ \frac{| \mathbf{e}_t \mathbf{B}_t|}{\mathbf{e}_\eps \mathbf{B}_\eps} \proba{\mathscr{I}_{\mathbf{e}_\eps \to \mathbf{e}_t}^{\mathbf{B}_\eps \to \mathbf{B}_t}(t-\eps)} \indic{\mathbf{B}_t \vee \mathbf{e}_t \leq C}} \leq C^6 C_t \eps \, .
		\end{equation}
		Thus, assembling \eqref{eq:majore-intersection-inf-C} and \eqref{eq:majore-intersection-sup-C} while taking $C = \eps^{-1/8}$, for any $t > 0$ we then have
		\[ \proba{I_{0,t}} \leq C_t \left( \eps^{-1} e^{-\frac{C^2}{4(t-\eps)^{1/6}}} + C^6 \eps \right) \leq C_t \left( \eps^{-1} \exp \Big(-\frac{\eps^{-1/4}}{2}\Big) + \eps^{1/4} \right) \xrightarrow[\eps \to 0]{} 0 \, . \]
		This means that $\proba{I_{0,t}} = 0$ and in particular, taking $t = \frac12$, one can almost-surely find a positive $\eps$ such that Lemma \ref{lem:temps-couplage} is true.
	\end{proof}

	\begin{proof}[Proof of Lemma \ref{lem:proba-non-intersection}]
		We can assume $0 < x < a$ and $0 < y < b$ (otherwise the probability is zero), then we have
		\begin{equation}\label{eq:probaBB}
			\proba{\mathscr{I}_{x \to y}^{a \to b}(T)}  = \proba{\forall r \in [0,T], 0 < W_T^{x \rightarrow y}(r) < W_T^{a \rightarrow b}(r)}.
		\end{equation}
		Observe that \eqref{eq:probaBB} is exactly the probability for the Brownian bridge $W_T^{x \to y, a \to b} \defeq (W_T^{x \rightarrow y}, W_T^{a \rightarrow b})$ to stay in the cone $\mathscr{C} \defeq \mathset{(x,y) \in \RR^2 \, : \, 0 \leq x \leq y}$ for a time $T$, meaning
		\[ \proba{\mathscr{I}_{x \to y}^{a \to b}(T)} = \proba{\forall t \in [0,T], W_T^{x \to y, a \to b}(t) \in \mathscr{C}}. \]
		The isotropy of Brownian motion allows us to consider instead $\hat {\mathscr{C}} \defeq \mathset{r e^{i\theta}, 0 \leq \theta \leq \frac{\pi}{4}}$.
		
		\begin{lemma}\label{lem:BB-cone}
			Let $W^{z \to z'}$ be a two dimensional Brownian bridge from $z$ to $z'$. Then, there is a positive $C_T$ such that uniformly as $|z| \to 0$ we have
			\[ \proba{\forall t \in [0,T], W^{z \to z'}_t \in \hat{\mathscr{C}}} = (1+\bar{o}(1)) C_T |z|^4 |z'|^4 \sin \left( 4  \arg z \right)\sin \left( 4 \arg z' \right) \, . \]
		\end{lemma}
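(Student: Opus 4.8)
The plan is to read the asymptotics off the explicit Dirichlet heat kernel of planar Brownian motion on the wedge $\hat{\mathscr C}=\{re^{i\theta}:r>0,\ 0<\theta<\pi/4\}$. Since a Brownian bridge from $z$ to $z'$ over time $T$ is a Doob transform of Brownian motion, writing $p_T$ for the free heat kernel of $\tfrac12\Delta$ on $\RR^2$ and $p_T^{\hat{\mathscr C}}$ for the kernel killed on exiting $\hat{\mathscr C}$, one has
\[
\proba{\forall t\in[0,T],\ W^{z\to z'}_t\in\hat{\mathscr C}}=\frac{p_T^{\hat{\mathscr C}}(z,z')}{p_T(z,z')}.
\]
First I would invoke the classical eigenfunction expansion of the killed kernel in polar coordinates $z=(r,\theta)$, $z'=(\rho,\phi)$: for a wedge of opening $\gamma$ (here $\gamma=\pi/4$),
\[
p_T^{\hat{\mathscr C}}\big((r,\theta),(\rho,\phi)\big)=\frac{2}{\gamma T}\,e^{-(r^2+\rho^2)/(2T)}\sum_{n\geq1}\sin\!\Big(\tfrac{n\pi\theta}{\gamma}\Big)\sin\!\Big(\tfrac{n\pi\phi}{\gamma}\Big)\,I_{n\pi/\gamma}\!\Big(\tfrac{r\rho}{T}\Big),
\]
with $I_\nu$ the modified Bessel function; for $\gamma=\pi/4$ the indices are $n\pi/\gamma=4n$, multiples of $4$.

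\textbf{Extracting the leading term.} The $n=1$ term carries the harmonic function $r^4\sin(4\theta)$, i.e.\ $\mathrm{Im}\,z^4$ up to the fixed rotation hidden in $\hat{\mathscr C}$, together with the small‑argument asymptotics $I_4(x)=\tfrac{x^4}{2^4\,4!}\big(1+\grdO(x^2)\big)$. Dividing by $p_T(z,z')=\tfrac{1}{2\pi T}e^{-|z-z'|^2/(2T)}$ and using $|z-z'|^2-r^2-\rho^2=-2r\rho\cos(\theta-\phi)\to0$ as $r=|z|\to0$, so that the Gaussian prefactors tend to $1$, one obtains
\[
\proba{\forall t\in[0,T],\ W^{z\to z'}_t\in\hat{\mathscr C}}=\big(1+\bar{o}(1)\big)\,\frac{1}{24\,T^4}\,|z|^4|z'|^4\,\sin\!\big(4\arg z\big)\sin\!\big(4\arg z'\big),
\]
which gives $C_T=\tfrac{1}{24\,T^4}$, the precise value being irrelevant for the sequel.

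\textbf{Main obstacle: uniformity of the $\bar{o}(1)$.} The delicate point is to justify the error estimate uniformly as $|z|\to0$, uniformly over $z'$ in compact subsets of $\hat{\mathscr C}$ (for the application one even needs $z'$ allowed to grow slowly), and over $\arg z$ — where "$(1+\bar{o}(1))$" must be understood as the ratio to the displayed leading term, since $\sin(4\arg z)$ may be small near the edges. To control the tail $\sum_{n\geq2}$ of the Bessel series I would use $I_\nu(x)\le\frac{(x/2)^\nu}{\nu!}e^x$ and $|\sin(4n\theta)|\le 1\wedge(4n\theta)$; the tail is then dominated by a geometric series in $(r\rho/2T)^4$, and after division by $p_T$ the factor $e^{r\rho/T}$ is absorbed using $-(r^2+\rho^2)/(2T)+r\rho/T=-(r-\rho)^2/(2T)\le0$, so the contribution of $n\ge2$ is $\grdO\big(|z|^8|z'|^8 e^{c|z||z'|/T}\big)=\bar{o}\big(|z|^4|z'|^4\big)$; the $\grdO(x^2)$ error inside $I_4$ is handled the same way. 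This same estimate, now kept for all $z,z'$ rather than in the limit, yields Lemma~\ref{lem:proba-non-intersection}: after the rotation identifying $\mathscr C$ with $\hat{\mathscr C}$ one has $|z|^2=x^2+a^2$, $|z'|^2=y^2+b^2$, and bounding the full series gives $\proba{\mathscr I^{a\to b}_{x\to y}(T)}\le C_T(x^2+a^2)^2(y^2+b^2)^2$, with no smallness needed since $-(r^2+\rho^2)/(2T)+r\rho/T\le0$.

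\textbf{Remark on an alternative.} One can in principle dispense with special functions via conformal invariance: the map $z\mapsto z^4$ sends $\hat{\mathscr C}$ onto the upper half‑plane, and Brownian motion is conformally invariant up to a time change, reducing the event to a half‑plane exit probability computable by reflection. However, tracking the path‑dependent time change for a bridge over the \emph{fixed} horizon $T$ reintroduces exactly the same Jacobian/heat‑kernel bookkeeping, so I would present the direct expansion above as the cleaner route.
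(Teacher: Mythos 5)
Your proof is correct and takes a genuinely different, more self-contained route than the paper. The paper obtains the small-$|z|$ asymptotics of the killed kernel $K_T^{\hat{\mathscr{C}}}$ by citing Denisov--Wachtel (and a companion reference on heat-kernel asymptotics in cones); you instead derive them directly from the classical Bessel-series expansion of the Dirichlet heat kernel on the wedge of opening $\pi/4$, peeling off the $n=1$ term, whose angular part reproduces exactly the harmonic function $u(re^{i\theta})=r^4\sin(4\theta)$ used by the paper, and bounding the tail via $I_\nu(x)\le (x/2)^\nu e^x/\nu!$ together with $|\sin(4n\theta)|\le n\,|\sin(4\theta)|$. This buys two things: the constant $C_T=1/(24\,T^4)$ is explicit (the paper only asserts the existence of some $\chi_0>0$), and, as you observe, the same series estimate yields Lemma~\ref{lem:proba-non-intersection} in the same stroke, whereas the paper derives that lemma from Lemma~\ref{lem:BB-cone} in a separate step.

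Two small points to tighten. First, the phrasing of the ``absorbed'' step: after dividing by $p_T(z,z')=\tfrac{1}{2\pi T}e^{-|z-z'|^2/(2T)}$, the residual exponential is $e^{r\rho(1-\cos(\theta-\phi))/T}\ge 1$, not $\le 1$; the identity $-(r^2+\rho^2)/(2T)+r\rho/T=-(r-\rho)^2/(2T)\le 0$ only controls the numerator. This is harmless because $|\theta-\phi|<\pi/4$ and $r\rho\to 0$, so the residual is indeed the $e^{c|z||z'|/T}=1+\bar{o}(1)$ factor you keep in your $\grdO$, but the sign claim as stated is misleading. Second, when extracting Lemma~\ref{lem:proba-non-intersection} as a bound valid for all $z,z'$, you should say explicitly that the claimed inequality is trivial whenever $C_T|z|^4|z'|^4\ge 1$ (a probability is at most $1$), so the Bessel-series estimate is only invoked in the regime where $r\rho/T$ is bounded and the exponential factor is under control; otherwise $e^{r\rho/T}$ would defeat the polynomial bound.
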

		
		\begin{proof}
			Recall that we identify $\RR^2$ with $\CC$, by writing $W$ for a standard two-dimensional Brownian motion, we have
			\[ \begin{split}
				\proba{\forall t \in [0,T], W^{z \to z'}_t \in \hat{\mathscr{C}}} &= \lim_{\eta \to 0} \frac{\probaM{x}{\forall t \in [0,T], W_t \in \hat{\mathscr{C}}, W_T \in B(z',\eta)}}{\probaM{z}{W_T \in B(z',\eta)}} \\
				&= \lim_{\eta \to 0} \left(C(T) \eta^2 e^{-|z'|^2/2T} \right)^{-1} \int_{B(z',\eta)} K_T^{\hat{\mathscr{C}}}(z,w) dw \, ,
			\end{split} \]
			where $K_T^{\hat {\mathscr{C}}}(z,w)$ is the heat kernel killed on exiting $\hat{\mathscr{C}}$ and $B(z,r)$ is the ball of radius $r$ centered at $z$.
			
			The key ingredient is the following statement, which is a consequence of \cite[Lemma 18 - (32)]{denisov-waschel}: as $\delta \to 0$, uniformly in $|z| \leq \delta \sqrt{T}, |w| \leq \sqrt{T/\delta}$, we have
			\[ K_T^{\hat{\mathscr{C}}}(z,w) \sim \frac{\chi_0}{T^{5}} e^{-|w|^2/2T} u(w) u(z) \quad \text{for some $\chi_0 > 0$.} \]
			where $u(re^{i\theta}) \defeq r^4 \sin(4\theta)$ (this expression is given in \cite[(3)]{denisov-waschel}). This result is also stated in \cite[Corollary 1]{banuelos_brownian_1997}. In particular, as $|z| \to 0$,
			\[ \begin{split}
				\proba{\forall t \in [0,T], W^{z \to z'}_t \in \hat{\mathscr{C}}} &\sim \lim_{\eta \to 0} \left(C(T) \eta^2 e^{-|z'|^2/2T} \right)^{-1} \int_{B(z',\eta)} \frac{\chi_0}{T^{5}} e^{-|w|^2/2T} u(w) u(z) dw \\
				&= \lim_{\eta \to 0} e^{|z'|^2/2T} \frac{\chi_0}{T^{5}} e^{-|z'|^2/2T}  u(z') u(z) \frac{\text{Vol}(B(z',\eta))}{C(T) \eta^2} (1 + h(\eta)) \, ,
			\end{split} \]
			with $h(\eta) \to 0$ and $\text{Vol}(B(z',\eta)) = \pi \eta^2$, leading us to the formula of Lemma \ref{lem:BB-cone}.
		\end{proof}
		
		\begin{remark}
			We could also use the fact that $\hat{\mathscr{C}}$ is the Weyl chamber $B_2$, thus we can use results from \cite[§5.3]{Grabiner1997BrownianMI} after a time scaling by $\eps$ to have that the probability in \eqref{eq:separation-intersection-leq} is of order $(\eps/t)^2$, which is ultimately what we proved.
		\end{remark}
		
		Thus, we proved Lemma \ref{lem:proba-non-intersection} by injecting $z = (x,a)$ and $z' = (y,b)$.
	\end{proof}

	Assembling Lemmas \ref{lem:meandre-excursion} and \ref{lem:bessel-excursion} yields that one can do a coupling of the Brownian meander $\mathcal{M}$ and the three-dimensional Bessel process $\mathbf{B}$ such that almost surely, there is a positive time $\sigma$ for which $\mathcal{M}_t = \mathbf{B}_t$ on $[0,\sigma]$, thus proving Proposition \ref{prop-couplage} using \eqref{eq:X-meandre}.

	\subsection*{Acknowledgements}
	
	The author would like to thank his PhD advisors Quentin Berger and Julien Poisat for their continual help, as well as Pierre Tarrago for his proof of Lemma \ref{lem:BB-cone}.

\end{appendix}

\nocite{*}
\printbibliography[heading=bibintoc]

\end{document}